\newcommand\setItemnumber[1]{\setcounter{enum\romannumeral\@enumdepth}{\numexpr#1-1\relax}}
\newcommand{\R}{\mathbb{R}}
\newcommand{\C}{\mathbb{C}}
\newcommand{\e}{\varepsilon}
\newcommand{\beq}{\begin{equation}}
\newcommand{\eeq}{\end{equation}}
\newcommand{\bdef}{\begin{definition}}
\newcommand{\eedef}{\end{definition}}
\newcommand{\bremark}{\begin{remark}}
\newcommand{\eremark}{\end{remark}}
\newtheorem{theorem}{Theorem}[section]
\newtheorem{lemma}[theorem]{Lemma}
\newtheorem{proposition}[theorem]{Proposition}
\newtheorem{corollary}[theorem]{Corollary}
\newtheorem{definition}[theorem]{Definition}
\newtheorem{claim}[theorem]{Claim}
\newtheorem{conjecture}[theorem]{Conjecture}
\theoremstyle{definition}
\newtheorem{example}[theorem]{Example}
\newtheorem{remark}[theorem]{Remark}
\newtheorem*{rep@theorem}{\rep@title}
\newcommand{\newreptheorem}[2]{%
\newenvironment{rep#1}[1]{%
 \def\rep@title{#2 \ref{##1}}%
 \begin{rep@theorem}}%
 {\end{rep@theorem}}}
\def\?{{\bf ****???????****}}
\def\M{{\mathcal M}}
\def\lb{\label}
\def\MA{Monge--Amp\`ere }
\newcommand{\defeq}{\vcentcolon=}
\long\def\frame#1#2#3#4{\hbox{\vbox{\hrule height#1pt
 \hbox{\vrule width#1pt\kern #2pt
 \vbox{\kern #2pt
 \vbox{\hsize #3\noindent #4}
\kern#2pt}
 \kern#2pt\vrule width #1pt}
 \hrule height0pt depth#1pt}}}
\def\calC{\mathcal{C}}
\def\?{{\bf ****???????****}}
\def\M{{\mathcal M}}
\def\q{\quad}
\def\qq{\qquad}
\def\eps{\varepsilon}
\def\ra{\rightarrow}
\def\Kppolar{K^{\circ,p}}
\colorlet{partnumbercolour}{blue}
\xpatchcmd{\@part}{\partname\nobreakspace\thepart}{\textcolor{partnumbercolour}{\thepart}}{}{}
\title{$L^p$-polarity, Mahler volumes, and the isotropic constant}
\author{Bo Berndtsson, Vlassis Mastrantonis, Yanir A. Rubinstein }
\begin{document}
\let\i\undefined
\newcommand{\i}{\sqrt{-1}}
\def\LpK{K^{\circ,p}}
\def\LpS{S^{\circ,p}}
\def\Cov{\mathrm{Cov}}
\numberwithin{equation}{section}

\maketitle
\date

\begin{abstract}
This article introduces $L^p$ versions of the support function of a convex body $K$
and associates to these canonical $L^p$-polar bodies $K^{\circ, p}$
and Mahler volumes $\mathcal{M}_p(K)$. Classical polarity is then seen
as $L^\infty$-polarity. This one-parameter 
generalization of polarity leads to a generalization of the Mahler conjectures,
with a subtle advantage over the original conjecture: conjectural uniqueness of extremizers
for each $p\in(0,\infty)$.
We settle the upper bound
by demonstrating the existence and uniqueness of an $L^p$-Santal\'o point and an 
$L^p$-Santal\'o inequality for symmetric convex bodies. The proof uses Ball's
Brunn--Minkowski inequality for harmonic means, the classical Brunn--Minkowski inequality, symmetrization, and a systematic study of the $\mathcal{M}_p$ functionals.
Using our results on the $L^p$-Santal\'o point and a new observation motivated by
complex geometry, we show how Bourgain's slicing conjecture can be reduced to
lower bounds on the $L^p$-Mahler volume coupled with a certain conjectural convexity property of the logarithm of the \MA measure of the $L^p$-support function.
We derive a suboptimal version of this convexity using Kobayashi's theorem on the Ricci curvature of Bergman metrics to illustrate this approach to slicing.
Finally, we explain how Nazarov's complex analytic approach to the classical Mahler conjecture
is instead precisely an approach to the $L^1$-Mahler conjecture. 
\end{abstract}

\tableofcontents

\section{Introduction}
\lb{IntroSec}

The polar $K^\circ$ and the support function $h_K$ 
of a convex body $K$ are fundamental objects in Functional and Convex Analysis.
The Mahler and Bourgain Conjectures have motivated an enormous amount of 
research in those fields over the past 85 years. One of the goals of this article
is to point out that $K^\circ$ and $h_K$ are $L^\infty$-versions of a more
general one-parameter family of objects
$$
K^{\circ, p}
$$
and
$$
h_{p,K},
$$
introduce the associated one-parameter generalization of the Mahler volume
$\M_p$
and Conjectures,
and establish some of their fundamental properties.
As we explain in detail and back up with explicit computations, minimizers
should be unique (see Figure~\ref{FigureDiamondCube} and the discussion surrounding it). This is a subtle, but perhaps crucial, advantage, as compared to Mahler's original conjecture. To quote Tao \cite{TaoBlog} (see also  B\l{}ocki \cite[p. 90]{blocki2}),
\begin{displayquote}
\textit{``In my opinion, the main reason why this conjecture is so difficult is that unlike the upper bound, in which there is essentially only one extremiser up to affine transformations (namely the ball), there are many distinct extremisers for the lower bound..."}
\end{displayquote}

As an application of the theory of $L^p$-polarity, we 
develop a connection between these new objects ($L^p$-support
functions and $L^p$-Mahler volumes)
and Bourgain's slicing conjecture, e.g., making contact with Kobayashi's 
theorem on the Ricci curvature of Bergman metrics. 
Finally, we explain how Nazarov's and B\l{}ocki's work on a complex analytic approach to the classical Mahler conjecture fits in, being
precisely an approach to the $L^1$-Mahler conjecture. 

Our approach is loosely motivated by complex geometry,
but the article in its entirety can be read with no knowledge of complex methods.
As is probably
clear from the text, the authors are novices in the study of the Mahler and Bourgain conjectures and apologize for any omission in accrediting results properly. 
The motivation for this article lies not so much in the particular results as in showing the link between complex geometry and this beautiful area. 
It should also be stressed
that the list of references is far from complete. We have tried to make the text
accessible to both convex and complex analysts and so perhaps included a bit more
background than usual.

\subsection{Motivation from Bergman kernels}
\lb{BergmanSubSec}

Denote by 
\beq
\lb{polarEq}
K^\circ\defeq \{y\in \R^n: \langle x,y\rangle\leq 1 \text{ for all } x\in K\}
\eeq 
the polar body associated to a convex body $K\subset \R^n$.
A key step in Nazarov's
complex-analytic approach
 to the Bourgain--Milman inequality \cite[Theorem 1]{bourgain-milman} is 
 a bound on the {\it Mahler volume} 
\begin{equation}
\lb{MahlerEq}
    \M(K)\defeq n! |K| |K^\circ|, 
\end{equation}
of a symmetric (i.e., $-K=K$)
convex body $K$ 
from below by 
a multiple of the Bergman kernel
$\mathcal{K}_{T_K}(z,w)$ of the tube domain 
$T_K\defeq \R^n+ \i K$ 
over $K$, evaluated on the diagonal at the origin \cite[p. 338]{nazarov}. 
This was generalized by Hultgren \cite[Lemma 11]{hultgren} and two of us \cite[Proposition 6]{MR} to any convex body $K$: 
\begin{equation}\label{0.1}
    \pi^n|K|^2 \mathcal{K}_{T_K}(\i b(K), \i b(K))\leq \M(K-b(K)),
\end{equation}
where
\begin{equation*}
    b(K)\defeq \int_K x\frac{\dif x}{|K|}
\end{equation*}
is 
the barycenter of $K$.

This article, however, is not about Bergman kernels (though 
we come back to Bergman kernels in \S\ref{NazarovSubSec} and \S\ref{complexgeom_subsec}). Nonetheless, the {\it $L^p$-Mahler volumes} introduced below are partly motivated by (\ref{0.1}). In order to prove (\ref{0.1}) one uses Jensen's inequality together with an explicit formula for the Bergman kernels of tube domains evaluated on the diagonal, due to Rothaus \cite[Theorem 2.6]{rothaus}, Korányi \cite[Theorem 2]{koranyi}, and Hsin \cite[(1.2)]{hsin}, that as observed recently 
can be expressed as \cite[Remark 36]{MR} 
\begin{equation}\label{diagonal_berg}
    \mathcal{K}_{T_K}(0, 0)= \frac{1}{(4\pi)^n} \int_{\R^n} e^{-h_{1,K}(y)} \frac{\dif y}{|K|}, 
\end{equation}
where, following \cite[Definition 13]{MR}, we denote by
\begin{equation}
\lb{h1KEq}
    h_{1,K}(y)\defeq \log\int_{K} e^{\langle x,y\rangle}\frac{\dif x}{|K|}, 
\end{equation}
the logarithmic Laplace transform of the convex indicator function $\bm{1}_K^\infty$ ($\bm{1}_K^\infty$ is $0$ on $K$ and $\infty$ otherwise). 
Therefore, the left-hand side of (\ref{0.1}) becomes $\pi^n |K|\int_{\R^n} e^{-h_{1,K-b(K)}(y)}\dif y$, bearing a curious resemblance to the standard formula for the Mahler volume
\eqref{MahlerEq},
\begin{equation}\label{0.2}
    \M(K)= |K|\int_{\R^n} e^{-h_K(y)}\dif y
\end{equation}
(see \eqref{KcircVolume} below),
where 
\begin{equation}
\lb{hinftyKEq}
h_K(y):=\sup_{x\in K}\langle x,y\rangle
\end{equation}
is the (classical) support function of $K$.

\subsection{\texorpdfstring{$L^p$-support function, -polarity, and -Mahler volume}{Lᵖ-support function, -polarity, and -Mahler volume}}

Motivated by the preceding
discussion and \cite[Remark 36]{MR}, we introduce the {\it $L^p$-support function} of a compact body $K\subset \R^n$ for all $p>0$,
\begin{equation}\label{hpdef}
    h_{p,K}(y)\defeq \log\left(\int_{K} e^{p\langle x,y\rangle}\frac{\dif x}{|K|}\right)^{\frac1p}, \quad y\in \R^n, 
\end{equation}
unifying and interpolating between \eqref{h1KEq} and \eqref{hinftyKEq}
(notice that $h_{\infty,K}:=\lim_{p\ra\infty}h_{p,K}=h_K$ by Corollary \ref{hpLimit}). 
These are convex functions in $y$, monotone increasing in $p$, and take the Cartesian product of bodies to the sum of the respective $L^p$-support functions (Lemma \ref{list}). Less obviously, they also enjoy a convexity property in $p$ (Lemma \ref{conv1}), and a ``concavity" property in $K$ (Lemma \ref{conv2}).

Generalizing (\ref{0.2}), we introduce the {\it $L^p$-Mahler volume},
\begin{equation}\label{Mpdef}
    \M_p(K)\defeq |K|\int_{\R^n} e^{-h_{p,K}(y)}\dif y.
\end{equation}
The functional $\M_p$ shares many (but not all) of the properties of $\M=\M_\infty$ (by Corollary \ref{hpLimit}), e.g., invariance under the action of $GL(n,\R)$ (Lemma \ref{Mp_aff_inv}), tensoriality (Remark \ref{tensor}), existence and uniqueness of a Santal\'o point (Proposition \ref{santalo_point_thm}), and a Santal\'o inequality for symmetric bodies (Theorem \ref{santalo_symmetric}).

It is natural to ask whether there is an analogue of \eqref{MahlerEq} for $\M_p$,
i.e., is there a canonically associated body to $K$ for which $\M_p$ can be 
expressed as the volume of a product body in $\R^{2n}$?
We answer this affirmatively. To that end, we introduce the following:

\bdef
\label{LpKdef}
Let $K\subset\R^n$. Define the {\rm $L^p$-polar body of $K$} by 
\begin{equation}
\lb{KpcircEq}
K^{\circ, p}\defeq \bigg\{y\in\R^n: \int_0^\infty r^{n-1} e^{-h_{p,K}(ry)}\dif r\ge(n-1)!\bigg\}. 
\end{equation}
\eedef

Our first result answers the aforementioned question. 
\begin{theorem}
\label{LpKwelldefined}
    Let $p\in(0,\infty]$. For a convex body $K\subset \R^n$,
    $K^{\circ,p}$ is convex, closed, has non-empty interior, and
    \begin{equation}\label{MpKeq}
        \M_p(K)= n! |K||K^{\circ,p}|. 
    \end{equation}
    It is compact (bounded) if and only if $0\in \mathrm{int}\, K$. For $K$ symmetric, $K^{\circ,p}$ is symmetric.
\end{theorem}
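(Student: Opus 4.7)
I would work with $F_p(y) := \int_0^\infty r^{n-1} e^{-h_{p,K}(ry)}\dif r$ so that $K^{\circ, p} = \{y \in \R^n : F_p(y) \geq (n-1)!\}$. The starting observation is that the substitution $s = tr$ gives the scaling identity $F_p(ty) = t^{-n}F_p(y)$ for $t > 0$; in particular $K^{\circ, p}$ is automatically a star body about $0$ with radial function
\[
R(u) := \sup\{t\geq 0: tu\in K^{\circ,p}\} = \bigl(F_p(u)/(n-1)!\bigr)^{1/n}, \qquad u\in S^{n-1}.
\]

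The substantive step, convexity together with closedness, comes from Ball's theorem on radial integrals of log-concave functions: since $h_{p,K}$ is convex in $y$ (Lemma \ref{list}), $f := e^{-h_{p,K}}$ is log-concave on $\R^n$ with $f(0)=1$, and Ball's inequality (a form of Borell--Brascamp--Lieb) gives that $\rho(y) := F_p(y)^{-1/n}$ is a positively $1$-homogeneous convex function on $\R^n$. The elementary bound $h_{p,K}(y)\leq h_K(y) \leq M_K\|y\|$ with $M_K := \max_{x\in K}\|x\|$ yields $\rho(y) \leq ((n-1)!)^{-1/n} M_K\|y\|$, so $\rho$ is finite on $\R^n$ and hence continuous. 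Therefore $K^{\circ,p} = \{y : \rho(y) \leq ((n-1)!)^{-1/n}\}$ is a closed convex set containing the Euclidean ball $\{y : \|y\|\leq 1/M_K\}$, which simultaneously gives convexity, closedness, and non-empty interior.

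For the volume identity I would apply polar coordinates to both sides. On one hand
\[
\frac{\M_p(K)}{|K|} = \int_{\R^n} e^{-h_{p,K}(y)}\dif y = \int_{S^{n-1}} F_p(u)\,\dif\sigma(u),
\]
while on the other hand
\[
|K^{\circ,p}| = \int_{S^{n-1}}\frac{R(u)^n}{n}\,\dif\sigma(u) = \frac{1}{n!}\int_{S^{n-1}} F_p(u)\,\dif\sigma(u),
\]
whence $\M_p(K) = n! |K||K^{\circ,p}|$; the constant $(n-1)!$ in Definition \ref{LpKdef} is forced by exactly this bookkeeping. Symmetry is immediate: if $K=-K$ then the substitution $x\mapsto -x$ in (\ref{hpdef}) shows $h_{p,K}$ is even, hence so are $F_p$ and $K^{\circ,p}$.

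For the boundedness dichotomy: if $0\in\mathrm{int}(K)$, pick $\epsilon>0$ with $\{x:\|x\|\leq\epsilon\}\subset K$; a spherical-cap estimate in the integral defining $h_{p,K}$ then gives $h_{p,K}(ru)\geq \epsilon r/2 - C$ uniformly in $u\in S^{n-1}$, so $F_p$ is uniformly bounded on $S^{n-1}$, hence so is $R$, hence $K^{\circ,p}$ is bounded. Conversely, if $0\notin\mathrm{int}(K)$, a supporting hyperplane at $0$ (when $0\in\partial K$) or strict separation (when $0\notin K$) produces $u\in S^{n-1}$ with $\langle x,u\rangle\leq 0$ on $K$, forcing $h_{p,K}(ru)\leq 0$ for all $r\geq 0$; then $F_p(u)=\infty$, so the whole ray $\{tu:t\geq 0\}$ lies in $K^{\circ,p}$. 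The only non-routine step in the plan is the convexity of $K^{\circ,p}$: without Ball's integral inequality it is not evident a priori why a superlevel set of $F_p$ should be convex, and everything else is polar-coordinate bookkeeping together with elementary estimates on $h_{p,K}$.
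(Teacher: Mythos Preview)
Your proposal is correct and follows the same overall architecture as the paper: Ball's theorem (Proposition~\ref{norm_phi}) supplies convexity of $\|\cdot\|_{K^{\circ,p}}$, polar coordinates give the volume identity \eqref{MpKeq}, and evenness of $h_{p,K}$ handles the symmetric case. (One small slip: convexity of $h_{p,K}$ is Lemma~\ref{hpKconvex}, not Lemma~\ref{list}.)

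Where you genuinely diverge is in the boundedness dichotomy. For $0\in\mathrm{int}\,K$, the paper takes a small cube $[-r,r]^n\subset K$, obtains the lower bound \eqref{hpKrEq} on $h_{p,K}$ in terms of $h_{p,[-r,r]^n}$, and then needs both Claim~\ref{cubeppolarBounded} and the reverse inequality Lemma~\ref{hpKineq} (applied with $b([-1,1]^n)=0$) to conclude that $([-r,r]^n)^{\circ,p}$ itself is bounded. Your spherical-cap estimate $h_{p,K}(ru)\geq \epsilon r/2 - C$ is a more direct route: it yields a uniform upper bound on $F_p$ over $S^{n-1}$ in one stroke, bypassing the cube computation and the barycentric reverse inequality entirely. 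For $0\notin\mathrm{int}\,K$, the paper invokes $K^\circ\subset K^{\circ,p}$ together with the classical unboundedness of $K^\circ$, whereas you produce the unbounded ray directly from the supporting hyperplane; again your argument is self-contained. The paper's route has the side benefit of establishing the inclusion $K^\circ\subset K^{\circ,p}$ (Lemma~\ref{nonemptyInterior}), which is used elsewhere, but for the theorem at hand your treatment is shorter.
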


Theorem \ref{LpKwelldefined} justifies the notation
\begin{equation}\label{opNorm}
    \|y\|_{K^{\circ,p}}\defeq \left( \frac{1}{(n-1)!}\int_0^\infty r^{n-1} e^{-h_{p,K}(ry)}\dif r\right)^{-\frac1n},
\end{equation}
(the power serves to homogenize)
and $K^{\circ,p}=\{y\in\R^n: \|y\|_{K^{\circ, p}}\leq 1\}.$
For $p=\infty$ one recovers the usual polar body, i.e., $K^{\circ,\infty}= K^{\circ}$ (Lemma \ref{LpKpcont}). The case $p=0$ is treated in \S\ref{casep0SubSec}. 
Figure~\ref{mainFigure} illustrates some explicit examples.

{\allowdisplaybreaks
\begin{figure}[H]
  \begin{subfigure}[b]{0.49\textwidth}
  \centering
    \includegraphics[scale=.49]{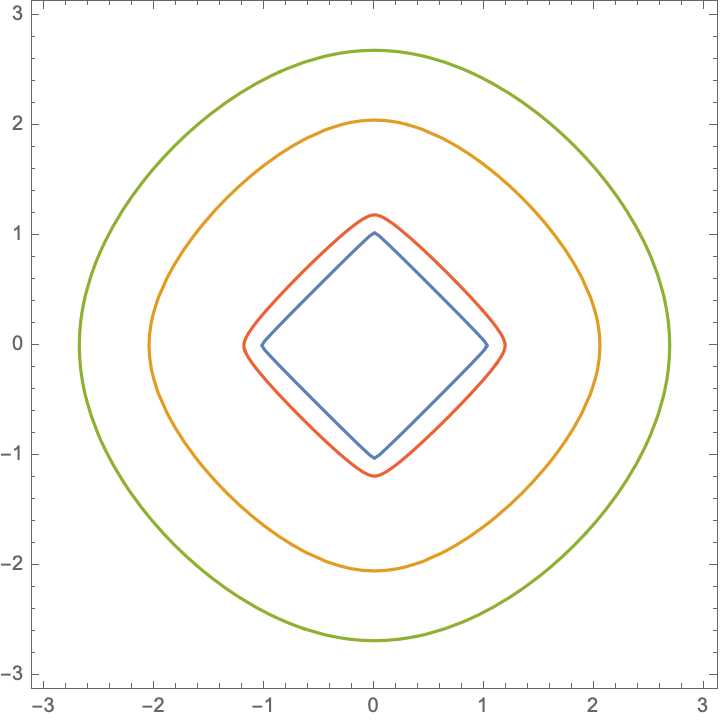}
    \caption{\small $(B_\infty^2)^{\circ,p}$}
    \label{fig:f1}
  \end{subfigure}
    \begin{subfigure}[b]{0.5\textwidth}
    \centering
    \includegraphics[scale=.49]{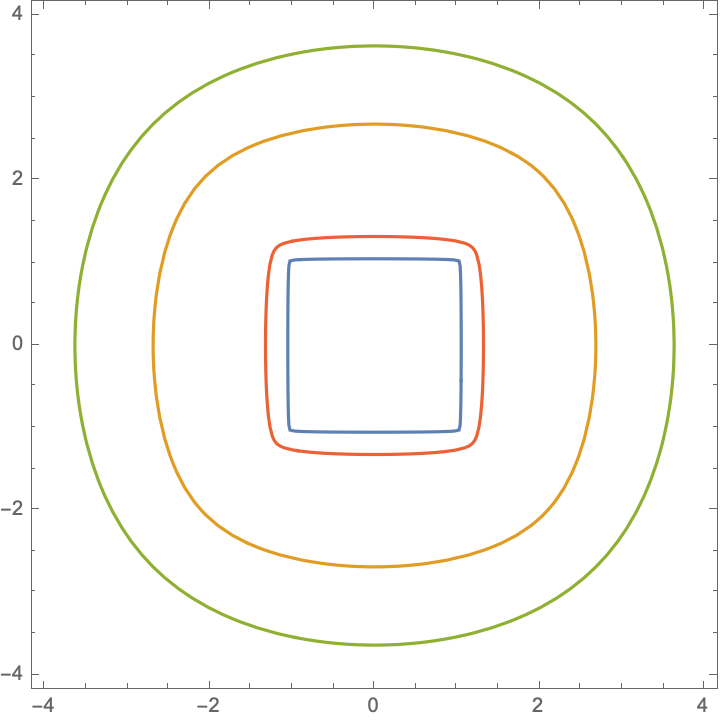}
    \caption{\small $(B_1^2)^{\circ,p}$}
    \label{fig:f2}
  \end{subfigure}
\end{figure}
\begin{figure}[ht]\ContinuedFloat
\centering
      \begin{subfigure}[b]{0.5\textwidth}
  \centering
    \includegraphics[scale=.5]{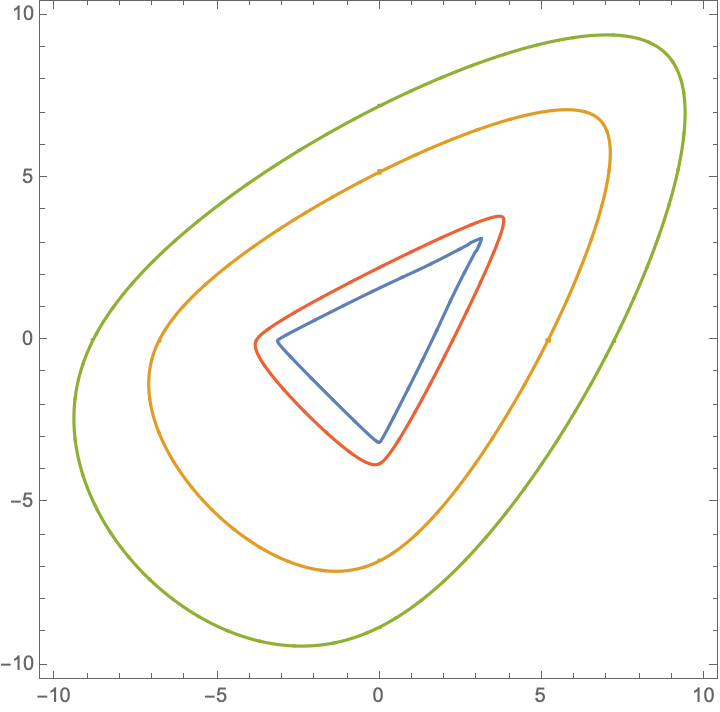}
    \caption{\small $(\Delta_2- b(\Delta_2))^{\circ,p}$}
    \label{fig:f4}
  \end{subfigure}
  \caption{\small The $L^p$-polars of (a) the square $B_\infty^2:=[-1,1]^2$, (b) the diamond
  $B_1^2:=(B_\infty^2)^\circ$, (c) the 2-simplex centered at the origin, 
  for $p=1/2$ (green), $p=1$ (orange), $p= 10$ (red) and $p=100$ (blue).}
  \label{mainFigure}
\end{figure}
}

As $p$ approaches 0, the $L^p$-polars of all three of the bodies pictured in Figure \ref{mainFigure} increase to $\R^2$. In fact, for any convex body $K\subset \R^n$, $K^{\circ,p}$ increases to $\{y: \langle y, b(K)\rangle\leq 1\}$ as $p\to 0$ (Proposition \ref{Kcirc0prop}), so we define $K^{\circ,0}$ to be exactly that (Definition \ref{Kcirc0def}). In particular, $K^{\circ,0}$ is either $\R^2$ or a half-space depending on whether or not $b(K)$ vanishes. By Example \ref{simplexpPolar}, we may plot a few of the $L^p$-polars of the standard simplex on the plane \eqref{SimplexDef}. Note that $\Delta_2^{\circ,0}$ is a half-space since $b(\Delta_2)\neq 0$.

\begin{figure}[H]
    \centering
    \includegraphics[scale=.5]{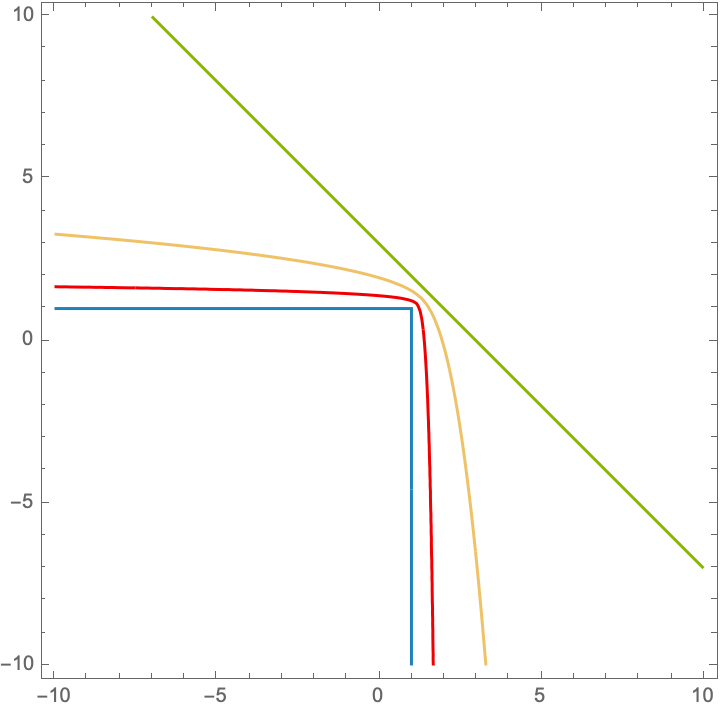}
    \caption{\small The boundary of $(\Delta_2)^{\circ,p}$ for $p=0$ (green), $p=2$ (orange), $p=10$ (red), and $p=\infty$ (blue).}
    \label{fig:f3}
\end{figure}

The proof of Theorem \ref{LpKwelldefined} has several parts.
To obtain \eqref{MpKeq} we rely on a result of Ball (Theorem \ref{KBall_ineq})
that implies that \eqref{opNorm} has all the properties of a norm, except that it is, in general, only {\it positively} 1-homogeneous, i.e., $\|\lambda y\|_{K^{\circ,p}}= \lambda \|y\|_{K^{\circ,p}}$ for $\lambda>0$. If $K$ is symmetric then $\|\cdot\|_{K^{\circ,p}}$ is 
fully 1-homogeneous, i.e., a norm (then $K^{\circ,p}$ is also symmetric).
For completeness, we include a detailed and self-contained proof of Ball's result
in Appendix \ref{appendixA}.
In particular, $\|\cdot\|_{K^{\circ,p}}$ is convex and so is $K^{\circ,p}$. 
Equality \eqref{MpKeq} follows from a standard formula relating the volume of a convex body to the surface integral of $\|\cdot\|_{K^{\circ,p}}^{-n}$ over the unit sphere
(see \eqref{normVolume}). 
Non-emptiness of the interior follows from $K^\circ\subset K^{\circ,p}$
(Lemma \ref{LpKpcont}). This inclusion also implies that $K^{\circ,p}$ is unbounded when $0\notin\mathrm{int}\,K$. 
The converse is slightly more subtle: when $0\in\mathrm{int}\,K$ one has a small cube $[-\eps,\eps]^n\subset K$.
For classical polarity this would be the end of the argument;
yet unlike classical polarity, $L^p$-polarity does not invert inclusions,
so we cannot simply argue that $K^{\circ,p}\subset ([-\eps,\eps]^n)^{\circ,p}$.
Instead, we use the existence of a small cube inside of $K$ to obtain a lower bound on $h_{p,K}$ in terms of $h_{p, [-\e,\e]^n}$ (see \eqref{hpKrEq}) which then induces an upper bound on $K^{\circ,p}$ by a multiple of $([-\e,\e]^n)^{\circ,p}$. The latter can be shown to be bounded (Claim \ref{cubeppolarBounded}), from which the boundedness of $K^{\circ,p}$ follows
by using yet another key estimate (Lemma \ref{hpKineq}).

\subsection{\texorpdfstring{$L^p$-Mahler conjectures
and uniqueness of minimizers}{Lᵖ-Mahler conjectures and uniqueness of minimizers}}
\lb{LpMahlerConjSubSec}

For $q>0$, denote by 
\begin{equation}
\lb{BqnEq}
    B_q^n\defeq \{x\in \R^n: |x_1|^q+ \ldots+ |x_n|^q\leq 1\}
\end{equation}
the (closed) $n$-dimensional $q$-ball, and denote by 
\begin{equation}\label{SimplexDef}
    \Delta_n\defeq \{x\in [0,\infty)^n: x_1+\ldots+ x_n\leq 1\} 
\end{equation}
the standard simplex in $\R^n$.
We propose a 1-parameter generalization of Mahler's conjectures. Mahler's original conjectures amount to setting $p=\infty$
in the following statements \cite[p. 96]{mahler},\cite{mahler2}.
\begin{conjecture}\label{pMahlerSym}
    Let $p\in(0,\infty]$. For a symmetric convex body $K\subset \R^n$,
    $$
    \M_p([-1,1]^n)\leq \M_p(K)\leq \M_p(B_2^n).
    $$ 
\end{conjecture}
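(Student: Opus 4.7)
The plan splits the conjecture into its two halves, which are of very different difficulty. The upper bound is a Santal\'o-type inequality amenable to symmetrization, while the lower bound is the essential Mahler-type content and is where I expect to get stuck.

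For the upper bound $\M_p(K)\leq\M_p(B_2^n)$ I would follow the Steiner-symmetrization strategy used for the classical Santal\'o inequality. Using the $GL(n,\R)$-invariance of $\M_p$ (Lemma \ref{Mp_aff_inv}), I may normalize $K$ as convenient. The main step is to show that for each unit $u\in S^{n-1}$, the Steiner symmetrization $S_u K$ in direction $u$ satisfies $\M_p(S_u K)\geq \M_p(K)$; since $|S_u K|=|K|$, this reduces to $\int_{\R^n} e^{-h_{p,S_u K}(y)}\,\dif y \geq \int_{\R^n} e^{-h_{p,K}(y)}\,\dif y$. At $p=\infty$ one uses the classical pointwise bound $h_{S_u K}(y)\leq \tfrac12(h_K(y)+h_K(r_u y))$ (with $r_u$ reflection across $u^\perp$) followed by AM--GM on the exponentials. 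For finite $p$, since $h_{p,K}$ is built from $\int_K e^{p\langle x,y\rangle}\,\dif x$, the analogous sliced inequality should come from Ball's Brunn--Minkowski inequality for harmonic means --- flagged in the abstract as a key tool --- combined with classical Brunn--Minkowski for the cross sections and the ``concavity in $K$'' of $h_{p,K}$ (Lemma \ref{conv2}). Iterating $S_u$ in enough directions drives $K$ to $B_2^n$ while preserving central symmetry and volume, yielding the inequality with equality characterizing $B_2^n$.

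For the lower bound $\M_p([-1,1]^n)\leq \M_p(K)$ there are three plausible avenues: (i) exploit the \emph{conjectural uniqueness} of minimizers at finite $p$ stressed in the introduction --- write the Euler--Lagrange equation for a symmetric critical point of $K\mapsto \M_p(K)$ using the convexity/concavity properties of $h_{p,K}$ in $K$, and try to argue that the cube is the unique solution; (ii) adapt the Nazarov--B\l{}ocki complex-analytic method, which the paper flags as naturally aligned with $p=1$, to other values of $p$ via an $L^p$-variant of the Bergman kernel estimate \eqref{0.1}; (iii) attempt a monotonicity-in-$p$ reduction, noting that $h_{p,K}$ is monotone in $p$ so $\M_p$ is monotone, though since both sides of the inequality depend on $p$ this does not straightforwardly reduce to the classical Mahler case. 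Here lies the main obstacle: the statement generalizes the classical Mahler conjecture, open in dimension $\geq 4$ even for symmetric bodies, so a direct proof would be a major breakthrough. The advantage of finite $p$ --- expected uniqueness of the extremizer --- is encouraging but has not yet been converted into rigidity estimates strong enough to single out the cube, and none of the three routes above is likely to yield a complete proof without new input.
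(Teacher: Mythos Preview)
Your overall assessment is correct: the statement is a \emph{conjecture}, and the paper does not prove it. Only the upper bound $\M_p(K)\le\M_p(B_2^n)$ is established (Theorem \ref{santalo_symmetric}); the lower bound is left open, and you are right that it contains the symmetric Mahler conjecture as the limiting case $p=\infty$ (Lemma \ref{mKmpKineq}), so a full proof would indeed be a breakthrough.

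Your outline of the upper bound is on the right track---Steiner symmetrization, Ball's Brunn--Minkowski for harmonic means, and classical Brunn--Minkowski on slices are exactly the ingredients---but two details are off. First, the pointwise inequality you write for $p=\infty$, namely $h_{S_uK}(y)\le\tfrac12(h_K(y)+h_K(r_uy))$, is only the diagonal case of what is actually needed: the paper uses the two-point sliced inequality \eqref{hKSteiner}, $h_{\sigma_{e_n}K}\big(\tfrac{\xi+\xi'}{2},x_n\big)\le\tfrac12 h_K(\xi,x_n)+\tfrac12 h_K(\xi',-x_n)$, and for finite $p$ its three-parameter generalization (Lemma \ref{steiner_claim2}) involving harmonic means of the radial variables, whose proof relies on H\"older and the log-convexity of $t\mapsto\sinh(t)/t$ (Claim \ref{steiner_claim3}) rather than on any concavity of $h_{p,K}$ in $K$. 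Second, the ``concavity in $K$'' lemma you cite (Lemma \ref{conv2}) plays no role here; the paper states explicitly in \S\ref{S12} that Lemmas \ref{conv1} and \ref{conv2} ``are not used elsewhere in the article.'' The actual mechanism is: Lemma \ref{steiner_claim2} feeds into Ball's Theorem \ref{KBall_ineq} to produce the norm inequality \eqref{steiner_symm_lem1}, which gives the slice inclusion of Lemma \ref{steiner_cor}, and then classical Brunn--Minkowski on the $(n-1)$-dimensional slices finishes the job.
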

\begin{conjecture}\label{pMahler}
    Let $p\in (0,\infty]$. For a convex body $K\subset \R^n$, 
    $$
    \inf_{x\in\Delta_n}\M_p(\Delta_n-x) \le \M_p(K).
    $$
\end{conjecture}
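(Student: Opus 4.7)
Since $p=\infty$ gives the classical Mahler lower bound, open for 85 years, I do not expect a complete proof. My plan is to set up a framework that separates the easy reductions from the genuinely hard step, and to identify sub-cases (notably $p=1$ and $p\to 0$) where partial progress looks feasible.

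The first step is a compactness reduction. Using affine invariance of $\M_p$ (Lemma \ref{Mp_aff_inv}), I would normalize $K$ by an element of $GL(n,\R)$ so that $K$ sits in a bounded canonical position (e.g., John or isotropic), and then translate so that the origin is the $L^p$-Santal\'o point of $K$ (Proposition \ref{santalo_point_thm}). The same normalization applied to the family $\{\Delta_n-x:x\in\Delta_n\}$ pins down a unique translate $\Delta_n-x_*$ achieving $\inf_{x\in\Delta_n}\M_p(\Delta_n-x)$. A minimizing sequence of convex bodies in the normalized class is pre-compact in Hausdorff distance, and continuity of $\M_p$ yields a minimizer $K_*$; the normalization is designed precisely to prevent the standard failure mode in which a minimizing sequence becomes flat or unbounded.

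The second step is a variational analysis of $K_*$. Parametrizing smooth boundary perturbations by a function on $\partial K_*$ and differentiating \eqref{Mpdef} under the integral, the stationarity condition should become a PDE of \MA type on $\partial K_*$ involving the log-Laplace density $e^{-h_{p,K_*}}$. The conjectural uniqueness of minimizers for $p\in(0,\infty)$, emphasized as the structural advantage over $p=\infty$ in the Tao quote, means that if one can classify the solutions of this PDE, then $K_*$ must equal a translate of $\Delta_n$ modulo $GL(n,\R)$. To verify that $\Delta_n-x_*$ itself solves the PDE, one computes $h_{p,\Delta_n}$ in closed form (Example \ref{simplexpPolar}) and checks the Euler--Lagrange equation directly.

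The main obstacle is this very classification step: ruling out exotic critical points is precisely the difficulty that has blocked the classical Mahler lower bound, where the extremizer set is large and disconnected, and $L^p$-polarity has the additional inconvenience of not reversing inclusions, so inequalities propagated via inclusion are unavailable. Two partial attacks look most promising. For $p=1$, the Bergman-kernel representation \eqref{diagonal_berg} together with the Nazarov--B\l{}ocki complex analytic framework discussed in \S\ref{NazarovSubSec} should yield concrete lower bounds on $\M_1(K)$; the paper already flags $p=1$ as the natural home for that machinery. For $p\to 0$, the $L^p$-polar degenerates to a half-space (Proposition \ref{Kcirc0prop}), so $\M_p$ admits an explicit asymptotic description near $p=0$; one could then try to propagate such bounds toward larger $p$ using the convexity of $h_{p,K}$ in $p$ (Lemma \ref{conv1}) combined with the concavity in $K$ (Lemma \ref{conv2}).
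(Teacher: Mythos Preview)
The statement you are addressing is Conjecture \ref{pMahler}, and the paper does not prove it: it is posed as an open problem generalizing the classical non-symmetric Mahler lower bound (the case $p=\infty$). There is therefore no proof in the paper to compare against; the only related result is Lemma \ref{mKmpKineq}, which shows that the family of conjectures over $p\in(0,\infty)$ implies the classical one in the limit $p\to\infty$.

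Your write-up is candid about this and is a research outline rather than a proof. The compactness reduction in your first step is standard and sound. Your second step correctly locates the crux at the classification of critical points of $K\mapsto\M_p(K)$, and you rightly note that this is exactly the obstruction in the classical problem; the paper's observation that minimizers should be unique for finite $p$ is offered as a heuristic reason for optimism, not as a device that bypasses the variational analysis. Your $p\to 0$ idea is weaker than it looks: by Proposition \ref{Kcirc0prop} the $L^p$-polar of every body blows up to a half-space or to all of $\R^n$, so $\M_p(K)\to\infty$ for every $K$, and extracting a nontrivial comparison between bodies from that common divergence would require a second-order expansion that is not supplied here and not obviously available. The $p=1$ Bergman-kernel route is more concrete, but as \S\ref{NazarovSubSec} spells out, the $p=1$ case of Conjecture \ref{pMahler} is itself a standing open conjecture.
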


By Proposition \ref{santalo_point_thm} below the infimum in Conjecture \ref{pMahler} is attained by a unique point.

By the Bourgain--Milman inequality \cite[Corollary 6.1]{bourgain-milman}, there is $c>0$ independent of dimension so that $\M(K)\geq c^n$ for all convex bodies $K\subset \R^n$. By Lemma \ref{mKmpKineq} below, this induces a lower bound on $\M_p$ for all $p$ with the constant only depending on $p$.
The best known constant for $\M$ in dimensions $n\geq 4$ with $K$ symmetric is  $c=\pi$ \cite[Corollary 1.6]{kuperberg2}, \cite[Theorem 2.1]{berndtsson1}. The sharp bound $c=4$ is due to Mahler in dimension $n=2$ \cite[(2)]{mahler2}, and Iriyeh--Shibata in dimension $n=3$ \cite[Theorem 1.1]{iriyeh-shibata} (cf. Fradelizi et al. \cite{fradelizi-et_al}). For general $K$, the best known constant is $c=2$ for $n=3$ and $c=\pi/2$
for $n\geq 4$ by the symmetric bound and a symmetrization trick (see, for example, \cite[Corollary 55]{MR}). In dimension $n=2$ the sharp bound is due to Mahler \cite[(1)]{mahler2}.
One may also formulate other versions of Mahler's original conjecture, e.g., to zonoids \cite{reisner} or unconditional bodies \cite[\S 4]{saint-raymond} and generalize these to all $p$, but in this article we focus on Conjectures \ref{pMahlerSym} and \ref{pMahler}. In the special case $p=1$, using \eqref{diagonal_berg} one can show that the lower
bound of
Conjecture \ref{pMahlerSym} is equivalent to a conjecture of B\l{}ocki \cite[p. 56]{blocki},
while Conjecture \ref{pMahler} reduces to a conjecture of two of us 
\cite[Conjecture 10]{MR}, both stated in terms of Bergman kernels
of tube domains.

Conjectures \ref{pMahlerSym} and \ref{pMahler} for all $p\in(0,\infty)$
imply Mahler's conjectures, as we show in Lemma \ref{mKmpKineq}.
On the surface, the former look harder to deal with.
However, there is a subtle, perhaps crucial, advantage in the ``regularized"
version of the symmetric Mahler conjecture (Conjecture \ref{pMahlerSym} for $p\in(0,\infty)$) 
compared to the classical version ($p=\infty$) of that conjecture.
This has to do with the non-uniqueness of minimizers in the classical 
symmetric Mahler conjecture which has been pointed out by experts
\cite[\S 1.3]{tao}, \cite{TaoBlog} (see, in particular, the comments)
as one of the main obstacles to tackling it (see also the quote by Tao in \S\ref{IntroSec}). 
Let us elaborate on that.

Indeed, tensoriality of $\M=\M_\infty$ 
together with its invariance under classical polarity
leads to the conjectured non-uniqueness
of symmetric minimizers, referred to as Hanner polytopes
(non-uniqueness here is in the strong sense:  after taking the quotient by $GL(n,\R)$, i.e., there are minimizing bodies that are in different $GL(n,\R)$-orbits). Hanner polytopes are symmetric convex polytopes that are defined inductively: $[-1,1]$ is the unique Hanner polytope in dimension $n=1$. In higher dimensions, a Hanner polytope is given either as the Cartesian product of two lower-dimensional Hanner polytopes, or as the polar of such \cite[Theorems 3.1--3.2, 7.1]{hanner}, \cite[Corollary 7.4]{hansen-lima}. For example, in dimension $n=3$ there are precisely two non-$GL(n,\R)$ equivalent Hanner polytopes: the cube $[-1,1]^3$, as the product of lower-dimensional Hanner polytopes, and its polar $B_1^3$ \cite[pp. 86--87]{hanner}.

By contrast, our $L^p$-polarity operation  
\eqref{KpcircEq}
is no longer a duality, i.e., 
$(K^{\circ, p})^{\circ,p}\neq K$ in general. 
In fact, the $L^p$-polar always has a smooth boundary
for $p\in(0,\infty)$, and hence $L^p$-polarity is never a duality operation among polytopes.
By \eqref{MpKeq}
this means $\M_p$ is not invariant under $L^p$-polarity.
We conjecture that for all $p\in(0,\infty)$, up to the action of $GL(n,\R)$, $\M_p$ is uniquely minimized by the cube among symmetric convex bodies, and by the simplex, appropriately repositioned, among general convex bodies.
If true, this would give some motivation for studying $\M_p$ and show
that the original Mahler conjecture has (for the better and for the worse) additional invariance absent from our $L^p$-Mahler conjectures. Figure \ref{FigureDiamondCube} illustrates this 
symmetry-breaking property of $\M_p$ in $n=3$:

\begin{figure}[H]
    \centering
    \includegraphics[scale=.6]{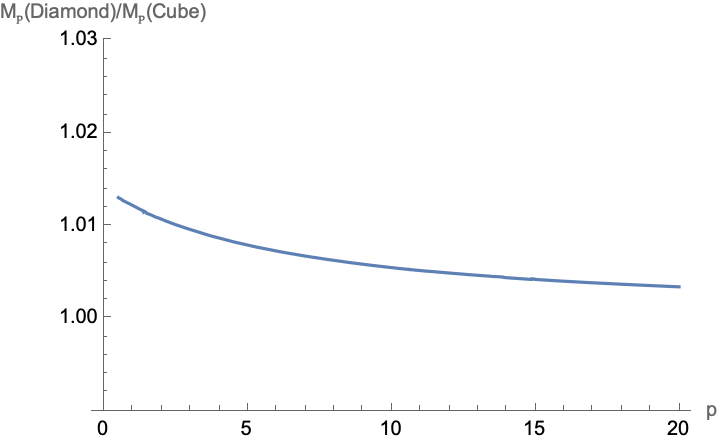}
    \caption{\small $\M_p(B_1^3)/\M_p(B_\infty^3)$ for $p\in[1/2,20]$.}
\lb{FigureDiamondCube}
\end{figure}

We emphasize that the above discussion pertains to the symmetric case, since in the non-symmetric case, the simplex, appropriately repositioned, is already
conjectured to be
the unique (up to $GL(n,\R)$) minimizer for the classical non-symmetric Mahler conjecture \cite{TaoBlog}. That is, $\M$ should be minimized by $\Delta_n-b(\Delta_n)$, where $b(\Delta_n)$ coincides with the Santal\'o point of $\Delta_n$. Note that $(\Delta_n- b(\Delta_n))^{\circ}$ is a $GL(n,\R)$ image of $\Delta_n-b(\Delta_n)$, so polarity does not produce a non-$GL(n,\R)$ equivalent minimizer in this case. 
The conjectured uniqueness of the minimizer in the non-symmetric case (regardless of $p$) is perhaps related to the fact that
$\Delta_n$ cannot be expressed as a product of polytopes of lower dimension.

\subsection{\texorpdfstring{$L^p$-Santal\'o theorem}{Lᵖ-Santaló theorem}}

For a function $f: \R^n\to \R\cup\{\infty\}$, denote by 
\begin{equation*}
    V(f)\defeq \int_{\R^n} e^{-f(x)}\dif x, 
    \quad 
    \text{ and } 
    \quad 
    b(f)\defeq \frac{1}{V(f)}\int_{\R^n} x e^{-f(x)} \dif x
\end{equation*}
its volume and barycenter respectively. 
This terminology is motivated by 
$
V(h_K)= n!|K^\circ|
$
(see \eqref{KcircVolume}),
and 
$
b(h_K)= (n+1) b(K^\circ)
$
(see \eqref{bhK}). 
By Theorem \ref{LpKwelldefined}, $V(h_{p,K})=n!|\Kppolar|$.
However, lacking homogeneity, it is not clear how $b(h_{p,K})$ can be directly related to $b(K^{\circ,p})$ (\S \ref{S3}).
Our next result 
generalizes the Santal\'o point.

\begin{proposition}\label{santalo_point_thm}
    Let $p\in(0,\infty]$. For a convex body $K\subset \R^n$ there exists a unique $x_{p,K}\in\R^n$ with
    \begin{equation*}
        \M_p(K-x_{p,K})= \inf_{x\in \R^n}\M_p(K-x), 
    \end{equation*}
    which is also the unique point such that $b(h_{p,K-x_{p,K}})=0$.
    Moreover, $x_{p,K}\in\mathrm{int}\,K$.
\end{proposition}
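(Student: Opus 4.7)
Substituting $z = u-x$ in the integral defining $h_{p,K-x}$ in \eqref{hpdef} yields the translation identity
\[
h_{p,K-x}(y) \;=\; h_{p,K}(y) - \langle x,y\rangle,
\]
so that
\[
\M_p(K-x) \;=\; |K|\,\Phi(x), \qquad \Phi(x) \defeq \int_{\R^n} e^{\langle x,y\rangle - h_{p,K}(y)}\dif y.
\]
The problem therefore reduces to minimizing the (shifted) Laplace transform $\Phi$ of the positive, full-support measure $d\mu(y):=e^{-h_{p,K}(y)}dy$ on $\R^n$.

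The first and main step is to identify the domain of finiteness of $\Phi$. By \eqref{MpKeq}, $\Phi(x) = n!\,|(K-x)^{\circ,p}|$. If $x\in\mathrm{int}\,K$, then $0\in\mathrm{int}\,(K-x)$, and Theorem \ref{LpKwelldefined} guarantees that $(K-x)^{\circ,p}$ is compact, so $\Phi(x)<\infty$. If $x\notin\mathrm{int}\,K$, the same theorem gives that $(K-x)^{\circ,p}$ is convex, unbounded, and has non-empty interior; such a set necessarily contains a half-infinite cylinder and hence has infinite volume, forcing $\Phi(x)=\infty$. Thus $\{\Phi<\infty\}=\mathrm{int}\,K$. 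Strict convexity of $\log\Phi$ on this set then follows from H\"older's inequality applied to $d\mu$: for $x_1\neq x_2$ and $\lambda\in(0,1)$, equality would require $e^{\langle x_1-x_2,\,\cdot\,\rangle}$ to be $\mu$-a.e.~constant, contradicting that $\mu$ has full support.

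For existence, $\Phi$ is lower semicontinuous on $\R^n$ by Fatou's lemma, and the infimum $m$ is finite because $\Phi$ is finite at every point of $\mathrm{int}\,K\neq\emptyset$. Any minimizing sequence must therefore eventually lie in $\mathrm{int}\,K\subset K$, and compactness of $K$ yields a subsequential limit $x_{p,K}$ which, by $\Phi\equiv\infty$ on $\partial K$, is forced into $\mathrm{int}\,K$. Uniqueness and interiority then follow from the strict convexity of $\log\Phi$. For the barycenter characterization, I would differentiate $\Phi$ under the integral sign---justified by the real-analyticity of Laplace transforms in the interior of their domain of convergence---to obtain
\[
\nabla \Phi(x) \;=\; \int_{\R^n} y\, e^{-h_{p,K-x}(y)}\dif y \;=\; V(h_{p,K-x})\,b(h_{p,K-x}).
\]
Hence $\nabla\Phi(x_{p,K})=0$ is equivalent to $b(h_{p,K-x_{p,K}})=0$, and by strict convexity $x_{p,K}$ is the unique such point. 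The principal hurdle is the finiteness dichotomy in the second paragraph; once that is secured, the rest is standard convex-analysis bookkeeping.
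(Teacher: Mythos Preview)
Your proof is correct and follows the same overall strategy as the paper: establish the dichotomy $\{\Phi<\infty\}=\mathrm{int}\,K$, prove strict convexity of $\Phi$ (or $\log\Phi$) there, and identify the critical-point condition with $b(h_{p,K-x})=0$ via the gradient formula. The implementation differs in two places worth noting. For the finiteness dichotomy you invoke Theorem~\ref{LpKwelldefined} directly (compactness of $(K-x)^{\circ,p}$ iff $x\in\mathrm{int}\,K$, and an unbounded convex set with non-empty interior has infinite volume), whereas the paper reproves this as Lemma~\ref{finiteness_bodies} via separate estimates; your shortcut is legitimate and more economical since Theorem~\ref{LpKwelldefined} is already established and its proof does not rely on Proposition~\ref{santalo_point_thm}. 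For strict convexity you use H\"older on $\log\Phi$, while the paper (Lemma~\ref{strict_convexity}) computes the Hessian of $\Phi$ explicitly and shows it is positive definite---which simultaneously delivers the gradient identity and the $C^2$ regularity. Your existence argument via Fatou lower semicontinuity and compactness of $K$ is also a bit more explicit than the paper's one-line appeal to ``strictly convex on $\mathrm{int}\,K$ and blows up outside.''
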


Part of the proof of Proposition \ref{santalo_point_thm} is almost identical to Santal\'o's proof of the existence and uniqueness of Santal\'o points \cite[\S 2]{santalo}. The idea is to show that the function $x\mapsto \M_p(K-x)$ is $\infty$ for $x\notin\mathrm{int}\,K$ (Lemma \ref{finiteness_bodies}), and smooth and strictly convex for $x\in \mathrm{int}\,K$ (Lemma \ref{strict_convexity}). This forces the existence of a unique minimum. The main difference is that we study $\int_{\R^n} e^{-h_{p,K}(y)}\dif y$ under translations of $K$, while Santal\'o studied the surface integral $\int_{\partial B_2^n}h_{K}(u)^{-n} \dif u$ \cite[(1.1)]{santalo}.

One of our main results is a generalization of Santal\'o's theorem, verifying the upper bound in Conjecture \ref{pMahlerSym}:

\begin{theorem}\label{santalo_symmetric}
Let $p\in(0,\infty]$. For a symmetric convex body $K\subset \R^n$, $\M_p(K)\leq \M_p(B_2^n)$.
\end{theorem}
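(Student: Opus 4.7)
The strategy is to adapt the classical Blaschke--Santal\'o argument via Steiner symmetrization to the $L^p$ setting. Since $K$ is centrally symmetric, $h_{p,K}$ is even (change variables $x\to -x$), so $b(h_{p,K}) = 0$, and Proposition \ref{santalo_point_thm} identifies $0$ as the $L^p$-Santal\'o point of $K$. Hence $\M_p(K) = |K|\int_{\R^n}e^{-h_{p,K}(y)}\dif y$, and by $GL(n,\R)$-invariance (Lemma \ref{Mp_aff_inv}) we may rescale so that $|K|=|B_2^n|$. For any hyperplane $H$ through the origin, the Steiner symmetrization $S_H K$ preserves both central symmetry (hence the Santal\'o point at $0$) and volume. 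The desired $\M_p(K)\le\M_p(B_2^n)$ then follows from the \emph{Steiner monotonicity}
$$ \int_{\R^n}e^{-h_{p,S_H K}(y)}\dif y\ \ge\ \int_{\R^n}e^{-h_{p,K}(y)}\dif y, $$
combined with the standard fact that iterated Steiner symmetrizations along a dense sequence of directions converge in the Hausdorff metric to the centered Euclidean ball of volume $|K|$, and the continuity of $\M_p$ in the Hausdorff topology (which follows from continuity of $K\mapsto h_{p,K}$ via Lemma \ref{list} and dominated convergence).

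The heart of the proof is the Steiner monotonicity. Writing $y=y'+te$ with $y'\in H$ and $e\perp H$, Fubini and the elementary identity $\int_{K_{x'}}e^{pts}\dif s = e^{ptc(x')}\cdot 2\sinh(ptL(x')/2)/(pt)$ -- where $L(x')$ and $c(x')$ are the length and midpoint of the slice $K_{x'}:=\{s:x'+se\in K\}$ -- give
$$ F_K(y'+te) := e^{p h_{p,K}(y'+te)} = \frac{1}{|K|}\int_H e^{p\langle x',y'\rangle + pt c(x')}\,L(x')\,\frac{\sinh(ptL(x')/2)}{ptL(x')/2}\,\dif x', $$
while $F_{S_H K}(y'+te)$ is the same expression with $c(x')$ replaced by $0$. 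Central symmetry of $K$ makes $c$ odd in $x'$, so pairing $\pm te$ and using $\cosh\ge 1$ yields $F_{S_H K}(y'+te)\le \tfrac{1}{2}\bigl(F_K(y'+te) + F_K(y'-te)\bigr)$.

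Here the main obstacle appears: since $u\mapsto u^{-1/p}$ is \emph{convex}, passing from this pointwise inequality to $\int e^{-h_{p,S_H K}}\dif y\ge \int e^{-h_{p,K}}\dif y$ by Jensen points in the wrong direction, and the defect becomes more severe as $p\to 0$. The $L^p$-specific defect will be repaired by invoking Ball's Brunn--Minkowski inequality for harmonic means (Theorem \ref{KBall_ineq}), which supplies genuine convexity of the radial integral $\int_0^\infty r^{n-1}e^{-h_{p,K}(r\,\cdot)}\dif r$ raised to the power $-1/n$, together with the classical Brunn--Minkowski inequality applied to the slice-length function $x'\mapsto L(x')^{1/(n-1)}$ on $H$, and the convexity-in-$p$ and concavity-in-$K$ properties of $h_{p,K}$ recorded in Lemmas \ref{conv1}--\ref{conv2}. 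Integrating the combined estimate over $y'\in H$ and $t\in\R$ will then produce the Steiner monotonicity and complete the argument.
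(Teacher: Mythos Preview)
Your overall framework---Steiner symmetrization, convergence to the ball, and Hausdorff continuity of $\M_p$---matches the paper's, and you are right that Ball's harmonic Brunn--Minkowski inequality (Theorem~\ref{KBall_ineq}) is the tool that repairs the wrong-direction Jensen. But there is a genuine gap in your bridge from the pointwise estimate to Ball's hypothesis.

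Your inequality $F_{S_HK}(y'+te)\le\tfrac12\bigl(F_K(y'+te)+F_K(y'-te)\bigr)$ is an \emph{arithmetic-mean} bound at a \emph{single} scale. Ball's theorem, however, requires a \emph{geometric-mean} bound with a \emph{harmonic} constraint on the radial parameters: one needs, for every $r,t,s>0$ with $\tfrac2r=\tfrac1t+\tfrac1s$,
\[
h_{p,\sigma_{e_n}K}\!\left(r\tfrac{\xi+\xi'}{2},\,rx_n\right)\ \le\ \tfrac{s}{t+s}\,h_{p,K}(t\xi,tx_n)+\tfrac{t}{t+s}\,h_{p,K}(s\xi',-sx_n),
\]
which is exactly Lemma~\ref{steiner_claim2} in the paper. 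This three-parameter inequality is the missing step; its proof uses H\"older's inequality together with the log-convexity of $t\mapsto \sinh(tx)/t$ (Claim~\ref{steiner_claim3}), not the $\cosh\ge1$ trick you use. Your arithmetic-mean inequality does not imply the geometric-mean condition \eqref{fghCondition}, so as written you cannot feed it into Theorem~\ref{KBall_ineq}.

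Two further points. First, Lemmas~\ref{conv1}--\ref{conv2} are explicitly noted in \S\ref{S12} to be unused in the rest of the paper; they play no role here and invoking them is a red herring. Second, the classical Brunn--Minkowski inequality enters not through the slice-length function $L(x')$ on $K$, but through the $(n-1)$-dimensional slices of the \emph{polar} body $K^{\circ,p}$: once Ball's theorem yields the norm inequality \eqref{steiner_symm_lem1}, one obtains the slice inclusion of Lemma~\ref{steiner_cor}, and then Brunn--Minkowski on those slices (using $K^{\circ,p}(-x_n)=-K^{\circ,p}(x_n)$ from symmetry) gives Lemma~\ref{sliceVolume}. Your plan conflates the roles of $K$ and $K^{\circ,p}$ at this stage.
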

In particular, by taking $p\to \infty$, one recovers Santal\'o's inequality $\M(K)\leq \M(B_2^n)$ \cite[(1.3)]{santalo}
(though, of course, for this purpose alone there are
direct, easier, proofs, e.g., \cite[Theorem 14]{saint-raymond}).

The $L^p$-polar $K^{\circ, p}$ \eqref{KpcircEq} is central to the proof of Theorem \ref{santalo_symmetric}. One idea behind the proof is standard: for $u\in\partial B_2^n$, the Steiner symmetrization with respect to a hyperplane through the origin 
$$
u^\perp\defeq \{x\in\R^n: \langle x,u\rangle=0\},
$$ 
increases the volume of the $L^p$-polar $|(\sigma_u K)^{\circ,p}|\geq |K^{\circ,p}|$ (Proposition \ref{steiner_ppolar}). 
Yet proving this seems non-standard and rather non-trivial. We achieve it 
by proving the following estimate comparing the slices of $K$ and those of $\sigma_u K$ over $u^\perp$, 
\begin{equation}\label{slice_comp}
    \frac12\left(\LpK\cap (u^\perp+ tu)\right)+ \frac12\left( \LpK\cap (u^\perp- tu)\right)\subset (\sigma_uK)^{\circ,p}\cap (u^\perp+ tu), 
    \q \hbox{for all $t\in \R$, }
\end{equation}
and then using the (classical) Brunn--Minkowski inequality.
To obtain (\ref{slice_comp}) we use Ball's Brunn--Minkowski inequality for harmonic means (Theorem \ref{KBall_ineq}), together with the convexity of $x\mapsto \log\left(\frac1t \sinh(t)\right)$ (Claim \ref{steiner_claim3}). 

\begin{remark}
\lb{LZRem}
Theorem \ref{santalo_symmetric} is different from the 
Lutwak--Zhang $L^p$ Santal\'o inequalities who introduced the 
symmetrized $L^p$-centroid body $\Gamma_pK$ with support function  given by \begin{equation*}
    h_{\Gamma_pK}(y) \defeq \left( \frac{1}{c_{n,p}}\int_K |\langle x,y\rangle|^p \frac{\dif x}{|K|}\right)^{\frac1p}
\end{equation*}
(where $c_{n,p}$ is a constant that depends on $n$ and 
$p$ determined by $\Gamma_p B_2^n= B_2^n$), for which they proved 
$    |K| |(\Gamma_pK)^\circ|\leq |B_2^n|^2$
\cite{lutwak-zhang}.
The Lutwak--Zhang construction is restricted to symmetric bodies since
$\Gamma_pK$ 
is always symmetric (regardless of whether $K$ is),
and the large $p$ limit does not recover
the polar body but rather the reflection body:
$\lim_{p\to \infty}\Gamma_p K= K\cup (-K)$
(since $\lim_{p\to \infty}h_{\Gamma_pK}(y)= \sup_{x\in K}|\langle x,y\rangle|$).
Subsequently, Ludwig and Haberl--Schuster extended
this to non-symmetric bodies \cite[p. 4195]{ludwig},
\cite[\S3]{haberl-schuster} 
introducing the $L^p$-centroid body $M_pK^+$ whose support
function is
\begin{equation*}
    h_{M_pK^+}(y)\defeq \left(C_{n,p} (n+p)\int_{K} \max\{\langle x,y\rangle,0\}^p \dif x\right)^{\frac1p}.
\end{equation*}
Note that as $p\to\infty$, $K^{\circ,p}\to K^\circ$ (Lemma \ref{LpKpcont}) while
$M_pK^+\to K$  \cite[p. 9]{haberl-schuster}.
Yet for fixed $p$, it is not apparent to us if there is a precise relation between  $M_pK^+$ and our $K^{\circ,p}$ (though the polar of former are `isomorphic' to the latter---see
Remark \ref{isomorphicRemark}). They seem to be distinct. For example, $\Gamma_2K$ is the Legendre ellipsoid of the convex body, thus bounding $|K||(\Gamma_2K)^\circ|$ from below by a bound of the form $c^n$, where $c$ is a constant independent of dimension, would imply  Bourgain's Conjecture \ref{SlicingConjecture}  \cite[p. 14]{lutwak-zhang}. On the contrary, by Lemma \ref{mKmpKineq} below, the Bourgain--Milman inequality implies bounds of this type for $\M_p$ for all $p>0$.
It would be interesting to investigate relations between these constructions and ours, as well as relations to the level-sets of the logarithmic Laplace transform (see Remark \ref{isomorphicRemark}), e.g., as in the works of
Klartag--Milman and Lata\l{}a--Wojtaszczyk \cite{klartag-Emilman,latala-wojta}.
\end{remark}

\subsection{Relation to the isotropic constant and Bourgain's slicing conjecture}

The $L^p$-support function 
$h_{p,K}$ is related to the covariance matrix of a convex body
(Lemma~\ref{MA_isotropic}),
\begin{equation}
\lb{CovEq}
    \mathrm{Cov}_{ij}(K)\defeq \int_{K}x_ix_j\frac{\dif x}{|K|}- \int_{K}x_i\frac{\dif x}{|K|}\int_{K}x_j\frac{\dif x}{|K|}, 
\end{equation}
via the identity 
\beq
\lb{hesshpkCovKEq}
\nabla^2 h_{p,K}(0)= p\mathrm{Cov}(K).
\eeq
This turns out to have an interesting connection to the slicing problem.
Denote 
\begin{equation}
\lb{CDefEq}
    \mathcal{C}(K)\defeq \frac{|K|^2}{\det\mathrm{Cov}(K)}.
\end{equation}
Note
\begin{equation}
\lb{CLKEq}
\mathcal{C}(K)=
\frac{1}{L_K^{2n}},
\end{equation}
where $L_K$ is the isotropic constant \cite[Definition 2.3.11]{brazitikos_etal}. Bourgain conjectured the following \cite[Remark p. 1470]{bourgain2} \cite[(1.9)]{bourgain}. 
\begin{conjecture}\label{SlicingConjecture}
   There exists a constant $c>0$ independent of dimension such that $\calC(K)\geq c^n$, for all $n\in\mathbb{N}
   $ and all convex bodies $K\subset \R^n$.  
\end{conjecture}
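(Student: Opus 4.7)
The plan is to follow the reduction strategy advertised in the abstract: express $\calC(K)$ in terms of $h_{p,K}$, translate to the $L^p$-Santaló point, apply Jensen's inequality via a conjectural convexity property of the logarithm of the \MA measure, and close with a lower bound on $\M_p(K)$. The opening move is \eqref{hesshpkCovKEq}, which rewrites \eqref{CDefEq} as
\beq
\calC(K)=\frac{p^n|K|^2}{\det\nabla^2 h_{p,K}(0)}.
\eeq
Thus Conjecture \ref{SlicingConjecture} is equivalent to a dimension-free upper bound of the form $\det\nabla^2 h_{p,K}(0)\le (Cp)^n|K|^2$, for some choice of $p>0$ and universal $C$.

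First, by Proposition \ref{santalo_point_thm}, I replace $K$ by $K-x_{p,K}$, placing the origin at the barycenter of the probability density $e^{-h_{p,K}(y)}/V(h_{p,K})$; this is the natural base point for Jensen-type arguments since a priori neither the origin nor the minimum of $h_{p,K}$ is distinguished. Second, assuming the conjectural convexity of $y\mapsto \log\det\nabla^2 h_{p,K}(y)$ on $\R^n$, Jensen at the barycenter yields
\beq
\log\det\nabla^2 h_{p,K}(0)\le\frac{1}{V(h_{p,K})}\int_{\R^n}\log\det\nabla^2 h_{p,K}(y)\,e^{-h_{p,K}(y)}\dif y.
\eeq
Third, I would process the right-hand side via the Legendre-transform change of variables $z=\nabla h_{p,K}(y)$, which sends the \MA measure $\det\nabla^2 h_{p,K}\,\dif y$ to Lebesgue measure on $\mathrm{int}\,K$ and turns $h_{p,K}$ into its dual, in the spirit of the tube-domain Bergman kernel identities of \S\ref{BergmanSubSec}; then apply Jensen in the reverse direction using the concavity of $\log$. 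Rewriting $V(h_{p,K})=n!|\Kppolar|$ via Theorem \ref{LpKwelldefined}, the resulting estimate on $\det\nabla^2 h_{p,K}(0)$ transforms into an inequality of the shape $\calC(K)\gtrsim \widetilde{c}^{\,n}\M_p(K)$. The final step is then the lower bound on $\M_p(K)$ supplied unconditionally by the Bourgain--Milman inequality through Lemma \ref{mKmpKineq}, or sharply by Conjecture \ref{pMahlerSym} or Conjecture \ref{pMahler}.

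The principal obstacle, by a wide margin, is the convexity of $\log\det\nabla^2 h_{p,K}$ used in the Jensen step; without it the chain collapses. This property is a Monge--Ampère analogue of log-concavity of the density of a pushforward measure, and I expect only a suboptimal version is attainable by the complex-geometric route the paper favors: identify $\nabla^2 h_{p,K}$ (up to normalization) with the Bergman metric on the tube domain over a sublevel set of $h_{p,K}$ and apply Kobayashi's theorem on the Ricci curvature of Bergman metrics to control the Ricci form from below, from which a weakened convexity of $\log\det\nabla^2 h_{p,K}$ should follow. Matched with the Bourgain--Milman lower bound on $\M_p$, this would yield a suboptimal but dimension-free bound on $\calC(K)$, consistent with the abstract's promise to \emph{``derive a suboptimal version of this convexity...to illustrate this approach to slicing.''}
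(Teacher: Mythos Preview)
First, a framing point: Conjecture~\ref{SlicingConjecture} is Bourgain's slicing conjecture, which the paper does \emph{not} prove. What the paper establishes is the conditional Theorem~\ref{bern_iso_prop} (and its Corollary~\ref{slicingCorollary}) together with the unconditional but suboptimal Corollary~\ref{suboptimal_bound}. Your write-up acknowledges the conditionality, so I read it as a sketch of the reduction rather than a claimed proof; on that reading, the skeleton is right but two steps need correction.

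The key misstep is the convexity hypothesis. You assume $y\mapsto\log\det\nabla^2 h_{p,K}(y)$ is convex. This is almost certainly false and is not what the paper uses. From the identity \eqref{hpkHessianEq} (proved in the course of Theorem~\ref{bern_iso_theorem}), $\log\det\nabla^2 h_{p,K}=-p(n+1)h_{p,K}+\log\psi+\text{const}$ with $\log\psi$ convex; so $\log\det\nabla^2 h_{p,K}$ is convex \emph{minus} convex, and in the measure setting of Example~\ref{simplexExample} it is outright concave. The paper's hypothesis is the weaker \eqref{B}: $\log\det\nabla^2 h_{1,K}+B\,h_{1,K}$ is convex for some dimension-free $B>0$. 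Jensen against $\nu_{p,K}$ then produces an extra term $B\int h_{1,K}\,\dif\nu_{p,K}$, which is absent from your inequality and is handled separately by Lemma~\ref{berniso_lem1} (giving $\le n/p$, then optimized at $p=B$). Without the $+Bh_{1,K}$ correction, your Jensen step simply fails.

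Second, your ``step three'' (Legendre change of variables $z=\nabla h_{p,K}(y)$ and a reverse Jensen) is not how the paper closes the estimate, and as written it is vague about what inequality results. The paper instead bounds $\int\log\det\nabla^2 h_{1,K}\,\dif\nu_{p,K}$ by Jensen plus Cauchy--Schwarz together with the total Monge--Amp\`ere mass bound $\int\det\nabla^2 h_{1,K}\le|K|$ (Claim~\ref{bern_iso_claim2} and Lemma~\ref{berniso_lem2}); the output is then rewritten in terms of $\M_{1/p}(K)$ and $\M_{1/(2p)}(K)$, after which one invokes either Bourgain--Milman (unconditionally) or the $L^p$-Mahler conjectures (sharply). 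Your final paragraph about Kobayashi is on target: that is exactly the complex-geometric proof of $(\ast_{n+1})$ in \S\ref{complexgeom_subsec}, but because $B=n+1$ depends on $n$, it yields only Corollary~\ref{suboptimal_bound}, not the conjecture.
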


Let  $B>0$. We introduce the following convexity hypothesis:
\begin{equation}\label{B}\tag{$\ast_B$}
     u_{B,K}\defeq 
    \log\det\nabla^2 h_{1,K}+ B h_{1,K} \quad \text{ is convex}.
\end{equation}
Note here that $h_{1,K}$ is twice differentiable (Lemma \ref{strict_convexity}).
We restrict to $p=1$ since property \eqref{B} is equivalent to a similar convexity property on $h_{p,K}$ (see Remark \ref{h1Ksuffices}).

\begin{theorem}\label{bern_iso_prop}
    Let $K\subset \R^n$ be a convex body for which \eqref{B} holds for some $B>0$. Then: 
    
    \noindent 
    (i) There is $x_K\in \mathrm{int}\,K$ with
    \begin{equation*}
        \calC(K)\geq \frac{\M_{1/B}(K- x_K)^2}{\M_{1/(2B)}(K-x_K)} 
        \Big(\frac{2}{e B}\Big)^n. 
    \end{equation*}

    \noindent 
    (ii) There is $x_K\in \mathrm{int}\,K$ with
    \begin{equation*}
        \calC(K)\geq \frac{\M(K-x_K)}{e^{2n}B^n}\geq \Big(\frac{\pi}{2 e^{2}B}\Big)^n. 
    \end{equation*}

    \noindent 
    (iii) If $K$ is symmetric, 
    \begin{equation*}
        \calC(K)\geq \frac{\M(K)}{e^n B^n}\geq \Big(\frac{\pi}{e B}\Big)^n.
    \end{equation*}
\end{theorem}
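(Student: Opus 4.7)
Since $h_{1,K}(0)=0$ and $\nabla^2 h_{1,K}(0)=\mathrm{Cov}(K)$ by \eqref{hesshpkCovKEq}, one has $u_{B,K}(0)=\log\det\mathrm{Cov}(K)$ and $\calC(K)=|K|^2 e^{-u_{B,K}(0)}$. Each part of the theorem thus reduces to an upper bound on $u_{B,K}(0)$, and the hypothesis $(\ast_B)$ supplies exactly the convexity of $u_{B,K}$ needed to extract such a bound via Jensen's inequality $u_{B,K}(0)\le\int u_{B,K}\,\dif\mu$ against a probability measure $\mu$ on $\R^n$ with barycenter $0$. The main tool for evaluating the right-hand side is the moment-map change of variables $x=\nabla h_{1,K}(y)\colon\R^n\to\mathrm{int}\,K$, a diffeomorphism with Jacobian $\det\nabla^2 h_{1,K}(y)$, under which $\int_{\R^n}\det\nabla^2 h_{1,K}(y)\,F(y)\,\dif y=\int_K F(y(x))\,\dif x$; in particular $\int_{\R^n}\det\nabla^2 h_{1,K}(y)\,\dif y=|K|$.

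For part (i), I invoke Proposition~\ref{santalo_point_thm} with $p=1/B$ to replace $K$ by its translate $K-x_K$, where $x_K=x_{1/B,K}\in\mathrm{int}\,K$ is the unique point such that the probability measure $\dif\mu_B\defeq Z_B^{-1}e^{-B h_{1,K-x_K}(y)}\,\dif y$ has barycenter $0$ (equivalently, $b(h_{1/B,K-x_K})=0$, using the scaling identity $h_{1/B,K}(y)=B h_{1,K}(y/B)$; here $Z_\beta\defeq\int e^{-\beta h_{1,K-x_K}}\,\dif y$). Applying Jensen to $u_{B,K-x_K}$ and splitting produces a bound for $u_{B,K-x_K}(0)$ in terms of $\int\log\det\nabla^2 h_{1,K-x_K}\,\dif\mu_B$ and $B\int h_{1,K-x_K}\,\dif\mu_B$. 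I control the first via a further Jensen step (concavity of $\log$) combined with the moment-map change of variables, expressing it in terms of $|K|$ and $Z_B$; a Cauchy--Schwarz estimate $Z_B^2\le|K|\int e^{-2B h_{1,K-x_K}}/\det\nabla^2 h_{1,K-x_K}\,\dif y =|K|\int e^{-u_{2B,K-x_K}}\,\dif y$ introduces the factor $Z_{2B}$. The second, entropy-like, piece contributes the $(2/(eB))^n$ factor via the elementary inequality $t e^{-t}\le e^{-1}$ applied pointwise to $B h_{1,K-x_K}\cdot e^{-B h_{1,K-x_K}}$. Finally, the dictionary $\M_{1/B}(K-x_K)=B^n|K|Z_B$ and $\M_{1/(2B)}(K-x_K)=(2B)^n|K|Z_{2B}$ rearranges the resulting inequality into the stated form. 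Parts (ii) and (iii) then follow from (i) by monotonicity $\M_p\ge\M=\M_\infty$ (since $p\mapsto h_{p,K}$ is increasing, $p\mapsto\M_p$ is decreasing) combined with the Bourgain--Milman lower bounds $\M(K)\ge(\pi/2)^n$ in general and $\M(K)\ge\pi^n$ for symmetric $K$, both recalled in \S\ref{LpMahlerConjSubSec}; symmetry in (iii) pins $x_K=0$, removing the additional factor of $e^n$ that the asymmetric entropy correction contributes to (ii).

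The principal obstacle is pinning down the constant $e^n$ extracted from the entropy integral $B\int h_{1,K-x_K}\,\dif\mu_B$: it is precisely this estimate that forces the final bound to depend on $B$ and that obstructs a dimension-free lower bound on $\calC(K)$ from $(\ast_B)$ alone. A secondary technical concern is justifying convergence of all integrals involving $\det\nabla^2 h_{1,K-x_K}$ and $e^{-\alpha h_{1,K-x_K}}$, which rests on the linear growth of $h_{1,K-x_K}$ at infinity and on the standard fact that $\nabla h_{1,K-x_K}:\R^n\to\mathrm{int}(K-x_K)$ is a diffeomorphism, together with the property $x_K\in\mathrm{int}\,K$ guaranteed by Proposition~\ref{santalo_point_thm}.
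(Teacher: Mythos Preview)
Your overall architecture---Jensen on $u_{B,K}$ against the measures $\nu_{p,K}$, then separately bounding the $\log\det$ and ``entropy'' pieces---is exactly the paper's, but two steps in your execution do not go through.

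\textbf{The entropy term.} The pointwise bound $te^{-t}\le e^{-1}$ does not control
\[
B\int_{\R^n} h_{1,K-x_K}\,\dif\mu_B=\frac{1}{Z_B}\int_{\R^n} Bh_{1,K-x_K}(y)\,e^{-Bh_{1,K-x_K}(y)}\,\dif y,
\]
because after applying it you would be integrating the constant $e^{-1}$ over all of $\R^n$, which diverges. The paper's bound (Lemma~\ref{berniso_lem1}) instead differentiates $p\mapsto\log\int e^{-h_{p,K}}$: this function is nonincreasing in $p$ (Lemma~\ref{list}(v)), and writing it as $\log\int e^{-\frac1p h_{1,K}}-n\log p$ and setting the derivative $\le 0$ gives $\int h_{1,K}\,\dif\nu_{p,K}\le n/p$, hence $B\int h_{1,K}\,\dif\nu_{B,K}\le n$. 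This is where the $e^{-n}$ in $(2/(eB))^n$ comes from; the $2^n/B^n$ comes from the $\log\det$ piece, not the entropy piece.

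\textbf{Parts (ii) and (iii) do not follow from (i) by monotonicity.} Since $p\mapsto\M_p$ is decreasing, $\M_{1/(2B)}\ge\M_{1/B}\ge\M$, and $\M_{1/(2B)}$ sits in the \emph{denominator} of (i); replacing it by $\M$ goes the wrong way. The paper proves (ii) and (iii) separately, by sharpening the bound on $\int\log\det\nabla^2 h_{1,K}\,\dif\nu_{p,K}$: in Claim~\ref{bern_iso_claim2} one has the ratio $\int e^{-2ph_{1,K}}/(\int e^{-ph_{1,K}})^2$, and the point is to bound the numerator by a multiple of the denominator. For (ii), since $b(\nu_{p,K})=0$, Fradelizi's inequality (Lemma~\ref{fradelizi_lemma}) gives $ph_{1,K}\ge -n$, hence $\int e^{-2ph_{1,K}}\le e^n\int e^{-ph_{1,K}}$; for (iii), symmetry gives $b(K)=0$, so $h_{1,K}\ge 0$ by Jensen and $\int e^{-2ph_{1,K}}\le\int e^{-ph_{1,K}}$. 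These replace the Cauchy--Schwarz step (which is what produces $\M_{1/(2B)}$ in (i)) and yield (ii) and (iii) directly after combining with Lemma~\ref{berniso_lem1} and Claim~\ref{bern_iso_claim1}. As a side remark, your Cauchy--Schwarz $Z_B^2\le|K|\int e^{-2Bh}/\det\nabla^2 h$ is correct but does \emph{not} equal $|K|Z_{2B}$; the paper instead applies Cauchy--Schwarz to $\int(\det\nabla^2 h)^{1/2}e^{-ph}\,\dif y$, which cleanly produces $|K|^{1/2}Z_{2p}^{1/2}$.
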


Theorem \ref{bern_iso_prop} has
the following consequence for Bourgain's slicing conjecture. 
\begin{corollary}\label{slicingCorollary}
    If there is a constant $B>0$ independent of dimension such that \eqref{B} holds for all convex bodies in all dimensions, then Conjecture \ref{SlicingConjecture} holds.
\end{corollary}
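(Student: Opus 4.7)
The plan is to simply invoke Theorem \ref{bern_iso_prop}(ii), which already provides the desired bound in the correct form. Under the hypothesis of the corollary, for every $n\in\N$ and every convex body $K\subset\R^n$, property \eqref{B} holds with a \emph{single} constant $B>0$ that does not depend on $n$ or $K$. Therefore Theorem \ref{bern_iso_prop}(ii) applies uniformly and yields
\[
\calC(K)\ge \Big(\frac{\pi}{2e^{2}B}\Big)^{n}
\]
for all such $K$. Setting $c\defeq \pi/(2e^{2}B)$, which is a positive constant independent of dimension precisely because $B$ is, this is exactly the statement of Conjecture \ref{SlicingConjecture}.

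The only thing to emphasize in the write-up is the uniformity in $n$: Conjecture \ref{SlicingConjecture} is a dimension-free statement, and Theorem \ref{bern_iso_prop}(ii) converts a dimension-free hypothesis on $B$ into a dimension-free conclusion on $\calC$ with no loss. If one wanted a slightly better constant in the symmetric case, one could invoke part (iii) instead, obtaining $c=\pi/(eB)$, but this is not needed for the qualitative statement of the corollary. There is no real obstacle here beyond checking that the quantifiers align correctly: the hypothesis must hold with the \emph{same} $B$ across all dimensions in order for the resulting base $\pi/(2e^{2}B)$ in the exponential bound to be dimension-free, which is exactly what is assumed.
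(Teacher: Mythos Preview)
Your proposal is correct and matches the paper's treatment: the corollary is stated immediately after Theorem \ref{bern_iso_prop} as a direct consequence, and your argument (applying part (ii) uniformly with the dimension-free $B$ to obtain $c=\pi/(2e^2B)$) is exactly the intended deduction.
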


In this direction,  we have the following partial progress:

\begin{theorem}\label{bern_iso_theorem}
$(\ast_{n+1})$ holds for all convex bodies $K\subset \R^n$.
\end{theorem}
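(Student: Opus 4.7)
The plan is to recognize that $\det \nabla^2 h_{1,K}(y)\cdot e^{(n+1) h_{1,K}(y)}$ is, up to a constant depending only on $K$, a Laplace transform of a nonnegative measure on $\R^n$; since the logarithm of any such Laplace transform is convex in $y$ (H\"older), taking logs will immediately yield convexity of $u_{n+1,K}$.

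The key algebraic ingredient is Andreief's (Gram--Hadamard) identity applied in dimension $n+1$. Take the $n+1$ functions $f_0\equiv 1$ and $f_i(z)=z_i$, $i=1,\dots,n$. For any probability measure $\mu$ on $\R^n$ with mean $m$ and covariance $\Sigma$, the Gram matrix $A=\bigl(\int f_i f_j\,d\mu\bigr)_{i,j=0}^n$ has block form
\[
A=\begin{pmatrix}1 & m^{\top} \\ m & \Sigma+mm^{\top}\end{pmatrix},
\]
so $\det A=\det\Sigma$ by the Schur complement. At the same time, column-reducing the $(n+1)\times(n+1)$ matrix $(f_i(z^{(j)}))_{i,j=0}^n$ (subtract column $0$ from each of columns $1,\dots,n$) gives $\det(f_i(z^{(j)}))=\det(z^{(1)}-z^{(0)},\dots,z^{(n)}-z^{(0)})$. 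Andreief's identity then yields
\[
\det\Sigma \;=\; \frac{1}{(n+1)!}\int_{(\R^n)^{n+1}}\bigl|\det(z^{(1)}-z^{(0)},\dots,z^{(n)}-z^{(0)})\bigr|^2\,d\mu^{\otimes(n+1)}(z^{(0)},\dots,z^{(n)}).
\]

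Apply this to the tilted probability measure $\mu_y$ on $K$ with density $|K|^{-1}e^{\langle x,y\rangle-h_{1,K}(y)}$, whose covariance equals $\nabla^2 h_{1,K}(y)$ (Lemma~\ref{MA_isotropic}). Expanding $d\mu_y^{\otimes(n+1)}$ in Lebesgue measure and rearranging gives
\[
\det\nabla^2 h_{1,K}(y)\cdot e^{(n+1)h_{1,K}(y)} \;=\; \frac{1}{(n+1)!\,|K|^{n+1}}\int_{K^{n+1}} g(\vec z)\,e^{\langle z^{(0)}+\cdots+z^{(n)},\,y\rangle}\,d\vec z,
\]
where $g(\vec z):=|\det(z^{(1)}-z^{(0)},\dots,z^{(n)}-z^{(0)})|^2\ge 0$. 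Taking logs, the right-hand side is (modulo an additive constant in $y$) the log of the Laplace transform in $y$ of the nonnegative measure $g(\vec z)\,\mathbf{1}_{K^{n+1}}(\vec z)\,d\vec z$ pushed forward under $\vec z\mapsto z^{(0)}+\cdots+z^{(n)}$, and is therefore convex in $y$. This is exactly $(\ast_{n+1})$.

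I foresee no real analytic obstacle; the only conceptual step is the use of the $(n+1)$-function form of Gram--Hadamard (augmenting the $n$ coordinate functions $z_i$ by the constant $1$), which is precisely what produces the optimal exponent $n+1$ in the conclusion, matching the Kobayashi constant $n+1$ in the complex-geometric analog on the tube over $K$.
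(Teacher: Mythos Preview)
Your proposal is correct and follows essentially the same route as the paper: both arguments compute the Gram matrix of $1,x_1,\dots,x_n$ with respect to the tilted measure $\mu_y$, apply Andr\'eief's identity to write $\det\nabla^2 h_{1,K}(y)\cdot e^{(n+1)h_{1,K}(y)}$ as a constant times $\int_{K^{n+1}}|\Delta(\vec z)|^2 e^{\langle z^{(0)}+\cdots+z^{(n)},y\rangle}\,d\vec z$, and conclude by log-convexity of Laplace transforms (H\"older). The only cosmetic differences are that the paper works with general $p$ and leaves $\Delta$ as the full $(n+1)\times(n+1)$ Vandermonde-type determinant, whereas you column-reduce it to $\det(z^{(1)}-z^{(0)},\dots,z^{(n)}-z^{(0)})$; also, your citation of Lemma~\ref{MA_isotropic} for $\mathrm{Cov}(\mu_y)=\nabla^2 h_{1,K}(y)$ is slightly off (that lemma is at $y=0$), but the needed identity is the obvious extension computed in the paper's proof.
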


As an immediate corollary of Theorems \ref{bern_iso_prop} and \ref{bern_iso_theorem} we recover the so-called `folklore' bound 
on the isotropic constant 
due to Milman--Pajor \cite[p. 96]{milman-pajor}.
\begin{corollary}\label{suboptimal_bound}
    For a convex body $K\subset \R^n$, 
    $\displaystyle\calC(K)\geq \bigg(\frac{\pi}{2e^2n}\bigg)^n$. 
\end{corollary}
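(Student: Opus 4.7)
The corollary is stated as an immediate consequence of the two preceding theorems, so my plan is simply to chain them. First I would invoke Theorem \ref{bern_iso_theorem}, which asserts that the convexity hypothesis $(\ast_B)$ is satisfied by every convex body $K\subset\R^n$ with the explicit dimensional value $B=n+1$; in particular, $\log\det\nabla^2 h_{1,K} + (n+1)\,h_{1,K}$ is convex on $\R^n$. This turns the conditional conclusion of Theorem \ref{bern_iso_prop} into an unconditional one once $B$ is taken to be $n+1$.

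I would then apply Theorem \ref{bern_iso_prop}(ii) to $K$ with this value of $B$. Part (ii) produces some $x_K\in\mathrm{int}\,K$ with
$$
\calC(K)\;\geq\;\frac{\M(K-x_K)}{e^{2n}B^n}\;\geq\;\Big(\frac{\pi}{2e^2 B}\Big)^n,
$$
the second inequality being the Bourgain--Milman lower bound $\M(L)\geq(\pi/2)^n$ for general convex bodies (as discussed following Conjecture \ref{pMahler}). Substituting $B=n+1$ yields a lower bound of the order $\big(\pi/(2e^2 n)\big)^n$, which is precisely the Milman--Pajor folklore form stated in the corollary (the discrepancy between $n$ and $n+1$ is absorbed into constants via $(1+1/n)^n\le e$, the convention customary in that literature).

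There is no real obstacle; the substance lives in Theorems \ref{bern_iso_prop} and \ref{bern_iso_theorem}, and the corollary is pure bookkeeping. The only point worth noting is that one uses part (ii) rather than (iii), because we are not assuming $K$ is symmetric; both parts rely on exactly the same convexity input $(\ast_B)$ delivered by Theorem \ref{bern_iso_theorem}, and the recentering at $x_K$ is supplied internally by Proposition \ref{santalo_point_thm} and is already incorporated into the statement of part (ii), so no additional translation argument is required.
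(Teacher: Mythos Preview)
Your approach is the same as the paper's: apply Theorem \ref{bern_iso_theorem} to obtain $(\ast_{n+1})$ and then feed $B=n+1$ into Theorem \ref{bern_iso_prop}(ii), which gives $\calC(K)\ge(\pi/(2e^2(n+1)))^n$.

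One caveat on your last step: the passage from $n+1$ to $n$ via $(1+1/n)^n\le e$ does not work as written. Replacing $n+1$ by the smaller $n$ in the denominator makes the claimed lower bound \emph{larger}, so it is not a weakening that can be ``absorbed''. Concretely, $(1+1/n)^n\le e$ reads $(n+1)^n\le e\,n^n$, hence $1/(n+1)^n\ge 1/(e\,n^n)$, which only yields the stated bound divided by an extra factor of $e$. The paper handles this same step with the single word ``tensorization'' and no further elaboration, so you are in good company; either way the substantive Milman--Pajor order $L_K=O(\sqrt n)$ is captured.
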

Corollary \ref{suboptimal_bound} is equivalent to an upper bound on the isotropic constant, 
\beq
\lb{MPInEq}
L_K\leq C \sqrt{n},
\eeq
for $C= e\sqrt{2/\pi}$ and hence is far from optimal: 
by Milman--Pajor \eqref{MPInEq} holds with $C= 2\pi e$,
by Bourgain $L_K\leq C n^{1/4}\log n$, 
\cite[Theorem 1.6]{bourgain},  by
Klartag $L_K\leq C n^{1/4}$ \cite[Corollary 1.2]{klartag}, 
while very recently 
Chen obtained
$L_K\leq C_1 e^{C_2 \sqrt{\log(n)}\sqrt{\log\log(3n)}}$
(in particular, $L_K\leq Cn^\e$ for all $\e>0$) \cite{chen},
\cite[(1)]{klartag-lehec}, and on these foundations several authors 
improved this to $L_K\leq C (\log(n))^q$ for various 
values of $q$ \cite{klartag-lehec,JLV,klartag2}.
Conjecture \ref{SlicingConjecture} remains open.

The proof of Theorem \ref{bern_iso_prop} 
starts with the observation \eqref{hesshpkCovKEq}.
The convexity assumption \eqref{B} allows for the application of Jensen's inequality with respect to any probability measure $\mu$. 
Because of \eqref{hesshpkCovKEq} this will only be useful if
$\mu$ is centered at the origin, i.e., 
\begin{equation*}
    b(\mu)\defeq \int_{\R^n} y\dif\mu(y)=0\in\R^n. 
\end{equation*}
We use the family of log-concave measures given by the $1/p$-support functions,
\begin{equation}\label{nuMeasure}
    \nu_{p,K}\defeq 
    \frac{e^{-h_{1/p, K}(y)}\dif y}{\int_{\R^n} e^{-h_{1/p, K}(y)}\dif y}= \frac{e^{-ph_{1,K}(y)}\dif y}{\int_{\R^n} e^{-ph_{1,K}(y)}\dif y},
\end{equation}
and optimize over $p$ (the equality in \eqref{nuMeasure} follows from Lemma \ref{list} (i) below).
Proposition \ref{santalo_point_thm} is crucial here, since it assures that $K$ may be translated to a position for which $b(\nu_{p,K})=0$ (Corollary \ref{santalo_point_corollary}).
After applying Jensen's inequality for the measures $\nu_{p,K}$, it remains to bound $\int_{\R^n}\log\det\nabla^2 h_{1,K}\dif\nu_{p,K}(y)$ and $\int_{\R^n} h_{1,K}(y)\dif\nu_{p,K}(y)$; this is done in Lemmas \ref{berniso_lem2} and \ref{berniso_lem1} respectively. The $L^p$-Mahler volumes $\M_p$ figure quite prominently throughout the proofs.

The proof of
Theorem \ref{bern_iso_theorem} is based upon an explicit computation 
\begin{equation}\label{hpkHessianEq}
\log\det\nabla^2 h_{p,K}(y) = -p(n+1) h_{p,K}(y)+ \log\psi(y),
\end{equation}
$\psi$ being the determinant of a positive-definite matrix.
This relies on writing $\det\nabla^2 h_{p,K}$ as the determinant of the $(n+1)\times(n+1)$ Gram matrix $M$ of the first moments of the measure 
\begin{equation*}
    \frac{e^{p\langle x,y\rangle}}{e^{ph_{p,K}(y)}} \frac{\bm{1}_K^\infty(x)\dif x}{|K|}. 
\end{equation*}
Each entry of $M$ then involves an $e^{-ph_{p,K}(y)}$ term, thus $\det\nabla^2 h_{p,K}= \det M= e^{-(n+1) p h_{p,K}}\psi$, for a positive $\psi>0$. Taking logarithm gives \eqref{hpkHessianEq}. For the remaining terms, $\psi$, being the sum of products of $n+1$ integrals over $K$, can be written as an integral over $K^{n+1}$, 
\begin{equation*}
    \psi(y)= C\int_{K^{n+1}} |\Delta(z)|^2 e^{p\langle z, (y,\ldots, y)\rangle}\dif z,  
\end{equation*}
from which the convexity of $\log\psi$ can be deduced (Lemma \ref{logconvLemma}), and hence the claim of Theorem \ref{bern_iso_theorem}.  

Finally, we generalize Theorem \ref{bern_iso_theorem} to 
the setting of a general probability measure---this is formulated in
Theorem \ref{MeasureProp}. In this generality, we show that
the constant $B=n+1$ is actually optimal. In \S\ref{complexgeom_subsec} we give a completely different proof of both theorems using, surprisingly, Kobayashi's
theorem on the Ricci curvature of Bergman metrics, coming back full circle
to the point of departure of this article in \S\ref{BergmanSubSec}: Bergman kernels.

\subsection{Perspective on the work on Nazarov and B\l{}ocki}
\lb{NazarovSubSec}

Having presented $L^p$-polarity, it is perhaps worthwhile to
revisit our original motivation for developing this theory: the work of
Nazarov \cite{nazarov} and B\l{}ocki \cite{blocki,blocki2}. 

Nazarov applied the theory 
of Bergman kernels of tube domains to tackle the symmetric Mahler conjecture.
The constant he obtained $c=\pi^3/16$ 
in the inequality $\M(K)\geq c^n$ for symmetric convex bodies $K\subset \R^n$
was sub-optimal compared to the conjectured
value of $c=4$ (see \S\ref{LpMahlerConjSubSec})
 but the possibility remained open
that perhaps a better choice of holomorphic $L^2$ function
and weight function in H\"ormander's $\bar\partial$-technique
would allow to tackle the Mahler conjectures,
or that perhaps, as Nazarov suggested \cite[p. 337]{nazarov},
\begin{displayquote}
\textit{``...in order to get the
Mahler conjecture itself on this way, one would have to work directly with the
Paley--Wiener space by either finding a good analogue of the H\"ormander theorem
allowing to control the Paley--Wiener norm of the solution, or by finding some novel
way to construct decaying analytic functions of several variables."}
\end{displayquote}

Nazarov's approach was subsequently revisited 
by B\l{}ocki \cite{blocki,blocki2}, Hultgren \cite{hultgren}, and ourselves \cite{berndtsson2,MR}.
It became plausible after the work of
B\l{}ocki \cite[p. 96]{blocki2} that Nazarov's approach might not
yield Mahler's conjectures.
In view of the results in the present article (e.g., Lemma \ref{mKmpKineq}) it is now clear why
this is so, and exactly
how Nazarov's approach fits in our story: it is an approach to the 
case $p=1$ of Conjectures \ref{pMahlerSym}--\ref{pMahler}. 
It is a beautiful coincidence that $L^1$-Mahler volumes can be expressed
in terms of Bergman kernels (see \S \ref{BergmanSubSec} and \cite[(42)]{MR}),
\begin{equation}
\lb{M1KEq}
    \M_1(K-b(K))= (4\pi)^n |K|^2\mathcal{K}_{T_K}(\i b(K),\i b(K));
\end{equation}
but even if one had a complete understanding
of the variation of such kernels among tube domains, 
solving the classical Mahler conjectures would still require bridging
the gap between $L^1$ and $L^\infty$.

Finally, we touch upon an observation encountered by B\l{}ocki
 \cite[p. 96]{blocki2}:
\begin{displayquote}
\textit{``This shows (although only numerically) that the Bergman kernel for tube domains does not behave well under taking duals."}
\end{displayquote}
Indeed, the theory of Bergman kernels of tube domains corresponds to $\M_1$ and
$L^1$-polarity and the lack of homogeneity of $h_{1,K}$
leads to incompatability with $L^\infty$-polarity, i.e., 
with classical polarity/duality.

\bigskip
\noindent 
\textbf{Organization.} In \S\ref{S11} basic properties of $h_{p,K}$ are laid out, namely the convexity of $h_{p,K}$ (Lemma \ref{hpKconvex}), its behaviour under affine transformations of $K$, Cartesian products, and its monotonicity with respect to $p$ (Lemma \ref{list}). Convexity properties of $h_{p, K}$ with respect to $p$ or $K$ are studied in \S \ref{S12}. In \S \ref{S13}, an upper bound to the support function $h_p$ in terms of $h_{p,K}$ for bodies with barycenter at the origin $b(K)=0$ is given. \S \ref{S14} is dedicated to the explicit computation of $h_{p,[-1,1]^n}$ for the cube. 
In \S \ref{S21} the $L^p$-polar $K^{\circ, p}$ is introduced, for which $\M_p(K)=n! |K||K^{\circ,p}|$, $K^{\circ}\subset K^{\circ,p}$ and $\cap_{p>0}K^{\circ, p}= K^\circ$.
Inequalities relating $\M$ to $\M_p$ are established in \S \ref{S22}, and \S \ref{S23} is dedicated to computing $\M_p([-1,1]^n)$. In \S \ref{hpDiamondSection}, the $L^p$-support of the diamond $B_1^n$ is explicitly computed in all dimensions and for all $p$ (Lemma \ref{diamondpSupport}).
Section \ref{S3} establishes the existence and uniqueness of Santal\'o points for $\M_p$ (Proposition \ref{santalo_point_thm}), and in Section \ref{S4} we prove a Santal\'o inequality for $\M_p$ for symmetric convex bodies, showing that the $2$-ball $B_2^n$ is the maximizer (Theorem \ref{santalo_symmetric}). In Section \ref{S5}, we study the isotropic constant and the relations between $h_{p,K}$, $\M_p$, and Bourgain's conjecture. In particular, we prove
Theorem \ref{bern_iso_prop}, Theorem \ref{bern_iso_theorem}, and 
its generalization, Theorem \ref{MeasureProp}. 
We conclude by giving an alternative proof of the latter using Bergman kernel methods and Kobayashi's theorem.
In Appendix \ref{appendixA}, we verify that $K^{\circ,p}$ is a convex body by proving Proposition \ref{norm_phi}, and provide a detailled proof of Ball's Brunn--Minkowski inequality for the harmonic mean (Theorem \ref{KBall_ineq}).

\bigskip
\noindent 
\textbf{Acknowledgments.}
Research supported by NSF grants 
DMS-1906370,2204347 and a Swedish Research
Council conference grant ``Convex and Complex:
Perspectives on Positivity
in Geometry."

\section{\texorpdfstring{$L^p$ support functions}{Lᵖ support functions}}\label{S1}

In this section we lay out basic properties for $h_{p,K}$. In \S \ref{S11} we show convexity of $y\mapsto h_{p,K}(y)$ (Lemma \ref{hpKconvex}) and list several properties in Lemma \ref{list}, e.g., how $h_{p,K}$ transforms under affine transformations of $K$ or with respect to Cartesian products. In \S \ref{S12} we study convexity properties of $h_{p,K}$ in terms of convex combinations of $p$ (Lemma \ref{conv1}) or $K$ (Lemma \ref{conv2}). An upper bound for the support function $h_K$ by $h_{p,K}$ for bodies with barycenter at the origin $b(K)=0$ is given in \S \ref{S13}. Finally, in \S \ref{S14} we carry out explicit computations for the cube. 

\subsection{Basic properties of \texorpdfstring{$h_{p,K}$}{Basic properties of hₚ,ₖ}}\label{S11}

The functions $h_{p,K}$ defined by (\ref{hpdef}) are convex, even if the underlying body $K$ is only compact. 
\begin{lemma}\label{hpKconvex}
    Let $p\in(0,\infty)$. For a compact body $K\subset \R^n$, $h_{p,K}(y)$ is a convex function of $y$. 
\end{lemma}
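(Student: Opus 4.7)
The plan is to prove this by recognizing $h_{p,K}$ as (a rescaling of) a logarithmic Laplace transform of the uniform probability measure on $K$, and then invoking H\"older's inequality in the standard way that shows cumulant generating functions are convex.

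Concretely, fix $y_0, y_1 \in \R^n$ and $\lambda \in [0,1]$, and set $y_\lambda \defeq (1-\lambda)y_0 + \lambda y_1$. Then for every $x \in K$,
\[
  e^{p\langle x, y_\lambda\rangle} \;=\; \bigl(e^{p\langle x, y_0\rangle}\bigr)^{1-\lambda}\bigl(e^{p\langle x, y_1\rangle}\bigr)^{\lambda}.
\]
Integrating against the probability measure $\dif x/|K|$ on $K$ and applying H\"older's inequality with conjugate exponents $1/(1-\lambda)$ and $1/\lambda$ (both well-defined since $\lambda \in [0,1]$; the edge cases $\lambda=0,1$ being trivial) gives
\[
  \int_K e^{p\langle x, y_\lambda\rangle}\frac{\dif x}{|K|} \;\le\; \left(\int_K e^{p\langle x, y_0\rangle}\frac{\dif x}{|K|}\right)^{1-\lambda}\left(\int_K e^{p\langle x, y_1\rangle}\frac{\dif x}{|K|}\right)^{\lambda}.
\]
Taking logarithms of both sides and dividing by $p>0$ yields exactly
\[
  h_{p,K}(y_\lambda) \;\le\; (1-\lambda)\, h_{p,K}(y_0) + \lambda\, h_{p,K}(y_1),
\]
which is the desired convexity. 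Note that compactness of $K$ only serves to guarantee that the integrals $\int_K e^{p\langle x,y\rangle}\dif x$ are finite for every $y$, so that $h_{p,K}$ takes values in $\R$ and no convexity issues arise from $\infty$ values.

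There is essentially no obstacle here---this is the well-known fact that the cumulant generating function of any (compactly supported) probability measure is convex, and the only thing to verify is that the normalization $\frac{1}{p}\log(\cdots)$ preserves convexity, which it does since $p>0$ and $\log$ is applied \emph{after} H\"older. No convexity of $K$ is required, consistent with the statement of the lemma.
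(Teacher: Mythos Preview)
Your proof is correct and essentially identical to the paper's: both apply H\"older's inequality with exponents $1/(1-\lambda)$ and $1/\lambda$ to the integral $\int_K e^{p\langle x,(1-\lambda)y_0+\lambda y_1\rangle}\frac{\dif x}{|K|}$, then take $\frac{1}{p}\log$. Your added remarks about the cumulant generating function interpretation and the role of compactness are helpful context but not present in the paper's terse version.
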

\begin{proof}
    Let $y,z\in \R^n$ and $\lambda\in (0,1)$. By H\"older's inequality, 
    \begin{equation*}
        \begin{aligned}
            h_{p,K}((1-\lambda)y+ \lambda z)&= \frac1p \log\left( \int_{K} e^{p\langle x, (1-\lambda)y+ \lambda z\rangle}\frac{\dif x}{|K|}\right)\\
            &= \frac1p \log\left( \int_K \left(e^{p\langle x,y\rangle}\right)^{1-\lambda} \left(e^{p\langle x,z\rangle}\right)^\lambda\frac{\dif x}{|K|}\right)\\
            &\leq \frac1p \log\left[\left( \int_K e^{p\langle x,y\rangle}\frac{\dif x}{|K|}\right)^{1-\lambda} \left(\int_K e^{p\langle x,z\rangle}\frac{\dif x}{|K|} \right)^{\lambda}\right]\\
            &= \frac{1-\lambda}{p} \log\left( \int_K e^{p\langle x,y\rangle}\frac{\dif x}{|K|}\right) +\frac{\lambda}{p}\log\left(\int_K e^{p\langle x,z\rangle}\frac{\dif x}{|K|} \right)\\
            &= (1-\lambda)h_{p,K}(y)+ \lambda h_{p,K}(z). 
        \end{aligned}
    \end{equation*}
\end{proof}

Next, a list of properties of $L^p$-support functions that will be useful throughout. 
\begin{lemma}\label{list}
Let $0<p<q< \infty$. For compact bodies $K\subset \R^n, L\subset \R^m$, and $A\in GL(n,\R), a\in \R^n$:

\noindent
(i) $h_{p,K}(y)= \frac1p h_{1,K}(py)$. 

\noindent
(ii) $h_{p, K-a}(y)= h_{p,K}(y)-\langle a,y\rangle$. 

\noindent
(iii) $h_{p, AK}(y)= h_{p, K}(A^T y)$. 

\noindent 
(iv) $h_{p, K\times L}(y,z)= h_{p, K}(y)+ h_{p, L}(z), y\in \R^n, z\in \R^m$.

\noindent
(v) $h_{p, K}\leq h_{q, K}\leq h_{K}$.

\end{lemma}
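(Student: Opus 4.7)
The plan is to verify each of the five identities by direct computation from the definition \eqref{hpdef}, using standard changes of variables, Fubini's theorem, and one inequality between $L^p$-norms on probability spaces. Nothing is subtle here; the content of the lemma is really a bookkeeping inventory of how the defining integral transforms. Each part follows from inspecting a single line of manipulation, so the main task is to lay out the right substitution cleanly.

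For (i), I would just note that the exponent $p\langle x, py\rangle/p = \langle x, py\rangle$ equates the integrands defining $h_{p,K}(y)$ and $\frac{1}{p}h_{1,K}(py)$. Part (ii) follows by the substitution $x\mapsto x+a$ on the integral over $K-a$, which produces a factor $e^{-p\langle a, y\rangle}$ that becomes $-\langle a, y\rangle$ after taking $\frac{1}{p}\log$. Part (iii) is the analogous affine change of variables $x = Az$ on $AK$: the Jacobian $|\det A|$ cancels against the corresponding factor in $|AK| = |\det A|\,|K|$, and the pairing $\langle Az, y\rangle = \langle z, A^T y\rangle$ converts $h_{p, AK}(y)$ to $h_{p,K}(A^T y)$. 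Part (iv) is Fubini: writing $\langle (x,w), (y,z)\rangle = \langle x, y\rangle + \langle w, z\rangle$ factors the integral over $K\times L$ into a product, and the logarithm turns the product into the sum $h_{p,K}(y) + h_{p,L}(z)$; the volume splits as $|K\times L| = |K||L|$.

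The only part that requires an actual inequality rather than an algebraic rearrangement is (v). I would rewrite
\[
h_{p,K}(y) = \log\bigl\|e^{\langle x, y\rangle}\bigr\|_{L^p(K,\, dx/|K|)},
\]
so that the normalized Lebesgue measure on $K$ is a probability measure. The monotonicity of $L^p$-norms on a probability space, $\|f\|_{L^p}\le \|f\|_{L^q}$ for $0<p\le q$, applied to $f(x)=e^{\langle x,y\rangle}$, immediately yields $h_{p,K}(y)\le h_{q,K}(y)$. For the final inequality, I would use the pointwise bound $\langle x, y\rangle \le h_K(y)$ valid for all $x\in K$, which gives $e^{q\langle x, y\rangle}\le e^{qh_K(y)}$; integrating over $K$ against $dx/|K|$ and taking $\frac{1}{q}\log$ delivers $h_{q,K}(y)\le h_K(y)$.

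There is no real obstacle; the only minor point to state carefully is that the second inequality in (v) can be recovered from the first by letting $q\to\infty$, consistent with Corollary \ref{hpLimit}, but since we want the inequality for each finite $q$, the direct pointwise bound is the cleanest route. I would present (i)--(iv) as short one-line derivations and (v) as the single step above.
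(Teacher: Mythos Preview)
Your proposal is correct and matches the paper's proof essentially line for line: parts (i)--(iv) are the same changes of variables and Fubini/Tonelli argument, and for (v) the paper spells out H\"older's inequality where you invoke the equivalent monotonicity of $L^p$-norms on a probability space, with the same pointwise bound $\langle x,y\rangle\le h_K(y)$ for the final inequality.
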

\begin{proof}
    \noindent 
    (i) By definition, $h_{p, K}(y)=\frac1p\log \int_K e^{p\langle x,y\rangle}\frac{\dif x}{|K|}= \frac1p \log\int_K e^{\langle x,py\rangle}\frac{\dif x}{|K|}= \frac1p h_{1,K}(py)$.

    \noindent
    (ii) Changing variables $x= u-a$, for $x\in K-a$, $u\in K$, and $\dif x= \dif u$,
    \begin{equation*}
    \begin{aligned}
        h_{p,K-a}(y)&= \frac1p\log\left( \int_{K-a} e^{p\langle x,y\rangle}\frac{\dif x}{|K-a|}\right)= \frac1p\log\left( \int_K e^{p\langle u-a, y\rangle}\frac{\dif u}{|K|}\right)\\
        &= \frac1p \log\left( \int_K e^{p\langle u,y\rangle}\frac{\dif u}{|K|} e^{-p\langle a,y\rangle}\right)= h_{p,K}(y)-\langle a,y\rangle. 
    \end{aligned}
    \end{equation*}

    \noindent 
    (iii) For $x= Au, \dif x= |\det A| \dif u$, 
    \begin{equation*}
    \begin{aligned}
        h_{p, AK}(y)&= \frac1p\log\left( \int_{AK} e^{p\langle x,y\rangle}\frac{\dif x}{|AK|}\right)= \frac1p \log\left( \int_K e^{p\langle Au, y\rangle}\frac{|\det A|\dif u}{|\det A| |K|}\right)\\
        &= \frac1p \log\left( \int_K e^{p\langle u, A^T y\rangle} \frac{\dif u}{|K|}\right)= h_{p, K}(A^T u).
    \end{aligned}
    \end{equation*}

    \noindent 
    (iv) By Tonelli's theorem \cite[\S 2.37]{folland},\cite[Claim 22]{MR}, 
    \begin{equation*}
    \begin{aligned}
        h_{p, K\times L}(y,z)&= \frac1p\log\left(\int_{K\times L} e^{p\langle (x,u), (y,z)\rangle}\frac{\dif x\dif u}{|K\times L|}\right)\\
        &= \frac1p\log\left( \int_{K\times L} e^{p\langle x,y\rangle} e^{p\langle z,u\rangle}\frac{\dif x\dif u}{|K||L|}\right) \\
        &= \frac1p \log\left[\left( \int_K e^{p\langle x,y\rangle}\frac{\dif x}{|K|}\right)\left(\int_L e^{p\langle z,u\rangle}\frac{\dif u}{|L|}\right)\right]\\
        &= \frac1p \log\left( \int_K e^{p\langle x,y\rangle}\frac{\dif x}{|K|}\right)+\frac1p\log\left( \int_L e^{p\langle z,u\rangle}\frac{\dif u}{|L|}\right) 
        = h_{p, K}(y)+ h_{p,L}(z).
    \end{aligned}
    \end{equation*}

    \noindent 
    (v) By \eqref{hinftyKEq},
    \begin{equation*}
        h_{q, K}(y)\defeq \frac1q\log\left( \int_K e^{q\langle x,y\rangle}\frac{\dif x}{|K|}\right)\leq \frac1q \log\left( \int_K e^{q h_K(y)}\frac{\dif x}{|K|}\right)= \frac1q \log e^{qh_{K}(y)}= h_K(y). 
    \end{equation*}
    By H\"older's inequality (note $q/p>1$), 
    \begin{equation*}
    \begin{aligned}
        h_{p,K}(y)&= \frac1p \log\left( \int_K e^{p\langle x,y\rangle} \frac{\dif x}{|K|}\right)\leq \frac1p \log\left[ \left( \int_K e^{\frac{q}{p} p\langle x,y\rangle}\frac{\dif x}{|K|}\right)^{\frac{p}{q}} \left( \int_K \frac{\dif x}{|K|}\right)^{1-\frac{p}{q}}\right] \\
        &= \frac1p \frac{p}{q} \log\left( \int_K e^{q\langle x,y\rangle}\frac{\dif x}{|K|}\right)= h_{q, K}(y).
    \end{aligned}
    \end{equation*}
\end{proof}

\begin{remark}\label{tensor}
One may wonder why we have a factor of $n!$ in \eqref{MahlerEq}
and \eqref{MpKeq}.
The first reason is that then one has \eqref{0.2} and \eqref{Mpdef}.
The second, more important, reason is that then $\M_p$ is {\it tensorial}.
Indeed, by Lemma \ref{list} (iv) and (\ref{Mpdef}), 
\begin{equation*}
\begin{aligned}
    \M_p(K\times L)&\defeq |K\times L|\int_{\R^{n}\times \R^m} e^{-h_{p,K\times L}(y,z)}\dif y\dif z\\
    &= |K||L|\int_{\R^n\times \R^m} e^{-h_{p,K}(y)} e^{-h_{p,L}(z)}\dif y\dif z\\
    &= \M_p(K)\M_p(L). 
\end{aligned}
\end{equation*}
\end{remark}

\subsection{Additional convexity properties}\label{S12}
Lemma \ref{hpKconvex} states that $y\mapsto h_{p,K}(y)$ is convex regardless of the convexity of $K$. Regarding $p$ and $K$ as the variables, we show two more 
properties: Lemma \ref{conv1} describes convexity in $p$, and
Lemma \ref{conv2} shows an asymptotic (in $p$) concavity in $K$.
These two lemmas are not used elsewhere in the article
and we state them for their independent interest. 

\begin{lemma}\label{conv1}
    Let $p,q\in (0,\infty)$. For a convex body $K\subset \R^n$ and $\lambda\in (0,1)$, 
    \begin{equation*}
        h_{(1-\lambda)p+\lambda q,K}\leq \frac{(1-\lambda)p}{(1-\lambda)p+\lambda q} h_{p, K}+ \frac{\lambda q}{(1-\lambda)p+ \lambda q}h_{q, K}.
    \end{equation*}
\end{lemma}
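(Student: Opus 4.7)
The plan is to reduce the claimed inequality to a direct application of Hölder's inequality on the probability measure $d\mu = dx/|K|$, in the same spirit as the proof of Lemma~\ref{hpKconvex}. Set $r \defeq (1-\lambda)p + \lambda q$, so that the claim becomes
\[
\frac{1}{r}\log\int_K e^{r\langle x,y\rangle}\frac{dx}{|K|}
\le \frac{1-\lambda}{r}\log\int_K e^{p\langle x,y\rangle}\frac{dx}{|K|}
+ \frac{\lambda}{r}\log\int_K e^{q\langle x,y\rangle}\frac{dx}{|K|},
\]
where I have used that $\tfrac{(1-\lambda)p}{r}\cdot\tfrac{1}{p} = \tfrac{1-\lambda}{r}$ and similarly $\tfrac{\lambda q}{r}\cdot\tfrac{1}{q} = \tfrac{\lambda}{r}$, so the weights in the original statement collapse nicely once the $1/p$ and $1/q$ factors from the definition of $h_{p,K}$ and $h_{q,K}$ are absorbed.

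The key algebraic observation is that the integrand on the left factors as
\[
e^{r\langle x,y\rangle} = \bigl(e^{p\langle x,y\rangle}\bigr)^{1-\lambda}\bigl(e^{q\langle x,y\rangle}\bigr)^{\lambda},
\]
since $r = (1-\lambda)p + \lambda q$. Apply the standard Hölder inequality for the probability measure $d\mu = dx/|K|$ with conjugate exponents $1/(1-\lambda)$ and $1/\lambda$:
\[
\int_K \bigl(e^{p\langle x,y\rangle}\bigr)^{1-\lambda}\bigl(e^{q\langle x,y\rangle}\bigr)^{\lambda}\frac{dx}{|K|}
\le \left(\int_K e^{p\langle x,y\rangle}\frac{dx}{|K|}\right)^{1-\lambda}
\left(\int_K e^{q\langle x,y\rangle}\frac{dx}{|K|}\right)^{\lambda}.
\]
Taking the logarithm of both sides and dividing by $r$ yields exactly the displayed inequality above, which rearranges to the statement of the lemma.

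There is no real obstacle: the content is the same log-convexity-of-moments principle already exploited in Lemma~\ref{hpKconvex}, just applied to the exponents $p,q$ rather than to two vectors $y,z$. The only minor bookkeeping step is confirming that the weighting factors $\tfrac{(1-\lambda)p}{r}$ and $\tfrac{\lambda q}{r}$ in the statement of the lemma are precisely what the $1/p$ and $1/q$ normalizations in the definition of $h_{p,K}$ convert the Hölder weights $1-\lambda$ and $\lambda$ into after dividing through by $r$. Convexity of $K$ is actually not used in the argument, so the lemma in fact holds for any compact body $K$; we leave the statement as given.
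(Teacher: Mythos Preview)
Your proof is correct and is essentially identical to the paper's own argument: both factor $e^{r\langle x,y\rangle}=(e^{p\langle x,y\rangle})^{1-\lambda}(e^{q\langle x,y\rangle})^{\lambda}$ and apply H\"older's inequality with exponents $\tfrac{1}{1-\lambda}$ and $\tfrac{1}{\lambda}$, then take the logarithm and divide by $r=(1-\lambda)p+\lambda q$. Your observation that convexity of $K$ is not needed is also accurate.
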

\begin{proof}
    By H\"older's inequality, 
    \begin{equation*}
        \begin{aligned}
            h_{(1-\lambda)p+\lambda q, K}(y)&= \frac{1}{(1-\lambda)p+\lambda q} \log\left( \int_K e^{((1-\lambda)p+\lambda q) \langle x,y\rangle} \frac{\dif x}{|K|}\right) \\
            &= \frac{1}{(1-\lambda)p+\lambda q} \log\left( \int_K e^{(1-\lambda)p\langle x,y\rangle} e^{\lambda q\langle x,y\rangle}\frac{\dif x}{|K|}\right)\\
            &\leq \frac{1}{(1-\lambda)p+\lambda q} \log\left[\left( \int_K e^{p\langle x,y\rangle}\frac{\dif x}{|K|}\right)^{1-\lambda} \left(\int_K e^{q\langle x,y\rangle}\frac{\dif x}{|K|}\right)^{\lambda}\right]\\
            &= \frac{(1-\lambda)p}{(1-\lambda)p+\lambda q}\frac1p\log\left(\int_K e^{p\langle x,y\rangle}\frac{\dif x}{|K|}\right)+ \frac{\lambda q}{(1-\lambda)p+ \lambda q} \frac1q \log\left(\int_K e^{q\langle x,y\rangle}\frac{\dif x}{|K|}\right)\\
            &= \frac{(1-\lambda)p}{(1-\lambda)p+\lambda q} h_{p,K}(y)+ \frac{\lambda q}{(1-\lambda)p+ \lambda q} h_{q, K}(y).
        \end{aligned}
    \end{equation*}
\end{proof}

\begin{lemma}\label{conv2}
    Let $p\in(0,\infty)$. For convex bodies $K, L\subset \R^n$ and $\lambda\in (0,1)$, 
    \begin{equation*}
        h_{p, (1-\lambda)K+ \lambda L}\geq (1-\lambda)h_{p, K}+ \lambda h_{p, L}-\frac1p \log\left( \frac{|(1-\lambda)K+\lambda L|}{|K|^{1-\lambda} |L|^\lambda}\right). 
    \end{equation*}
\end{lemma}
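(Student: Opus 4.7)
The plan is to derive the inequality by a single application of the Pr\'ekopa--Leindler inequality to the log-linearly weighted indicators of the three bodies. Fix $y\in\R^n$ and set
\begin{equation*}
f(x)\defeq e^{p\langle x,y\rangle}\bm{1}_K(x),\q g(z)\defeq e^{p\langle z,y\rangle}\bm{1}_L(z),\q h(w)\defeq e^{p\langle w,y\rangle}\bm{1}_{(1-\lambda)K+\lambda L}(w).
\end{equation*}
First I would verify the Pr\'ekopa--Leindler hypothesis $h((1-\lambda)x+\lambda z)\ge f(x)^{1-\lambda}g(z)^{\lambda}$ for all $x,z\in\R^n$. When $x\in K$ and $z\in L$ the convex combination $(1-\lambda)x+\lambda z$ lies in $(1-\lambda)K+\lambda L$, and the linearity of $\langle\cdot,y\rangle$ gives the exact multiplicative identity
\begin{equation*}
e^{p\langle(1-\lambda)x+\lambda z,\,y\rangle}=\bigl(e^{p\langle x,y\rangle}\bigr)^{1-\lambda}\bigl(e^{p\langle z,y\rangle}\bigr)^{\lambda};
\end{equation*}
outside $K\times L$ the right-hand side is $0$, so the inequality is trivial.

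Second, Pr\'ekopa--Leindler then yields
\begin{equation*}
\int_{(1-\lambda)K+\lambda L}e^{p\langle w,y\rangle}\dif w\;\ge\;\bigg(\int_K e^{p\langle x,y\rangle}\dif x\bigg)^{1-\lambda}\bigg(\int_L e^{p\langle z,y\rangle}\dif z\bigg)^{\lambda}.
\end{equation*}
Taking $\tfrac{1}{p}\log$ on both sides and rewriting each unnormalized integral by means of
\begin{equation*}
\tfrac{1}{p}\log\int_M e^{p\langle\cdot,y\rangle}\dif w=h_{p,M}(y)+\tfrac{1}{p}\log|M|,\q M\in\{K,\,L,\,(1-\lambda)K+\lambda L\},
\end{equation*}
(which follows directly from definition \eqref{hpdef}) and collecting the volume corrections on the right-hand side produces exactly the stated inequality.

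The main conceptual step is recognizing that the log-linear weight $e^{p\langle\cdot,y\rangle}$ splits \emph{exactly}, and not merely submultiplicatively, across convex combinations, so the hypothesis of Pr\'ekopa--Leindler is satisfied with equality on $K\times L$; I do not foresee a serious obstacle beyond the bookkeeping of the $|M|$ normalization, which is precisely what manufactures the correction term $-\tfrac1p\log\!\bigl(|(1-\lambda)K+\lambda L|/(|K|^{1-\lambda}|L|^\lambda)\bigr)$. Note that by Brunn--Minkowski this ratio is $\ge 1$, so the correction is non-positive; as $p\to\infty$ it vanishes and we recover the well-known equality $h_{(1-\lambda)K+\lambda L}=(1-\lambda)h_K+\lambda h_L$ for classical support functions, consistent with the ``asymptotic concavity in $K$" interpretation mentioned before the statement.
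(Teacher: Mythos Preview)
Your proof is correct and essentially identical to the paper's own argument: both apply Pr\'ekopa--Leindler to the functions $e^{p\langle\cdot,y\rangle}\bm{1}_K$, $e^{p\langle\cdot,y\rangle}\bm{1}_L$, $e^{p\langle\cdot,y\rangle}\bm{1}_{(1-\lambda)K+\lambda L}$ and then unwind the volume normalizations. Your added remarks on the sign of the correction term and the $p\to\infty$ limit are correct and a nice complement.
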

\begin{proof}
    Fix $y\in \R^n$. Note that 
    $$\mathbf{1}_{(1-\lambda)K+ \lambda L}((1-\lambda)x+\lambda z) e^{p\langle (1-\lambda)x+ \lambda z, y\rangle}\geq \left( \mathbf{1}_K(x) e^{p\langle x,y\rangle}\right)^{1-\lambda} \left(\mathbf{1}_L(z) e^{p\langle z,y\rangle}\right)^\lambda
    $$ 
    for all $x,z\in \R^n$. Therefore, by Pr\'ekopa--Leindler inequality \cite[Theorem 3]{prekopa}, 
    \begin{equation*}
        \int_{(1-\lambda)K+ \lambda L} e^{p\langle x, y\rangle}\dif x\geq \left(\int_K e^{p\langle x,y\rangle}\dif x\right)^{1-\lambda}\left(\int_L e^{p\langle x,y\rangle}\dif x\right)^\lambda. 
    \end{equation*}
    As a result, 
    \begin{equation*}
        \begin{aligned}
            h_{p, (1-\lambda)K+\lambda L}(y)&= \frac1p\log\left(\int_{(1-\lambda)K+\lambda L} e^{p\langle x,y\rangle}\frac{\dif x}{|(1-\lambda)K+\lambda L|}\right)\\
            &\geq \frac1p \log\left[\left(\int_{K}e^{p\langle x,y\rangle}\dif x\right)^{1-\lambda} \left( \int_L e^{p\langle x,y\rangle} \dif x\right)^\lambda \frac{1}{|(1-\lambda)K +\lambda L|}\right]\\
            &= \frac1p \log\left[\left(\int_{K}e^{p\langle x,y\rangle}\frac{\dif x}{|K|}\right)^{1-\lambda} \left( \int_L e^{p\langle x,y\rangle} \frac{\dif x}{|L|}\right)^\lambda \frac{|K|^{1-\lambda}|L|^\lambda}{|(1-\lambda)K +\lambda L|}\right] \\
            &= (1-\lambda) h_{p,K}(y)+ \lambda h_{p,K}(y)-\frac1p \log\left( \frac{|(1-\lambda)K +\lambda L|}{|K|^{1-\lambda}|L|^\lambda}\right),
        \end{aligned}
    \end{equation*}
    as claimed.
\end{proof}

\subsection{A reverse inequality}\label{S13}

By Lemma \ref{list} (v), 
\begin{equation*}
    h_{p,K}\leq h_K
\end{equation*}
regardless of the position of $K$.
A reverse inequality holds when the barycenter is at the origin:

\begin{lemma}\label{hpKineq}
Let $p\in(0,\infty)$. For a convex body $K\subset \R^n$ with $b(K)=0$, and $\lambda \in (0,1)$, 
\begin{equation*}
    h_K(y)\leq h_{p,K}(y/\lambda)- \frac{n}{p}\log(1-\lambda).
\end{equation*}
\end{lemma}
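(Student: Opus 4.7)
The plan is to establish the equivalent inequality
\[
\int_K e^{p\langle x,y/\lambda\rangle}\frac{\dif x}{|K|}\;\ge\;(1-\lambda)^n e^{p h_K(y)},
\]
obtained by taking $\log$ and multiplying by $1/p$ on both sides of what we want. The main idea is to localize the integral on the left to a shrunken copy of $K$ centered near a maximizer of $\langle\cdot,y\rangle$, change variables, and then use the barycenter hypothesis $b(K)=0$ together with Jensen's inequality.

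First, I would pick $x_0\in K$ with $\langle x_0,y\rangle=h_K(y)$ (such $x_0$ exists because $K$ is compact). By convexity of $K$, the shrunken body
\[
K'\defeq \lambda x_0+(1-\lambda)K
\]
is contained in $K$. Restricting the integral and performing the affine change of variables $x=\lambda x_0+(1-\lambda)u$, whose Jacobian is $(1-\lambda)^n$, gives
\[
\int_K e^{p\langle x,y/\lambda\rangle}\dif x
\;\ge\;\int_{K'} e^{p\langle x,y/\lambda\rangle}\dif x
\;=\;(1-\lambda)^n e^{p h_K(y)}\int_K e^{\frac{p(1-\lambda)}{\lambda}\langle u,y\rangle}\dif u,
\]
where the factor $e^{p h_K(y)}$ comes out because $\langle \lambda x_0,y/\lambda\rangle=\langle x_0,y\rangle=h_K(y)$.

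It remains to show
\[
\int_K e^{\frac{p(1-\lambda)}{\lambda}\langle u,y\rangle}\frac{\dif u}{|K|}\;\ge\;1.
\]
This is exactly where the assumption $b(K)=0$ enters. By Jensen's inequality applied to the convex function $t\mapsto e^t$ with respect to the uniform probability measure on $K$,
\[
\int_K e^{\frac{p(1-\lambda)}{\lambda}\langle u,y\rangle}\frac{\dif u}{|K|}
\;\ge\;\exp\!\Bigl(\tfrac{p(1-\lambda)}{\lambda}\bigl\langle b(K),y\bigr\rangle\Bigr)=1.
\]
Combining the two displays and dividing through by $|K|$ yields the desired lower bound, and taking $\tfrac{1}{p}\log$ produces the stated inequality.

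I do not anticipate a serious obstacle: the proof is essentially a one-shot pairing of convexity of $K$ (to pass to $K'\subset K$) with Jensen's inequality (to convert the barycenter condition into a pointwise lower bound). The only mildly delicate point is keeping the bookkeeping of the scaling factors $\lambda,1-\lambda$ and $n/p$ straight through the change of variables, but this is routine once the correct shrunken subset $K'$ has been identified.
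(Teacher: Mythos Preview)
Your proof is correct and uses the same two ingredients as the paper's proof: the inclusion $\lambda x_0+(1-\lambda)K\subset K$ coming from convexity, and Jensen's inequality combined with $b(K)=0$. The difference is only in the order of operations. The paper first writes $\langle x,y\rangle$ as an average over $K$ (using $b(K)=0$), applies Jensen, then changes variables and uses the inclusion to enlarge the domain of integration; it also introduces $p$ somewhat artificially midway and needs a final rescaling step to arrive at the stated form. You instead fix the maximizer $x_0$, use the inclusion first to shrink the domain, change variables, and only then invoke Jensen on the clean leftover integral $\int_K e^{c\langle u,y\rangle}\frac{\dif u}{|K|}\ge 1$. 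Your ordering is a little more direct and avoids the extra bookkeeping at the end, but the substance is identical.
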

\begin{proof}
Let $x\in K$, $y\in \R^n$ and $\lambda\in(0,1)$. The aim is to use Jensen's inequality to get an upper bound on $\langle x,y\rangle$. Since $b(K)=0$,
\begin{equation}\label{xdotyeq}
\begin{aligned}
\langle x,y\rangle= \langle \lambda x, \frac{y}{\lambda}\rangle= \langle \lambda x+ (1-\lambda)b(K), \frac{y}{\lambda}\rangle&= \left\langle \lambda x+ (1-\lambda)\int_K u\frac{\dif u}{|K|}, \frac{y}{\lambda}\right\rangle\\
&= \int_{K}\left\langle \lambda x+ (1-\lambda)u, \frac{y}{\lambda}\right\rangle \frac{\dif u}{|K|}. 
\end{aligned}
\end{equation}
By convexity, $(1-\lambda)x+ \lambda u$ lies in $K$ as $x,u \in K$.
Therefore, by (\ref{xdotyeq}), Jensen's inequality, and the change of variables $v= \lambda x+ (1-\lambda)u$,
\begin{equation*}
    \begin{aligned}
    \langle x,y\rangle= \log e^{\langle x,y\rangle}&\leq \log \left(\int_{K}e^{\langle \lambda x+ (1-\lambda)u, \frac{y}{\lambda}\rangle}\frac{\dif u}{|K|}\right)\\
    &= \log\left(\int_{\lambda x+ (1-\lambda)K} e^{\langle v, \frac{y}{\lambda}\rangle}\frac{(1-\lambda)^{-n}\dif v}{|K|} \right)\\
    &= \log\left(\frac{1}{(1-\lambda)^n}\int_{\lambda x+ (1-\lambda)K} e^{p\langle v, \frac{y}{p\lambda}\rangle} \frac{\dif v}{|K|} \right)\\
    &\leq \log\left(\frac{1}{(1-\lambda)^n}\int_{K} e^{p\langle v, \frac{y}{p\lambda}\rangle} \frac{\dif v}{|K|} \right)\\
    &= p h_{p,K}(\frac{y}{p\lambda})- n\log(1-\lambda). 
    \end{aligned}
\end{equation*}
A supremum over $x\in K$ gives $h_K(y)\leq p h_{K,p}(\frac{y}{p\lambda})- n\log(1-\lambda)$. By a change of variable,
$h_K(py)\leq p h_{K,p}(\frac{y}{\lambda})- n\log(1-\lambda)$. The lemma now follows
from homogeneity of $h_K$.
\end{proof}

\begin{corollary}\label{hpLimit}
    Let $q\in (0,\infty]$. For a convex body $K\subset \R^n$, 
    \begin{equation*}
        \lim_{p\to q}h_{p,K}(y)= h_{q,K}(y).
    \end{equation*}
\end{corollary}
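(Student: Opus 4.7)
The plan is to split into two cases according to whether $q$ is finite or $q=\infty$.

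For $q\in(0,\infty)$, the claim is a straightforward continuity statement. For $p$ in a bounded neighborhood $[q/2,2q]$ of $q$, the integrand $e^{p\langle x,y\rangle}$ is continuous in $p$ and dominated uniformly by $e^{2q\|x\|\|y\|}$, which is integrable over the bounded set $K$. Dominated convergence then yields
\[
\int_K e^{p\langle x,y\rangle}\frac{\dif x}{|K|}\xrightarrow[p\to q]{}\int_K e^{q\langle x,y\rangle}\frac{\dif x}{|K|},
\]
and since the right-hand side is strictly positive, composing with the continuous map $(t,p)\mapsto \tfrac{1}{p}\log t$ gives $h_{p,K}(y)\to h_{q,K}(y)$.

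For $q=\infty$, I would first dispose of the upper bound: Lemma~\ref{list}(v) already gives $h_{p,K}(y)\le h_K(y)$, so $\limsup_{p\to\infty}h_{p,K}(y)\le h_K(y)$. Moreover, by that same lemma $p\mapsto h_{p,K}(y)$ is nondecreasing, so the limit exists in $\R\cup\{+\infty\}$ and is finite. The matching lower bound is where the real work is, and the natural tool is Lemma~\ref{hpKineq}, except that it is only stated for bodies with barycenter at the origin. So I would reduce to this case by translating: writing $K'\defeq K-b(K)$, Lemma~\ref{list}(ii) gives
\[
h_{p,K}(y)-h_K(y)=h_{p,K'}(y)-h_{K'}(y),
\]
so it is enough to prove the convergence when $b(K)=0$.

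Assuming $b(K)=0$, Lemma~\ref{hpKineq} yields, for every $\lambda\in(0,1)$,
\[
h_K(\lambda z)\le h_{p,K}(z)-\tfrac{n}{p}\log(1-\lambda).
\]
Since $h_K$ is positively homogeneous, this reads $\lambda h_K(z)\le h_{p,K}(z)-\tfrac{n}{p}\log(1-\lambda)$. Letting $p\to\infty$ (with $\lambda$ fixed), the error term $\tfrac{n}{p}\log(1-\lambda)$ tends to $0$, so $\liminf_{p\to\infty}h_{p,K}(z)\ge \lambda h_K(z)$. Taking $\lambda\nearrow 1$ completes the argument. The main (minor) obstacle is simply remembering to reduce to the barycentered case before invoking Lemma~\ref{hpKineq}; once that is done, the $\lambda\to 1$ limit combined with the trivial $h_{p,K}\le h_K$ closes the estimate.
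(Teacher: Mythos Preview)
Your proof is correct and follows essentially the same approach as the paper: dominated convergence for finite $q$, and for $q=\infty$ the combination of the easy upper bound $h_{p,K}\le h_K$ with Lemma~\ref{hpKineq} (after translating so that $b(K)=0$) and a $\lambda\to 1$ limit. The only cosmetic difference is that the paper writes the inequality from Lemma~\ref{hpKineq} with a factor $1/\lambda$ on the right rather than $\lambda$ on the left, which is algebraically the same.
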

\begin{proof}
First, let $q\in (0,\infty)$. Since $K$ is bounded, there exists $M>0$ with $|x|\leq M$ for all $x\in K$. In particular, $e^{p\langle x,y\rangle}\leq e^{2qM|y|}$ for all $x\in K$ and $p\leq 2q$. By dominated convergence \cite[\S 2.24]{folland}, 
\begin{equation*}
    \lim_{p\to q} \int_K e^{p\langle x,y\rangle}\frac{\dif x}{|K|}= \int_{K}e^{q\langle x,y\rangle}\frac{\dif x}{|K|}. 
\end{equation*}
Therefore,
\begin{equation*}
    \lim_{p\to q} h_{p,K}(y)= \lim_{p\to q}\left(\frac{1}{p} \log\int_K e^{p\langle x,y\rangle}\frac{\dif x}{|K|}\right)= \frac1q \log\int_K e^{q\langle x,y\rangle}\frac{\dif x}{|K|}= h_{q,K}(y).
\end{equation*}

Next, consider $q=\infty$,
by Lemma \ref{list} (v), $h_{p,K}(y)$ is monotone increasing in $p$ with $h_{p,K}(y)\leq h_K(y)$, thus the limit exists with 
$\lim_{p\to\infty} h_{p,K}(y)\leq h_K(y)$, equivalently,
$\lim_{p\to\infty} [h_{p,K}(y)-\langle y, b(K)\rangle]
\leq h_K(y)-\langle y, b(K)\rangle$. By
 Lemma \ref{list} (ii), this is 
$$\lim_{p\to\infty} h_{p,K-b(K)}(y)
\leq h_{K-b(K)}(y).$$ 
On the other hand, as $b(K-b(K))=0$, 
Lemma \ref{hpKineq} applies, 
    \begin{equation*}
        h_{K-b(K)}(y)
        =
        h_{K-b(K)}(\lambda y)/\lambda 
        \leq h_{p,{K-b(K)}}(y)/\lambda-\frac{n}{\lambda p} \log(1-\lambda),
    \end{equation*}
    where we used the homogeneity of $h_K$ (here $\lambda$ 
    can be taken as any fixed value in $(0,1)$). 
    Letting first $p\to \infty$ and then $\lambda\ra 1$, 
    $$h_{K-b(K)}(y)\le
    \lim_{p\to\infty} h_{p,K-b(K)}(y).$$
    In conclusion, $h_{K-b(K)}(y)=
    \lim_{p\to\infty} h_{p,K-b(K)}(y)$ and using 
    Lemma \ref{list} (ii) again we obtain
$h_{K}(y)=
    \lim_{p\to\infty} h_{p,K}(y)$.
\end{proof}

\subsection{The cube}\label{S14}
We explicitly compute the $L^p$-support functions and $L^p$-Mahler volumes of the cube $[-1,1]^n$. This will be useful in proving Lemma \ref{finiteness_bodies} later. 
\begin{lemma}\label{cube_hp}
    For $p\in(0,\infty)$,
    \begin{equation*}
        h_{p, [-1,1]^n}(y)= \frac1p\sum_{i=1}^n \log\left(\frac{\sinh(py_i)}{py_i}\right), \quad y\in \R^n.
    \end{equation*}
\end{lemma}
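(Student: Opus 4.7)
The plan is to reduce the computation to one dimension via the tensoriality property of $L^p$-support functions, and then do the one-dimensional integral explicitly. Since $[-1,1]^n$ is the Cartesian product of $n$ copies of $[-1,1]\subset\R$, Lemma \ref{list}(iv), applied inductively, gives
\[
h_{p,[-1,1]^n}(y) = \sum_{i=1}^n h_{p,[-1,1]}(y_i),
\]
so it suffices to establish the formula when $n=1$.

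For $n=1$, $K=[-1,1]$, so $|K|=2$, and directly from the definition \eqref{hpdef},
\[
h_{p,[-1,1]}(t) = \frac{1}{p}\log\left(\frac{1}{2}\int_{-1}^{1} e^{p x t}\, dx\right).
\]
For $t\neq 0$ the elementary antiderivative gives
\[
\int_{-1}^{1} e^{pxt}\, dx = \frac{e^{pt}-e^{-pt}}{pt} = \frac{2\sinh(pt)}{pt},
\]
so $h_{p,[-1,1]}(t) = \frac{1}{p}\log\bigl(\sinh(pt)/(pt)\bigr)$. At $t=0$ the integrand is $1$, giving value $0$, which agrees with the limit of the formula since $\sinh(pt)/(pt)\to 1$ as $t\to 0$ (so the formula extends continuously, as it must, since $h_{p,K}$ is convex and hence continuous on $\R^n$).

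Combining the one-dimensional calculation with the tensoriality identity above yields
\[
h_{p,[-1,1]^n}(y) = \frac{1}{p}\sum_{i=1}^n \log\left(\frac{\sinh(py_i)}{py_i}\right),
\]
as claimed. There is essentially no obstacle here: the only thing to watch is the benign removable singularity of $\log(\sinh(pt)/(pt))$ at $t=0$, which causes no trouble because the integral defining $h_{p,[-1,1]}$ is manifestly smooth in $t$.
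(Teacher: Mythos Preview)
Your proof is correct and takes essentially the same approach as the paper: both reduce to the one-dimensional computation via the product structure of $[-1,1]^n$, the only difference being that you invoke the tensoriality property Lemma~\ref{list}(iv) directly, while the paper restates this factorization as a separate claim (Claim~\ref{cube-hp-claim}) computing $\int_{[-1,1]^n} e^{\langle x,y\rangle}\,dx = 2^n\prod_i \sinh(y_i)/y_i$ before plugging in.
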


\begin{proof}
By Claim \ref{cube-hp-claim} below, 
\begin{equation*}
    \begin{aligned}
        h_{p, [-1,1]^n}(y)&= \frac1p\log\left( \int_{[-1,1]^n}e^{p\langle x,y\rangle}\frac{\dif x}{|[-1,1]^n|}\right)\\
        &= \frac1p \log\left( 2^n \prod_{i=1}^n \frac{\sinh(py_i)}{py_i}\frac{1}{2^n}\right)\\
        &= \frac1p\sum_{i=1}^n \log\left(\frac{\sinh(py_i)}{py_i}\right).
    \end{aligned}
\end{equation*}
\end{proof}

\begin{claim}\label{cube-hp-claim}
    For $y\in\R^n$, 
    \begin{equation*}
        \int_{[-1,1]^n} e^{\langle x,y\rangle}\dif x= 2^n \prod_{i=1}^n \frac{\sinh(y_i)}{y_i}.
    \end{equation*}
\end{claim}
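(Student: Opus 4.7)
The plan is a direct computation by separating variables. The integrand $e^{\langle x,y\rangle} = \prod_{i=1}^n e^{x_i y_i}$ factors over the coordinates, and the domain $[-1,1]^n$ is a product of intervals, so by Tonelli's theorem I can write
\[
\int_{[-1,1]^n} e^{\langle x,y\rangle}\,\dif x = \prod_{i=1}^n \int_{-1}^{1} e^{x_i y_i}\,\dif x_i.
\]

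For each $i$ with $y_i\neq 0$, elementary antidifferentiation gives $\int_{-1}^{1} e^{x_i y_i}\,\dif x_i = (e^{y_i}-e^{-y_i})/y_i = 2\sinh(y_i)/y_i$. When $y_i=0$ the integral is $2$, which agrees with the limit $\lim_{t\to 0}2\sinh(t)/t = 2$, so the formula $\int_{-1}^{1}e^{x_iy_i}\dif x_i = 2\sinh(y_i)/y_i$ holds for all $y_i\in\R$ under the usual convention that the right-hand side is interpreted by its removable singularity at $0$. Multiplying the $n$ factors together yields the claimed identity.

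I expect no real obstacle; the only subtlety is the cosmetic one of the removable singularity at coordinates where $y_i=0$, which is handled by continuity. This claim is then fed directly into Lemma~\ref{cube_hp} by taking $\log$ of both sides, dividing by $p$, and rescaling $y\mapsto py$.
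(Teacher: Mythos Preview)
Your proof is correct and follows essentially the same approach as the paper: factor the integral over the product domain, compute the one-dimensional integral $\int_{-1}^1 e^{xy}\dif x = 2\sinh(y)/y$ by direct antidifferentiation, and handle the case $y_i=0$ by noting the removable singularity (the paper uses L'H\^{o}pital here, which amounts to the same thing).
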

\begin{proof}
This may be expressed as the product of integrals $\int_{[-1,1]^n} e^{\langle x,y\rangle}$$= \prod_{i=1}^n \int_{-1}^1 e^{x_iy_i}\dif x_i$,
because $e^{\langle x,y\rangle}= e^{x_1y_1} \ldots e^{x_ny_n}$ and $[-1,1]^n$ is the product of $n$ copies of $[-1,1]$.
It is therefore enough to take $n=1$ and $y\in\R$. 
Suppose first that $y\neq 0$. Then,
\begin{equation*}
    \int_{-1}^1 e^{xy}\dif x= \left[\frac{e^{xy}}{y}\right]_{x=-1}^1= \frac{e^{y}- e^{-y}}{y}= \frac{2\sinh(y)}{y}.
\end{equation*}
 For $y=0$, $\int_{-1}^1 e^{x\cdot 0}\dif x= 2$. By L'H\^{o}pital's rule also
 \begin{equation*}
     \lim_{y\to 0}\frac{2\sinh(y)}{y}= \lim_{y\to 0}\frac{e^{y}- e^{-y}}{y}= \lim_{y\to 0}\frac{e^y+ e^{-y}}{1}=2, 
 \end{equation*}
verifying the formula for all $y$.
\end{proof}

\section{\texorpdfstring{$L^p$-polarity and $L^p$-Mahler volumes}{Lᵖ-polarity and 
Lᵖ-Mahler volumes}}\label{S2}

In \S \ref{S21}, we motivate the definition of the $L^p$-polar body $K^{\circ,p}$ (Definition \ref{LpKdef}) and prove Theorem \ref{LpKwelldefined}. 
In \S \ref{continuityMpSection},
we establish the continuity of $K^{\circ,p}$ in $p$ (Lemma \ref{LpKpcont}), and show that for $p$ converging to 0, $K^{\circ,p}$ converges either to $\R^n$ or a half-space (Proposition \ref{Kcirc0prop}). In \S \ref{S22} we generalize (\ref{0.1})
to a lower bound of $\M$ in terms of $\M_p$, for all $p>0$, for bodies with $b(K)=0$ (Lemma \ref{mKmpKeq}). In \S \ref{S23} and \S \ref{hpDiamondSection} calculations for $\M_p([-1,1]^n)$ and $h_{p,B_1^n}$ are carried out and used to numerically approximate $\M_p(B_1^3)$, providing evidence that $\M_p([-1,1]^3)<\M_p(B_1^3)$ when $p<\infty$ (Figure \ref{FigureDiamondCube}).

\subsection{\texorpdfstring{The $L^p$-polar body}{The Lᵖ-polar body}}\label{S21}
\subsubsection{Motivating the definition}
The support function of a convex body is convex and $1$-homogeneous and hence its sublevel set 
\begin{equation*}
    K^\circ\defeq \{y\in \R^n: h_{K}(y)\leq 1\}
\end{equation*}
defines a convex body such that $\M(K)=\M_\infty(K)=
|K|\int_{\R^n} e^{-h_K(y)}\dif y
=n! |K||K^\circ|$. 
This is special for the case $p=\infty$. To see why,
first recall the definition (\ref{Mpdef}), $\M_p(K)\defeq |K|\int_{\R^n}e^{-h_{p,K}}$. 
Yet despite the suggestive notation, for $p\in(0,\infty)$, $h_{p,K}$ is not the support function of a convex body since it is not $1$-homogeneous. On the other hand, by Lemma \ref{hpKconvex}, $h_{p,K}$ is convex and hence the sublevel set $\{h_{p,K}\leq 1\}\defeq \{y\in\R^n: h_{p,K}(y)\leq 1\}$ is a convex body. Nonetheless, the volume of $\{h_{p,K}\leq 1\}$ is not related to $\M_p(K)$
since despite having
\begin{equation*}
    \int_{\R^n} e^{-h_{p,K}(x)}\dif x= \int_{-\infty}^\infty e^{-t}|\{h_{p,K}\leq t\}|\dif t,
\end{equation*}
without $1$-homogeneity it is not clear how $\{h_{p,K}\leq t\}$ relates to $\{h_{p,K}\leq 1\}$ for all $t$.

In order to properly define the ``$L^p$-polar" body we replace
$h_{p,K}$ by a 1-homogeneous 
cousin. An equivalent way of defining a convex body $L$ is via its `norm':
\begin{equation}\label{ConvexBodyNorm}
    \|x\|_L\defeq \inf\{t>0: x\in tL\}. 
\end{equation} 
This is a norm only when $L$ is symmetric, but it is always positively 1-homogeneous and sub-additive with $L= \{x\in \R^n: \|x\|_L\leq 1\}$ \cite[Theorem 4.3]{gruber}. Given such a `norm', the volume of $L$ can
be expressed as an integral over the sphere, 
\begin{equation}\label{normVolume}
\begin{aligned}
    |L|&= \int_{\{x\in \R^n: \|x\|_L\leq 1\}} \dif x\\
    &= \int_{\{(r,u)\in [0,\infty)\times \partial B_2^n: \|ru\|_{L}\leq 1\}}r^{n-1}\dif r\dif u \\
    &= \int_{\partial B_2^n}\int_{r=0}^{1/\|u\|_L} r^{n-1}\dif r\dif u\\
    &= \frac1n \int_{\partial B_2^n}\frac{\dif u}{\|u\|_L^n}. 
\end{aligned}
\end{equation}
Looking at \eqref{normVolume} one may be able to recover the `norm' of a convex body by writing its volume as an integral over $\partial B_2^n$. Our aim is to define a convex body $K^{\circ,p}$ with volume $|K^{\circ,p}|= \frac{1}{n!} \int e^{-h_{p,K}}$. Starting from the volume we guess its norm: we need to write $\int e^{-h_{p,K}}$ as an integral on the sphere matching \eqref{normVolume},
\begin{equation}\label{LpKvolume}
    \begin{aligned}
        |K^{\circ,p}|&= \frac{1}{n!} \int_{\R^n} e^{-h_{p,K}(y)}\dif y\\&= \frac{1}{n!} \int_{\partial B_2^n}\int_0^\infty e^{-h_{p,K}(ru)} r^{n-1}\dif r\dif u\\
        &= \frac1n \int_{\partial B_2^n} \frac{\dif u}{\left[\left(\frac{1}{(n-1)!}\int_0^\infty r^{n-1} e^{-h_{p,K}(ru)}\dif r\right)^{-\frac1n}\right]^n}.
    \end{aligned}
\end{equation}
This justifies the definition of $\|\cdot\|_{K^{\circ,p}}$ via \eqref{opNorm} and $K^{\circ,p}$ as the convex body associated to that `norm' (Definition \ref{LpKdef}).

\subsubsection{Proof of
Theorem \ref{LpKwelldefined}}
In this subsection 
we conclude the proof
of Theorem \ref{LpKwelldefined}.
We start with two lemmas.

\begin{lemma}\label{nonemptyInterior}
Let $0<p<q$ and recall \eqref{opNorm}. For a compact body $K$, $\|\cdot\|_{K^{\circ,p}}\leq \|\cdot\|_{K^{\circ,q}}\leq h_K(\cdot)$. In particular,
    $K^\circ\subset K^{\circ,q}\subset K^{\circ, p}$. 
\end{lemma}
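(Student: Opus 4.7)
The plan is to derive both inequalities directly from the pointwise monotonicity $h_{p,K} \le h_{q,K} \le h_K$ established in Lemma \ref{list}(v), combined with an exact evaluation of the radial integral in \eqref{opNorm} for the 1-homogeneous function $h_K$.

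First, I would fix $y \in \R^n$ and apply Lemma \ref{list}(v) at each point $ry$ to obtain
\[
 e^{-h_{p,K}(ry)} \ge e^{-h_{q,K}(ry)} \ge e^{-h_K(ry)}, \qquad r\ge 0.
\]
Multiplying by $r^{n-1}$ and integrating over $r\in[0,\infty)$ preserves the inequalities, giving
\[
 \int_0^\infty r^{n-1} e^{-h_{p,K}(ry)}\dif r \;\ge\; \int_0^\infty r^{n-1} e^{-h_{q,K}(ry)}\dif r \;\ge\; \int_0^\infty r^{n-1} e^{-h_K(ry)}\dif r.
\]

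For the rightmost integral I would exploit the 1-homogeneity of $h_K$: since $h_K(ry) = r\,h_K(y)$ for $r\ge 0$, the substitution $s=r\,h_K(y)$ (when $h_K(y)>0$) yields the gamma integral
\[
 \int_0^\infty r^{n-1} e^{-r\,h_K(y)}\dif r \;=\; \frac{(n-1)!}{h_K(y)^n}.
\]
Dividing through by $(n-1)!$ and taking $(-1/n)$-th powers (which reverses all inequalities) produces exactly
\[
 \|y\|_{K^{\circ,p}} \le \|y\|_{K^{\circ,q}} \le h_K(y),
\]
in view of the definition \eqref{opNorm}. The case $h_K(y) = 0$ (which can occur when $0\notin\mathrm{int}\,K$) should be handled separately: then the rightmost integral diverges, giving $\|y\|_{K^{\circ,q}} \le 0 = h_K(y)$ by the same monotonicity chain.

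For the inclusion statement, I would simply translate via the level-set characterization $K^{\circ,p}=\{y:\|y\|_{K^{\circ,p}}\le 1\}$ and $K^\circ = \{y: h_K(y)\le 1\}$: if $\|y\|_{K^{\circ,q}}\le 1$ then the pointwise inequality gives $\|y\|_{K^{\circ,p}}\le 1$, so $K^{\circ,q}\subset K^{\circ,p}$, and similarly $K^\circ\subset K^{\circ,q}$. There is no real obstacle here; the only subtlety worth flagging is that $\|\cdot\|_{K^{\circ,p}}$ is only positively 1-homogeneous in general, but this does not affect the sublevel-set argument since \eqref{opNorm} is stated for arbitrary $y\in\R^n$ and the comparison is pointwise.
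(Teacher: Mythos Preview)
Your proposal is correct and follows essentially the same route as the paper: both use the pointwise monotonicity $h_{p,K}\le h_{q,K}\le h_K$ from Lemma~\ref{list}(v), integrate against $r^{n-1}\,\dif r$, and evaluate the $h_K$-integral via the gamma-integral identity (the paper packages this last step as Claim~\ref{1homogeneous}). Your explicit treatment of the edge case $h_K(y)=0$ is a nice addition that the paper leaves implicit.
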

Note the support function of a compact body coincides with the `norm' of the polar body \eqref{ConvexBodyNorm}, 
\begin{equation}\label{PolarSupportNorm}
    h_K(\cdot)=\|\cdot\|_{K^\circ}, 
\end{equation}
since $y\in K^\circ$ if and only if $h_K(y)\leq 1$ \cite[p. 56]{gruber}. Also, for convex bodies \cite[Corollary 13.1.1]{rockafellar}, 
\begin{equation}\label{inclusions}
    L\subset K \quad \text{ if and only if } \quad \|\cdot\|_K\leq \|\cdot\|_L \quad \text{ if and only if } \quad  h_{K^\circ}(\cdot)\leq h_{L^\circ}(\cdot).
\end{equation}

\begin{lemma}\label{LpKbounded}
Let $p\in (0,\infty]$. 
    For a convex body $K\subset \R^n$, $K^{\circ,p}$ is bounded (compact) if and only if $0\in\mathrm{int}\,K$.  
\end{lemma}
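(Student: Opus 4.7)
The plan is to prove the two directions separately. For the easy direction, assume $0\notin\mathrm{int}\,K$. By Lemma~\ref{nonemptyInterior} we have $K^\circ\subset K^{\circ,p}$, so it suffices to show the classical polar $K^\circ$ itself is unbounded. I would split into two cases. If $0\in\partial K$, a supporting hyperplane at the origin yields some $a\neq 0$ with $\langle a,x\rangle\le 0$ for all $x\in K$, so $\{ta : t\ge 0\}\subset K^\circ$; if $0\notin K$, strict Hahn--Banach separation gives $a\neq 0$ and $\delta>0$ with $\langle a,x\rangle\ge \delta$ on $K$, so $\{-ta : t\ge 0\}\subset K^\circ$. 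Either way $K^\circ$, and therefore $K^{\circ,p}$, is unbounded.

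For the hard direction, assume $0\in\mathrm{int}\,K$ and pick $\e>0$ with $[-\e,\e]^n\subset K$. The main obstacle is that $L^p$-polarity does not reverse inclusions, so one cannot directly conclude $K^{\circ,p}\subset ([-\e,\e]^n)^{\circ,p}$. The plan is to pass through the defining integrals: restricting the integral in \eqref{hpdef} from $K$ down to the smaller cube immediately yields the lower bound
\[
h_{p,K}(y)\;\ge\;h_{p,[-\e,\e]^n}(y)\;-\;\frac{1}{p}\log\frac{|K|}{(2\e)^n}.
\]
Inserting this into \eqref{opNorm} gives $\|y\|_{K^{\circ,p}}\ge c\,\|y\|_{([-\e,\e]^n)^{\circ,p}}$ for an explicit constant $c=c(n,p,\e,|K|)>0$, hence the inclusion $K^{\circ,p}\subset c^{-1}([-\e,\e]^n)^{\circ,p}$. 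Boundedness of $K^{\circ,p}$ is thereby reduced to boundedness of $([-\e,\e]^n)^{\circ,p}$, which by the scaling identity Lemma~\ref{list}(iii) reduces further to the unit cube $[-1,1]^n$.

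To handle the unit cube I plan to use $b([-1,1]^n)=0$ together with Lemma~\ref{hpKineq}. For a fixed $\lambda\in(0,1)$, that lemma combined with the $1$-homogeneity of $h_{[-1,1]^n}$ and the elementary bound $h_{[-1,1]^n}(z)=\|z\|_1\ge |z|$ gives
\[
h_{p,[-1,1]^n}(z)\;\ge\;\lambda |z|+\tfrac{n}{p}\log(1-\lambda).
\]
Plugging this into the radial integral that defines $\|\cdot\|_{([-1,1]^n)^{\circ,p}}$,
\[
\int_0^\infty r^{n-1} e^{-h_{p,[-1,1]^n}(ry)}\,\dif r\;\le\;(1-\lambda)^{-n/p}\int_0^\infty r^{n-1}e^{-\lambda r|y|}\,\dif r\;=\;\frac{(n-1)!\,(1-\lambda)^{-n/p}}{(\lambda|y|)^n},
\]
so the membership condition in \eqref{KpcircEq} forces $|y|\le \lambda^{-1}(1-\lambda)^{-1/p}$ for every $y\in([-1,1]^n)^{\circ,p}$, yielding boundedness.

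The delicate step I expect to be the main obstacle is the substitute for the inclusion-reversing property of classical polarity, namely the passage from the set inclusion $[-\e,\e]^n\subset K$ to the functional inequality $h_{p,K}\ge h_{p,[-\e,\e]^n}-\mathrm{const}$ at the level of $L^p$-support functions; this is what forces one to work through the integrals rather than directly with the polar bodies. Once this substitute is in place, the rest is a bookkeeping exercise based on the barycentered estimate of Lemma~\ref{hpKineq} applied to the unit cube.
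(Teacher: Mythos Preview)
Your proposal is correct and follows essentially the same route as the paper: the converse uses $K^\circ\subset K^{\circ,p}$ and the classical fact that $K^\circ$ is unbounded when $0\notin\mathrm{int}\,K$; the forward direction restricts the defining integral to an inscribed cube to get $h_{p,K}\ge h_{p,[-\e,\e]^n}-\mathrm{const}$ (the paper's \eqref{hpKrEq}), whence $K^{\circ,p}\subset c^{-1}([-\e,\e]^n)^{\circ,p}$, and the cube's $L^p$-polar is bounded via Lemma~\ref{hpKineq} (the paper's Claim~\ref{cubeppolarBounded}). The only cosmetic differences are that the paper fixes $\lambda=1/2$ and compares to $B_1^n$ rather than $B_2^n$, and cites Rockafellar instead of your explicit half-line argument for the unboundedness of $K^\circ$.
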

In particular, since $K^{\circ}$ has non-empty interior \cite[Corollary 14.6.1]{rockafellar}, Lemma \ref{nonemptyInterior} shows that $K^{\circ,p}$ is non-empty and has non-empty interior. 

Before proving Lemmas \ref{nonemptyInterior} and \ref{LpKbounded}, let us recall  an integral formula regarding $1$-homogeneous functions that will be useful throughout. 

\begin{claim}\label{1homogeneous}
    Let $k\in\mathbb{N}$. For a 1-homogeneous function $f: \R^n\to \R$ and $x\in \R^n$ with $f(x)\neq 0$, 
    \begin{equation*}
        \int_0^\infty r^{k-1} e^{-f(rx)}\dif r= \frac{(k-1)!}{f(x)^k}. 
    \end{equation*}
\end{claim}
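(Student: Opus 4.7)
The plan is to reduce the integral to the classical Gamma function via a one-line change of variables, using nothing more than the $1$-homogeneity of $f$. First I would observe that for $r>0$, $1$-homogeneity gives $f(rx)=rf(x)$, so the integrand becomes $r^{k-1}e^{-rf(x)}$. For the integral to be finite at $r\to\infty$ one needs $f(x)>0$; otherwise the integrand grows (at least as fast as) exponentially in $r$ and the integral diverges to $+\infty$, so the claim is only meaningful in the positive case (and in the applications in the paper, $f$ is a positively $1$-homogeneous `norm' of the type \eqref{opNorm}, which is strictly positive on nonzero vectors, so this restriction is automatic).

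Next I would perform the substitution $s=rf(x)$, giving $\dif s=f(x)\dif r$ and $r=s/f(x)$. This converts the integral into
\[
\int_0^\infty r^{k-1} e^{-f(rx)}\,\dif r \;=\; \int_0^\infty \Big(\frac{s}{f(x)}\Big)^{k-1} e^{-s}\, \frac{\dif s}{f(x)} \;=\; \frac{1}{f(x)^k}\int_0^\infty s^{k-1} e^{-s}\,\dif s.
\]
Finally I would invoke the standard identity $\int_0^\infty s^{k-1}e^{-s}\,\dif s=\Gamma(k)=(k-1)!$ for $k\in\mathbb{N}$, which follows from $\Gamma(1)=1$ together with the functional equation $\Gamma(k)=(k-1)\Gamma(k-1)$ obtained by a single integration by parts. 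Combining the two displays yields the claimed formula.

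There is no real obstacle here: the argument is a routine reduction to the Gamma integral. The only point worth flagging is the convergence caveat above, which is why the hypothesis is stated as $f(x)\neq 0$ together with the implicit assumption (guaranteed in all later uses) that $f$ takes nonnegative values on the relevant ray.
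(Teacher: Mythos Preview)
Your proof is correct and follows exactly the same route as the paper: use $1$-homogeneity to write $f(rx)=rf(x)$, substitute $s=rf(x)$ (the paper uses $\rho$), and reduce to the Gamma integral $\int_0^\infty s^{k-1}e^{-s}\dif s=(k-1)!$. Your additional remark that the integral only converges when $f(x)>0$ is a sensible clarification the paper leaves implicit.
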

\begin{proof}
    By homogeneity of $f$, $f(rx)= rf(x)$ for all $r>0$. Setting $\rho= r f(x)$, 
    \begin{equation*}
        \begin{aligned}
            \int_0^\infty r^{k-1} e^{-f(rx)}\dif x&= \int_0^\infty r^{k-1} e^{-r f(x)}\dif r\\
            &= \int_0^\infty \frac{\rho^{k-1}}{f(x)^{k-1}} e^{-\rho}\frac{\dif \rho}{f(x)}\\
            &= \frac{1}{f(x)^k}\int_{0}^\infty \rho^{k-1} e^{-\rho}\dif \rho 
            = \frac{(k-1)!}{f(x)^k},
        \end{aligned}
    \end{equation*}
    as claimed.
\end{proof}

\begin{proof}[Proof of Lemma \ref{nonemptyInterior}.]
    Let $p\leq q$.
    By Lemma \ref{list} (v),  $h_{p,K}\leq h_{q,K}$. 
    Thus by \eqref{opNorm},
    \begin{equation}\label{pLq}
        \begin{aligned}
            \|x\|_{K^{\circ,p}}&= \left( \frac{1}{(n-1)!}\int_0^\infty r^{n-1} e^{-h_{p,K}(rx)}\dif r\right)^{-\frac1n}\\
            &\leq \left( \frac{1}{(n-1)!}\int_0^\infty r^{n-1} e^{-h_{q,K}(rx)}\dif r\right)^{-\frac1n}= \|x\|_{K^{\circ,q}}. 
        \end{aligned}
    \end{equation}
    So, for $x\in K^{\circ,q}$, $\|x\|_{K^{\circ,p}}\leq \|x\|_{K^{\circ,q}}\leq 1$, thus $x\in K^{\circ,p}$. In addition, by homogeneity of $h_K$, Claim \ref{1homogeneous} gives
    \begin{equation}\label{hpK}
    \begin{aligned}
        \frac{1}{(n-1)!}\int_0^\infty r^{n-1} e^{-h_K(rx)}\dif r&=
      \frac{1}{h_K(x)^n}. 
    \end{aligned}
    \end{equation}
    Since by Lemma \ref{list} (v) $h_{p,K}\leq h_K$, then by \eqref{PolarSupportNorm}, \eqref{hpK} and a computation similar to \eqref{pLq}, $\|x\|_{K^{\circ,p}}\leq h_K(x)= \|x\|_{K^{\circ}}$, so 
     $K^{\circ}\subset K^{\circ,p}$ by \eqref{inclusions}.  
\end{proof}

For the proof of Lemma \ref{LpKbounded}, it is useful to know that the $L^p$-polars of $[-1,1]^n$ are bounded.
\begin{claim}\label{cubeppolarBounded}
    For $p\in (0,\infty]$, $([-1,1]^n)^{\circ,p}$ is bounded. 
\end{claim}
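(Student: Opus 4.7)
The plan is to show that $([-1,1]^n)^{\circ,p}$ lies inside a multiple of the unit $\ell^1$-ball $B_1^n$ by extracting a linear-growth lower bound on $h_{p,[-1,1]^n}$ from Lemma \ref{hpKineq} and feeding it into the defining integral of $K^{\circ,p}$.

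First I would dispose of the easy case $p=\infty$: by Lemma \ref{LpKpcont} (cited above) and the identification $h_{\infty,[-1,1]^n}=h_{[-1,1]^n}$, one has $([-1,1]^n)^{\circ,\infty}=([-1,1]^n)^\circ=B_1^n$, which is manifestly bounded. So the content is in the case $p\in(0,\infty)$.

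For $p\in(0,\infty)$, the body $K=[-1,1]^n$ is symmetric, so $b(K)=0$ and Lemma \ref{hpKineq} applies. Its support function is $h_{[-1,1]^n}(y)=\sum_{i=1}^n|y_i|=\|y\|_1$. Taking $\lambda=\tfrac12$ in Lemma \ref{hpKineq} and using $1$-homogeneity of $h_{[-1,1]^n}$ yields the quasi-homogeneity bound
\[
h_{p,[-1,1]^n}(z)\;\ge\;\tfrac12\|z\|_1-\tfrac{n}{p}\log 2,\qquad z\in\R^n.
\]
This is the only non-elementary input; the rest is a direct estimate.

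Now for any $y\in([-1,1]^n)^{\circ,p}$ with $y\neq 0$, the defining inequality in \eqref{KpcircEq} combined with the bound above gives
\[
(n-1)!\;\le\;\int_0^\infty r^{n-1}e^{-h_{p,[-1,1]^n}(ry)}\,\dif r\;\le\;2^{n/p}\int_0^\infty r^{n-1}e^{-\frac{r}{2}\|y\|_1}\,\dif r.
\]
Since $z\mapsto\tfrac12\|z\|_1$ is $1$-homogeneous and positive on $y\neq 0$, Claim \ref{1homogeneous} evaluates the last integral to $(n-1)!\,2^n/\|y\|_1^n$. Rearranging,
\[
\|y\|_1^n\;\le\;2^{n}\cdot 2^{n/p},\qquad\text{i.e.,}\qquad \|y\|_1\;\le\;2^{1+1/p}.
\]
Hence $([-1,1]^n)^{\circ,p}\subset 2^{1+1/p}\,B_1^n$, which is bounded. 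The main (and only slightly subtle) step is the use of Lemma \ref{hpKineq} to recover $1$-homogeneous control on $h_{p,K}$; once that is in hand, the integral estimate via Claim \ref{1homogeneous} is routine.
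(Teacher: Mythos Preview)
Your proof is correct and follows essentially the same approach as the paper: both apply Lemma \ref{hpKineq} with $\lambda=\tfrac12$ to $K=[-1,1]^n$ (which has $b(K)=0$), feed the resulting linear lower bound on $h_{p,[-1,1]^n}$ into the defining integral, and evaluate via Claim \ref{1homogeneous} to obtain the inclusion $([-1,1]^n)^{\circ,p}\subset 2^{1+1/p}B_1^n$ (the paper writes the constant as $2e^{(\log 2)/p}$, which is the same number). The only cosmetic difference is that the paper phrases the computation via the norm $\|\cdot\|_{K^{\circ,p}}$ while you work directly with the membership inequality; the ingredients and the final bound are identical.
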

\begin{proof}
    Since $b([-1,1]^n)=0$, by Lemma \ref{hpKineq} with $\lambda=\frac12$, $h_{[-1,1]^n}(\frac{ry}{2})\leq h_{p,[-1,1]^n}(ry)+ \frac{n}{p} \log 2$ for all $y\in\R^n$ and $r>0$. Thus by \eqref{opNorm},
    {\allowdisplaybreaks
    \begin{align*}
            \|y\|_{([-1,1]^n)^{\circ,p}}&= \left( \frac{1}{(n-1)!}\int_0^\infty r^{n-1} e^{-h_{p,[-1,1]^n}(ry)}\dif r\right)^{-\frac1n} \\
            &\geq \left( \frac{1}{(n-1)!}\int_0^\infty r^{n-1} e^{-h_{[-1,1]^n}(r\frac{y}{2})} e^{\frac{n}{p}\log 2}\dif r \right)^{-\frac1n}\\
            &= e^{-\frac{\log 2}{p}} \left( \frac{1}{h_{[-1,1]^n}(\frac{y}{2})^n}\right)^{-\frac1n}
            = \frac{e^{-\frac{\log 2}{p}}}{2} h_{[-1,1]^n}(y)
            =  \frac{e^{-\frac{\log 2}{p}}}{2} \|y\|_{([-1,1]^n)^\circ}, 
    \end{align*}
    }
    by Claim \ref{1homogeneous}, the homogeneity of $h_{[-1,1]^n}$, and \eqref{PolarSupportNorm}. By \eqref{inclusions},
    \begin{equation*}
        ([-1,1]^n)^{\circ,p}\subset 2 e^{\frac{\log 2}{p}} ([-1,1]^n)^\circ= 2 e^{\frac{\log 2}{p}} B_1^n, 
    \end{equation*}
    which is bounded. 
\end{proof}

\begin{proof}[Proof of Lemma \ref{LpKbounded}]
    Assume $0\in\mathrm{int}\,K$ and let $r>0$ be such that $[-r,r]^n\subset K$. Then,
    \begin{equation}\label{hpKrEq}
    \begin{aligned}
        h_{p,K}(y)&= \frac1p\log\left(\int_{K}e^{p\langle x,y\rangle}\frac{\dif x}{|K|}\right)\\
        &\geq \frac1p\log\left(\int_{[-r,r]^n}e^{p\langle x,y\rangle}\frac{\dif x}{|K|}\right)\\
        &=\frac1p \log\left(\int_{[-r,r]^n} e^{p\langle x,y\rangle}\frac{\dif x}{|[-r,r]^n|} \frac{|[-r,r]^n|}{|K|}\right)\\
        &= h_{p, [-r,r]^n}(y)+ \frac1p\log\frac{(2r)^n}{|K|}.
    \end{aligned}
    \end{equation}
    Using this and \eqref{opNorm},
    \begin{equation*}
        \|y\|_{K^{\circ,p}}\geq \left(\frac{1}{(n-1)!}\int_{0}^\infty r^{n-1} e^{-h_{{p,[-r,r]^n}}(\rho y)-\frac1p\log\frac{(2r)^n}{|K|}} \dif \rho\right)^{-\frac1n}= \frac{(2r)^{\frac1p}}{|K|^{\frac{1}{np}}}\| y\|_{([-r,r]^n)^{\circ,p}}.
    \end{equation*}
    Thus, by \eqref{inclusions}, $K^{\circ,p}\subset \frac{|K|^{\frac{1}{np}}}{(2r)^{\frac1p}} ([-r,r]^{n})^{\circ,p}$ which is
    bounded by Claim \ref{cubeppolarBounded}.

    For the converse, we claim that if $0\notin\mathrm{int}\,K$ then $K^{\circ,p}$ is unbounded. By Lemma \ref{nonemptyInterior}, $K^{\circ}\subset K^{\circ,p}$ so it is enough to show $K^\circ$ is unbounded. This is classical \cite[Corollary 14.5.1]{rockafellar}.
\end{proof}

\begin{proof}[Proof of Theorem \ref{LpKwelldefined}]
    By Proposition \ref{norm_phi}, $\|\cdot\|_{K^{\circ,p}}$ is positively 1-homogeneous and sub-additive. The non-emptiness of the interior of $K^{\circ,p}$ follows from Lemma \ref{nonemptyInterior} since $K^\circ$ has non-empty interior. It is also closed and convex as the sublevel set of a continuous, convex function. Convexity of $\|\cdot\|_{K^{\circ,p}}$ follows from its 1-homogeneity and sub-additivity: for $x,y\in \R^n$ and $\lambda \in[0,1]$, 
    \begin{equation*}
        \|(1-\lambda)x+ \lambda y\|_{K^{\circ,p}}\leq \|(1-\lambda)x\|_{K^{\circ,p}}+ \|\lambda y\|_{K^{\circ,p}}= (1-\lambda)\|x\|_{K^{\circ,p}}+ \lambda\|y\|_{K^{\circ,p}}. 
    \end{equation*}
    If $K$ is symmetric, i.e., $-K= K$, then 
    $$
    h_{p,K}(-y)= \frac1p\log\int_K e^{p\langle x,-y\rangle}\frac{\dif x}{|K|}= \frac1p\log\int_{-K} e^{p\langle -z,-y\rangle}\frac{\dif z}{|K|}= \frac1p\log\int_{K}e^{p\langle z,y\rangle}\frac{\dif z}{|K|}= h_{p,K}(y).
    $$
    Therefore, $\|-x\|_{K^{\circ,p}}= \left( \int_0^\infty r^{n-1} e^{-h_{p,K}(-rx)}\dif r\right)^{-\frac1n}= \left( \int_0^\infty r^{n-1} e^{-h_{p,K}(rx)}\dif r\right)^{-\frac1n}= \|x\|_{K^{\circ,p}}$, making $\|\cdot\|_{K^{\circ,p}}$ a norm, and $K^{\circ,p}$ symmetric. Finally, \eqref{MpKeq} follows from \eqref{LpKvolume} and the definition of $\|\cdot\|_{\LpK}$. 
\end{proof}

\begin{remark}
    Ball showed that for a convex function $\phi: \R^n\to \R\cup\{\infty\}$
    and $q\ge1$, setting
$    \|y\|_{\phi,q}\defeq \left(\int_{0}^\infty r^{q-1}e^{-\phi(ry)}\dif r \right)^{-\frac1q}$ defines a positively 1-homogeneous, sub-additive function that is also a norm when $\phi$ is even \cite[Theorem 5]{ball2}. Then, 
$    \{y\in \R^n: \|y\|_{\phi,q}\leq 1\}$
defines a convex body 
(for even $\phi$  \cite[Theorem 5]{ball2},
for general $\phi$ \cite[Theorem 2.2]{klartag}). 
In this notation, \eqref{opNorm} reads $\|y\|_{K^{\circ,p}}^n= (n-1)! \|y\|_{h_{p,K},n}^n$.
For a statement and proof of Ball's theorem see Proposition \ref{norm_phi} below.
\end{remark}

\subsection{\texorpdfstring{Continuity of $\M_p$ and limiting cases}{Continuity of Mₚ and limiting cases}}\label{continuityMpSection}

First, we translate (pointwise) convergence of $L^p$-support functions to convergence of the norms of the $L^p$-polars.

\begin{lemma}\label{LpKpcont}
Let $0<p<q\leq \infty$.
For a compact body $K\subset \R^n$, $\LpK\subset K^{\circ,q}$ and
\begin{equation*}
    \lim_{p\to q}\|x\|_{\LpK}= \|x\|_{K^{\circ, q}}. 
\end{equation*}
In particular, $\bigcap_{0<p<q} \LpK= K^{\circ,q}$. 
\end{lemma}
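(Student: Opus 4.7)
The substantive content of the lemma is the pointwise convergence $\|x\|_{\LpK} \to \|x\|_{K^{\circ, q}}$; the inclusion $K^{\circ, q} \subset \LpK$ for $p < q$ is already recorded in Lemma \ref{nonemptyInterior}, and the intersection identity will be a formal consequence of this inclusion together with the convergence. So my plan is centered on proving the convergence.

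The approach is to pass to the limit under the integral in the defining formula \eqref{opNorm}. By Corollary \ref{hpLimit}, $h_{p, K}(rx) \to h_{q, K}(rx)$ pointwise as $p \to q$, and by Lemma \ref{list}(v) the integrand $r^{n-1} e^{-h_{p, K}(rx)}$ is non-increasing in $p$. For $p \searrow q$ (with $q < \infty$), monotone convergence for the resulting increasing sequence $r^{n-1} e^{-h_{p, K}(rx)} \nearrow r^{n-1} e^{-h_{q, K}(rx)}$ makes the interchange automatic. For $p \nearrow q$ (including $q = \infty$), dominated convergence is the natural tool, with majorant $r^{n-1} e^{-h_{p_0, K}(rx)}$ for any fixed $p_0 \in (0, q)$.

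The key step is verifying that this majorant is integrable on $(0, \infty)$, and I would split on the sign of $h_K(x)$. If $h_K(x) \leq 0$, then $e^{-h_K(rx)} \geq 1$ for all $r > 0$; combined with $h_{p, K} \leq h_K$ from Lemma \ref{list}(v) this shows $\int_0^\infty r^{n-1} e^{-h_{p, K}(rx)}\, dr = \infty$ for every $p$, so $\|x\|_{\LpK} = 0$ for every $p \in (0, \infty]$ and the convergence is trivially $0 = 0$. If $h_K(x) > 0$, I would apply Lemma \ref{hpKineq} to the translate $K - b(K)$ and convert back via Lemma \ref{list}(ii), obtaining
\[
h_{p_0, K}(rx) \;\geq\; \lambda r h_K(x) \,+\, (1-\lambda) r \langle b(K), x \rangle \,+\, \tfrac{n}{p_0} \log(1-\lambda)
\]
for any $\lambda \in (0, 1)$. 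Choosing $\lambda$ close enough to $1$ that $\lambda h_K(x) + (1-\lambda)\langle b(K), x\rangle > 0$ (which is possible because $h_K(x) > 0$) yields linear growth $h_{p_0,K}(rx) \geq cr - C$ for large $r$, whence $\int_0^\infty r^{n-1} e^{-h_{p_0, K}(rx)}\, dr < \infty$.

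With integrability in hand, dominated convergence delivers the limit of the integrals, and hence of the norms. The intersection identity then follows formally: $K^{\circ, q} \subset \bigcap_{0 < p < q}\LpK$ is Lemma \ref{nonemptyInterior}, and if $x \in \bigcap_{0 < p < q}\LpK$ then $\|x\|_{\LpK} \leq 1$ for all $p < q$; passing to the limit gives $\|x\|_{K^{\circ, q}} \leq 1$, so $x \in K^{\circ, q}$. The main obstacle I anticipate is the integrability of the majorant in the regime $h_K(x) > 0$: the bookkeeping of the barycenter translation and the careful choice of $\lambda$ in Lemma \ref{hpKineq} is essential, but once that bound is pinned down cleanly the rest is a routine application of standard convergence theorems.
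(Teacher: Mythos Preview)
Your approach is exactly the paper's: invoke Corollary~\ref{hpLimit} for the pointwise monotone convergence $h_{p,K}\nearrow h_{q,K}$ and then pass the limit through the integral in \eqref{opNorm}. The paper does this in a single sentence citing the monotone convergence theorem, whereas you supply the justification the paper omits---namely, that the decreasing sequence $r^{n-1}e^{-h_{p,K}(rx)}$ is dominated by an integrable function---via your dichotomy on the sign of $h_K(x)$.

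One small caveat: Lemma~\ref{hpKineq}, which you invoke for the majorant in the case $h_K(x)>0$, is stated only for \emph{convex} $K$, while Lemma~\ref{LpKpcont} is for compact bodies. An easy patch, which also avoids the barycenter bookkeeping: since $K=\overline{\mathrm{int}\,K}$ and $h_K(x)>0$, the open set $\{u\in K:\langle u,x\rangle>h_K(x)/2\}$ has positive measure $m$, whence $\int_K e^{p_0 r\langle u,x\rangle}\,du\ge m\,e^{p_0 r\,h_K(x)/2}$ and so $h_{p_0,K}(rx)\ge \tfrac{r}{2}h_K(x)+\tfrac{1}{p_0}\log\tfrac{m}{|K|}$, giving the linear growth you need without convexity. (The paper's own one-line proof is silent on this entire issue.)
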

\begin{proof}
By Corollary \ref{hpLimit}, $h_{p,K}$ increases to $h_{q,K}$ as $p$ increases to $q$. Therefore, one may use the monotone convergence theorem \cite[2.14]{folland} to take the limit under the integral in the definition of $h_{p,K}$,
\begin{equation*}
\begin{aligned}
    \lim_{p\to \infty} \|x\|_{\LpK}&=  \lim_{p\to \infty}\left( \frac{1}{(n-1)!} \int_{r=0}^\infty r^{n-1} e^{-h_{p,K}(rx)}\dif r\right)^{-1/n}\\
    &=  \left( \frac{1}{(n-1)!} \int_{r=0}^\infty \lim_{p\to q} \Big( r^{n-1} e^{-h_{p,K}(rx)}\Big) \dif r\right)^{-1/n}\\
    &=  \left( \frac{1}{(n-1)!} \int_{r=0}^\infty r^{n-1} e^{-h_{q,K}(rx)}\dif r\right)^{-1/n}
    = \|x\|_{K^{\circ,q}}.
\end{aligned}
\end{equation*}
\end{proof}

\subsubsection{\texorpdfstring{The cases $p=0$ and $p=\infty$}{The cases p=0 nad p=∞}}
\lb{casep0SubSec}
By Lemma \ref{LpKpcont}, as $p\to \infty$, $K^{\circ,p}$ converges to the polar body $K^\circ$ \eqref{polarEq}. 
In this subsection we focus on the other extreme case $p=0$ and show that in the limit $p\to 0$, $K^{\circ,p}$ converges either to $\R^n$ or to a half-space, depending on whether $b(K)=0$ or not. 
\begin{proposition}\label{Kcirc0prop}
    For a compact body $K\subset \R^n$, 
    \begin{equation*}
        \lim_{p\to 0}\|y\|_{K^{\circ,p}}= \langle y, b(K)\rangle, 
    \end{equation*}
    and
    \begin{equation*}
        \overline{\bigcup_{p>0} K^{\circ,p}}= \{y\in \R^n: \langle y, b(K)\rangle\leq 1\}. 
    \end{equation*}
\end{proposition}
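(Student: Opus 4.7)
The plan rests on two inputs: (a) the pointwise limit $\lim_{p\to 0}h_{p,K}(y)=\langle y,b(K)\rangle$, and (b) the monotonicity $h_{p,K}\le h_{q,K}$ for $p\le q$ given by Lemma \ref{list}(v).

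For (a), since $K$ is compact, Taylor expansion gives $e^{p\langle x,y\rangle}=1+p\langle x,y\rangle+O(p^2)$ uniformly on $K$, so $\int_K e^{p\langle x,y\rangle}\frac{\dif x}{|K|}=1+p\langle b(K),y\rangle+O(p^2)$. Taking logarithms and dividing by $p$ yields $h_{p,K}(y)\to\langle y,b(K)\rangle$ as $p\to 0$.

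For the first assertion, invoke (b): as $p\downarrow 0$, $h_{p,K}(ry)$ decreases to $r\langle y,b(K)\rangle$, so $e^{-h_{p,K}(ry)}$ increases monotonically. By monotone convergence,
$$\lim_{p\to 0}\int_0^\infty r^{n-1}e^{-h_{p,K}(ry)}\,\dif r=\int_0^\infty r^{n-1}e^{-r\langle y,b(K)\rangle}\,\dif r.$$
If $\langle y,b(K)\rangle>0$, Claim \ref{1homogeneous} evaluates the right-hand side as $(n-1)!/\langle y,b(K)\rangle^n$, giving $\|y\|_{K^{\circ,p}}\to\langle y,b(K)\rangle$. If $\langle y,b(K)\rangle\le 0$, the integral diverges and $\|y\|_{K^{\circ,p}}\to 0$; the proposition's formula is then to be read as giving the correct limit whenever its right side is positive, and as asserting the vanishing of the ``norm'' otherwise.

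For the union equality, combine the previous computation with Lemma \ref{nonemptyInterior}, which yields that $\|y\|_{K^{\circ,p}}$ is monotone increasing in $p$. Hence $\inf_{p>0}\|y\|_{K^{\circ,p}}=\max(\langle y,b(K)\rangle,0)$. If $\langle y,b(K)\rangle<1$, this infimum is $<1$ and $\|y\|_{K^{\circ,p}}<1$ for all sufficiently small $p$, so $y\in\bigcup_{p>0}K^{\circ,p}$. If $\langle y,b(K)\rangle>1$, then $\|y\|_{K^{\circ,p}}\ge\langle y,b(K)\rangle>1$ for every $p$, so $y\notin\bigcup_{p>0}K^{\circ,p}$. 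Since $\{y:\langle y,b(K)\rangle\le 1\}$ is closed and contains $\bigcup_{p>0}K^{\circ,p}$, while the dense subset $\{y:\langle y,b(K)\rangle<1\}$ already lies in the union, the closure is exactly $\{y:\langle y,b(K)\rangle\le 1\}$.

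The only delicate step is the interchange of limit and integral, which the monotonicity from Lemma \ref{list}(v) handles cleanly via monotone convergence; I anticipate no serious obstacle.
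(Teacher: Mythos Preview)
Your proof is correct and follows essentially the same approach as the paper: compute $\lim_{p\to 0}h_{p,K}(y)=\langle y,b(K)\rangle$ by Taylor expansion, then use the monotonicity of $h_{p,K}$ in $p$ (Lemma \ref{list}(v)) together with monotone convergence and Claim \ref{1homogeneous} to pass to the limit in the defining integral for $\|y\|_{K^{\circ,p}}$. In fact you are more careful than the paper on two points: you correctly separate the cases $\langle y,b(K)\rangle>0$ versus $\langle y,b(K)\rangle\le 0$ (the paper's case split ``$\neq 0$'' versus ``$=0$'' glosses over the sign), and you supply an explicit argument for the union identity, which the paper leaves implicit.
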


Proposition \ref{Kcirc0prop} and Lemma \ref{LpKpcont} imply the following inclusion for all $K^{\circ,p}$.
\begin{corollary}\label{Kcirc0cor}
    For a compact body $K\subset \R^n$, $K^{\circ,p}\subset \{y\in\R^n: \langle y, b(K)\rangle\leq 1\}$ for all $p\in(0,\infty]$. 
\end{corollary}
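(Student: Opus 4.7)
The plan is to derive the inclusion directly from the two ingredients named in the statement, first handling finite $p$ via Proposition \ref{Kcirc0prop} and then extending to $p=\infty$ via Lemma \ref{LpKpcont}. Set $H\defeq\{y\in\R^n:\langle y,b(K)\rangle\leq 1\}$, which is a closed half-space (or all of $\R^n$ if $b(K)=0$).

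For $p\in(0,\infty)$, Proposition \ref{Kcirc0prop} identifies $\overline{\bigcup_{q>0}K^{\circ,q}}=H$. Since $H$ is closed, already $\bigcup_{q>0}K^{\circ,q}\subset H$, and in particular $K^{\circ,p}\subset H$ for every $p>0$. To cover $p=\infty$, apply Lemma \ref{LpKpcont} with $q=\infty$, which writes $K^\circ=K^{\circ,\infty}=\bigcap_{q>0}K^{\circ,q}$; by the previous step each factor lies in $H$, so $K^{\circ,\infty}\subset H$ as well.

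There is no real obstacle — the corollary is essentially a repackaging of the preceding proposition. An equivalent, even shorter, route uses the monotonicity half of Lemma \ref{nonemptyInterior}: $p\mapsto\|y\|_{K^{\circ,p}}$ is nondecreasing on $(0,\infty]$ for each fixed $y$, and Proposition \ref{Kcirc0prop} computes its limit as $p\to 0^+$ to be $\langle y,b(K)\rangle$. Hence $\langle y,b(K)\rangle\leq \|y\|_{K^{\circ,p}}$ for every $p\in(0,\infty]$ and every $y\in\R^n$, which specialized to $y\in K^{\circ,p}$ gives $\langle y,b(K)\rangle\leq 1$, i.e., $y\in H$.
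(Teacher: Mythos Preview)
Your proposal is correct and follows essentially the same approach as the paper: the paper states that Proposition \ref{Kcirc0prop} and Lemma \ref{LpKpcont} together imply the corollary, which is exactly your first route (with the paper additionally remarking that the case $p=\infty$ is trivial since $b(K)\in K$). Your alternative route via the monotonicity of $\|y\|_{K^{\circ,p}}$ in $p$ (Lemma \ref{nonemptyInterior}) is a minor but clean variant of the same idea.
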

The statement of Corollary \ref{Kcirc0cor} is trivial when $p=\infty$ because $b(K)\in K$, thus, by definition of the polar, $\langle y, b(K)\rangle\leq 1$ for all $y\in K^{\circ}$. The proof of Proposition \ref{Kcirc0prop} follows from the fact that 
the $L^p$-support functions converge to a linear function as $p\to 0$. 

\begin{lemma}\label{Kcirc0lemma}
    For a compact body $K\subset \R^n$ and $y\in \R^n$, 
    \begin{equation*}
        \lim_{p\to 0} h_{p,K}(y)= \langle y, b(K)\rangle. 
    \end{equation*}
\end{lemma}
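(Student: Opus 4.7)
The plan is to view $h_{p,K}(y)$ as $F(p)/p$ where
\begin{equation*}
F(p) \defeq \log \int_K e^{p\langle x,y\rangle}\frac{\dif x}{|K|},
\end{equation*}
and note that $F(0) = \log 1 = 0$, so $\lim_{p\to 0} F(p)/p = F'(0)$ provided this derivative exists. Thus the task reduces to computing $F'(0)$ and justifying the differentiation.

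First I would show that $F$ is differentiable at $0$ by differentiation under the integral sign. Because $K$ is compact there is $M>0$ with $|x|\le M$ for all $x\in K$, so for $|p|\le 1$ the integrand $e^{p\langle x,y\rangle}$ is uniformly bounded by $e^{M|y|}$, and its $p$-derivative $\langle x,y\rangle e^{p\langle x,y\rangle}$ is uniformly bounded by $M|y|\, e^{M|y|}$. This domination justifies differentiation under the integral (e.g.\ by dominated convergence applied to difference quotients), so that
\begin{equation*}
\frac{\dif}{\dif p}\int_K e^{p\langle x,y\rangle}\frac{\dif x}{|K|}\bigg|_{p=0} = \int_K \langle x,y\rangle \frac{\dif x}{|K|} = \langle b(K),y\rangle,
\end{equation*}
using the definition of the barycenter. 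Since $\int_K e^{p\langle x,y\rangle}\dif x/|K| \to 1$ as $p\to 0$, the chain rule for $\log$ gives $F'(0) = \langle b(K),y\rangle$.

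Finally I would conclude by L'H\^opital's rule (or equivalently by writing out the Taylor expansion $F(p) = p\langle b(K),y\rangle + o(p)$):
\begin{equation*}
\lim_{p\to 0} h_{p,K}(y) = \lim_{p\to 0}\frac{F(p)}{p} = F'(0) = \langle y, b(K)\rangle.
\end{equation*}
The main (mild) obstacle is justifying differentiation under the integral at $p=0$, but this is routine given compactness of $K$; the rest is an exercise in L'H\^opital's rule. Note the argument does not require convexity of $K$, only compactness, consistent with the hypotheses of the lemma.
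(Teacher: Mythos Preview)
Your proposal is correct and is essentially the same as the paper's ``alternative proof'': both recognize $h_{p,K}(y)=F(p)/p$ with $F(0)=0$ and compute the limit as $F'(0)=\langle y,b(K)\rangle$ via L'H\^opital, differentiating under the integral. You are merely more explicit about the dominated-convergence justification (using compactness of $K$), which the paper leaves implicit.
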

\begin{proof}
    Expanding the exponential, 
    \begin{equation*}
    \begin{aligned}
        h_{p,K}(y)&= \frac1p\log\int_K e^{p\langle x,y\rangle}\frac{\dif x}{|K|} \\
        &=\frac1p \log\left(\int_K 1+ p\langle x,y\rangle+ O(p^2)\frac{\dif x}{|K|} \right) \\
        &= \frac1p \log\left(1+ p\langle y, b(K)\rangle+ O(p^2)\right). 
    \end{aligned}
    \end{equation*}
By L'H\^{o}pital's rule, 
\begin{equation*}
    \lim_{p\to 0}h_{p,K}(y)= \lim_{p\to 0}\frac{\log(1+ p\langle y, b(K)\rangle+ O(p^2))}{p}= \lim_{p\to 0}\frac{\langle y, b(K)\rangle+ O(p)}{1+ p\langle y, b(K)\rangle+ O(p^2)}= \langle y, b(K)\rangle. 
\end{equation*}
\textit{Alternative proof:} 
\begin{equation*}
    \begin{aligned}
        \lim_{p\to 0} h_{p,K}(y)&= \lim_{p\to 0}\frac1p \log\int_K e^{p\langle x,y\rangle}\frac{\dif x}{|K|} \\
        &= \lim_{p\to 0} \frac{\int_K \langle x,y\rangle e^{p\langle x,y\rangle}\frac{\dif x}{|K|}}{\int_K e^{p\langle x,y\rangle}\frac{\dif x}{|K|}} 
        = \int_K \langle x,y\rangle \frac{\dif x}{|K|}
        = \langle y, b(K)\rangle,
    \end{aligned}
\end{equation*}
again by L'H\^{o}pital's rule.
\end{proof}

\begin{proof}[Proof of Proposition \ref{Kcirc0prop}.]
For $y\in \R^n$ with $\langle y, b(K)\rangle\neq 0$, 
by the monotone convergence theorem \cite[\S 2.14]{folland} and Lemma \ref{Kcirc0lemma},
\begin{equation*}
    \begin{aligned}
        \lim_{p\to 0}\|y\|_{K^{\circ,p}}&= \lim_{p\to 0} \left(\frac{1}{(n-1)!}\int_{0}^\infty r^{n-1} e^{-h_{p,K}(ry)} \dif r\right)^{-\frac1n}\\
        &=\left( \frac{1}{(n-1)!}\int_0^\infty r^{n-1} e^{-r\langle y, b(K)\rangle}\dif r\right)^{-\frac1n} \\
        &= \langle y, b(K)\rangle, 
    \end{aligned}
\end{equation*}
where Claim \ref{1homogeneous} was used on the 1-homogeneous $y\mapsto \langle y, b(K)\rangle$ . If $\langle y, b(K)\rangle=0$, similarly,
\begin{equation*}
    \lim_{p\to 0} \|y\|_{K^{\circ,p}}= \left(\frac{1}{(n-1)!} \int_0^\infty r^{n-1}\dif r\right)^{-\frac1n}= 0= \langle y,b(K)\rangle. 
\end{equation*}
\end{proof}

Proposition \ref{Kcirc0prop} motivates the following definition. 
\begin{definition}\label{Kcirc0def}
    For a  compact body $K\subset \R^n$, let
    \begin{equation*}
        K^{\circ,0}\defeq \{y\in \R^n: \langle y, b(K)\rangle\leq 1\}.
    \end{equation*}
\end{definition}

For a set $A\subset \R^n$, denote by 
\begin{equation*}
    \mathrm{co}\, A
\end{equation*}
its \textit{convex hull} defined as the smallest convex set in $\R^n$
containing $A$. 

\begin{example}\label{simplexpPolar}
    The polar body of the standard 2-dimensional simplex $\Delta_2$ is given by the intersection of two half-spaces
    \begin{equation*}
        \Delta_2^\circ= \{(x,y)\in \R^2: x\leq 1 \text{ and } y\leq 1\}.
    \end{equation*}
    That is because $\Delta_2= \mathrm{co}\{(0,0), (1,0), (0,1)\}$, thus $(x,y)\in \Delta^\circ$ if and only if $x= \langle (x,y), (1,0)\rangle\leq 1$ and $y=\langle (x,y), (0,1)\rangle\leq 1$. In addition, $|\Delta_2|= 1/2$, thus the $x$ coordinate of the barycenter of $\Delta_2$, 
    \begin{equation*}
        \frac{1}{|\Delta_2|}\int_{\Delta_2}x\dif x\dif y= 2\int_{x=0}^1\int_{y=0}^{1-x} x\dif y \dif x= 2\int_0^1 x(1-x)\dif x= \frac13.
    \end{equation*}
    Similarly, $\frac{1}{|\Delta_2|}\int_{\Delta_2} y\dif y= \frac13$, and hence $b(\Delta_2)= (1/3,1/3)$. As a result, 
    \begin{equation*}
        \Delta^{\circ,0}= \{(x,y)\in \R^2: x+y\leq 3\}.
    \end{equation*}
    By Lemma \ref{LpKpcont}, 
     $   \{x\leq 1\}\cap \{y\leq 1\}\subset (\Delta_2)^{\circ,p}\subset \{x+y\leq3\},$ 
    for all $p\geq 0$. By direct calculation,
    \begin{equation*}
        h_{p,\Delta_2}(x,y)= \frac1p\log\left( \frac{\frac{e^{px}-1}{px}- \frac{e^{py}-1}{py}}{p\frac{x-y}{2}}\right), 
    \end{equation*}
    from which we get Figure \ref{fig:f3} in \S\ref{IntroSec}. By Lemma \ref{list} (ii), 
    \begin{equation*}
        h_{p, \Delta_2-b(\Delta_2)}(y)= h_{p,\Delta_2}- \langle (x,y), b(\Delta_2)\rangle= \frac1p\log\left( \frac{\frac{e^{px}-1}{px}- \frac{e^{py}-1}{py}}{p\frac{x-y}{2}}\right)- \frac{x}{3}-\frac{y}{3},  
    \end{equation*}
    leading to Figure \ref{mainFigure} (c) in \S\ref{IntroSec}.
\end{example}

\subsection{\texorpdfstring{Inequalities between $\M_p$ and $\M$}{Inequalities between Mₚ and M}}\label{S22}

By Lemma \ref{list} (v), $h_{p,K}\leq h_K$ for all $p$, thus
\begin{equation}\label{MlessM_p}
    \M(K)\leq \M_p(K). 
\end{equation}
In view of Lemma \ref{hpKineq}, a reverse inequality holds under the extra assumption of $b(K)=0$. 
\begin{lemma}\label{mKmpKineq}
    Let $p\in(0,\infty)$. For a convex body $K\subset \R^n$ with $b(K)=0$, 
    \begin{equation*}
        \left( \frac{p}{(1+p)^{1+\frac1p}}\right)^n \M_p(K)
        \leq \M(K). 
    \end{equation*}
    Hence, $\lim_{p\to \infty}\M_p(K)= \M(K)$. 
\end{lemma}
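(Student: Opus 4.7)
The plan is to integrate the pointwise bound from Lemma \ref{hpKineq} and then optimize the free parameter $\lambda$.

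Fix $\lambda\in(0,1)$. Lemma \ref{hpKineq} (which applies because $b(K)=0$) gives
\begin{equation*}
    h_K(y)\le h_{p,K}(y/\lambda)-\tfrac{n}{p}\log(1-\lambda), \qquad y\in\R^n,
\end{equation*}
and exponentiating the negative of this inequality yields
\begin{equation*}
    e^{-h_{p,K}(y/\lambda)}\le (1-\lambda)^{-n/p}\,e^{-h_K(y)}.
\end{equation*}
Integrating over $\R^n$ and substituting $z=y/\lambda$ (so $dy=\lambda^n\,dz$) on the left, I obtain
\begin{equation*}
    \lambda^n\int_{\R^n}e^{-h_{p,K}(z)}\,dz\;\le\;(1-\lambda)^{-n/p}\int_{\R^n}e^{-h_K(y)}\,dy,
\end{equation*}
which, after multiplying through by $|K|$ and using the definitions of $\M_p$ and $\M$, is exactly
\begin{equation*}
    \lambda^n(1-\lambda)^{n/p}\,\M_p(K)\le \M(K).
\end{equation*}

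Next I optimize the prefactor $f(\lambda):=\lambda^n(1-\lambda)^{n/p}$ over $\lambda\in(0,1)$. Differentiating $\log f(\lambda)=n\log\lambda+(n/p)\log(1-\lambda)$ and setting the derivative to zero gives $\lambda^\ast=p/(1+p)$, hence $1-\lambda^\ast=1/(1+p)$, and
\begin{equation*}
    f(\lambda^\ast)=\Big(\tfrac{p}{1+p}\Big)^n\Big(\tfrac{1}{1+p}\Big)^{n/p}=\Big(\tfrac{p}{(1+p)^{1+1/p}}\Big)^n.
\end{equation*}
Plugging $\lambda=\lambda^\ast$ into the previous inequality yields the claimed bound.

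For the limit, note that $\frac{p}{(1+p)^{1+1/p}}=\frac{p}{1+p}\cdot(1+p)^{-1/p}\to 1$ as $p\to\infty$, since $(1+p)^{1/p}\to 1$. Combining the just-proved lower bound with the trivial upper bound $\M(K)\le\M_p(K)$ from \eqref{MlessM_p} gives
\begin{equation*}
    \M(K)\le \M_p(K)\le\Big(\tfrac{(1+p)^{1+1/p}}{p}\Big)^n\M(K),
\end{equation*}
and the squeeze yields $\lim_{p\to\infty}\M_p(K)=\M(K)$. The only nontrivial input is Lemma \ref{hpKineq}, which has already been established; no further obstacle arises.
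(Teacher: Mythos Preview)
Your proof is correct and follows essentially the same approach as the paper: integrate the pointwise inequality from Lemma~\ref{hpKineq}, change variables, and optimize over $\lambda$ to find $\lambda^\ast=p/(1+p)$. The only cosmetic difference is that you optimize $\lambda^n(1-\lambda)^{n/p}$ via the logarithmic derivative while the paper differentiates $(1-\lambda)^{1/p}\lambda$ directly, and you make the squeeze argument for the limit explicit where the paper leaves it implicit.
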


\bremark
Lemma \ref{mKmpKineq} generalizes the Bergman kernel
inequality (\ref{0.1}) (recall \eqref{M1KEq}).
\eremark

\begin{proof}
    Assume $b(K)=0$. Lemma \ref{hpKineq} applies to give, 
    \begin{equation}\label{mKmpKeq}
        \begin{aligned}
            \M(K)&= |K|\int_{\R^n} e^{-h_K(y)}\dif y\\
            &\geq (1-\lambda)^{\frac{n}{p}}|K|\int_{\R^n} e^{-h_{p,K}(y/\lambda)}\dif y \\
            &= (1-\lambda)^{\frac{n}{p}} \lambda^n |K|\int_{\R^n} e^{-h_{p,K}(y)}\dif y
            = \Big( (1-\lambda)^{\frac{1}{p}}\lambda\Big)^n \M_p(K).
        \end{aligned}
    \end{equation}
    It remains to maximize $f(\lambda)\defeq (1-\lambda)^{\frac{1}{p}}\lambda$. The derivative
\begin{equation}\label{lambdaOptimize}
    f'(\lambda)= -\frac{1}{p}(1-\lambda)^{\frac{1}{p}-1}\lambda+ (1-\lambda)^{\frac{1}{p}}= (1-\lambda)^{\frac{1}{p}-1} \left( -\frac{\lambda}{p}+ 1-\lambda\right), 
 \end{equation}
 is positive for $\lambda\in(0,\frac{p}{p+1})$ and non-positive
 for  $\lambda\in(\frac{p}{p+1},1)$, so plugging $\lambda=\frac{p}{p+1}$ in (\ref{mKmpKeq}) proves the claim.

    Finally, note
    \begin{equation*}
        \lim_{p\to \infty} \frac{p}{(1+p)^{1+\frac1p}}= 
        \frac{1}{(1+p)^{\frac1p}}- \frac{1}{(1+p)^{1+\frac1p}}
        =
        1,
    \end{equation*}
    thus $\lim_{p\to \infty}\M_p(K)= \M(K)$.
\end{proof}

\begin{remark}\label{isomorphicRemark}
For convex $K\subset \R^n$ with $b(K)=0$, and  any $\lambda\in (0,1)$, 
\begin{equation*}
    \begin{aligned}
        h_{K}(y)&= \left( \frac{1}{(n-1)!}\int_0^\infty r^{n-1} e^{-h_{K}(ry)}\dif r\right)^{-\frac1n} \\
        &\leq \left( \frac{1}{(n-1)!} \int_0^\infty r^{n-1} e^{-h_{p,K}(\frac{ry}{\lambda})+ \frac{n}{p}\log(1-\lambda)} \dif r\right)^{-\frac1n} 
        = \frac{\|y\|_{K^{\circ,p}}}{(1-\lambda)^{\frac1p}\lambda}, 
    \end{aligned}
\end{equation*}
where we used Lemma \ref{hpKineq} and Claim \ref{1homogeneous}.
So,
\begin{equation*}    K^{\circ}\subset K^{\circ,p}\subset \frac{1}{(1-\lambda)^{1/p}\lambda} K^{\circ}\subset \frac{(1+p)^{1+\frac1p}}{p}K^\circ,
\end{equation*}
(optimizing over $\lambda$ as in the proof 
of Lemma \ref{mKmpKineq}).
This yields inclusions independent of $K$ or the dimension. 
Thus for convex bodies with $b(K)=0$, all the $L^p$-polars $K^{\circ,p}$ are `isomorphic' to (each other and to) the classical polar body $K^\circ$. 
They are also `isomorphic' to the sub-level sets of $h_{1,K}$, cf. \cite[p. 16]{klartag-Emilman}, \cite[Lemma 2.2]{klartag-milman}.
Furthermore, the latter (at
least in the symmetric case) are `isomorphic'
to the Lutwak--Zhang centroid bodies from Remark \ref{LZRem} \cite[Lemma 2.3]{klartag-Emilman}.
Nonetheless, `isomorphic' in this context means that inclusions in both directions exist by dilations independent of dimension. Consequently, such equivalences are not typically helpful when one is concerned with sharp lower bounds as in the Mahler Conjectures. Given Lemma \ref{mKmpKineq} and the remarks in the Introduction, we believe that our $L^p$-polars could be helpful in the pursuit of sharp bounds, e.g.,
as in the Mahler Conjectures.
\end{remark}

\subsection{The cube}\label{S23}
The next lemma computes the $L^p$-Mahler volume of the cube. 
\begin{lemma}\label{cubeMp}
    For $p\in(0,\infty)$, 
    \begin{equation*}
        \M_p([-1,1]^n)= 4^n \left(\frac1p\int_0^\infty\left( \frac{y}{\sinh(y)}\right)^{1/p}\dif y\right)^{n}.
    \end{equation*}
\end{lemma}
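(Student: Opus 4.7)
The plan is to plug the explicit formula for $h_{p,[-1,1]^n}$ from Lemma \ref{cube_hp} directly into the definition \eqref{Mpdef} of $\M_p$, and exploit the product structure of the integrand to reduce to a one-dimensional integral.

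First I would note that since $|[-1,1]^n|=2^n$, we have
$$
\M_p([-1,1]^n) = 2^n \int_{\R^n} e^{-h_{p,[-1,1]^n}(y)}\,\dif y.
$$
By Lemma \ref{cube_hp}, $h_{p,[-1,1]^n}(y) = \tfrac{1}{p}\sum_{i=1}^n \log(\sinh(py_i)/(py_i))$, so the integrand factors:
$$
e^{-h_{p,[-1,1]^n}(y)} = \prod_{i=1}^n \Big(\tfrac{py_i}{\sinh(py_i)}\Big)^{1/p}.
$$
The factor $py_i/\sinh(py_i)$ is a positive even function of $y_i$ (both numerator and denominator are odd), so its $(1/p)$-th power is well-defined and even. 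By Tonelli's theorem, the $n$-dimensional integral splits as a product of $n$ identical one-dimensional integrals:
$$
\int_{\R^n} e^{-h_{p,[-1,1]^n}(y)}\,\dif y = \left(\int_{\R}\Big(\tfrac{pt}{\sinh(pt)}\Big)^{1/p}\dif t\right)^{n}.
$$

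Then I would use evenness to replace $\int_\R$ by $2\int_0^\infty$, and perform the substitution $u = pt$, $\dif u = p\,\dif t$, which converts $\int_0^\infty (pt/\sinh(pt))^{1/p}\,\dif t$ into $\tfrac{1}{p}\int_0^\infty (u/\sinh(u))^{1/p}\,\dif u$. Combining, the one-dimensional integral equals $\tfrac{2}{p}\int_0^\infty (y/\sinh(y))^{1/p}\,\dif y$, and raising to the $n$-th power and multiplying by $2^n$ yields
$$
\M_p([-1,1]^n) = 2^n\cdot\Big(\tfrac{2}{p}\Big)^n\Big(\int_0^\infty (y/\sinh(y))^{1/p}\,\dif y\Big)^n = 4^n\Big(\tfrac{1}{p}\int_0^\infty (y/\sinh(y))^{1/p}\,\dif y\Big)^n,
$$
as claimed.

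There is no real obstacle here: the argument is a direct computation once Lemma \ref{cube_hp} is in hand. The only small point worth checking is that the one-dimensional integral is finite, which follows from $y/\sinh(y) \sim 2y e^{-y}$ as $y\to\infty$ (so the integrand decays like $e^{-y/p}$ up to polynomial factors) and $y/\sinh(y)\to 1$ as $y\to 0$, ensuring integrability at both ends for every $p\in(0,\infty)$; this also justifies the use of Tonelli's theorem.
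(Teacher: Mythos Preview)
Your proof is correct and follows essentially the same approach as the paper: plug in Lemma \ref{cube_hp}, factor the integrand, split via Tonelli into a product of identical one-dimensional integrals, then use evenness and the substitution $u=pt$. Your additional remark verifying finiteness of the one-dimensional integral is a nice touch that the paper omits.
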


Note that 
\begin{equation*}    \M_p([-1,1]^n)= \left( \M_p([-1,1])\right)^n, 
\end{equation*}
in agreement with Remark \ref{tensor}.

\begin{proof}
    By Lemma \ref{cube_hp}, 
    \begin{equation*}
        \begin{aligned}
            \M_p([-1,1]^n)&= |[-1,1]^n| \int_{\R^n} e^{-h_{p,[-1,1]^n}(y)}\dif y= 2^n \int_{\R^n} \prod_{i=1}^n \left(\frac{py_i}{\sinh(py_i)}\right)^{1/p} \dif y \\
            &=  2^n\prod_{i=1}^n\int_{\R}\left(\frac{p y_i}{\sinh(p y_i)}\right)^{1/p}\dif y= 2^n \left(\int_{\R}\left( \frac{p y}{\sinh(py)}\right)^{1/p}\dif y\right)^{n}.
        \end{aligned}
    \end{equation*}
    The claim follows from the evenness of $\frac{py}{\sinh(py)}$ and the change of variables $z=py$.
\end{proof}

In the notation of \S\ref{BergmanSubSec}, B\l{}ocki obtained 
$|[-1,1]^n|^2\mathcal{K}_{T_{[-1,1]^n}}(0,0)= (\pi/4)^n$  \cite[(7)]{blocki}.
This agrees with our next corollary as $\M_1(K)= (4\pi)^n |K|^2\mathcal{K}_{T_K}(0,0)$ by \eqref{diagonal_berg}.

\begin{corollary}\label{cubeM1}
    $\M_1([-1,1]^n)= \pi^{2n}$.
\end{corollary}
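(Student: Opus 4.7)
The plan is to specialize Lemma \ref{cubeMp} to $p=1$ and then evaluate the resulting one-dimensional integral in closed form. Setting $p=1$ in the formula
\[
\M_p([-1,1]^n)= 4^n \left(\frac1p\int_0^\infty\Big( \frac{y}{\sinh(y)}\Big)^{1/p}\dif y\right)^{n}
\]
reduces the claim to the identity
\[
\int_0^\infty \frac{y}{\sinh(y)}\,\dif y = \frac{\pi^2}{4},
\]
since then $\M_1([-1,1]^n) = 4^n\cdot (\pi^2/4)^n = \pi^{2n}$.

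To evaluate the integral, I would expand $1/\sinh(y)$ as a geometric series in $e^{-2y}$. Specifically,
\[
\frac{1}{\sinh(y)} = \frac{2e^{-y}}{1-e^{-2y}} = 2\sum_{k=0}^\infty e^{-(2k+1)y}, \qquad y>0.
\]
The non-negativity of each term justifies interchanging sum and integral by Tonelli's theorem, giving
\[
\int_0^\infty \frac{y}{\sinh(y)}\,\dif y = 2\sum_{k=0}^\infty \int_0^\infty y\, e^{-(2k+1)y}\,\dif y = 2\sum_{k=0}^\infty \frac{1}{(2k+1)^2}.
\]

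Finally, the sum of reciprocals of odd squares equals $\pi^2/8$: indeed, by Euler,
\[
\sum_{k=0}^\infty \frac{1}{(2k+1)^2} = \sum_{n=1}^\infty \frac{1}{n^2} - \sum_{n=1}^\infty \frac{1}{(2n)^2} = \frac{\pi^2}{6} - \frac{1}{4}\cdot\frac{\pi^2}{6} = \frac{\pi^2}{8}.
\]
Combining the pieces yields $\int_0^\infty \frac{y}{\sinh(y)}\,\dif y = \pi^2/4$, completing the proof. There is no serious obstacle here: Lemma \ref{cubeMp} reduces the computation to a classical integral, and the only ingredient beyond routine manipulation is the Basel-type sum $\sum (2k+1)^{-2} = \pi^2/8$.
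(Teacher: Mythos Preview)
Your proof is correct and follows essentially the same route as the paper: specialize Lemma~\ref{cubeMp} to $p=1$, expand $1/\sinh(y)$ as a geometric series in $e^{-2y}$, integrate term by term, and evaluate the resulting odd-squares sum via the Basel problem. The only cosmetic differences are that the paper phrases the term-by-term integration via integration by parts rather than citing Tonelli, but the argument is the same.
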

\begin{proof}
    Setting $p=1$ in Lemma \ref{cubeMp}, $\M_1([-1,1]^n)=\left( 2\int_{\R}\frac{y}{\sinh(y)}\dif y\right)^n= \left(4\int_0^\infty \frac{y}{\sinh(y)}\dif y\right)^n$, because $y/\sinh(y)$ is even. Using $(1-x)^{-1}= \sum_{k=0}^\infty x^k$ for $0<x<1$, expand the integrand 
    \begin{equation*}
        \frac{y}{\sinh(y)}= \frac{2y}{e^{y}-e^{-y}}= \frac{2ye^{-y}}{1-e^{-2y}}= 2ye^{-y}\sum_{k=0}^\infty e^{-2ky}= \sum_{y=0} 2y e^{-(2k+1)y}. 
    \end{equation*}
    Therefore, by integration by parts
    \begin{equation*}
    \begin{aligned}
        \int_{0}^\infty \frac{y}{\sinh(y)}\dif y&= \sum_{k=0}^\infty \int_0^\infty 2y e^{-(2k+1)y}\dif y= \sum_{k=0}^\infty \frac{2}{2k+1} \int_0^\infty e^{-(2k+1)y}\dif y\\
        &= 2\sum_{k=0}\frac{1}{(2k+1)^2}= 2\left( \sum_{k=1}^\infty \frac{1}{k^2}-\sum_{k=1}^\infty \frac{1}{(2k)^2}\right)\\
        &= 2\left(\sum_{k=1}^\infty \frac{1}{k^2}- \frac14\sum_{k=1}^\infty\frac{1}{k^2}\right)= \frac{3}{2}\sum_{k=0}^\infty \frac{1}{k^2}= \frac32 \frac{\pi^2}{6}= \frac{\pi^2}{4}, 
    \end{aligned}
    \end{equation*}
    and hence $\M_1([-1,1]^n)= \left( 4\int_0^\infty \frac{y}{\sinh(y)}\dif y\right)^n= \pi^{2n}$.
\end{proof}

A numerical approximation of $\M_p([-1,1])$ gives following graph.
\begin{figure}[H]
    \centering
    \includegraphics[scale=.6]{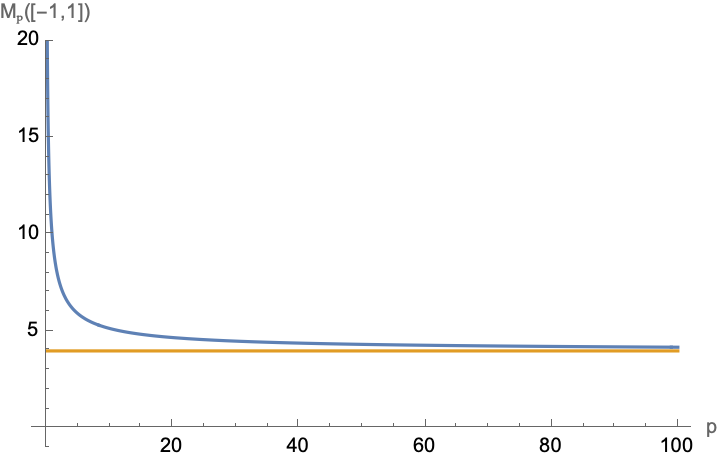}
    \caption{\small $\M_p([-1,1])$ for $p\in (0,100)$ compared to $\M([-1,1])=4$.}
\end{figure}

\subsection{Cube, diamond, and uniqueness of minimizers}
\label{hpDiamondSection}

Let $B_\infty^n=[-1,1]^n$ and $B_1^n=(B_\infty^n)^\circ$ be the cube and diamond (recall \eqref{BqnEq}).
The $L^p$-support function of the cube was computed in Lemma \ref{cube_hp}
and its $L^p$-Mahler volume is given by Lemma \ref{cubeMp}.
Lemma \ref{diamondpSupport} below is the considerably harder computation
of the $L^p$-support function of the diamond.

Lemmas \ref{cubeMp} and \ref{diamondpSupport} allow to 
compare the $L^p$-Mahler volumes of the cube and the diamond.
We carried this out numerically for $n=3$ and those computations lead
to Figure \ref{FigureDiamondCube} from the Introduction.
As discussed in \S\ref{LpMahlerConjSubSec}, this provides evidence 
that the cube is the unique minimizer for Conjecture \ref{pMahlerSym}. 

\begin{lemma}\label{diamondpSupport}
For $p\in(0,\infty)$,
    \begin{equation}
    \lb{diamondhpEq}
        h_{p, B_1^n}(y)= \frac1p\log\left( \frac{n!}{p^n}\sum_{j=1}^n \frac{y_j^{n-2} (e^{py_j} + (-1)^n e^{-py_j}) }{(y_j^2- y_1^2)\cdots (y_j^2-y_{j-1}^2)(y_j^2-y_{j+1}^2)\cdots (y_j^2-y_n^2)}\right).
    \end{equation}
\end{lemma}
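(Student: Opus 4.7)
The plan is to reduce to the case $p=1$ via Lemma \ref{list}(i), namely $h_{p,B_1^n}(y)=\tfrac{1}{p}h_{1,B_1^n}(py)$, and then directly evaluate the Laplace transform
$$I_n(y):=\int_{B_1^n}e^{\langle x,y\rangle}\,\dif x,$$
since $h_{1,B_1^n}(y)=\log(I_n(y)/|B_1^n|)$ and $|B_1^n|=2^n/n!$ is classical. The main computation is a closed form for $I_n(y)$.

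First I would split $B_1^n$ into the $2^n$ essentially disjoint signed simplices $\varepsilon\cdot\Delta_n=\{x:\varepsilon_ix_i\ge 0,\ \sum\varepsilon_ix_i\le 1\}$ indexed by $\varepsilon\in\{\pm1\}^n$. Changing variables $x_i=\varepsilon_iu_i$ in each piece gives
$$I_n(y)=\sum_{\varepsilon\in\{\pm1\}^n}J_n(\varepsilon\cdot y),\qquad J_n(z):=\int_{\Delta_n}e^{\langle u,z\rangle}\,\dif u,$$
where $(\varepsilon\cdot y)_i=\varepsilon_iy_i$. The second ingredient is the classical partial-fraction formula
$$J_n(z)=\sum_{j=0}^n\frac{e^{z_j}}{\prod_{k\ne j}(z_j-z_k)},\qquad z_0:=0,$$
provable by induction on $n$ via Fubini, or equivalently by the Brion/Lawrence formula for the simple polytope $\Delta_n$.

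Next I would sum over $\varepsilon$. The contribution from $j=0$ is $\sum_\varepsilon\big((-1)^n\prod_k\varepsilon_ky_k\big)^{-1}$, which vanishes since $\sum_\varepsilon\prod_k\varepsilon_k=\prod_i\big(\sum_{\varepsilon_i}\varepsilon_i\big)=0$. For each $j\ge 1$, I would factor $\varepsilon_j$ out of each difference $\varepsilon_jy_j-\varepsilon_ky_k=\varepsilon_j(y_j-\varepsilon_j\varepsilon_ky_k)$ and substitute $\delta_k:=\varepsilon_j\varepsilon_k$ for $k\ne j$; the inner sum over $(\delta_k)\in\{\pm1\}^{n-1}$ then factors over $k$, and each factor collapses via
$$\frac{1}{y_j-y_k}+\frac{1}{y_j+y_k}=\frac{2y_j}{y_j^2-y_k^2}.$$
The remaining outer sum $\sum_{\varepsilon_j=\pm1}e^{\varepsilon_jy_j}/\varepsilon_j^n$ produces $e^{y_j}+(-1)^ne^{-y_j}$. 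Combining,
$$I_n(y)=2^{n-1}\sum_{j=1}^n\frac{y_j^{n-2}\big(e^{y_j}+(-1)^ne^{-y_j}\big)}{\prod_{k\ne j}(y_j^2-y_k^2)};$$
dividing by $|B_1^n|$ and rescaling $y\mapsto py$ via Lemma \ref{list}(i) yields \eqref{diamondhpEq}.

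The main obstacle is the bookkeeping in the sign-summation step: the parity of $n$ enters both through the sign of $\varepsilon_j^n$ in the outer sum (producing the $(-1)^n$ in the exponential) and through the $n-1$ pair-collapses in the inner sum (producing the $2^{n-1}$ and the non-obvious exponent $n-2$ on $y_j$, which traces back to the factor $z_j-z_0=\varepsilon_jy_j$ in the denominator of $J_n$). A secondary point is that the right-hand side of \eqref{diamondhpEq} is singular on the hyperplanes $\{y_j^2=y_k^2\}$, so the identity is first established on the open dense set where the $y_j^2$ are pairwise distinct and then extended to all of $\R^n$ by continuity of $h_{p,B_1^n}$ (Lemma \ref{hpKconvex}), interpreting the singular-looking right-hand side as a removable-singularity limit.
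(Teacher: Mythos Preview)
Your approach is correct and takes a genuinely different route from the paper. The paper argues by induction on $n$: it first pairs the $2^n$ orthants into a product of hyperbolic cosines, $e^{ph_{p,B_1^n}(y/p)}=n!\int_{\Delta_n}\prod_i\cosh(x_iy_i)\,\dif x$, then slices $\Delta_{n+1}$ in the last coordinate to reduce to the $n$-dimensional formula via a one-variable integral, and closes the induction with the partial-fraction identity of Claim~\ref{peculiarClaim}. You instead keep the $2^n$ orthants separate, invoke the known closed form $J_n(z)=\sum_{j=0}^n e^{z_j}/\prod_{k\ne j}(z_j-z_k)$ for the simplex Laplace transform, and carry out the sign-summation directly; the collapse $\frac{1}{y_j-y_k}+\frac{1}{y_j+y_k}=\frac{2y_j}{y_j^2-y_k^2}$ plays the role that Claim~\ref{peculiarClaim} plays in the paper. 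This is shorter and more modular once $J_n$ is granted (itself a one-line induction, or an instance of Brion's formula), and it makes the exponent $n-2$ on $y_j$ and the parity sign $(-1)^n$ appear in a single stroke rather than through inductive bookkeeping. One minor bookkeeping remark: carrying your $I_n(y)=2^{n-1}\sum_j(\cdots)$ through the division by $|B_1^n|=2^n/n!$ and the rescaling $y\mapsto py$ produces the prefactor $n!/(2p^n)$ rather than $n!/p^n$; this agrees with the paper's own $n=3$ display at the end of \S\ref{hpDiamondSection} (where $6/p^3$ appears after absorbing the $2$ from $e^{py_j}-e^{-py_j}=2\sinh(py_j)$) and with the $n=1$ case via Lemma~\ref{cube_hp}, so the discrepancy is a harmless constant in \eqref{diamondhpEq} rather than a gap in your method.
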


The special case $p=1$ 
of \eqref{diamondhpEq} was stated by B\l{}ocki in terms of Bergman kernels without proof \cite[pp. 96--97]{blocki2}. 

For the proof of Lemma \ref{diamondpSupport} we require the following claim. 

\begin{claim}\label{peculiarClaim}
For $n\geq 2$, and distinct $y_1, \ldots, y_n \in \R$,
\begin{equation*}
    \sum_{j=1}^n \frac{y_j^k}{(y_j-y_1)\cdots (y_j-y_{j-1}) (y_j-y_{j+1})\cdots (y_j-y_n)}= \begin{cases}
        0, & 0\leq k<n-1, \\
        1, & k=n-1.
    \end{cases}
\end{equation*}
\end{claim}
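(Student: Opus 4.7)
The plan is to recognize this sum as the leading coefficient of a Lagrange interpolating polynomial. For any polynomial $P$ of degree at most $n-1$ and any choice of distinct nodes $y_1, \ldots, y_n$, the Lagrange interpolation formula asserts
\begin{equation*}
    P(y) = \sum_{j=1}^n P(y_j) \prod_{i\neq j}\frac{y-y_i}{y_j-y_i},
\end{equation*}
since both sides are polynomials of degree at most $n-1$ that agree at the $n$ points $y_1,\ldots,y_n$, hence are identically equal.

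Now I would extract the coefficient of $y^{n-1}$ on both sides. Each Lagrange basis polynomial $\prod_{i\neq j}(y-y_i)/(y_j-y_i)$ has leading term $y^{n-1}/\prod_{i\neq j}(y_j-y_i)$, so comparing leading coefficients yields
\begin{equation*}
    [y^{n-1}]P(y) \;=\; \sum_{j=1}^n \frac{P(y_j)}{\prod_{i\neq j}(y_j-y_i)}.
\end{equation*}
Specializing to $P(y)=y^k$ with $0\le k\le n-1$, the left-hand side is $0$ when $k<n-1$ and equals $1$ when $k=n-1$, which is exactly the claim.

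There is no real obstacle here; it is a direct consequence of the uniqueness of polynomial interpolation. Equivalently, one can view the sum as the divided difference $f[y_1,\ldots,y_n]$ for $f(y)=y^k$, whose well-known value is $0$ for $k<n-1$ and $1$ for $k=n-1$. Either interpretation yields the identity in a single step, so no induction or case analysis on $n$ or $k$ is necessary.
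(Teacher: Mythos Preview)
Your proof is correct and considerably shorter than the paper's. You identify the sum as the coefficient of $y^{n-1}$ in the Lagrange interpolant of $P(y)=y^k$ at the nodes $y_1,\ldots,y_n$, and read off the answer immediately. The paper instead treats $y_1$ as a complex variable $z$ and studies the rational function
\[
f(z)=\frac{z^k}{(z-y_2)\cdots(z-y_n)}+\sum_{j=2}^n\frac{y_j^k}{(y_j-z)(y_j-y_2)\cdots(y_j-y_n)},
\]
shows by a direct algebraic manipulation that the apparent poles at $y_2,\ldots,y_n$ are removable, concludes that $f$ is a polynomial, and then invokes Liouville's theorem together with $\lim_{z\to\infty}f(z)\in\{0,1\}$ to deduce that $f$ is the desired constant. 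Your argument is purely algebraic and self-contained, avoiding the complex-analytic detour; the paper's route, while longer, is perhaps in keeping with the complex-geometric flavor of the article. Both arrive at the same identity, and your divided-difference remark is an equally valid one-line justification.
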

\begin{proof}
Consider the rational function
\begin{equation*}
    f: \C\ni z\mapsto \frac{z^k}{(z-y_2)\cdots(z-y_n)}+ \sum_{j=2}^n \frac{y_j^k}{(y_j-z)(y_j-y_2)\cdots (y_j-y_n)}\in\C\cup\{\infty\}, 
\end{equation*}
i.e., think of $y_1$ as a complex variable. 

The claim is that $f$ is a polynomial. It is enough to show that its poles at $y_2, \ldots, y_n$ are removable singularities. By symmetry, it is enough to do it for $y_2$. There are only two terms involving $(z-y_2)$ in the denominator. Write their sum 
as,\begin{equation*}    \begin{gathered}
        \frac{z^k}{(z-y_2)\ldots(z-y_n)}+ \frac{y_2^k}{(y_2-z)\cdots (y_2-y_n)}
        = \frac{\left( \frac{z^k}{(z-y_3)\cdots (z-y_n)}-\frac{y_2^k}{(y_2-y_3)\cdots(y_2-y_n)}\right)}{z-y_2} 
   \end{gathered}
\end{equation*}
   We claim the numerator can be written in the form $(z-y_2)p(z)$
   for some polynomial $p$.
    Indeed,
\begin{equation*}
\begin{aligned}
&\frac{z^k}{(z-y_3)\cdots (z-y_n)}-\frac{y_2^k}{(y_2-y_3)\cdots(y_2-y_n)} \\
        &= \frac{z^k}{(z-y_3)\cdots(z-y_n)}-\frac{y_2^k}{(z-y_3)\cdots (z-y_n)}
        +\frac{y_2^k}{(z-y_3)\cdots (z-y_n)}-\frac{y_2^k}{(y_2-y_3)\cdots (y_2-y_n)} \\
        &=\frac{(z-y_2)(z^{k-1}+ \cdots+ y_2^{k-1})}{(z-y_3)\cdots(z-y_n)} - y_2^k\frac{(z-y_3)\cdots(z-y_n)- (y_2-y_3)\cdots (y_2-y_n)}{(z-y_3)\cdots (z-y_n)(y_2-y_3)\cdots (y_2-y_n)}\\
         &=\frac{(z-y_2)(z^{k-1}+ \cdots+ y_2^{k-1})}{(z-y_3)\cdots(z-y_n)} - y_2^k\frac{(z-y_2)p(z)}{(z-y_3)\cdots (z-y_n)(y_2-y_3)\cdots (y_2-y_n)} \\
         &=\frac{z^{k-1}+ \cdots+ y_2^{k-1}}{(z-y_3)\cdots(z-y_n)} - y_2^k\frac{p(z)}{(z-y_3)\cdots (z-y_n)(y_2-y_3)\cdots (y_2-y_n)}, 
    \end{aligned}
\end{equation*}
where $p(z)$ is a polynomial such that
\begin{equation*}
    (z-y_3)\cdots(z-y_n)- (y_2-y_3)\cdots (y_2-y_n)= (z-y_2) p(z), 
\end{equation*}
since the left-hand side is a polynomial that vanishes at $y_2$. 
In sum, $f$ is a polynomial. In addition, 
\begin{equation*}
    \lim_{z\to\infty} f(z)=\begin{cases}
        0, & k< n-1, \\
        1, & k =n-1,
    \end{cases}
\end{equation*}
proving, by Liouville's theorem \cite[p. 122]{ahlfors}, that $f$ is constant (as a bounded, entire function) and equal to $0$ when $0<k<n-1$, or $1$ when $k=n-1$. 
\end{proof}

\begin{proof}[Proof of Lemma \ref{diamondpSupport}]
    Since $B_1^n$ is the union of $2^n$ simplices of volume $1/n!$, $|B_1^n|= 2^n/n!$. In addition, by splitting the integral into $2^n$ integrals over the simplex,
    \begin{equation}\label{DiamondInt1}
    \begin{aligned}
        e^{ph_{p, B_1^n}(y/p)}=\int_{B_1^n} e^{\langle x,y\rangle}\frac{\dif x}{|B_1^n|}
        &= n! \int_{\Delta_n} \cosh(x_1 y_1)\cdots \cosh(x_n y_n)\dif x.
    \end{aligned}
    \end{equation}
    The rest of the proof is by induction on $n$. 
    
    For $n=2$, by \eqref{DiamondInt1}, 
    \begin{equation*}
        \begin{aligned}
            e^{p h_{p, B_1^2}(y/p)}&= 2\int_{\Delta_2} \cosh(x_1 y_1)\cosh(x_2y_2)\dif x_1\dif x_2 \\
            & =2\int_0^{1} \cosh(x_1 y_1)\int_{0}^{1-x_1} \cosh(x_2 y_2)\dif x_2\dif x_1\\
            &= \int_0^1 \cosh(x_1y_1) \frac{\sinh((1-x_1)y_2)}{y_2}\dif x_1\\
            &= \frac{1}{y_2}\left[\frac{y_1\sinh(y_1x_1)\sinh((1-x_1)y_2)+ y_2\cosh(x_1 y_1)\cosh((1-x_1)y_2)}{y_1^2-y_2^2} 
            \right]_{x_1=0}^1\\
            &= \frac{\cosh(y_1)- \cosh(y_2)}{y_1^2-y_2^2}= \frac{\cosh(y_1)}{y_1^2-y_2^2}+ \frac{\cosh(y_2)}{y_2^2-y_1^2},
        \end{aligned}
    \end{equation*}
where,
\begin{equation*}
    \int \cosh(ax+c) \sinh(bx+d)\dif x= \frac{a\sinh(ax+c)\sinh(bx+d)- b\cosh(ax+c)\cosh(bx+d)}{a^2-b^2}+ C,
\end{equation*}
was used.

For $n\geq 2$, by \eqref{DiamondInt1}, 
\begin{equation}\label{DiamondInt2}
    \begin{aligned}
        \frac{e^{p h_{p, B_1^{n+1}}(y/p)}}{(n+1)!}
        &= \int_{\Delta_{n+1}} \cosh(x_1 y_1)\cdots \cosh(x_{n+1}y_{n+1}) \dif x \\
        &= \int_{x_{n+1}=0}^1 \cosh(x_{n+1}y_{n+1}) \int_{(1-x_{n+1})\Delta_n} \cosh(x_1y_1)\cdots \cosh(x_n y_n)\dif x\\
        &= \int_{x_{n+1}=0}^1 \cosh(x_{n+1} y_{n+1}) \frac{e^{ph_{p, B_1^n}(\frac{(1-x_{n+1})y}{p})}}{n!} (1-x_{n+1})^n\dif x_{n+1}.
    \end{aligned}
\end{equation}
because by \eqref{DiamondInt1} and changing variables, 
\begin{equation*}
    \begin{aligned}
     &\int_{(1-x_{n+1})\Delta_n} \cosh(x_1 y_1)\cdots \cosh(x_n y_n)\dif x\\
        &\qq\q= \int_{\Delta_n} \cos((1-x_{n+1})z_1 y_1)\cdots \cosh((1-x_{n+1})z_n y_n) (1-x_{n+1})^n \dif z \\
        &\qq\q= \frac{e^{ph_{p,B_1^n}(\frac{(1-x_{n+1})y}{p})} }{n!} (1-x_{n+1})^n. 
    \end{aligned}
\end{equation*}
By induction, 
\begin{equation}\label{DiamondInt3}
    \begin{aligned}
        &e^{ph_{p, B_1^n}(\frac{(1-x_{n+1})y}{p})}\\
        &= \frac{n!}{p^n}\sum_{j=1}^n   \frac{(\frac{(1-x_{n+1})y_j}{p})^{n-2} (e^{(1-x_{n+1})y_j}+ (-1)^n e^{-(1-x_{n+1})y_j})}{(\frac{1-x_{n+1}}{p})^{2(n-1)} (y_j^2- y_1^2) \cdots (y_j^2-y_{j-1}^2)(y_j^2- y_{j+1}^2)\cdots (y_j^2- y_n^2)} \\
        &=\frac{n!}{(1-x_{n+1})^n} \sum_{j=1}^n  \frac{y_j^{n-2} (e^{(1-x_{n+1})y_j}+ (-1)^n e^{-(1-x_{n+1})y_j})}{(y_j^2- y_1^2) \cdots (y_j^2-y_{j-1}^2)(y_j^2- y_{j+1}^2)\cdots (y_j^2- y_n^2)}.
    \end{aligned}
\end{equation}
Therefore, by \eqref{DiamondInt2} and \eqref{DiamondInt3},
\begin{equation}\label{DiamondInduction1}
    \begin{gathered}
        \frac{e^{p h_{p, B_1^{n+1}}(y/p)}}{(n+1)!} =\sum_{j=1}^n \frac{y_j^{n-2} \int_0^1 \cosh(x_{n+1}y_{n+1}) (e^{(1-x_{n+1})y_j}+(-1)^n e^{-(1-x_{n+1})y_j})\dif x_{n+1}}{(y_j^2- y_1^2)\cdots (y_j^2- y_{j-1}^2)(y_j^2- y_{j+1}^2)\cdots (y_j^2- y_n^2)}. 
    \end{gathered}
\end{equation}
To complete the proof, compute
\begin{equation}\label{DiamondInt4}
    \begin{aligned}
    \int_{0}^1 \cosh(x_{n+1}y_{n+1}) e^{(1-x_{n+1})y_{j}}\dif x_{n+1}
    &= e^{y_j} \int_0^1 \cosh(x_{n+1}y_{n+1}) e^{-x_{n+1}y_j}\dif x_{n+1} \\
    &= \frac12 e^{y_j}\int_0^1 e^{x_{n+1}(y_{n+1}-y_j)}+ e^{-x_{n+1}(y_{n+1}+y_j)}\dif x_{n+1} \\
    &= \frac12 e^{y_j} \left( \frac{e^{y_{n+1}-y_j}-1}{y_{n+1}-y_j}- \frac{e^{-(y_{n+1}+y_j)}-1}{y_{n+1}+ y_j}\right) \\
    &= \frac12 \left( \frac{e^{y_{n+1}}-e^{y_j}}{y_{n+1}-y_j}- \frac{e^{-y_{n+1}}-e^{y_j}}{y_{n+1}+ y_j}\right) \\
    &= \frac{y_j e^{y_j}- y_j \cosh(y_{n+1})- y_{n+1}\sinh(y_{n+1})}{y_j^2-y_{n+1}^2}, 
    \end{aligned}
\end{equation}
and hence, replacing $y_j$ by $-y_j$ in \eqref{DiamondInt4},
\begin{equation}\label{DiamondInt5}
    \int_{0}^1 \cosh(x_{n+1}y_{n+1}) e^{-(1-x_{n+1})y_j}\dif x_{n+1}= \frac{-y_je^{-y_j}+ y_j \cosh(y_{n+1})-y_{n+1}\sinh(y_{n+1})}{y_j^2-y_{n+1}^2}. 
\end{equation}
Therefore, by \eqref{DiamondInt4} and \eqref{DiamondInt5}, 
\begin{equation}\label{DiamondInduction2} 
    \begin{aligned}
   &\int_0^1 \cosh(x_{n+1})\Big( e^{(1-x_{n+1}y_j)}+ (-1)^n e^{-(1-x_{n+1})y_j}\Big)\dif x_{n+1}\\
        &= \frac{y_j e^{y_j}+ (-1)^{n+1} y_j e^{y_j}}{y_{j}^2-y_{n+1}^2} 
        -\frac{(1-(-1)^n) y_j \cosh(y_{n+1})}{y_j^2-y_{n+1}^2} 
        -\frac{(1+(-1)^n)y_{n+1}\sinh(y_{n+1})}{y_j^2-y_{n+1}^2}.
    \end{aligned}
\end{equation}
By \eqref{DiamondInduction1}, \eqref{DiamondInduction2} and Claim \ref{peculiarClaim}, 
{\allowdisplaybreaks
\begin{align*}
    &\frac{1}{(n+1)!} e^{ph_{p, B_1^{n+1}}(y/p)}\\ 
    &\qq=\sum_{j=1}^n \frac{y_j^{n-2} \int_0^1 \cosh(x_{n+1}y_{n+1}) (e^{(1-x_{n+1})y_j}+(-1)^n e^{-(1-x_{n+1})y_j})\dif x_{n+1}}{(y_j^2- y_1^2)\cdots (y_j^2- y_{j-1}^2)(y_j^2- y_{j+1}^2)\cdots (y_j^2- y_n^2)} \\
    &\qq= \sum_{j=1}^n \frac{y_j^{n-1} (e^{y_j}+ (-1)^{n+1} e^{-y_j})}{(y_j^2- y_1^2)\cdots (y_j^2- y_{j-1}^2)(y_j^2- y_{j+1}^2)\cdots (y_j^2- y_n^2)(y_j^2-y_{n+1}^2)} \\
    &\qq\q-\Big(1-(-1)^n\Big)\sum_{j=1}^n \frac{y_j^{n-1}\cosh(y_{n+1})}{(y_j^2- y_1^2)\cdots (y_j^2- y_{j-1}^2)(y_j^2- y_{j+1}^2)\cdots (y_j^2- y_n^2)(y_j^2-y_{n+1}^2)} \\
    &\qq\q-\Big(1+(-1)^n\Big) \sum_{j=1}^n 
    \frac{y_j^{n-2}y_{n+1}\sinh(y_{n+1})}{(y_j^2- y_1^2)\cdots (y_j^2- y_{j-1}^2)(y_j^2- y_{j+1}^2)\cdots (y_j^2- y_n^2)(y_j^2-y_{n+1}^2)} \\
    &\qq= \sum_{j=1}^n \frac{y_j^{n-1} (e^{y_j}+ (-1)^{n+1} e^{-y_j})}{(y_j^2- y_1^2)\cdots (y_j^2- y_{j-1}^2)(y_j^2- y_{j+1}^2)\cdots (y_j^2- y_n^2)(y_j^2-y_{n+1}^2)} \\
    &\qq\q+\Big(1-(-1)^n\Big)\cosh(y_{n+1})\frac{y_{n+1}^{n-1}}{(y_{n+1}^2-y_1^2)\cdots (y_{n+1}^2-y_n^2)} \\
    &\qq\q+\Big(1+(-1)^n\Big)\sinh(y_{n+1})\frac{y_{n+1}^{n-1}}{(y_{n+1}^2-y_1^2)\cdots (y_{n+1}^2-y_n^2)} \\
     &\qq= \sum_{j=1}^n \frac{y_j^{n-1} (e^{y_j}+ (-1)^{n+1} e^{-y_j})}{(y_j^2- y_1^2)\cdots (y_j^2- y_{j-1}^2)(y_j^2- y_{j+1}^2)\cdots (y_j^2- y_n^2)(y_j^2-y_{n+1}^2)} \\
    &\qq\q+\Big(e^y+ (-1)^{n+1} e^{-y}\Big)\frac{y_{n+1}^{n-1}}{(y_{n+1}^2-y_1^2)\cdots (y_{n+1}^2-y_n^2)}, 
\end{align*}
}
as desired. 
\end{proof}
Therefore, in dimension $n=3$, for distinct values of $x,y$ and $z$,
\begin{equation*}
    h_{p, B_1^3}(x,y,z)= \frac1p \log\left[ \frac{6}{p^3}\left( \frac{x\sinh(px)}{(x^2-y^2)(x^2-z^2)}+ \frac{y\sinh(py)}{(y^2-x^2)(y^2-z^2)}+ \frac{z\sinh(pz)}{(z^2-x^2)(z^2-y^2)}\right)\right], 
\end{equation*}
which smoothly extends to $\R^3$. In particular, 
\begin{equation*}
    h_{p,B_1^3}(x,y,z)= \begin{cases}
    \frac1p\log\left[\frac{6}{p^3}\left(\frac{p\cosh(px)}{2(x^2-z^2)}-\frac{x^2+z^2}{(x^2-z^2)^2}\frac{\sinh(px)}{2x}+ \frac{z\sinh(pz)}{(x^2-z^2)^2}\right)\right], & x=y\neq z, \\
     \frac1p\log\left[\frac{6}{p^3}\left(\frac{p\cosh(px)}{2(x^2-y^2)}-\frac{x^2+y^2}{(x^2-y^2)^2}\frac{\sinh(px)}{2x}+\frac{y\sinh(py)}{(x^2-y^2)^2}\right)\right], & x=z\neq y,\\
    \frac1p\log\left[\frac{6}{p^3}\left(\frac{p\cosh(py)}{2(y^2-x^2)}-\frac{y^2+x^2}{(y^2-x^2)^2}\frac{\sinh(py)}{2y}+\frac{x\sinh(px)}{(y^2-x^2)^2}\right)\right], & y=z\neq x, \\
    \frac1p\log\left[ \frac{6}{p^3}\left(\frac{xp\cosh(px)- \sinh(px)+ x^2p^2\sinh(px)}{8x^3}\right)\right], & x=y=z\neq 0, \\
    0, & x=y=z=0. 
    \end{cases}
\end{equation*}

\section{\texorpdfstring{The $L^p$-Santal\'o point}{The Lᵖ-Santaló point}}\label{S3}
In this section, we prove Proposition \ref{santalo_point_thm}. 

First, let us elucidate the similarities
and differences from the case $p=\infty$.
    The Santal\'o point of $K$ is the unique point $x_{\infty, K}\in\mathrm{int}\,K$ for which $b((K-x_{\infty, K})^\circ)=0$ \cite[(2.3)]{santalo}. This is equivalent to $b(h_{K-x_{\infty, K}})=0$ since 
    \begin{equation}\label{bhK}
    b(h_K)= (n+1) b(K^\circ). 
    \end{equation}
    However, since $h_{p,K}$ are not 1-homogeneous for $p<\infty$, it is not in general true that $b(K^{\circ,p})$ vanishes when $b(h_{p,K})$ does.
    To verify \eqref{bhK}, first compute $V(h_K)$. Since $h_K$ is 1-homogeneous and $h_K= \|\cdot\|_{K^{\circ}}$, by Claim \ref{1homogeneous} and \eqref{normVolume},
    \begin{equation}\label{KcircVolume}
        \begin{aligned}
           V(h_K)&= \int_{\R^n} e^{-h_K(y)}\dif y= \int_{\partial B_2^n}\int_0^\infty r^{n-1} e^{-h_K(ru)}\dif r\dif u= (n-1)! \int_{\partial B_2^n}\frac{\dif u}{h_K(u)^{n}}= n! |K^{\circ}|.
        \end{aligned}
    \end{equation}
    Another way to see \eqref{KcircVolume} is to start with \eqref{Mpdef} and \eqref{MpKeq}, i.e.,
    \begin{equation}\label{VhpEq}
    V(h_{p,K})= n! |K^{\circ,p}|,
    \end{equation}
    and take $p\to\infty$.

    For the barycenters, compute in polar coordinates, 
    \begin{equation}\label{bKcircp}
    \begin{aligned}
        b(K^{\circ,p})&= \frac{1}{|K^{\circ,p}|}\int_{\{\|y\|_{K^{\circ,p}}\leq 1\}} y\dif y\\
        &=\frac{1}{|K^{\circ,p}|}\int_{\{(r,u)\in (0,\infty)\times \partial B_2^n: \|ru\|_{K^{\circ,p}}\leq 1\}} ru r^{n-1}\dif r\dif u \\
        &= \frac{1}{|K^{\circ,p}|}\int_{\partial B_2^n} \int_{r=0}^{1/\|u\|_{K^{\circ,p}}} r^{n}u \dif r\dif u\\
        &= \frac{1}{|K^{\circ,p}|}\frac{1}{n+1} \int_{\partial B_2^n} \frac{u}{\|u\|_{K^{\circ,p}}^{n+1}}\dif u\\
        &= \frac{1}{n+1}\frac{1}{|K^{\circ,p}|}\int_{\partial B_2^n}u \left(\frac{1}{(n-1)!}\int_0^\infty r^{n-1} e^{-h_{p,K}(ru)}\dif r \right)^{\frac{n+1}{n}}\dif u.
    \end{aligned}
    \end{equation}
    In addition, by \eqref{VhpEq}, 
    \begin{equation}\label{bhpK}
    \begin{aligned}
       b(h_{p,K})&= \frac{1}{V(h_{p,K})}\int_{\R^n}y e^{-h_{p,K}(y)}\dif y= \frac{1}{|K^{\circ,p}|} \frac{1}{n!}\int_{\partial B_2^n}u \int_0^\infty r^n e^{-h_{p,K}(ru)}\dif r\dif u. 
    \end{aligned}
    \end{equation}
    For $p=\infty$, since $h_{\infty, K}= h_K$ is homogeneous. Claim \ref{1homogeneous} gives, 
    \begin{equation}\label{eq1101}
        \begin{aligned}
            \left(\frac{1}{(n-1)!}\int_0^\infty r^{n-1} e^{-h_{K}(ru)}\dif r \right)^{\frac{n+1}{n}}= \left( \frac{1}{h_K(u)^n}\right)^{\frac{n+1}{n}}= \frac{1}{h_K(u)^{n+1}}= \frac{1}{n!}\int_{0}^\infty r^n e^{-h_{p,K}(ru)}\dif r,
        \end{aligned}
    \end{equation}
    so \eqref{bhK} follows from \eqref{bKcircp}--\eqref{eq1101},
    but  without homogeneity such a relation
    does not hold.
   
    \begin{remark}
    While \eqref{bhK} does not hold for all $p$, one can show a weaker inequality of the form 
    \begin{equation*}
        \left(\frac{1}{(n-1)!} \int_0^\infty r^{n-1} e^{-h_{p,K}(ru)}\dif r\right)^{\frac{n+1}{n}} \leq  (n+1)\frac{\|e^{-h_{p,K}}\|_\infty^{\frac1n}}{(n!)^{\frac1n}}\,\frac{1}{n!}\int_0^\infty r^n e^{-h_{p,K}(ru)}\dif r,
    \end{equation*}
    by using \cite[Lemma 2.2.4]{brazitikos_etal}.
\end{remark}

The proof of Proposition \ref{santalo_point_thm} is based on two key lemmas, 
proved in
\S\ref{Lemma4.2pf} and \S\ref{Lemma4.3pf}.

\begin{lemma}\label{finiteness_bodies}
Let $p\in(0,\infty]$. For a convex body $K\subset \R^n$, $\M_p(K-x)<\infty$ if and only if $x\in \mathrm{int}\,K$.
\end{lemma}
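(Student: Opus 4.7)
The plan is to treat the two implications separately, using the explicit form of $h_{p,K-x}$ given by Lemma \ref{list}(ii), namely $h_{p,K-x}(y)=h_{p,K}(y)-\langle x,y\rangle$, and leveraging machinery already developed for Theorem \ref{LpKwelldefined}.

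\textbf{Finite case ($x\in\mathrm{int}\,K$).}  I would argue essentially as in Lemma \ref{LpKbounded}. Since $x\in\mathrm{int}\,K$, there exists $\e>0$ with $[-\e,\e]^n\subset K-x$. The estimate \eqref{hpKrEq}, applied to $K-x$, yields
\begin{equation*}
h_{p,K-x}(y)\ge h_{p,[-\e,\e]^n}(y)+\tfrac{1}{p}\log\tfrac{(2\e)^n}{|K|}.
\end{equation*}
By the explicit formula in Lemma \ref{cube_hp}, $h_{p,[-\e,\e]^n}(y)=\tfrac{1}{p}\sum_{i}\log\bigl(\sinh(p\e y_i)/(p\e y_i)\bigr)$, which grows linearly in $|y|$ at infinity. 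Hence $e^{-h_{p,K-x}(y)}$ decays exponentially in each coordinate, so $\M_p(K-x)=|K|\int_{\R^n}e^{-h_{p,K-x}(y)}\dif y<\infty$.

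\textbf{Infinite case ($x\notin\mathrm{int}\,K$).} Since $K$ is convex, $0\notin\mathrm{int}(K-x)$, so by the Hahn--Banach separation theorem there is a nonzero $u\in\R^n$ with $\langle z,u\rangle\le 0$ for every $z\in K-x$. Thus for every $t\ge 0$,
\begin{equation*}
e^{p\,h_{p,K-x}(tu)}=\int_{K-x}e^{pt\langle z,u\rangle}\,\tfrac{\dif z}{|K|}\le 1,\qquad\text{i.e.}\qquad h_{p,K-x}(tu)\le 0.
\end{equation*}
By convexity of $h_{p,K-x}$ (Lemma \ref{hpKconvex}), for any $v\in u^\perp$ and $t\ge 0$,
\begin{equation*}
h_{p,K-x}(tu+v)\le \tfrac12 h_{p,K-x}(2tu)+\tfrac12 h_{p,K-x}(2v)\le \tfrac12 h_{p,K-x}(2v).
\end{equation*}
Integrating on the half-tube $\{tu+v:t\ge 0,\;v\in u^\perp,\;|v|<1\}$ (using Fubini with the isometric decomposition $\R^n\cong \R u\oplus u^\perp$) gives
\begin{equation*}
\int_{\R^n}e^{-h_{p,K-x}(y)}\dif y\;\ge\;\Bigl(\int_0^\infty \dif t\Bigr)\Bigl(\int_{u^\perp,|v|<1}e^{-\frac12 h_{p,K-x}(2v)}\dif v\Bigr)=+\infty,
\end{equation*}
the transverse factor being a finite positive number since $h_{p,K-x}$ is continuous and the region of integration compact. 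Therefore $\M_p(K-x)=\infty$.

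\textbf{The main obstacle} is the converse direction: the separation argument only gives non-positivity of $h_{p,K-x}$ on a one-dimensional ray, which has Lebesgue measure zero, so by itself does not preclude integrability. The convexity trick $h(tu+v)\le\tfrac12 h(2tu)+\tfrac12 h(2v)$ is what amplifies ``bounded on a ray'' to ``bounded on an $n$-dimensional half-tube,'' and this is the crux of the proof. For $p=\infty$ the same reasoning applies verbatim, using that $h_{K-x}(tu)\le 0$ and that $h_{K-x}$ is (positively) 1-homogeneous and convex; alternatively, one can deduce the $p=\infty$ case directly from the finite-$p$ case by Corollary \ref{hpLimit} and Fatou's lemma.
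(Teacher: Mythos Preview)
Your proof is correct. The finite direction is essentially the paper's Claim~\ref{MpFiniteClaim}: both use the cube comparison \eqref{hpKrEq} to dominate $e^{-h_{p,K-x}}$ by a multiple of $e^{-h_{p,[-\e,\e]^n}}$ and then invoke the explicit finiteness of $\M_p([-1,1]^n)$. The infinite direction genuinely differs. The paper's argument works in polar coordinates: from the separating direction $u$ it obtains $c:=\int_{K-x}e^{p\langle z,-u\rangle}\frac{\dif z}{|K|}<1$, then uses continuity in $v\in\partial B_2^n$ to find a spherical cap $U\ni -u$ on which the integral stays below $(1+c)/2$, and finally integrates $r^{n-1}$ over $r\ge 1$, $v\in U$. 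Your half-tube argument is more structural: the single convexity inequality $h_{p,K-x}(tu+v)\le\tfrac12 h_{p,K-x}(2tu)+\tfrac12 h_{p,K-x}(2v)\le\tfrac12 h_{p,K-x}(2v)$ immediately gives a $t$-independent lower bound on $e^{-h_{p,K-x}}$ over $\{tu+v:t\ge 0,\ |v|<1\}$, and the divergence drops out of Fubini. This buys you robustness---you never need $\langle z,v\rangle\le 0$ for perturbed directions $v\neq u$, only for $u$ itself---and the argument carries over verbatim to $p=\infty$ without a separate limiting step.
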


\begin{lemma}\label{strict_convexity}
    Let $p\in(0,\infty]$. For a convex body $K\subset \R^n$, $x\mapsto \M_p(K-x), x\in\mathrm{int}\,K$, is twice differentiable and strictly convex with $\nabla_x\M_p(K-x)= \M_p(K-x) b(h_{p, K-x})$.
\end{lemma}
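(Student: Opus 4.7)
The plan is to rewrite $\M_p(K-x)$ so that the dependence on $x$ is isolated in a simple exponential factor. By Lemma \ref{list} (ii) together with translation invariance of Lebesgue measure,
\[
\M_p(K-x) = |K-x| \int_{\R^n} e^{-h_{p,K-x}(y)}\,\dif y = |K| \int_{\R^n} e^{-h_{p,K}(y) + \langle x,y\rangle}\,\dif y.
\]
In this form the $x$-dependence is completely explicit, and formally differentiating twice under the integral gives
\[
\nabla_x \M_p(K-x) = |K| \int_{\R^n} y\, e^{-h_{p,K-x}(y)}\,\dif y, \qquad \nabla_x^2 \M_p(K-x) = |K| \int_{\R^n} y y^T e^{-h_{p,K-x}(y)}\,\dif y.
\]

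The main step is to justify this interchange, which reduces to uniform integrability of $|y|^k e^{-h_{p,K-x}(y)}$ on compact subsets of $\mathrm{int}\,K$. For $x_0 \in \mathrm{int}\,K$, choose $\delta > 0$ with $B_\delta(x_0) \subset K$, so that for every $x$ with $|x-x_0| < \delta/2$ the cube $[-\eta,\eta]^n$ with $\eta := \delta/(2\sqrt{n})$ sits inside $K-x$. Following the argument leading to \eqref{hpKrEq},
\[
h_{p,K-x}(y) \geq h_{p,[-\eta,\eta]^n}(y) + \tfrac{1}{p}\log\tfrac{(2\eta)^n}{|K|},
\]
and by Lemma \ref{cube_hp} the right-hand side grows at least linearly in $|y|$ as $|y| \to \infty$, with constants independent of $x$ in the chosen neighborhood of $x_0$. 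Hence $|y|^k e^{-h_{p,K-x}(y)}$ admits a fixed integrable dominating function locally in $x$, which legitimates the differentiation under the integral. For $p = \infty$ one uses the even stronger homogeneous bound $h_{K-x}(y) \geq (\delta/2)|y|$ coming directly from $B_{\delta/2}(0) \subset K-x$.

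With the gradient formula at hand, dividing and multiplying by $\int e^{-h_{p,K-x}(y)}\,\dif y = \M_p(K-x)/|K|$ gives $\nabla_x \M_p(K-x) = \M_p(K-x)\, b(h_{p,K-x})$ by the very definition of the barycenter of $h_{p,K-x}$. For strict convexity, I observe that for any $v \in \R^n \setminus \{0\}$,
\[
v^T \nabla_x^2 \M_p(K-x) v = |K| \int_{\R^n} \langle v,y\rangle^2 e^{-h_{p,K-x}(y)}\,\dif y > 0,
\]
since the integrand is non-negative, continuous, and strictly positive off the measure-zero hyperplane $\{y : \langle v,y\rangle = 0\}$. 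Thus the Hessian is positive definite on $\mathrm{int}\,K$, which gives strict convexity. The main obstacle is the uniform integrability justifying differentiation under the integral; once that is done, the gradient identity and the positivity of the Hessian are essentially formal.
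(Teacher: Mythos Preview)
Your proof is correct and follows essentially the same route as the paper: isolate the $x$-dependence via $h_{p,K-x}(y)=h_{p,K}(y)-\langle x,y\rangle$, differentiate under the integral, and check positive definiteness of the resulting Hessian. The only minor difference is in the justification of dominated convergence---the paper bounds the difference quotient by $\M_p(K-x\pm r e_i)$ and invokes Lemma~\ref{finiteness_bodies}, whereas you produce an explicit integrable majorant from the cube lower bound \eqref{hpKrEq} and Lemma~\ref{cube_hp}; both work equally well.
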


\begin{proof}[Proof of Proposition \ref{santalo_point_thm}]
    Since, by Lemmas \ref{finiteness_bodies} and \ref{strict_convexity}, $x\mapsto \M_p(K-x)$ is strictly convex in $\mathrm{int}\,K$ and blows up on $\R^n\setminus \mathrm{int}\,K$, it must have a unique minimum at some $x_{p,K}\in \mathrm{int}\,K$. This is a critical point and therefore, by Lemma \ref{strict_convexity}, 
    \begin{equation*}
        0= \nabla_x\M_p(K-x_{p,K})= \M_p(K-x_{p,K}) b(h_{p, K-x_{p,K}}), 
    \end{equation*}
    thus $b(h_{p, K-x_{p,K}})=0$. 
\end{proof}

We call $x_{p,K}$ the $L^p$-Santal\'o point of $K$. For future reference we
record its characterization:
\begin{corollary}\label{santalo_point_corollary}
    Let $p\in(0,\infty]$. For a convex body $K\subset \R^n$, there exists a unique $x_{p,K}\in \R^n$ such that $b(h_{p,K-x_{p,K}})=0$.
\end{corollary}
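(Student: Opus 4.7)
The plan is to derive this corollary as an immediate distillation of Proposition \ref{santalo_point_thm}, together with the two lemmas used in its proof. The whole content is already present in the preceding arguments; the corollary just isolates the variational characterization of the $L^p$-Santal\'o point.

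First I would address existence: Proposition \ref{santalo_point_thm} gives a unique $x_{p,K} \in \mathrm{int}\,K$ that minimizes $x \mapsto \M_p(K-x)$ on $\R^n$ and satisfies $b(h_{p,K-x_{p,K}}) = 0$. This handles the existence half of the claim without further work.

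Next I would handle uniqueness. Suppose $x \in \R^n$ satisfies $b(h_{p,K-x}) = 0$. For this condition to be meaningful, $h_{p,K-x}$ must be integrable against $e^{-h_{p,K-x}}$ after weighting by $|y|$, which in particular requires $V(h_{p,K-x}) < \infty$; equivalently $\M_p(K-x) < \infty$. By Lemma \ref{finiteness_bodies} this forces $x \in \mathrm{int}\,K$. Then Lemma \ref{strict_convexity} applies and yields
\begin{equation*}
\nabla_x \M_p(K-x) \;=\; \M_p(K-x)\, b(h_{p,K-x}) \;=\; 0,
\end{equation*}
since $\M_p(K-x) > 0$. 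Thus every $x$ with $b(h_{p,K-x}) = 0$ is a critical point of the function $x \mapsto \M_p(K-x)$ on $\mathrm{int}\,K$. But this function is strictly convex on $\mathrm{int}\,K$ (again by Lemma \ref{strict_convexity}), so it has at most one critical point. Combined with the existence of $x_{p,K}$, the uniqueness follows.

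There is no serious obstacle here — the statement is genuinely a corollary — but the one subtle point worth spelling out is why the condition $b(h_{p,K-x}) = 0$ implicitly forces $x \in \mathrm{int}\,K$; this is where Lemma \ref{finiteness_bodies} is needed, to exclude the degenerate case where the integral defining $b$ diverges.
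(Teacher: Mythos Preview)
Your proposal is correct and follows the paper's approach: the corollary is simply recorded as a restatement of the characterization already contained in Proposition~\ref{santalo_point_thm}, with no separate proof given. Your write-up in fact spells out more carefully than the paper why uniqueness holds---namely, that any $x$ with $b(h_{p,K-x})=0$ must lie in $\mathrm{int}\,K$ by Lemma~\ref{finiteness_bodies} and is then a critical point of a strictly convex function by Lemma~\ref{strict_convexity}---which is a welcome clarification of what the paper leaves implicit.
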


It is not clear to us how to directly prove Corollary \ref{santalo_point_corollary} if not by Proposition \ref{santalo_point_thm}. In general, for a convex function $\phi:\R^n\to \R\cup\{\infty\}$ with $b(\phi)\in \R^n$, it is not hard to see that there is an $x\in \R^n$ such that under the translation 
\begin{equation*}
    T_x: \R^n \ni y\mapsto y-x \in \R^n,
\end{equation*}
the pull-back of $\phi$
\begin{equation*}
    T_x^*\phi(y)\defeq \phi(y-x)
\end{equation*}
has its barycenter at the origin, $b(T^*_x\phi)=0$. This is because, 
\begin{equation*}
    b(T^*_x\phi)= \int_{\R^n} y e^{-T_x^*\phi(y)}\frac{\dif y}{V(\phi)}= \int_{\R^n}y e^{-\phi(y-x)}\frac{\dif y}{V(\phi)}= \int_{\R^n}(y+x) e^{-\phi(y)}\frac{\dif y}{V(\phi)}= b(\phi)+ x, 
\end{equation*}
so it is enough to choose $x= -b(\phi)$. 
However, functional translation of $h_{p,K}$ does not correspond to the translation of the body. That is, in general, $T^*_x h_{p,K}\neq h_{p, K-x}$. In fact, by Lemma \ref{list} (ii), $h_{p,K-x}(y)= h_{p,K}(y)-\langle y,x\rangle$, and hence
\begin{equation*}
    b(h_{p,K-x})= \int_{\R^n}y e^{-h_{p,K-x}(y)}\frac{\dif y}{V(h_{p,K-x})}= \int_{\R^n} ye^{-h_{p,K}(y)} e^{\langle y,x\rangle}\frac{\dif y}{V(h_{p, K-x})}, 
\end{equation*}
from which is not clear what $x$ should be so that $b(h_{p,K-x})=0$.

\begin{remark}
While we discuss lack of translation-invariance of some quantities, it will
be helpful to note how $\M_p$ transforms 
under the $GL(n,\R)$-action.
For $p>0$, a convex body $K\subset \R^n$, and $A\in GL(n,\R)$, by Lemma \ref{list} (iii),
\begin{equation*}
    \|x\|_{(AK)^{\circ,p}}\defeq \left( \int_0^\infty r^{n-1}e^{-h_{p,AK}(rx)}\dif r\right)^{-\frac{1}{n}}= \left( \int_0^\infty r^{n-1}e^{-h_{p,K}(rA^T x)}\dif r\right)^{-\frac{1}{n}}= \|A^T x\|_{\LpK}, 
\end{equation*}
hence
\begin{equation}
\label{LpK_affine_transform}
(AK)^{\circ,p}= (A^{-1})^T K^{\circ,p}.
\end{equation}
In sum:

\begin{lemma}\label{Mp_aff_inv}
    Let $p\in(0,\infty]$. For a compact body $K\subset \R^n$ and $A\in GL(n,\R)$, $\M_p(AK)= \M_p(K)$.
\end{lemma}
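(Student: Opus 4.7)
The plan is to deduce $GL(n,\R)$-invariance directly from the definition \eqref{Mpdef} together with the transformation rule for $h_{p,K}$ recorded in Lemma \ref{list}(iii). Alternatively one can invoke the identity \eqref{MpKeq} from Theorem \ref{LpKwelldefined} together with the transformation rule \eqref{LpK_affine_transform}; I will sketch both routes since they are each a few lines.

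For the direct approach, fix $p\in(0,\infty]$ and $A\in GL(n,\R)$. By Lemma \ref{list}(iii), $h_{p,AK}(y)=h_{p,K}(A^T y)$, and $|AK|=|\det A|\,|K|$. The plan is then to write
\begin{equation*}
\M_p(AK)=|AK|\int_{\R^n} e^{-h_{p,AK}(y)}\,\dif y=|\det A|\,|K|\int_{\R^n} e^{-h_{p,K}(A^T y)}\,\dif y,
\end{equation*}
and substitute $z=A^T y$, so $\dif z=|\det A|\,\dif y$. The Jacobian $|\det A|$ cancels precisely the factor $|\det A|$ coming from $|AK|$, leaving $\M_p(AK)=|K|\int_{\R^n}e^{-h_{p,K}(z)}\,\dif z=\M_p(K)$.

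The alternative route uses \eqref{LpK_affine_transform}, which has just been established: $(AK)^{\circ,p}=(A^{-1})^T K^{\circ,p}$. For $p\in(0,\infty)$, Theorem \ref{LpKwelldefined} then gives
\begin{equation*}
\M_p(AK)=n!\,|AK|\,\big|(A^{-1})^T K^{\circ,p}\big|=n!\,|\det A|\,|K|\cdot\frac{1}{|\det A|}\,|K^{\circ,p}|=n!\,|K|\,|K^{\circ,p}|=\M_p(K).
\end{equation*}

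I do not anticipate any genuine obstacle here; the only subtlety is the case $p=\infty$, which is not covered by \eqref{LpK_affine_transform} as stated (whose derivation was written for $p>0$ finite), but the direct change-of-variables argument above applies uniformly in $p\in(0,\infty]$ provided one interprets $h_{\infty,K}=h_K$ and invokes the classical transformation $h_{AK}(y)=h_K(A^T y)$. Alternatively, the case $p=\infty$ may be obtained by taking $p\to\infty$ in the identity $\M_p(AK)=\M_p(K)$ using Corollary \ref{hpLimit} and dominated convergence (noting that $e^{-h_{p,K}}\le e^{-h_{1,K}}$ for $p\ge 1$ by Lemma \ref{list}(v), giving an integrable dominating function on bodies with $0\in\mathrm{int}\,K$, to which one can reduce after a translation).
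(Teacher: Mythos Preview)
Your proposal is correct and essentially matches the paper's approach: the paper derives \eqref{LpK_affine_transform} exactly as you describe and then states the lemma as an immediate consequence, which is your second route. Your first route via a direct change of variables in \eqref{Mpdef} is an equally valid (and slightly more self-contained) alternative that bypasses Theorem \ref{LpKwelldefined}; the paper does not spell this out but it is the same computation in disguise.
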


This $GL(n,\R)$-invariance will be useful in several places, e.g., in the
proof of Claim \ref{MpFiniteClaim} below and in proving Theorem \ref{santalo_symmetric} when we deal with Steiner symmetrization. 
\end{remark}

\subsection{\texorpdfstring{Finiteness of $\M_p$}{Finiteness of Mₚ}}
\lb{Lemma4.2pf}
Lemma \ref{finiteness_bodies} follows from the following two claims.
\begin{claim}
\lb{MpFiniteClaim}
    Let $p\in(0,\infty]$. For a convex body $K\subset \R^n$ with $0\in\mathrm{int}\,K$, and $r>0$ such that $[-r,r]^n\subset K$, 
    \begin{equation*}
        \M_p(K)\leq \frac{|K|^{1+1/p}}{(2r)^{n+n/p}}\M_p([-1,1]^n).
    \end{equation*}
    In particular, $\M_p(K)<\infty$. 
\end{claim}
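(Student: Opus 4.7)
The plan is to leverage the key inequality \eqref{hpKrEq}, which was already established in the proof of Lemma \ref{LpKbounded}: whenever $[-r,r]^n\subset K$, one has the pointwise lower bound
\begin{equation*}
h_{p,K}(y)\ge h_{p,[-r,r]^n}(y)+\frac{1}{p}\log\frac{(2r)^n}{|K|}.
\end{equation*}
Exponentiating this with a minus sign converts it into an upper bound on $e^{-h_{p,K}(y)}$ with a multiplicative constant that is independent of $y$ and therefore pulls out of the integral defining $\M_p(K)$.

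Concretely, multiplying by $|K|$ and integrating over $\R^n$,
\begin{equation*}
\M_p(K)=|K|\int_{\R^n}e^{-h_{p,K}(y)}\dif y\le |K|\bigg(\frac{|K|}{(2r)^n}\bigg)^{1/p}\int_{\R^n}e^{-h_{p,[-r,r]^n}(y)}\dif y=\frac{|K|^{1+1/p}}{(2r)^{n/p}\,|[-r,r]^n|}\M_p([-r,r]^n).
\end{equation*}
Since $|[-r,r]^n|=(2r)^n$, this produces the factor $(2r)^{-(n+n/p)}$ in the desired bound, modulo replacing $\M_p([-r,r]^n)$ by $\M_p([-1,1]^n)$. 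The latter replacement is immediate from Lemma \ref{Mp_aff_inv}: the map $A=\tfrac{1}{r}I\in GL(n,\R)$ sends $[-r,r]^n$ to $[-1,1]^n$, and $\M_p$ is $GL(n,\R)$-invariant, so $\M_p([-r,r]^n)=\M_p([-1,1]^n)$.

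Combining the two displays yields exactly the stated inequality. Finiteness of $\M_p(K)$ then follows because $\M_p([-1,1]^n)$ is finite: this can be read off from the explicit formula in Lemma \ref{cubeMp} (the integrand $(y/\sinh y)^{1/p}$ decays exponentially at infinity), or equivalently from Claim \ref{cubeppolarBounded} combined with \eqref{MpKeq}, since a bounded $([-1,1]^n)^{\circ,p}$ has finite volume. There is no real obstacle here; the content of the claim is really just that \eqref{hpKrEq} upgrades from a bound controlling $K^{\circ,p}$ in the direction $([-r,r]^n)^{\circ,p}$ to a bound on the $L^p$-Mahler volume itself, together with the bookkeeping of how the factor $|K|/(2r)^n$ interacts with the $1/p$ in the definition of $h_{p,K}$.
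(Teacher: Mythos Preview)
Your proof is correct and follows essentially the same route as the paper: both apply \eqref{hpKrEq}, integrate, and invoke the $GL(n,\R)$-invariance of $\M_p$ (Lemma \ref{Mp_aff_inv}) to pass from $[-r,r]^n$ to $[-1,1]^n$. The only minor difference is in the finiteness step: the paper bounds $\M_p([-1,1]^n)$ via Lemma \ref{mKmpKineq} (since $b([-1,1]^n)=0$) in terms of $\M([-1,1]^n)=4^n$, whereas you appeal to the explicit formula of Lemma \ref{cubeMp} or to Claim \ref{cubeppolarBounded}; all three justifications are valid.
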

\begin{proof}
    Since $0\in\mathrm{int}\,K$, there is $r>0$ such that $[-r,r]^n\subset \mathrm{int}\,K$. By \eqref{hpKrEq}, 
    \begin{equation*}
    \begin{aligned}
        \M_p(K)&\defeq |K|\int_{\R^n}e^{-h_{p,K}(y)}\dif y\\
        &\leq |K|\int_{\R^n} e^{-h_{p, [-r,r]^n}(y)} \frac{|K|^{1/p}}{(2r)^{n/p}} \dif y
        = \frac{|K|^{1+1/p}}{(2r)^{n+ n/p}} (2r)^n\int_{\R^n}e^{-h_{p,[-r,r]^n}(y)}\dif y\\
        &= \frac{|K|^{1+1/p}}{(2r)^{n+n/p}} \M_p([-r,r]^n)
        = \frac{|K|^{1+1/p}}{(2r)^{n+n/p}} \M_p([-1,1]^n), 
    \end{aligned}
    \end{equation*}
    where we used Lemma \ref{Mp_aff_inv}.
    By Lemma \ref{mKmpKineq}, since $b([-1,1]^n)=0$, 
    $$
    \M_p([-1,1]^n)\leq \left( \frac{(1+p)^{1+\frac1p}}{p}\right)^n \M([-1,1]^n)= \left( \frac{(1+p)^{1+\frac1p}}{p}\right)^n 4^n,
    $$ 
   concluding the proof.
\end{proof}

\begin{claim}
    Let $p\in(0,\infty]$. For a convex body $K\subset \R^n$ with $0\notin \mathrm{int}\,K$, $\M_p(K)=\infty$. 
\end{claim}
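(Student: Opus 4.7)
The plan is to exhibit a subset of $\R^n$ of infinite Lebesgue measure on which $e^{-h_{p,K}}$ is bounded below by a positive constant, forcing the integral in \eqref{Mpdef} to diverge.

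Since $\mathrm{int}\,K$ is a nonempty open convex set not containing $0$, the Hahn--Banach separation theorem yields a unit vector $u\in\R^n$ and a constant $c\le 0$ with $\langle x,u\rangle\le c\le 0$ for every $x\in K$ (the closure of $\mathrm{int}\,K$ since $K$ is a convex body). In particular $\langle x,u\rangle\le 0$ on $K$. Let $M\defeq\sup_{x\in K}|x|<\infty$, which is finite because $K$ is bounded. Decomposing any $y\in\R^n$ as $y=tu+v$ with $t=\langle y,u\rangle$ and $v\in u^\perp$, one sees that for $t\ge 0$ and $x\in K$,
\[
\langle x,y\rangle \;=\; t\langle x,u\rangle + \langle x,v\rangle \;\le\; \langle x,v\rangle \;\le\; M|v|.
\]
Taking the log of the averaged exponential over $K$ gives $h_{p,K}(y)\le M|v|$ for every such $y$.

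Now consider the half-cylinder
\[
\Sigma \;\defeq\; \{\,tu+v\,:\, t\ge 0,\ v\in u^\perp,\ |v|\le 1\,\}\subset\R^n,
\]
which has infinite $n$-dimensional Lebesgue measure (the $v$-section is bounded with positive $(n-1)$-volume, and $t$ ranges over $[0,\infty)$). On $\Sigma$ the previous estimate gives $e^{-h_{p,K}(y)}\ge e^{-M}$, so
\[
\M_p(K)\;=\;|K|\int_{\R^n}e^{-h_{p,K}(y)}\dif y\;\ge\;|K|\,e^{-M}\int_\Sigma \dif y\;=\;\infty.
\]

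The only step requiring care is the separation: one must ensure the supporting functional is nontrivial and satisfies $\langle x,u\rangle\le 0$ on all of $K$, not merely on $\mathrm{int}\,K$. This follows because $K$ is a convex body, hence $K=\overline{\mathrm{int}\,K}$, and the inequality passes to the closure. The estimate $h_{p,K}\le h_K$ from Lemma~\ref{list}(v) is implicit in the argument but not strictly needed; the bound on $h_{p,K}(y)$ is obtained directly from the integral definition \eqref{hpdef}. Together with Claim~\ref{MpFiniteClaim}, this completes the proof of Lemma~\ref{finiteness_bodies}.
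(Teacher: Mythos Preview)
Your proof is correct and takes a genuinely different (and somewhat cleaner) route from the paper's. Both arguments begin with the same separating hyperplane $u^\perp$, but then diverge: the paper shows that the averaged exponential $\int_K e^{p\langle x,v\rangle}\frac{\dif x}{|K|}$ is strictly less than $1$ at $v=-u$, invokes continuity to find an open neighborhood $U\subset\partial B_2^n$ of $-u$ where this remains bounded by $(1+c)/2<1$, and then integrates in polar coordinates over $U\times[1,\infty)$ to force divergence. You instead exploit the boundedness of $K$ directly: writing $y=tu+v$ with $t\ge 0$, the inequality $\langle x,u\rangle\le 0$ kills the $t$-contribution, and Cauchy--Schwarz on the remaining term gives the uniform bound $h_{p,K}(y)\le M|v|$ on the entire half-space $\{t\ge 0\}$. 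Restricting to the half-cylinder $\{t\ge 0,\,|v|\le 1\}$ then gives a constant lower bound $e^{-M}$ on $e^{-h_{p,K}}$ over a set of infinite measure.

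Your approach avoids the continuity/compactness argument and the strict-inequality step (which in the paper requires noting that $K\not\subset u^\perp$), at the modest cost of introducing the diameter bound $M=\sup_{x\in K}|x|$. Both methods cover $p=\infty$ without change. Your remark about $K=\overline{\mathrm{int}\,K}$ is the right justification for extending the separation inequality from $\mathrm{int}\,K$ to $K$; in fact one can take $c=0$ directly by the supporting-hyperplane theorem applied at $0\notin\mathrm{int}\,K$.
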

\begin{proof}
    By convexity of $K$, since $0\notin \mathrm{int}\,K$, there is a hyperplane through the origin 
    \begin{equation*}
        u^\perp\defeq \{x\in \R^n: \langle x,u\rangle=0\}, 
    \end{equation*}
    such that $K\subset \{x\in \R^n: \langle x,u\rangle\geq 0\}$. In particular, $\langle x,-u\rangle\leq 0$ for all $x\in K$, and hence
    \begin{equation*}
        c\defeq \int_K e^{p\langle x, -u\rangle}\frac{\dif x}{|K|}< 1.
    \end{equation*}
    If it was exactly equal to 1, then $\langle x,u\rangle=0$ for all $x\in K$, that is $K\subset u^\perp$, which is a contradiction because $K$ has non-empty interior.
    Let $U\subset \partial B_2^n$ be an open neighborhood of $-u$ such that
    \begin{equation*}
        \int_K  e^{p\langle x, v\rangle}\frac{\dif x}{|K|}\leq \frac{1+c}{2}<1, \quad \text{ for all } v\in U.
    \end{equation*}
    For $r\geq 1$ and $v\in U, x\in K$, since $p\langle x,v\rangle<0$, $rp\langle x,v\rangle\leq p\langle x,v\rangle$, thus 
    \begin{equation}\label{4.1eq1}
        \int_K e^{rp\langle x,v\rangle}\frac{\dif x}{|K|}\leq \int_K e^{p\langle x,v\rangle}\frac{\dif x}{|K|}\leq \frac{1+c}{2}<1, \quad v\in U, r\geq 1.
    \end{equation}
    In polar coordinates, by (\ref{4.1eq1}),
    \begin{equation*}
        \begin{aligned}
            \M_p(K)&= |K|\int_{\R^n} e^{-h_{p,K}(y)}\dif y= |K| \int_{\R^n} \frac{\dif y}{\left( \int_K e^{p\langle x,y\rangle} \frac{\dif x}{|K|}\right)^{1/p}}\\
            &= |K|\int_{\partial B_2^n}\int_0^\infty \frac{r^{n-1}\dif r\dif v}{\left(\int_K e^{rp\langle x,v\rangle}\frac{\dif x}{|K|}\right)^{1/p}}\geq 
             |K|\int_{U}\int_1^\infty \frac{r^{n-1}\dif r\dif v}{\left(\int_K e^{rp\langle x,v\rangle}\frac{\dif x}{|K|}\right)^{1/p}}\\
             &\geq |K| \left(\frac{1+c}{2}\right)^{-1/p}\int_U \int_{1}^\infty r^{n-1}\dif r= \infty.
        \end{aligned}
    \end{equation*}
\end{proof}

\subsection{\texorpdfstring{Smoothness and convexity of $\M_p$}{Smoothness and convexity of Mₚ}}
\lb{Lemma4.3pf}
\begin{proof}[Proof of Lemma \ref{strict_convexity}.]   
    Denote by $e_1, \ldots, e_n$ the standard basis of $\R^n$.
    For $x\in\mathrm{int}\,K$ there is $r>0$ such that $x+ 2r B_2^n\subset \mathrm{int}\,K$. 
    Using Lemma \ref{list} (ii), and 
    letting $0<\e<r$, 
    \begin{equation}\label{finite_eq}
        \begin{aligned}
            \left| \frac{\M_p(K-x-\e e_i)- \M_p(K-x)}{\e}\right|&= \left| \frac1\e \int_{\R^n} e^{-h_{p,K-x-\e e_i}(y)}- e^{-h_{p,K-x}(y)} \dif y\right|\\
            &= \left| \frac1\e \int_{\R^n} e^{-h_{p,K}(y)} e^{\langle x+\e e_i, y\rangle}- e^{-h_{p,K}(y)} e^{\langle x, y\rangle}\dif y\right|\\
            &\leq \int_{\R^n} \left|\frac{e^{\langle x+\e e_i, y\rangle}- e^{\langle x,y\rangle}}{\e}\right| e^{-h_{p,K}(y)}\dif y\\
            &= \int_{\R^n} \left|\frac{e^{\e y_i}- 1}{\e}\right| e^{\langle x,y\rangle-h_{p,K}(y)}\dif y.
        \end{aligned}
    \end{equation}
    Note
    \begin{equation*}
        \left|\frac{e^{\e y_i}-1}{\e}\right|\leq \sum_{m=1}^\infty \frac{\e^{m-1}|y_i|^m}{m!}\leq \sum_{m=1}^\infty \frac{r^{m-1}|y_i|^m}{m!}= \frac1r \sum_{m=1}^\infty \frac{r^m |y_i|^m}{m!}\leq \frac1r e^{r|y_i|}= \frac{1}{r} e^{r\mathrm{sgn}(y_i)y_i},
    \end{equation*}
    so, by \eqref{finite_eq},
    \begin{equation}\label{finite_eq2}
        \begin{aligned}
        \left| \frac{\M_p(K-x-\e e_i)- \M_p(K-x)}{\e}\right|
        &\leq\int_{\R^n} \frac1r e^{r\mathrm{sgn}(y_i) y_i} e^{-h_{p,K-x}(y)}\dif y\\
            &= \frac1r \int_{\R^{n-1}} \left(\int_0^\infty  e^{ry_i} +\int_{-\infty}^0 e^{-r y_i} \right) e^{-h_{p,K-z}(y)}\dif y \\
            &\leq \frac1r \int_{\R^n} e^{ry_i} e^{-h_{p,K-x}(y)}\dif y+ \frac1r \int_{\R^n} e^{-ry_i} e^{-h_{p,K-x}(y)}\dif y \\
            &= \M_p(K- x-re_i)+ \M_p(K-x+ re_i).
        \end{aligned}
    \end{equation}
    Since $x+ re_i, x- re_i\in \mathrm{int}\,K$, by Lemma \ref{finiteness_bodies}, the right-hand side of (\ref{finite_eq2}) is finite, and hence by dominated convergence we may differentiate under the integral, 
    \begin{equation*}
    \begin{aligned}
        \nabla_x\M_p(K-x)&= \nabla_x \left( |K|\int_{\R^n} e^{-h_{p,K-x}(y)}\dif y\right)= \nabla_x \left( |K|\int_{\R^n} e^{-h_{p,K}(y)} e^{\langle y,x\rangle}\dif y\right) \\
        &= |K|\int_{\R^n}\nabla_x\left(e^{-h_{p,K}(y)} e^{\langle y,x\rangle}\right)\dif y= |K|\int_{\R^n}y e^{-h_{p,K}(y)} e^{\langle y,x\rangle}\dif y\\
        &= |K|\int_{\R^n}y e^{-h_{p,K-x}(y)}\dif y\\
        &= |K|\int_{\R^n}e^{-h_{p,K-x}(x)}\dif x\int_{\R^n} ye^{-h_{p,K-x}(y)}\frac{\dif y}{\int_{\R^n}e^{-h_{p,K-x}(x)}\dif x}\\
        &= \M_p(K-x) b(h_{p, K-x}).
    \end{aligned}
    \end{equation*}
    Similarly, one can show the second-order derivatives exist and are continuous.  Differentiating under the integral sign,
    \begin{equation*}
        \begin{aligned}
            \frac{\partial^2}{\partial x_i\partial x_j}\M_p(K-x)= |K|\int_{\R^n} y_i y_j e^{-h_{p,{K-x}}(y)}\dif y. 
        \end{aligned}
    \end{equation*}
    Therefore, for $v\in \R^n$, 
    \begin{equation*}
        v^T \nabla_{x}^2\M_p(K-x) v= |K|\sum_{i,j=1}^n \int_{\R^n} v_i v_j y_i y_j e^{-h_{p,K-x}(y)}\dif y= |K|\int_{\R^n} \langle v,y\rangle^2 e^{-h_{p,K-x}(y)}\dif y\geq 0,
    \end{equation*}
    with equality if and only if $\langle y,v\rangle=0$ for almost all $y$, or equivalently $v=0$, proving strict convexity.
\end{proof}

\section{\texorpdfstring{The upper bound on $\M_p$}{The upper bound on Mₚ}}\label{S4}

This section is dedicated to proving the $L^p$-Santal\'o theorem, Theorem \ref{santalo_symmetric}. 
As expected, we use symmetrization. However, there are a number of
intricate details that need to be carefully dealt with, since $L^p$-polarity
is a highly non-local operation compared to classical polarity. 
On the surface of it though,
as in the case $p=\infty$, the key estimate we need to prove is the monotonicity
of volume under Steiner symmetrization:
\begin{proposition}\label{steiner_ppolar}
    Let $p\in(0,\infty]$. For a symmetric convex body $K\subset \R^n$ and $u\in \partial B_2^n$, 
    let $\sigma_uK$ be the Steiner symmetral of $K$ (Definition \ref{steiner_def}).
    Then, 
    $|(\sigma_uK)^{\circ,p}|\geq |\LpK|$.
\end{proposition}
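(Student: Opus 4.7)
My plan is to reduce the claim to a slice-by-slice comparison via Fubini and the classical Brunn--Minkowski inequality in $u^\perp\cong\R^{n-1}$. Granting the slice inclusion \eqref{slice_comp}---interpreted at the level of elements as: if $a_0,b_0\in u^\perp$ satisfy $a_0+tu,\,b_0-tu \in K^{\circ,p}$, then $\tfrac12(a_0+b_0)+tu\in(\sigma_u K)^{\circ,p}$---and using that central symmetry of $K^{\circ,p}$ (Theorem \ref{LpKwelldefined}, since $K=-K$) makes the projection onto $u^\perp$ of the $K^{\circ,p}$-slice at height $-t$ equal to the negative of that at $+t$, Brunn--Minkowski applied to the projected slices in $\R^{n-1}$ gives
\begin{equation*}
|(\sigma_u K)^{\circ,p}\cap(u^\perp+tu)|_{n-1}^{1/(n-1)}\geq|K^{\circ,p}\cap(u^\perp+tu)|_{n-1}^{1/(n-1)}.
\end{equation*}
Integrating over $t\in\R$ yields $|(\sigma_u K)^{\circ,p}|\geq|K^{\circ,p}|$.

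To prove \eqref{slice_comp}, fix $t\in\R$ and $a_0,b_0\in u^\perp$ as above, and put $y\defeq\tfrac12(a_0+b_0)+tu$. The task is to show $\int_0^\infty r^{n-1} e^{-h_{p,\sigma_u K}(ry)}\,dr\geq(n-1)!$, given the analogous inequalities for $a_0+tu$ and $b_0-tu$ (with $K$ in place of $\sigma_u K$). I would compute each integrand by splitting $x\in K$ as $x=x_0+\tau u$ with $x_0\in u^\perp$. Since $K$ is convex, the $u$-line through $x_0$ meets $K$ in an interval $[\alpha(x_0),\beta(x_0)]$; set $h(x_0)\defeq\tfrac12(\beta(x_0)-\alpha(x_0))$. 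Fubini yields
\begin{equation*}
e^{p\,h_{p,K}(r(a_0+tu))}=\int_{u^\perp} e^{pr\langle x_0,a_0\rangle}\,\frac{e^{prt\beta(x_0)}-e^{prt\alpha(x_0)}}{prt}\,\frac{dx_0}{|K|},
\end{equation*}
an analogous formula for $b_0-tu$ with numerator $e^{-prt\alpha}-e^{-prt\beta}$, and (since $\sigma_u K$ has the same projection and volume as $K$, with slices $[-h(x_0),h(x_0)]$)
\begin{equation*}
e^{p\,h_{p,\sigma_u K}(ry)}=\int_{u^\perp} e^{pr\langle x_0,(a_0+b_0)/2\rangle}\,\frac{2\sinh(prt\,h(x_0))}{prt}\,\frac{dx_0}{|K|}.
\end{equation*}
The identity $(e^{prt\beta}-e^{prt\alpha})(e^{-prt\alpha}-e^{-prt\beta})=4\sinh^2(prt\,h)$ then shows that the $\sigma_u K$-integrand is, pointwise in $x_0$, the geometric mean of the two $K$-integrands, all factors being nonnegative. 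Cauchy--Schwarz in the $x_0$-integration therefore gives, for every $r>0$,
\begin{equation*}
h_{p,\sigma_u K}(ry)\leq\tfrac12 h_{p,K}(r(a_0+tu))+\tfrac12 h_{p,K}(r(b_0-tu)),
\end{equation*}
equivalently $e^{-h_{p,\sigma_u K}(ry)}\geq\bigl(e^{-h_{p,K}(r(a_0+tu))}e^{-h_{p,K}(r(b_0-tu))}\bigr)^{1/2}$ pointwise in $r$.

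The main obstacle is converting this pointwise-in-$r$ geometric-mean bound into the required integral inequality. Direct Cauchy--Schwarz on $r^{n-1}\sqrt{f_1f_2}$ goes the wrong way, so I would invoke Ball's Brunn--Minkowski inequality for harmonic means (Theorem \ref{KBall_ineq}) applied to the log-concave functions $r\mapsto r^{n-1}e^{-h_{p,K}(r\cdot)}$ and $r\mapsto r^{n-1}e^{-h_{p,\sigma_u K}(ry)}$ on $\R_+$; its conclusion is precisely the norm convexity
\begin{equation*}
\|y\|_{(\sigma_u K)^{\circ,p}}\leq\tfrac12\|a_0+tu\|_{K^{\circ,p}}+\tfrac12\|b_0-tu\|_{K^{\circ,p}}\leq 1,
\end{equation*}
which is the element version of \eqref{slice_comp}. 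Verifying Ball's hypothesis, however, requires upgrading the equal-argument ($s=t=r$) pointwise bound above to the honest harmonic-mean bound $f_\sigma(\tfrac{2st}{s+t})\geq\sqrt{f_1(s)f_2(t)}$ on the three integrands; this is where the convexity of $x\mapsto\log(\sinh(x)/x)$ (Claim \ref{steiner_claim3}) enters, exploiting the explicit $\sinh$ form of the $\sigma_u K$-integrand. This promotion from an equal-argument geometric-mean bound to an honest harmonic-mean one is the subtlest point of the whole argument.
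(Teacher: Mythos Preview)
Your approach is essentially the paper's: slice along $u$, apply Brunn--Minkowski in $u^\perp\cong\R^{n-1}$ using the symmetry $K^{\circ,p}(-t)=-K^{\circ,p}(t)$ (Corollary~\ref{KcircpSliceSym}), and deduce the slice inclusion from the norm inequality~\eqref{steiner_symm_lem1} via Ball's Theorem~\ref{KBall_ineq}, whose hypothesis is verified by the explicit $\sinh$ computation together with Claim~\ref{steiner_claim3}. One inaccuracy to fix: Ball's hypothesis is the \emph{weighted} geometric-mean bound $H(r)\ge F(t)^{s/(t+s)}G(s)^{t/(t+s)}$ whenever $\tfrac2r=\tfrac1t+\tfrac1s$, not the equal-weight $\sqrt{F(s)G(t)}$ you wrote; obtaining this (the paper's Lemma~\ref{steiner_claim2}) requires H\"older's inequality in the $x_0$-integration with conjugate exponents $\tfrac{t+s}{s},\tfrac{t+s}{t}$ \emph{in addition to} the log-convexity of $\sinh(x)/x$---your Cauchy--Schwarz step only yields the diagonal case $t=s=r$, and the $\sinh$-convexity alone does not complete the promotion to unequal arguments.
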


\subsection{Outline of the proof of Proposition \ref{steiner_ppolar}}

Proposition \ref{steiner_ppolar} is proved in \S\ref{ppolarSantaloProof}.
For $n=1$, $\sigma_u K=K$ if $K=-K$. 
Thus, take $n>1$ for the rest of the section. 
We follow a classical proof for the case $p=\infty$ \cite[Proposition 9.2]{gruber} \cite[Proposition 1.1.15]{artstein-giannopoulos-milman}
and make the appropriate modifications to $p\in(0,\infty)$. 
This involves comparing the volume of the `slices' of the polar body perpendicular to the vector used for Steiner symmetrization. For a convex body $K\subset \R^n$, and $x_n\in\R$, denote by
\begin{equation}\label{sliceDef}
    K(x_n)\defeq \{\xi\in \R^{n-1}: (\xi,x_n)\in K\},
\end{equation}
the slice of $K$ at height $x_n$. By Tonelli's theorem \cite[\S 2.37]{folland}, the volume of a convex body may be expressed as an integral of the volume of its slices,
\begin{equation}\label{sliceVolumeIntegral}
    |K|= \int_{\{(\xi,x_n)\in \R^{n-1}\times \R: \,\, \xi\in K(x_n)\}} \dif \xi\dif x_n= \int_{-\infty}^\infty |K(x_n)|\dif x_n. 
\end{equation}
In view of \eqref{sliceVolumeIntegral}, Proposition \ref{steiner_ppolar} follows from the next lemma. Denote by $e_1, \ldots, e_n$ the standard basis of $\R^n$.
\begin{lemma}\label{sliceVolume}
    Let $p\in(0,\infty]$. For a symmetric convex body $K\subset \R^n$, $|(\sigma_{e_n}K)^{\circ,p}(x_n)|\geq |K^{\circ,p}(x_n)|$ for all $x_n\in \R$.
\end{lemma}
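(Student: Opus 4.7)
The plan is to deduce this $(n-1)$-dimensional slice inequality from a set-theoretic inclusion between slices of $K^{\circ,p}$ and $(\sigma_{e_n}K)^{\circ,p}$, and then apply the classical Brunn--Minkowski inequality together with the symmetry of $K^{\circ,p}$.

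First, since $K=-K$, Theorem \ref{LpKwelldefined} gives $K^{\circ,p}=-K^{\circ,p}$. Identifying each hyperplane $\{y_n=c\}$ with $\mathbb{R}^{n-1}$ by dropping the last coordinate, the slices $K^{\circ,p}(x_n)$ and $K^{\circ,p}(-x_n)$ are reflections of one another in $\mathbb{R}^{n-1}$, so in particular
\begin{equation*}
|K^{\circ,p}(x_n)|=|K^{\circ,p}(-x_n)|.
\end{equation*}

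Second, the geometric heart of the argument is the inclusion \eqref{slice_comp} with $u=e_n$ and $t=x_n$, which, under the above identification, reads
\begin{equation*}
\tfrac12\, K^{\circ,p}(x_n)\;+\;\tfrac12\, K^{\circ,p}(-x_n)\;\subset\;(\sigma_{e_n}K)^{\circ,p}(x_n).
\end{equation*}
Taking $(n-1)$-dimensional volumes and applying the classical Brunn--Minkowski inequality,
\begin{equation*}
|(\sigma_{e_n}K)^{\circ,p}(x_n)|^{\frac{1}{n-1}}\;\geq\;\tfrac12 |K^{\circ,p}(x_n)|^{\frac{1}{n-1}}+\tfrac12 |K^{\circ,p}(-x_n)|^{\frac{1}{n-1}}\;=\;|K^{\circ,p}(x_n)|^{\frac{1}{n-1}},
\end{equation*}
where the last equality uses the first step. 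Raising to the $(n-1)$-th power yields the lemma.

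The substantive work is thus in establishing \eqref{slice_comp}. Following the outline from the Introduction, one picks $y_1,y_2\in K^{\circ,p}$ with $n$-th coordinates $+x_n$ and $-x_n$ respectively, sets $y := \tfrac12(y_1+y_2)+x_n e_n$ (so that $y_n=x_n$), and aims to show $\int_0^\infty r^{n-1}e^{-h_{p,\sigma_{e_n}K}(ry)}\,dr \geq (n-1)!$. Using Fubini to write $e^{p h_{p,\sigma_{e_n}K}(ry)}$ as an iterated integral over the $e_n$-fibers of $\sigma_{e_n}K$, and noting these fibers have the same $1$-dimensional length $\ell(\xi)$ as the corresponding fibers of $K$ (by definition of Steiner symmetrization), the inner integration in the $e_n$-direction produces a $\tfrac{\sinh(r y_n \ell(\xi) p/2)}{r y_n \ell(\xi) p/2}$-type factor, to which the log-convexity of $t\mapsto \log(\sinh(t)/t)$ from Claim \ref{steiner_claim3} applies. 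Ball's Brunn--Minkowski inequality for harmonic means (Theorem \ref{KBall_ineq}) then controls the remaining $(n-1)$-dimensional integration, converting the Minkowski average in the $\xi$-variable of the fibers of $K$ into the desired pointwise lower bound on $e^{-h_{p,\sigma_{e_n}K}(ry)}$ in terms of $e^{-h_{p,K}(ry_1)}$ and $e^{-h_{p,K}(ry_2)}$; integrating against $r^{n-1}\,dr$ and using $y_1, y_2\in K^{\circ,p}$ delivers \eqref{slice_comp}.

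The main obstacle is aligning the harmonic-mean form in Ball's theorem with the power $r^{n-1}$ appearing in the definition \eqref{opNorm} of $\|\cdot\|_{K^{\circ,p}}$: the matching between the exponent $n-1$ and the dimension in Ball's inequality is what makes the entire scheme close up, and coordinating the rescaling of the fiber length $\ell(\xi)$ with the $\log\sinh$-convexity is the technical core.
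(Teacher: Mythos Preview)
Your deduction of the lemma from \eqref{slice_comp} (equivalently Lemma~\ref{steiner_cor}) via Brunn--Minkowski and the symmetry $K^{\circ,p}(-x_n)=-K^{\circ,p}(x_n)$ is correct and is exactly the paper's argument.

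Your supplementary sketch of how \eqref{slice_comp} is established, however, has the roles of the two main tools swapped. You describe Ball's harmonic Brunn--Minkowski inequality (Theorem~\ref{KBall_ineq}) as controlling ``the remaining $(n-1)$-dimensional integration'' to produce a pointwise bound $e^{-h_{p,\sigma_{e_n}K}(ry)}\ge \cdots$ in terms of $e^{-h_{p,K}(ry_1)}$ and $e^{-h_{p,K}(ry_2)}$ at the \emph{same} radial parameter $r$, after which the $r$-integration would be trivial. That pointwise same-scale bound is not available. What actually happens (Lemma~\ref{steiner_claim2}) is that H\"older's inequality in the $(n-1)$-dimensional base together with the $\log(\sinh(t)/t)$ convexity yields only a three-scale inequality
\[
h_{p,\sigma_{e_n}K}\Big(r\tfrac{\xi+\xi'}{2},rx_n\Big)\le \tfrac{s}{t+s}h_{p,K}(t\xi,tx_n)+\tfrac{t}{t+s}h_{p,K}(s\xi',-sx_n),\qquad \tfrac{2}{r}=\tfrac{1}{t}+\tfrac{1}{s},
\]
and it is precisely this harmonic constraint on $(r,t,s)$ that forces the use of Ball's theorem on the \emph{one-dimensional radial} integrals $\int_0^\infty r^{n-1}e^{-h_{p,\cdot}(r\,\cdot)}\,dr$ (with $q=n$, so the weight is $r^{q-1}=r^{n-1}$), yielding the norm inequality \eqref{steiner_symm_lem1} and hence \eqref{slice_comp}. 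So in the paper the $(n-1)$-dimensional integration is handled by H\"older, and Ball's theorem is applied afterward to the $r$-integral---the reverse of your description.
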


Lemma \ref{sliceVolume}, in turn, follows from the Brunn--Minkowski inequality and the following monotonicity property of the average of antipodal slices under Steiner symmetrization.
\begin{lemma}\label{steiner_cor}
    Let $p\in(0,\infty]$. For a convex body $K\subset \R^n$,
    \begin{equation}\label{averageEq}
        \frac{K^{\circ,p}(x_n)+ K^{\circ,p}(-x_n)}{2}\subset \frac{(\sigma_{e_{n}}K)^{\circ,p}(x_n)+ (\sigma_{e_{n}}K)^{\circ,p}(-x_n)}{2}= (\sigma_{e_{n}}K)^{\circ,p}(x_n). 
    \end{equation}
\end{lemma}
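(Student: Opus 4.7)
The equality on the right of \eqref{averageEq} is immediate from the reflection symmetry of $\sigma_{e_n}K$ about $e_n^\perp$: this symmetry transfers to $h_{p,\sigma_{e_n}K}$ and hence to $(\sigma_{e_n}K)^{\circ,p}$, so the slices at heights $\pm x_n$ coincide. For the non-trivial inclusion, I would fix $\xi_1\in K^{\circ,p}(x_n)$ and $\xi_2\in K^{\circ,p}(-x_n)$, set $y_0\defeq(\xi_1,x_n)$, $y_1\defeq(\xi_2,-x_n)$, $\bar y\defeq(\tfrac12(\xi_1+\xi_2),x_n)$, and write $F_L(y)\defeq \int_0^\infty r^{n-1}e^{-h_{p,L}(ry)}\,dr$. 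Definition~\ref{LpKdef} then converts the hypothesis to $F_K(y_0),F_K(y_1)\ge(n-1)!$ and the goal to $F_{\sigma_{e_n}K}(\bar y)\ge(n-1)!$.

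The crux is the pointwise inequality
\begin{equation*}
h_{p,\sigma_{e_n}K}(r\bar y)\leq \tfrac12 h_{p,K}(ry_0)+\tfrac12 h_{p,K}(ry_1), \qquad r>0. \qquad (\star)
\end{equation*}
I would prove $(\star)$ by slicing $K$ parallel to $e_n$: for $\xi\in\pi(K)\subset\R^{n-1}$, the fibre $\{t:(\xi,t)\in K\}$ is an interval $[a_\xi,b_\xi]$ with centre $c_\xi$ and length $\ell_\xi$, and integrating out $t$ introduces the factor $S_\xi(r)\defeq\tfrac{2\sinh(prx_n\ell_\xi/2)}{prx_n}$, yielding (using $|\sigma_{e_n}K|=|K|$) explicit integral representations of $|K|e^{ph_{p,K}(ry_0)}$, $|K|e^{ph_{p,K}(ry_1)}$, and $|K|e^{ph_{p,\sigma_{e_n}K}(r\bar y)}$ against the positive measure $S_\xi(r)\,d\xi$ with exponential weights $e^{pr\langle\xi_1,\xi\rangle+prx_nc_\xi}$, $e^{pr\langle\xi_2,\xi\rangle-prx_nc_\xi}$, and $e^{pr\langle(\xi_1+\xi_2)/2,\xi\rangle}$ respectively. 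Since the third weight is the geometric mean of the first two, Cauchy--Schwarz against $S_\xi(r)\,d\xi$ delivers $(\star)$.

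I would then convert $(\star)$ to the desired integral bound via Ball's Brunn--Minkowski inequality for harmonic means (Theorem~\ref{KBall_ineq}), which, together with the convexity of the radial restrictions $r\mapsto h_{p,K}(ry_i)$, gives
\begin{equation*}
F_{\sigma_{e_n}K}(\bar y)^{-1/n}\leq \tfrac12 F_K(y_0)^{-1/n}+\tfrac12 F_K(y_1)^{-1/n},
\end{equation*}
and the hypotheses $F_K(y_i)\ge(n-1)!$ then force $F_{\sigma_{e_n}K}(\bar y)\ge(n-1)!$, giving $\tfrac12(\xi_1+\xi_2)\in(\sigma_{e_n}K)^{\circ,p}(x_n)$. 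The principal subtlety lies precisely here: naively applying Cauchy--Schwarz to $\int r^{n-1}e^{-\frac12[h_{p,K}(ry_0)+h_{p,K}(ry_1)]}\,dr$ runs in the wrong direction for a \emph{lower} bound on $F_{\sigma_{e_n}K}(\bar y)$, and it is Ball's inequality -- with the convexity of $t\mapsto \log(\sinh(t)/t)$ (Claim~\ref{steiner_claim3}) ensuring the $\sinh$-factors $S_\xi(r)$ interact compatibly with the harmonic-mean interpolation -- that correctly extracts the integral estimate.
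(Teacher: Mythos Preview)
Your outline has the right ingredients but assembles them in the wrong order, and as written there is a genuine gap. Ball's Theorem~\ref{KBall_ineq} requires the hypothesis
\[
H(r)\ \ge\ F(t)^{\frac{s}{t+s}}\,G(s)^{\frac{t}{t+s}}\qquad\text{for \emph{every} }t,s>0\text{ with }\tfrac{2}{r}=\tfrac{1}{t}+\tfrac{1}{s},
\]
not merely the diagonal case $t=s=r$. Your pointwise inequality $(\star)$ is exactly this diagonal case, and it is \emph{not} sufficient to feed into Ball's inequality; nor does convexity of the radial maps $r\mapsto h_{p,K}(ry_i)$ upgrade $(\star)$ to the full condition (there is no relation of that type between one-variable convexity and the mixed Hölder-type bound). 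So the sentence ``convert $(\star)$ to the desired integral bound via Ball's inequality, together with the convexity of the radial restrictions'' does not work.

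What the paper does --- and what your slicing computation is in fact set up to deliver --- is the full three-parameter estimate (Lemma~\ref{steiner_claim2}). With your notation, for $\tfrac{2}{r}=\tfrac{1}{t}+\tfrac{1}{s}$ one applies H\"older with exponents $\tfrac{t+s}{s},\tfrac{t+s}{t}$ (not just Cauchy--Schwarz) to the integrals over $\pi(K)$; the exponential weights combine as before since $\tfrac{ts}{t+s}=\tfrac{r}{2}$, but now the $\sinh$-factors appear with \emph{different} arguments, $S_\xi(t)^{s/(t+s)}S_\xi(s)^{t/(t+s)}$, and it is precisely the log-convexity of $u\mapsto \tfrac{1}{u}\sinh(\alpha u)$ (Claim~\ref{steiner_claim3}) that gives $S_\xi(t)^{s/(t+s)}S_\xi(s)^{t/(t+s)}\ge S_\xi(r)$. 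Thus the $\sinh$-convexity is used \emph{inside} the pointwise argument to obtain Lemma~\ref{steiner_claim2}, and only then does Theorem~\ref{KBall_ineq} apply to produce the norm inequality~\eqref{steiner_symm_lem1}. In short: replace your step ``prove $(\star)$ by Cauchy--Schwarz, then invoke Ball $+$ $\sinh$-convexity'' with ``prove the 3-parameter bound by H\"older $+$ $\sinh$-convexity, then invoke Ball''.
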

The equality on the right-hand side holds because $\sigma_{e_n} K$, and hence $(\sigma_{e_n}K)^{\circ,p}$ (Lemma \ref{KcircpSymPlane}), are by construction symmetric with respect to $e_n^\perp$. 
Nonetheless, note that no symmetry on $K$ is assumed for Lemma \ref{steiner_cor}, in contrast to Lemma \ref{sliceVolume}.
Applying the Brunn--Minkwoski inequality on Lemma \ref{steiner_cor} gives
\begin{equation*}
    |(\sigma_{e_n}K)^{\circ,p}(x_n)|^{\frac{1}{n-1}}\geq \frac12 |K^{\circ,p}(x_n)|^{\frac{1}{n-1}}+ \frac12 |K^{\circ,p}(-x_n)|^{\frac{1}{n-1}}. 
\end{equation*}
Without any symmetry assumption on $K$, $|K^{\circ,p}(x_n)|$ and $|K^{\circ,p}(-x_n)|$ may be unrelated. 
For symmetric convex bodies, $K^{\circ,p}(-x_n)= -K^{\circ,p}(x_n)$ (Claim \ref{symSlices}) and hence $|K^{\circ,p}(-x_n)|= |K^{\circ,p}(x_n)|$, justifying the symmetry assumption in Lemma \ref{sliceVolume}. 

\begin{figure}[H]
    \centering
    \includegraphics[scale=.25]{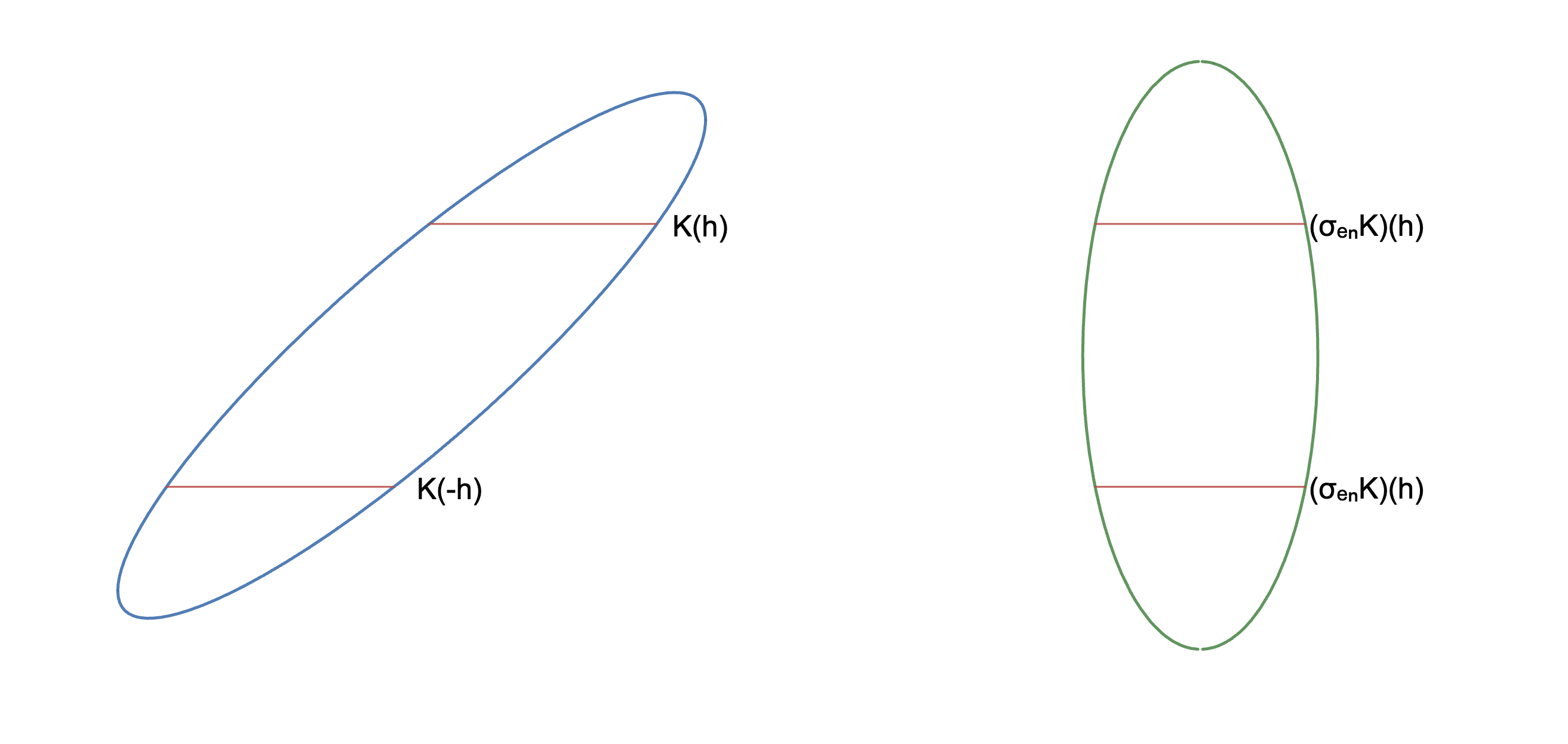}
    \caption{\small Comparing the slices}
\end{figure}

In order to obtain the inclusion of Lemma \ref{steiner_cor}, we first obtain an inequality relating the norms before and after symmetrization:
\begin{equation}\label{steiner_symm_lem1}
        \left\|\left(\frac{\xi+\xi'}{2}, x_n\right)\right\|_{(\sigma_{e_{n}}K)^{\circ,p}}\leq \frac{\|(\xi, x_n)\|_{\LpK}+ \|(\xi', -x_n)\|_{\LpK}}{2}. 
\end{equation}
For $p=\infty$, by \eqref{PolarSupportNorm}, \eqref{steiner_symm_lem1} reads
\begin{equation}\label{hKSteiner}
    h_{\sigma_{e_n}K}\left( \frac{\xi+\xi'}{2},x_n\right)\leq \frac{h_K(\xi,x_n)+ h_K(\xi',-x_n)}{2},
\end{equation}
which is classical and simple to prove: any element of $\sigma_{e_n}K$ is of the form $(z, \frac{t-s}{2})$, for $(z,t), (z,s)\in K$, so
\begin{equation*}
    \begin{aligned}
        \left\langle \Big(\frac{\xi+\xi'}{2}, x_n\Big), \Big( z,\frac{t-s}{2}\Big)\right\rangle &= \left\langle \frac{\xi+\xi'}{2}, z\right\rangle+ x_n\frac{t-s}{2} \\
        &= \frac{\langle \xi,z\rangle+ x_n t}{2}+ \frac{\langle \xi',z\rangle- x_n s}{2} \\
        &= \frac{\langle (\xi, x_n), (z,t)\rangle}{2}+ \frac{\langle (\xi',-x_n), (z,-s)\rangle}{2} \\
        &\leq \frac{h_K(\xi,x_n)+ h_K(\xi',- x_n)}{2},
    \end{aligned}
\end{equation*}
and \eqref{hKSteiner} follows.
One of our key estimates in this section is a
3-parameter ($p,s,t$) family generalization of \eqref{hKSteiner}:
\begin{lemma}\label{steiner_claim2}
    Let $p\in(0,\infty]$, and $K\subset \R^n$ a convex body. For $\xi,\xi'\in \R^{n-1}, x_n\in \R$ and $r,t,s>0$ with $\frac{2}{r}= \frac{1}{t}+ \frac{1}{s}$,
    \begin{equation*}
        h_{p,\sigma_{e_{n}}K}\left(r\frac{\xi+\xi'}{2}, r x_n\right)\leq \frac{s}{t+s} h_{p,K}(t\xi, t x_n)+ \frac{t}{t+s} h_{p, K}(s\xi', -s x_n).
    \end{equation*}
\end{lemma}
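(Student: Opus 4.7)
The plan is to exploit the fibered structure of $K$ over $e_n^\perp$ and reduce the inequality to a pointwise convexity statement for $u\mapsto \log(\sinh(u)/u)$. For each $z$ in the projection $P(K)\subset\R^{n-1}$ of $K$ onto $e_n^\perp$, the vertical fiber $\{w\in\R:(z,w)\in K\}$ is an interval $[c(z)-L(z)/2,\,c(z)+L(z)/2]$ of length $L(z)$ and midpoint $c(z)$ by convexity; Steiner symmetrization replaces it with the centered interval $[-L(z)/2,\,L(z)/2]$, and $|\sigma_{e_n}K|=|K|$. Writing $y=(\xi,x_n)\in\R^{n-1}\times\R$ and integrating first in the $w$-variable, I would record
\begin{equation*}
|K|\, e^{p h_{p,K}(\xi,x_n)} \;=\; \int_{P(K)} e^{p\langle z,\xi\rangle}\, e^{p c(z) x_n}\, \frac{2\sinh(pL(z)x_n/2)}{p x_n}\,dz,
\end{equation*}
with the analogous identity for $\sigma_{e_n}K$ being identical except that the factor $e^{pc(z)x_n}$ is absent.

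The second step is to apply H\"older's inequality with exponents $\alpha\defeq s/(t+s)$ and $\beta\defeq t/(t+s)$ (so $\alpha+\beta=1$) to the two integrals expressing $e^{p h_{p,K}(t\xi,tx_n)}$ and $e^{p h_{p,K}(s\xi',-sx_n)}$. The harmonic-mean hypothesis $2/r=1/t+1/s$ forces $\alpha t=\beta s=r/2$. Consequently, in the product of the $\alpha$-th and $\beta$-th powers of the integrands, the horizontal arguments combine to $\alpha(t\xi)+\beta(s\xi')=r(\xi+\xi')/2$, and the noncentered shift cancels: $\alpha(t c(z) x_n)+\beta(-s c(z) x_n)=(\alpha t-\beta s)c(z)x_n=0$. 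After this cancellation, H\"older reduces the lemma to the pointwise estimate
\begin{equation*}
\frac{2\sinh(pL(z)rx_n/2)}{prx_n}\;\leq\; \left(\frac{2\sinh(pL(z)tx_n/2)}{ptx_n}\right)^{\!\alpha}\left(\frac{2\sinh(pL(z)sx_n/2)}{psx_n}\right)^{\!\beta}
\end{equation*}
for every $z\in P(K)$.

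Setting $\eta\defeq pL(z)x_n/2$ and using $\alpha+\beta=1$, this is equivalent to $\sinh(r\eta)/r\leq (\sinh(t\eta)/t)^\alpha(\sinh(s\eta)/s)^\beta$; taking logarithms turns it into $g(r\eta)\leq \alpha g(t\eta)+\beta g(s\eta)$ with $g(u)\defeq \log(\sinh(u)/u)$. Since $\alpha t+\beta s=r$, we have $\alpha(t\eta)+\beta(s\eta)=r\eta$, so the desired bound is precisely convexity of $g$ at $t\eta,s\eta$ with weights $\alpha,\beta$, i.e., Claim \ref{steiner_claim3}. This settles $p\in(0,\infty)$; the case $p=\infty$ then follows by letting $p\to\infty$ via Corollary \ref{hpLimit} (or directly from $1$-homogeneity of $h_K$, which collapses the three parameters $r,s,t$ to the classical midpoint estimate already displayed before the statement).

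The main subtle point is the three-parameter coupling: because $h_{p,K}$ is not $1$-homogeneous for finite $p$, the inequality genuinely depends on $t$ and $s$, not just on $r$, and a naive scaling argument is unavailable. The harmonic-mean constraint $2/r=1/t+1/s$ is exactly what simultaneously (i) makes the H\"older exponents $\alpha=s/(t+s)$, $\beta=t/(t+s)$ annihilate the off-center factor $e^{pc(z)x_n}$ produced by noncentered fibers of $K$, and (ii) makes $r\eta=\alpha(t\eta)+\beta(s\eta)$ a valid convex combination, so the problem factors fiber-by-fiber through the scalar convexity of $\log(\sinh(u)/u)$.
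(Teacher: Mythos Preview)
Your proof is correct and follows essentially the same route as the paper's: both parametrize $K$ fiberwise over $e_n^\perp$, carry out the one-dimensional $\sinh$ integral along each fiber, apply H\"older with exponents $\frac{s}{t+s},\frac{t}{t+s}$, and reduce to the log-convexity of $u\mapsto\sinh(u)/u$ (Claim~\ref{steiner_claim3}). The only differences are cosmetic: you parametrize fibers by midpoint/length $(c,L)$ while the paper uses upper/lower functions $(f,g)$, and you phrase the scalar convexity as convexity of $g(u)=\log(\sinh(u)/u)$ rather than of $t\mapsto\log(\tfrac{1}{t}\sinh(tx))$, which is equivalent after the linear change $u=tx$. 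Your observation that $\alpha t=\beta s=r/2$ is exactly what makes the off-center factor $e^{pc(z)x_n}$ cancel is the same mechanism the paper uses implicitly in \eqref{eq104}.
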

For $p=\infty$, Lemma \ref{steiner_claim2} is equivalent to \eqref{steiner_symm_lem1}.
Lacking homogeneity, for $p\in (0,\infty)$ this is no longer the case. 
Notwithstanding, Lemma \ref{steiner_claim2} is exactly the condition necessary to apply Ball's Brunn--Minkowski inequality for harmonic means (Theorem \ref{KBall_ineq}, proven in Appendix \ref{appendixA}) from which we deduce \eqref{steiner_symm_lem1}.
The next step in the proof of Proposition \ref{steiner_ppolar} is to use \eqref{steiner_symm_lem1} to obtain Lemma \ref{steiner_cor}. 
Finally, Lemma \ref{steiner_cor} and a symmetry property for antipodal slices of symmetric bodies (Corollary \ref{KcircpSliceSym}) give Lemma \ref{sliceVolume} from which Proposition \ref{steiner_ppolar} follows by \eqref{sliceVolumeIntegral}. 

The proof of Proposition \ref{steiner_ppolar} is organized
as follows. Sections \ref{MpSteiner} and \ref{MpHausdorff} are preparatory. In \S \ref{MpSteiner} we recall a few basics of Steiner symmetrization. In \S\ref{MpHausdorff}, Lemma \ref{Mp_Hausdorff} establishes the continuity of $\M_p$ in the Hausdorff topology (Definition \ref{HausdorffDef}). \S \ref{SliceSymSection} establishes several symmetries between antipodal slices for symmetric convex bodies.
Section \ref{SantaloTechnicalSection} is dedicated to proving Lemma \ref{steiner_claim2}, and Section \ref{KcircpSteiner}  to proving Lemmas \ref{steiner_cor} and \ref{sliceVolume}. In \S \ref{ppolarSantaloProof}, we complete the proofs of Proposition \ref{steiner_ppolar} and Theorem \ref{santalo_symmetric}.

\subsection{Steiner symmetrization}\label{MpSteiner}

For a vector $u\in\partial B_2^n$ denote by 
\begin{equation*}
    u^\perp\defeq \{x\in \R^n: \langle x,u\rangle=0\}, 
\end{equation*}
the hyperplane through the origin that is normal to $u$. Let, also,
\begin{equation*}
    \pi_{u^\perp}: \R^n \ni x\mapsto x- \langle x,u\rangle u \in u^\perp
\end{equation*}
the projection onto $u^\perp$.
Given $u\in \partial B_2^n$, one may  
foliate any convex body $K$ by a family of straight line segments parametrized by a hyperplane $u^\perp$. The Steiner symmetral $\sigma_uK$
is the unique such foliation for which the line segments have their midpoints in $u^\perp$
\cite[pp. 286--287]{steiner}
(see also \cite[\S 9]{gruber}
\cite[Definition 1.1.13]{artstein-giannopoulos-milman}):
\begin{definition}\label{steiner_def}
    For $K\subset \R^n$ a convex body and $u\in \partial B_2^n$, the Steiner symmetral in the $u$ direction is given by
    \begin{equation*}
        \sigma_u(K)\defeq \{x+ tu: x\in \pi_{u^\perp}(K)
        \text{ and } |t|\leq \frac12  |K\cap (x+\R u)|\}. 
    \end{equation*}
\end{definition}

Steiner symmetrization produces a convex body that is symmetric with respect to $u^\perp$. 
\begin{definition}
A convex body $K\subset \R^n$ is symmetric with respect to a hyperplane $u^\perp$ if for all $x\in K$
\begin{equation*}
    x- 2\langle x,u\rangle u \in K.
\end{equation*}
\end{definition}
Equivalently, $K$ remains invariant under reflection with respect to $u^\perp$.
Steiner symmetrization also preserves volume and convexity \cite[Proposition 9.1]{gruber}: 
\begin{lemma}\label{SteinerProperties}
    For a convex body $K\subset \R^n$ and $u\in\partial B_2^n$, $\sigma_u(K)$ is a convex body, symmetric with respect to $u^\perp$, with $|\sigma_u(K)|= |K|$.
\end{lemma}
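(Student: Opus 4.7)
The plan is to check each of the three assertions (symmetry with respect to $u^\perp$, convexity, and volume preservation) in turn, using the one-dimensional slice length
\begin{equation*}
\ell(x) \defeq |K \cap (x + \R u)|, \qquad x \in \pi_{u^\perp}(K),
\end{equation*}
as the central object. By Definition \ref{steiner_def}, a point $y \in \R^n$ lies in $\sigma_u(K)$ iff $y = x + tu$ with $x = \pi_{u^\perp}(y) \in \pi_{u^\perp}(K)$ and $|t| \leq \tfrac12 \ell(x)$. The condition on $t$ is invariant under $t \mapsto -t$, which immediately yields reflection symmetry in $u^\perp$. Volume preservation is a direct application of Fubini's/Cavalieri's principle: foliating both $K$ and $\sigma_u(K)$ by the affine lines $x + \R u$, $x \in \pi_{u^\perp}(K)$, each fiber of $\sigma_u(K)$ is by construction an interval of length exactly $\ell(x)$, the same as the (possibly longer-than-an-interval but equally-measured) fiber of $K$; integrating over $x \in \pi_{u^\perp}(K)$ yields $|\sigma_u(K)| = |K|$.

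The main work is convexity of $\sigma_u(K)$, and the key step is the concavity of $\ell$ on $\pi_{u^\perp}(K)$. First, $\pi_{u^\perp}(K)$ is convex as the image of the convex set $K$ under a linear map. Next, I claim
\begin{equation*}
\ell\bigl((1-\lambda)x_1 + \lambda x_2\bigr) \;\geq\; (1-\lambda)\ell(x_1) + \lambda \ell(x_2), \qquad x_1,x_2 \in \pi_{u^\perp}(K),\; \lambda\in[0,1].
\end{equation*}
Writing $K \cap (x_i + \R u) = \{x_i + tu : t \in I_i\}$ for closed intervals $I_i = [a_i,b_i]$ (closedness uses compactness of $K$), the convexity of $K$ forces $(1-\lambda)I_1 + \lambda I_2 \subset K \cap (((1-\lambda)x_1 + \lambda x_2) + \R u)$, from which $\ell((1-\lambda)x_1 + \lambda x_2) \geq (1-\lambda)(b_1-a_1) + \lambda(b_2-a_2)$ follows at once.

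With concavity of $\ell$ in hand, convexity of $\sigma_u(K)$ is immediate: given $y_i = x_i + t_i u \in \sigma_u(K)$ with $|t_i|\leq \tfrac12\ell(x_i)$ and $\lambda \in [0,1]$, set $x_\lambda \defeq (1-\lambda)x_1 + \lambda x_2 \in \pi_{u^\perp}(K)$ and $t_\lambda \defeq (1-\lambda)t_1 + \lambda t_2$, so that
\begin{equation*}
|t_\lambda| \leq (1-\lambda)|t_1| + \lambda |t_2| \leq \tfrac12\bigl[(1-\lambda)\ell(x_1) + \lambda\ell(x_2)\bigr] \leq \tfrac12 \ell(x_\lambda),
\end{equation*}
so $(1-\lambda)y_1 + \lambda y_2 = x_\lambda + t_\lambda u \in \sigma_u(K)$.

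It remains to verify that $\sigma_u(K)$ is a \emph{body}, i.e., compact with non-empty interior. Compactness follows because $\sigma_u(K)$ is contained in $\pi_{u^\perp}(K) + [-\tfrac12 \sup\ell, \tfrac12\sup\ell]u$, which is bounded (as $K$ is), and because closedness is inherited from upper semicontinuity of $\ell$ on $\pi_{u^\perp}(K)$ (a standard consequence of compactness of $K$). For non-empty interior, since $K$ has non-empty interior, there is an open set on which $\ell > 0$; concavity then shows $\ell > 0$ on the relative interior of $\pi_{u^\perp}(K)$, producing an open set inside $\sigma_u(K)$. The only subtle point in the argument is the concavity of $\ell$, which I expect to be the main technical step, although as noted above it reduces to a one-line application of the convexity of $K$.
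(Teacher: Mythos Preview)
Your proof is correct and is essentially the standard textbook argument; the paper itself does not give a proof of this lemma but simply cites \cite[Proposition 9.1]{gruber}, where the same concavity-of-slice-length argument appears. One minor remark: since $K$ is convex, each fiber $K\cap(x+\R u)$ is already an interval, so the parenthetical ``possibly longer-than-an-interval'' is unnecessary.
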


Orthogonal transformations preserve volume and, by \eqref{LpK_affine_transform}, commute with $L^p$-polarity. The following lemma then justifies working with $u=e_n$ throughout.   
\begin{lemma}\label{steiner_orthogonal}
For a convex body $K\subset \R^n$, $u\in \partial B_2^n$, and $A\in O(n)$, 
\begin{equation*}
    \sigma_{u}(K)= A^{-1}\sigma_{Au}(AK). 
\end{equation*}
In particular, $|\sigma_u(K)|= |\sigma_{Au}(AK)|$.
\end{lemma}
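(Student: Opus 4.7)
The plan is to verify this equivariance directly from Definition \ref{steiner_def}. Rewriting the identity as $A\sigma_u(K) = \sigma_{Au}(AK)$ (which is equivalent since $A$ is invertible), I would prove it by checking $\subseteq$ and then obtaining the reverse inclusion by running the same argument with the triple $(A^{-1}, AK, Au)$ in place of $(A, K, u)$, using that $A^{-1} \in O(n)$ and $A^{-1}(Au) = u$.

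The proof rests on three observations, all of which come from $A \in O(n)$ preserving inner products: (i) $A$ maps $u^\perp$ bijectively onto $(Au)^\perp$; (ii) $A$ sends the fiber line $x + \R u$ isometrically onto $Ax + \R(Au)$, so in particular $|K \cap (x + \R u)| = |AK \cap (Ax + \R(Au))|$ (1-dimensional Lebesgue measure is isometry-invariant); and (iii) orthogonal projection is equivariant, $A\,\pi_{u^\perp}(y) = \pi_{(Au)^\perp}(Ay)$, which is the one-line check $Ay - \langle Ay, Au\rangle Au = A(y - \langle y, u\rangle u)$. Combining these, a typical element $A(x+tu) = Ax + t(Au)$ of $A\sigma_u(K)$, with $x \in \pi_{u^\perp}(K)$ and $|t| \le \tfrac12|K \cap (x + \R u)|$, has base point $Ax \in A\,\pi_{u^\perp}(K) = \pi_{(Au)^\perp}(AK)$ and satisfies $|t| \le \tfrac12|AK \cap (Ax + \R(Au))|$, so lies in $\sigma_{Au}(AK)$ by Definition \ref{steiner_def}.

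The ``in particular'' statement on volume is then immediate: since $A^{-1} \in O(n)$ preserves Lebesgue measure, $|\sigma_u(K)| = |A^{-1}\sigma_{Au}(AK)| = |\sigma_{Au}(AK)|$ (alternatively, this follows from Lemma \ref{SteinerProperties} applied to $AK$). I do not anticipate a substantive obstacle; the only point worth flagging is that orthogonality is used essentially, not merely invertibility — for a generic $A \in GL(n, \R)$, the image $A(u^\perp)$ is not $(Au)^\perp$ and the identity would fail, which is also why the lemma suffices to reduce the proof of Proposition \ref{steiner_ppolar} to the single direction $u = e_n$.
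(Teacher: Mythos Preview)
Your proof is correct and follows essentially the same approach as the paper: both rely on the equivariance of orthogonal projection, $A\,\pi_{u^\perp} = \pi_{(Au)^\perp}\circ A$, and on the fact that $A$ maps fiber lines isometrically so that $|K\cap(x+\R u)| = |AK\cap(Ax+\R(Au))|$. The only cosmetic difference is that the paper verifies the inclusion $A^{-1}\sigma_{Au}(AK)\subset\sigma_u(K)$ first, while you verify $A\sigma_u(K)\subset\sigma_{Au}(AK)$ first; the symmetry argument to recover the reverse inclusion is identical.
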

\begin{proof}
Since $A\in O(n)$ is invertible, it is enough to show $A^{-1}\sigma_{Au}(AK)\subset \sigma_u(K)$. Let $x+ t Au\in \sigma_{Au}(AK)$ with 
$$
x\in \pi_{(Au)^\perp}(AK), \quad
\hbox{\ and \ } |t|\leq \frac12 |(AK)\cap (x+ \R Au)|.
$$

First, 
\begin{equation}\label{piAuEq}
    \pi_{(Au)^\perp}(AK)= A\pi_{u^\perp}(K).
\end{equation}
Indeed, for $z\in\R^n$,
$$
    \pi_{(Au)^\perp}(Az)= Az-\langle Az, Au\rangle Au= Az- \langle z, u\rangle Au= A(z- \langle z,u\rangle u)= A\pi_{u^\perp}(z),
$$
because, since $A\in O(n)$, $\langle Az, au\rangle= \langle z, A^TAu\rangle= \langle z,u\rangle$.

Second, 
\beq
\lb{AuperpEq2}
(AK)\cap (x+ \R Au)= A\left( K\cap (A^{-1}x+ \R u)\right).
\eeq
That is because, $y\in (AK)\cap (x+ \R Au)$, if and only if $y\in AK$ and $y= x+ s Au, x\in K, s\in \R$. Equivalently, $A^{-1}y\in K$ and $A^{-1}y= A^{-1}x+ su\in A^{-1}x+ \R u$, i.e., $A^{-1}y\in K\cap (A^{-1} x+ \R u)$. 

Using \eqref{AuperpEq2} and as $A\in O(n)$ preserves volume, $|K\cap (A^{-1}x+\R u)|= |(AK)\cap (x+\R Au)|$. 
Thus $A^{-1}(x+ t Au)= A^{-1}x+ tu$, is such that 
$A^{-1}x\in A^{-1} \pi_{(Au)^\perp}(AK) = A^{-1}(A \pi_{u^\perp}(K))= \pi_{u^\perp}(K)$ (using \eqref{piAuEq}), and $|t|\leq \frac12 |(AK)\cap (x+ \R Au)|= \frac12|K\cap (A^{-1}x+ \R u)|$, that is $A^{-1}(x+ t Au)= A^{-1}x+ tu\in \sigma_u(K)$.
\end{proof}

Recall the definition of the Hausdorff metric.
\begin{definition}\label{HausdorffDef}
    For $K, L\subset \R^n$ two compact bodies, let
    \begin{equation*}
        d_H(K,L)\defeq \inf\{\e>0: K\subset L+ \e B_2^n \text{ and } L\subset K+\e B_2^n\}, 
    \end{equation*}
    be the Hausdorff distance between $K$ and $L$. 
\end{definition}

Repeated Steiner symmetrizations Hausdorff
converge  to a 2-ball
\cite{gross}  \cite[Theorem 9.1]{gruber}.

\begin{lemma}\label{steiner_limit}
    For a convex body $K\subset \R^n$, there is $\rho>0$ and a sequence of vectors $u_j\in\partial B_2^n$ such that if $K_j\defeq \sigma_{u_j}(K_{j-1})$, where $K_0\defeq K$, then $K_j\to \rho B_2^n$ in the Hausdorff metric.
\end{lemma}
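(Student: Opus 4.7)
My plan is to apply Blaschke's selection theorem to extract a Hausdorff limit of $\{K_j\}$ and then force this limit to be a ball using a monotone functional. I will first establish an a priori bound: fixing $R>0$ with $K\subset R B_2^n$, since each hyperplane $u^\perp$ passes through the origin (the center of $RB_2^n$), Steiner symmetrization preserves this containment. Indeed, for $z\in\pi_{u^\perp}(K')$ the chord $K'\cap(z+\R u)$ lies in a segment of length at most $2\sqrt{R^2-|z|^2}$, so $\sigma_u K'\subset R B_2^n$. Combined with Lemma~\ref{SteinerProperties}, this gives $K_j\subset R B_2^n$ and $|K_j|=|K|$ for every $j$, regardless of how the $u_j$ are chosen.

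For the monotone functional I will use $F(K'):=\int_{K'}|x|^2\,\mathrm{d}x$. Decomposing $|x|^2=|\pi_{u^\perp}(x)|^2+\langle x,u\rangle^2$ and applying Fubini, the first term is unchanged under $\sigma_u$, while the second weakly decreases because the centered interval uniquely minimizes $\int s^2\,\mathrm{d}s$ among subsets of $\R$ of given one-dimensional measure. Thus $F(\sigma_u K')\le F(K')$, with equality iff $K'$ is symmetric about $u^\perp$. I will then choose $u_j\in\partial B_2^n$ iteratively to minimize $F(\sigma_u K_{j-1})$; such a minimizer exists by compactness of $\partial B_2^n$ and continuity of $u\mapsto\sigma_u K_{j-1}$ in the Hausdorff metric. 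The sequence $F(K_j)$ is then nonincreasing and bounded below by $0$, so it converges to some $F_\infty$.

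Next I extract a convergent subsequence. By Blaschke's selection theorem applied to $\{K_j\}\subset R B_2^n$, some subsequence $K_{j_m}\to L$ in the Hausdorff metric, with $L$ convex, $|L|=|K|$, and $F(L)=F_\infty$. I claim $L=\rho B_2^n$ with $\rho:=(|K|/|B_2^n|)^{1/n}$. If $L$ were not a ball, there would be $v\in\partial B_2^n$ with $\e:=F(L)-F(\sigma_v L)>0$; by Hausdorff continuity of $\sigma_v$ and of $F$, one has $F(\sigma_v K_{j_m})\to F_\infty-\e$, and so for large $m$, by the minimizing choice of $u_{j_m+1}$,
\[ F(K_{j_m+1})=F(\sigma_{u_{j_m+1}}K_{j_m})\le F(\sigma_v K_{j_m})\le F_\infty-\e/2, \]
contradicting $F(K_{j_m+1})\ge F_\infty$. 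Hence $L$ is invariant under every reflection through a hyperplane containing the origin, so $L$ is a ball centered at $0$, and volume conservation fixes its radius. Since every Hausdorff-convergent subsequence of $\{K_j\}$ must have the same limit $\rho B_2^n$, and the full sequence is precompact, $K_j\to\rho B_2^n$.

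The main obstacle will be the Hausdorff continuity of Steiner symmetrization on convex bodies, a classical but nontrivial fact that I need twice: once to ensure $u\mapsto F(\sigma_u K_{j-1})$ attains its minimum on the compact set $\partial B_2^n$, and once to pass from $K_{j_m}\to L$ to $\sigma_v K_{j_m}\to\sigma_v L$. A secondary point is continuity of $F$ along Hausdorff-convergent sequences in $R B_2^n$, which is easy since $|x|^2$ is bounded on $R B_2^n$ and Hausdorff convergence of convex bodies yields $L^1$-convergence of their indicator functions.
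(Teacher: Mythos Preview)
The paper does not prove this lemma at all; it states it as a classical fact and cites Gross and Gruber \cite{gross}, \cite[Theorem 9.1]{gruber}. Your proposal supplies a complete self-contained argument, so there is nothing to compare against in the paper itself.

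Your argument is correct. The Lyapunov-functional approach via the polar moment of inertia $F(K')=\int_{K'}|x|^2\,\mathrm{d}x$, combined with a greedy choice of symmetrization direction and Blaschke selection, is one of the standard routes to this result. The two continuity facts you flag as obstacles are indeed the only nontrivial inputs: Hausdorff continuity of $K'\mapsto\sigma_v K'$ on convex bodies (classical; see, e.g., Gruber's book), and continuity of $u\mapsto\sigma_u K'$, which you can either quote or sidestep by noting that $F(\sigma_u K')=F(K')-\int_{K'}\langle x,u\rangle^2\,\mathrm{d}x+\frac{1}{12}\int_{\pi_{u^\perp}(K')}\ell_u(z)^3\,\mathrm{d}z$ (with $\ell_u(z)$ the chord length) is manifestly continuous in $u$ by dominated convergence. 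The rest of the logic---monotonicity of $F(K_j)$, the contradiction step using the minimizing choice of $u_{j_m+1}$, and the passage from subsequential to full convergence via precompactness---is clean.
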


\subsection{\texorpdfstring{Hausdorff continuity of $\M_p$
}{Hausdorff continuity of Mₚ}}\label{MpHausdorff}

The aim of this subsection is to verify that $\M_p$ is continuous under Hausdorff convergence (Lemma \ref{Mp_Hausdorff}).

By Lemma \ref{steiner_limit}, iterated applications of Steiner symmetrization $d_H$-converge to a 2-ball. Therefore, in order to obtain Theorem \ref{santalo_symmetric}, it is necessary to show that $\M_p$ is $d_H$-continuous. 
\begin{lemma}\label{Mp_Hausdorff}
    Let $p\in(0,\infty]$, and $\{K_j\}_{j\geq 1}\subset \R^n$ a sequence of convex bodies $d_H$-converging to a convex body 
    $K\subset \R^n$ with $|K^{\circ,p}|<\infty$.        
    Then, $\M_p(K_j)\to \M_p(K)$. 
\end{lemma}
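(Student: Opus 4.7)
The plan is to write $\M_p(K_j)=|K_j|\int_{\R^n}e^{-h_{p,K_j}(y)}\,\dif y$ and pass to the limit in each factor separately. Since Hausdorff convergence of convex bodies yields $|K_j|\to|K|$ as a standard fact, the main task reduces to applying dominated convergence to the integral. Note also that by Theorem \ref{LpKwelldefined} the hypothesis $|K^{\circ,p}|<\infty$ is equivalent to $0\in\mathrm{int}\,K$, which will be decisive below.

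First I would establish pointwise convergence $h_{p,K_j}(y)\to h_{p,K}(y)$ for every $y\in\R^n$. Hausdorff convergence of convex bodies implies $\mathbf{1}_{K_j}(x)\to\mathbf{1}_K(x)$ for every $x\notin\partial K$, hence almost everywhere since $|\partial K|=0$. As all $K_j$ are eventually contained in the common bounded set $K+B_2^n$, the integrands $x\mapsto e^{p\langle x,y\rangle}\mathbf{1}_{K_j}(x)$ are dominated uniformly in $j$ by a bounded, compactly supported function, and dominated convergence gives $\int_{K_j}e^{p\langle x,y\rangle}\dif x\to\int_{K}e^{p\langle x,y\rangle}\dif x$. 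Combined with $|K_j|\to|K|>0$ and continuity of $\tfrac{1}{p}\log$, this yields pointwise convergence of the $L^p$-support functions.

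The main obstacle is producing a $j$-uniform integrable majorant for the family $e^{-h_{p,K_j}}$. Here I would use $0\in\mathrm{int}\,K$ to pick $r>0$ with $[-2r,2r]^n\subset K$, so that Hausdorff convergence ensures both $[-r,r]^n\subset K_j$ and $|K_j|\le 2|K|$ for all $j$ large. Applying the estimate \eqref{hpKrEq} to each such $K_j$ gives
\[
    h_{p,K_j}(y)\ge h_{p,[-r,r]^n}(y)+\tfrac{1}{p}\log\tfrac{(2r)^n}{2|K|},
\]
and hence $e^{-h_{p,K_j}(y)}\le C\,e^{-h_{p,[-r,r]^n}(y)}$ with $C>0$ independent of $j$. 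The majorant is integrable because $[-r,r]^n$ has $0$ in its interior, so $\M_p([-r,r]^n)<\infty$ by Claim \ref{MpFiniteClaim}. Dominated convergence then gives $\int_{\R^n}e^{-h_{p,K_j}(y)}\dif y\to\int_{\R^n}e^{-h_{p,K}(y)}\dif y$, and combining with $|K_j|\to|K|$ concludes. Without $0\in\mathrm{int}\,K$ nothing would prevent the mass of $e^{-h_{p,K_j}}$ from escaping to infinity, so this assumption is genuinely needed.
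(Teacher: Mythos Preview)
Your proposal is correct and follows essentially the same approach as the paper's proof, which splits the problem into showing $|K_j|\to|K|$ and $\int_{\R^n}e^{-h_{p,K_j}}\to\int_{\R^n}e^{-h_{p,K}}$, obtains pointwise convergence of $h_{p,K_j}$ from a.e.\ convergence of indicator functions, and constructs the uniform integrable majorant exactly as you do by using $[-2r,2r]^n\subset K$, hence $[-r,r]^n\subset K_j$ for large $j$, together with a uniform upper bound on $|K_j|$ and the estimate \eqref{hpKrEq}. The only cosmetic differences are that the paper cites Lemma~\ref{finiteness_bodies} rather than Theorem~\ref{LpKwelldefined} for the equivalence $|K^{\circ,p}|<\infty\Leftrightarrow 0\in\mathrm{int}\,K$, and Lemma~\ref{mKmpKineq} rather than Claim~\ref{MpFiniteClaim} for the integrability of $e^{-h_{p,[-r,r]^n}}$.
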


Lemma \ref{Mp_Hausdorff} follows from the next two claims. First, the volume of convex bodies is continuous under the Hausdorff metric. Note this is not true without the convexity assumption, e.g. for space-filling curves. Denote by
\begin{equation*}
    \bm{1}_K(x)\defeq \begin{cases}
    1, & x\in K, \\
    0, & x\notin K, 
    \end{cases}
\end{equation*}
the indicator function of $K$.
\begin{claim}\label{HausdorffClaim1}
Let $\{K_j\}_{j\geq 1}\subset \R^n$ be a sequence of convex bodies $d_H$-converging to $K\subset \R^n$. Then, $|K_j|\to |K|$. 
\end{claim}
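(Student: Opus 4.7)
The plan is to prove $\bm{1}_{K_j}\to\bm{1}_K$ almost everywhere and then invoke the dominated convergence theorem. Since convex bodies have topological boundary of Lebesgue measure zero, it suffices to check pointwise convergence on $\R^n\setminus\partial K$.

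First I would handle the exterior. If $x\notin K$ then $\delta\defeq d(x,K)>0$, and once $d_H(K_j,K)<\delta/2$ the inclusion $K_j\subset K+(\delta/2)B_2^n$ forces $x\notin K_j$, so $\bm{1}_{K_j}(x)\to 0=\bm{1}_K(x)$.

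The substantive step is for $x\in\mathrm{int}\,K$. Pick $\delta>0$ with $B(x,\delta)\subset K$ and take $j$ large enough that $d_H(K_j,K)<\delta/2$, so $B(x,\delta)\subset K\subset K_j+(\delta/2)B_2^n$. To show $x\in K_j$, suppose not; since $K_j$ is closed and convex, a separating hyperplane furnishes a unit vector $u\in\partial B_2^n$ and a constant $c\in\R$ with $\langle z,u\rangle\le c$ for all $z\in K_j$ but $\langle x,u\rangle>c$. The test point $y\defeq x+(\delta/2)u$ lies in $B(x,\delta)\subset K_j+(\delta/2)B_2^n$, so some $z\in K_j$ satisfies $|z-y|\le \delta/2$, whence $\langle z,u\rangle\ge\langle y,u\rangle-\delta/2=\langle x,u\rangle>c$, contradicting $z\in K_j$. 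Thus $x\in K_j$ eventually, i.e.\ $\bm{1}_{K_j}(x)\to 1=\bm{1}_K(x)$. This separation argument is where convexity of the $K_j$ is essential---without it, Hausdorff convergence need not preserve volume---and it is the only nontrivial part of the proof.

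To close, any Hausdorff-convergent sequence of compact sets is uniformly bounded: eventually $K_j\subset K+B_2^n$, which is compact. Hence $\bm{1}_{K_j}\le\bm{1}_{K+B_2^n}\in L^1(\R^n)$, and dominated convergence delivers $|K_j|=\int_{\R^n}\bm{1}_{K_j}\,\dif x\to\int_{\R^n}\bm{1}_K\,\dif x=|K|$.
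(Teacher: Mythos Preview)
Your proof is correct and follows essentially the same approach as the paper: establish $\bm{1}_{K_j}\to\bm{1}_K$ off $\partial K$ and apply dominated convergence with the dominating function $\bm{1}_{K+B_2^n}$. The only cosmetic difference is that, for $x\in\mathrm{int}\,K$, the paper deduces $x\in K_j$ from $x+\e B_2^n\subset K_j+\e B_2^n$ via the cancellation law for Minkowski sums of convex bodies, whereas you argue directly by separating hyperplane; both exploit convexity of $K_j$ in an equivalent way.
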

\begin{proof}
Since $d_H(K_j, K)\to 0$, there are $\e_j>0$ such that $K_j\subset K+ \e_j B_2^n$ and $K+\e_j B_2^n\subset K_j$, with $\e_j\to 0$. In particular, $\{\e_j\}_{j\geq 1}$ is bounded. For simplicity, take $\e_j\leq 1$. In particular, $K_j\subset K+\e_j B_2^n\subset K+B_2^n$, thus $\bm{1}_{K_j}\leq \bm{1}_{K+B_2^n}$ for all $j$. 
This allows for the use of dominated convergence. 
It is therefore enough to show
\begin{equation}\label{KjPC}
    \lim_{j\to\infty} \bm{1}_{K_j}(x)= \bm{1}_K(x), \quad x\in (\mathrm{int}\,K)\cup (\R^n\setminus K). 
\end{equation}
Then, by dominated convergence, 
\begin{equation*}
    \lim_{j\to\infty} |K_j|= \lim_{j\to\infty} \int_{\R^n}\bm{1}_{K_j}= \int_{\R^n}\lim_{j\to\infty} \bm{1}_{K_j}= \int_{\R^n}\bm{1}_K= |K|.
\end{equation*}
For \eqref{KjPC}, let $x\in \mathrm{int}\,K$. There is $\e>0$ such that $x+ \e B_2^n\subset K$. Since $\e_j\to 0$, there is $j_0\geq 1$ such that $\e_j<\e$ for all $j\geq j_0$. Therefore, $x+\e B_2^n\subset K\subset K_j+\e_j B_2^n\subset K_j+ \e B_2^n$. By the cancellation law for the Minkowski sum of convex bodies \cite[Theorem 6.1 (i)]{gruber}, $\{x\}\subset K_j$, i.e., $x\in K_j$. Therefore, $\bm{1}_{K_j}(x)=1= \bm{1}_K(x)$ for all $j\geq j_0$.

For $x\in \R^n\setminus K$, since $K$ is closed, $\R^n\setminus K$ is open, thus there is $\e>0$ such that $x+ 2\e B_2^n\subset \R^n\setminus K$, i.e., $(x+2\e B_2^n)\cap K=\emptyset$. Let $j_0\geq 1$ with $\e_j<\e$ for all $j\geq j_0$. Then, $K_j\subset K+ \e B_2^n$ and hence
\begin{equation*}
    (x+ \e B_2^n)\cap K_j\subset (x+\e B_2^n)\cap (K+\e B_2^n) =\emptyset,
\end{equation*}
because for $y\in (x+\e B_2^n)\cap(K+\e B_2^n)$, $y= x+ \e u= z+ \e v,$ for $u,v\in B_2^n$ and $z\in K$. That is, $z= x+ \e(u-v)\in x+2\e B_2^n$, thus $z\in K\cap (x+2\e B_2^n)=\emptyset$, a contradiction.  
Therefore, $x\notin K_j$ for all $j\geq j_0$, i.e., $\bm{1}_{K_j}(x)=0= \bm{1}_K(x)$ for all $j\geq j_0$, proving \eqref{KjPC}. 
\end{proof}

Second, the volume of the $L^p$-polars is also continuous under Hausdorff convergence given that the limit is a convex body with finite $\M_p$ volume. 
\begin{claim}\label{HausdorffClaim2}
    Let $p\in(0,\infty]$, and $\{K_j\}_{j\geq 1}\subset \R^n$ a sequence of convex bodies $d_H$-converging to a convex body $K$ with $|K^{\circ,p}|<\infty$. Then, $|K_j^{\circ,p}|\to |K^{\circ,p}|$. 
\end{claim}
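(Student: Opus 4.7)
The plan is to reduce the convergence of $|K_j^{\circ,p}|$ to the convergence of $\M_p(K_j)$ via \eqref{MpKeq}, and then to apply dominated convergence to the defining integral $\M_p(K_j)=|K_j|\int_{\R^n} e^{-h_{p,K_j}(y)}\dif y$. Since $|K_j|\to|K|$ by Claim \ref{HausdorffClaim1} (and $|K|>0$ as $K$ is a convex body), the heart of the matter is to show that $\int_{\R^n} e^{-h_{p,K_j}(y)}\dif y\to\int_{\R^n} e^{-h_{p,K}(y)}\dif y$.

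First I would observe that $|K^{\circ,p}|<\infty$, combined with Lemma \ref{LpKbounded}, forces $0\in\mathrm{int}\,K$. Choose $\e>0$ with $[-2\e,2\e]^n\subset K$. Since $d_H(K_j,K)\to 0$, the cancellation argument of Claim \ref{HausdorffClaim1} ensures that for all $j$ sufficiently large we have the uniform inclusion $[-\e,\e]^n\subset K_j\subset K+B_2^n$, and also $|K_j|\to|K|>0$.

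Next I would establish pointwise convergence $h_{p,K_j}(y)\to h_{p,K}(y)$ for each fixed $y\in\R^n$ (the case $p=\infty$ reduces to the classical fact that support functions converge pointwise, indeed locally uniformly, under Hausdorff convergence). For $p<\infty$, the function $x\mapsto e^{p\langle x,y\rangle}$ is bounded on the compact set $K+B_2^n$, and $\bm{1}_{K_j}\to\bm{1}_K$ almost everywhere by the argument in the proof of Claim \ref{HausdorffClaim1}, so dominated convergence gives $\int_{K_j} e^{p\langle x,y\rangle}\dif x\to\int_K e^{p\langle x,y\rangle}\dif x$; combined with $|K_j|\to|K|>0$ this yields pointwise convergence of $h_{p,K_j}(y)$.

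The main obstacle is constructing a uniform integrable majorant for $e^{-h_{p,K_j}}$. For $p<\infty$, the uniform inclusion $[-\e,\e]^n\subset K_j$ together with \eqref{hpKrEq} gives
\begin{equation*}
    h_{p,K_j}(y)\geq h_{p,[-\e,\e]^n}(y)+\frac{1}{p}\log\frac{(2\e)^n}{|K_j|}
\end{equation*}
for all large $j$. Since $|K_j|\to|K|$, the logarithmic term is uniformly bounded, so there is a constant $C>0$ with $e^{-h_{p,K_j}(y)}\leq C\,e^{-h_{p,[-\e,\e]^n}(y)}$, and the majorant is integrable on $\R^n$ by Claim \ref{cubeppolarBounded} (equivalently, by Lemma \ref{cubeMp}, $\M_p([-\e,\e]^n)<\infty$). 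For $p=\infty$ the cube inclusion yields $h_{K_j}(y)\geq h_{[-\e,\e]^n}(y)=\e(|y_1|+\cdots+|y_n|)$, whose exponential is also integrable. Dominated convergence then gives $\int e^{-h_{p,K_j}}\to\int e^{-h_{p,K}}$, hence $\M_p(K_j)\to\M_p(K)$; dividing by $n!|K_j|\to n!|K|>0$ and invoking \eqref{MpKeq} concludes $|K_j^{\circ,p}|\to|K^{\circ,p}|$.
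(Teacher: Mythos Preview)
Your proof is correct and follows essentially the same approach as the paper: both use Lemma \ref{LpKbounded} (the paper cites the equivalent Lemma \ref{finiteness_bodies}) to extract a small cube inside $K$ and hence inside all $K_j$ for large $j$, establish pointwise convergence of $h_{p,K_j}$ via dominated convergence on the compact union, and then use the cube inclusion with \eqref{hpKrEq} to build the uniform integrable majorant $C\,e^{-h_{p,[-\e,\e]^n}}$. The only cosmetic differences are that you frame the conclusion via $\M_p(K_j)\to\M_p(K)$ and then divide by $n!|K_j|$, and that you treat $p=\infty$ separately, whereas the paper works directly with $\int e^{-h_{p,K_j}}$ throughout.
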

\begin{proof}
Since $d_H(K_j, K)\to 0$, 
there are $\e_j>0$ such that $K_j\subset K+\e_j B_2^n$ and $K\subset K_j+ \e_j B_2^n$ with $\e_j\to 0$. In particular, $\{\e_j\}_{j\geq 1}$ is bounded. For simplicity, take $\e_j\leq 1/2$. In particular, 
\begin{equation*}
    K_j\subset K+\e_j B_2^n\subset K+ B_2^n, 
\end{equation*}
so $K_j$ are uniformly bounded. Let $M>0$ such that $|x|\leq M$ for all $x\in K_j$ and all $j$.
For $y\in \R^n$,
    \begin{equation}\label{hausdorff_eq1}
    \begin{aligned}
        \left||K_j|e^{ph_{p,K_j}(y)}- |K|e^{ph_{p,K}(y)}\right|&=\left| \int_{K_j} e^{p\langle x,y\rangle}\dif x- \int_K e^{p\langle x,y\rangle}\dif x\right|\\
        &\leq \int_{(K_j\setminus K)\cup (K\setminus K_j)} e^{p\langle x,y\rangle}\dif x\\
        &\leq |(K_j\setminus K)\cup (K\setminus K_j)| e^{pM|y|}.
    \end{aligned}
    \end{equation}
    Note that 
    \begin{equation*}
        \bm{1}_{(K_j\setminus K)\cup (K\setminus K)}(y)= |\bm{1}_{K_j}(y)-\bm{1}_{K}(y)|
    \end{equation*}
    which converges to 0 almost everywhere by \eqref{KjPC}. By dominated convergence, $|(K_j\setminus K)\cup (K\setminus K_j)|\to 0$.
    Taking $j\to \infty$ in \eqref{hausdorff_eq1}, $|K_j| e^{ph_{p,K_j}(y)}\to |K| e^{ph_{p,K}(y)}$. By Claim \ref{HausdorffClaim1}, $|K_j|\to |K|$, thus
    \begin{equation}\label{hausdorffEq2}
        \lim_{j\to\infty} h_{p,K_j}(y)= h_{p,K}(y), \quad y\in \R^n, 
    \end{equation}
    establishing the pointwise convergence. 
    
    The aim is to use dominated convergence on $e^{-h_{p,K_j}}$, for which a uniform (independent of $j$) and integrable upper bound is necessary. By assumption $|K^{\circ,p}|<\infty$, or equivalently, by Lemma \ref{finiteness_bodies}, $0\in\mathrm{int}\,K$. That is, there is $r>0$ such that $[-2r,2r]^n\subset K$. Therefore, for large enough $j_0>0$, $[-r,r]^n\subset K_j$ for all $j\geq j_0$. In addition, by Claim \ref{HausdorffClaim1}, $|K_j|\to |K|>0$, thus there is $M'>0$ with $|K_j|\leq M'$ for all $j$. As a result, 
    \begin{equation*}
    \begin{aligned}
        h_{p,K_j}(y)&= \frac1p\log\int_{K_j} e^{p\langle x,y\rangle}\frac{\dif x}{|K_j|} \geq \frac1p\log\int_{[-r,r]^n} e^{p\langle x,y\rangle}\frac{\dif x}{M'}= h_{p,[-r,r]^n}(y)+\log\frac{(2r)^n}{M'},
    \end{aligned}
    \end{equation*}
    and hence
    \begin{equation*}
        e^{-h_{p,K_j}(y)}\leq \frac{M'}{(2r)^n} e^{-h_{p,[-r,r]^n}(y)}.
    \end{equation*}
    The right-hand side is integrable since by \eqref{LpK_affine_transform}, 
    \begin{equation*}
    \int_{\R^n}e^{-h_{p,[-r,r]^n}(y)}\dif y= \frac{\M_p([-r,r]^n)}{|[-r,r]^n|}= \frac{1}{(2r)^n} \M_p([-1,1]^n),
    \end{equation*}   
    which is finite by Lemma \ref{mKmpKineq}.
    The claim now follows from \eqref{hausdorffEq2} and the dominated convergence theorem. 
\end{proof}

\begin{proof}[Proof of Lemma \ref{Mp_Hausdorff}]
    By Claims \ref{HausdorffClaim1}--\ref{HausdorffClaim2}, $|K_j|\to |K|$ and $|K_j^{\circ,p}|\to |K^{\circ,p}|$, thus by \eqref{MpKeq},
        $\lim_{j\to\infty}\M_p(K_j)= \lim_{j\to \infty} n! |K_j||K_j^{\circ,p}|= n! |K| |K^{\circ,p}|= \M_p(K). 
        $
\end{proof}

\subsection{Slice analysis of symmetric convex bodies}\label{SliceSymSection}
\subsubsection{Symmetry with respect to a hyperplane}

Antipodal slices are related when $-K= K$:
      $\xi \in K(-x_n)$ if and only if $(\xi,-x_n)\in K$ or $-(\xi,-x_n)= (-\xi,x_n)\in K$, i.e., if and only if $-\xi\in K(x_n)$. In sum:
\begin{claim}\label{symSlices}
    For a symmetric convex body $K\subset \R^n$, $K(-x_n)= -K(x_n)$ for all $x_n\in \R$. 
\end{claim}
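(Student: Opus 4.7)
The plan is to unwind the definition of the slice $K(x_n)$ from \eqref{sliceDef} and chain together elementary equivalences using only the hypothesis $-K = K$. No convexity is actually needed; the statement is really about sets that are centrally symmetric about the origin. I would start from an arbitrary $\xi \in \R^{n-1}$ and $x_n \in \R$ and track the membership condition.

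First, I would write $\xi \in K(-x_n)$ as $(\xi, -x_n) \in K$, directly from \eqref{sliceDef}. Next, since $-K = K$, a point lies in $K$ if and only if its negative does, so $(\xi, -x_n) \in K$ is equivalent to $-(\xi, -x_n) = (-\xi, x_n) \in K$. Applying \eqref{sliceDef} once more, this last condition is exactly $-\xi \in K(x_n)$, which in turn reads $\xi \in -K(x_n)$. Assembling these equivalences gives the desired set equality $K(-x_n) = -K(x_n)$.

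I do not foresee any obstacles: each step is either a definitional rewriting or a direct use of central symmetry. The only thing worth flagging is that the identity holds for every $x_n \in \R$, including values for which the slice is empty (in which case both sides are $\emptyset$, and the equality is trivial). Below is the short write-up I would insert.

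\begin{proof}
Fix $x_n \in \R$ and $\xi \in \R^{n-1}$. By the definition \eqref{sliceDef} of a slice,
\[
\xi \in K(-x_n) \q \Longleftrightarrow \q (\xi, -x_n) \in K.
\]
Since $K$ is symmetric, $-K = K$, so $(\xi, -x_n) \in K$ if and only if $-(\xi, -x_n) = (-\xi, x_n) \in K$. Applying \eqref{sliceDef} again,
\[
(-\xi, x_n) \in K \q \Longleftrightarrow \q -\xi \in K(x_n) \q \Longleftrightarrow \q \xi \in -K(x_n).
\]
Combining these equivalences yields $K(-x_n) = -K(x_n)$.
\end{proof}
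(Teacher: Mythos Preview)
Your proof is correct and matches the paper's own argument essentially line for line: both unwind the definition of the slice and use $-K=K$ to pass from $(\xi,-x_n)\in K$ to $(-\xi,x_n)\in K$.
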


If, instead, one assumes $K$ to be symmetric with respect to the hyperplane $e_n^\perp$, then antipodal slices are exactly equal: 
note that $\xi\in K(x_n)$ if and only if $(\xi,x_n)\in K$, which by the symmetry of $K$ with respect to $e_n^\perp$ is equivalent to $(\xi,-x_n)\in K$ or $\xi\in K(-x_n)$. Thus:

\begin{claim}\label{planeSymmetric}
    For a convex body $K\subset \R^n$ symmetric with respect to $e_n^\perp$, $K(-x_n)= K(x_n)$ for all $x_n\in \R$. 
\end{claim}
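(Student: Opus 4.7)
The plan is to unwind the definitions: the slice $K(x_n)$ is the set of $\xi\in\R^{n-1}$ with $(\xi,x_n)\in K$, and symmetry of $K$ with respect to $e_n^\perp$ means $x\mapsto x-2\langle x,e_n\rangle e_n$ preserves $K$. For $x=(\xi,x_n)\in\R^{n-1}\times\R$ one has $\langle x,e_n\rangle=x_n$, so this reflection is precisely $(\xi,x_n)\mapsto(\xi,-x_n)$.

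Then I would argue by a short chain of equivalences: $\xi\in K(x_n)$ iff $(\xi,x_n)\in K$, which by the hyperplane symmetry is equivalent to $(\xi,-x_n)\in K$, which is the definition of $\xi\in K(-x_n)$. This proves the two sets are equal for every $x_n\in\R$.

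There is no real obstacle; the statement is essentially a direct rewriting of the definition of hyperplane symmetry in terms of slices, exactly parallel to the one-line derivation of Claim \ref{symSlices} given just before. Both inclusions follow from the same reflection, so no symmetry-breaking case analysis is needed.
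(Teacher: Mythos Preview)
Your proof is correct and essentially identical to the paper's own argument: the paper also argues that $\xi\in K(x_n)$ iff $(\xi,x_n)\in K$, which by symmetry with respect to $e_n^\perp$ is equivalent to $(\xi,-x_n)\in K$, i.e., $\xi\in K(-x_n)$. Your explicit computation that the reflection $x\mapsto x-2\langle x,e_n\rangle e_n$ sends $(\xi,x_n)$ to $(\xi,-x_n)$ is a welcome clarification but does not change the approach.
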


\subsubsection{\texorpdfstring{$L^p$-polarity preserves symmetries}{Lᵖ-polarity preserves symmetries}}

\begin{corollary}\label{KcircpSliceSym}
    Let $p\in(0,\infty]$. For a symmetric convex body $K$, $K^{\circ,p}(-x_n)= -K^{\circ,p}(x_n)$ for all $x_n\in \R$. 
\end{corollary}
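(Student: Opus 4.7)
The plan is to reduce this to Claim \ref{symSlices} by first showing that $K^{\circ,p}$ itself inherits central symmetry from $K$. The slice notation \eqref{sliceDef} applied to $K^{\circ,p}$ reads
\[
K^{\circ,p}(x_n)=\{\xi\in\R^{n-1}:(\xi,x_n)\in K^{\circ,p}\},
\]
and Claim \ref{symSlices} says precisely that any centrally symmetric convex body $L\subset\R^n$ satisfies $L(-x_n)=-L(x_n)$. So all that needs to be established is that $K^{\circ,p}$ is centrally symmetric whenever $K$ is.

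The central symmetry of $K^{\circ,p}$ is already recorded as the last sentence of Theorem \ref{LpKwelldefined}: if $-K=K$, then by the change of variable $x\mapsto -x$ in the integral defining $h_{p,K}$ one obtains $h_{p,K}(-y)=h_{p,K}(y)$, whence $\|-y\|_{K^{\circ,p}}=\|y\|_{K^{\circ,p}}$ by \eqref{opNorm}, and therefore $-K^{\circ,p}=K^{\circ,p}$. Thus $K^{\circ,p}$ is a (centrally) symmetric convex body as soon as $K$ is, and all the hypotheses of Claim \ref{symSlices} are satisfied by $L=K^{\circ,p}$.

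Applying Claim \ref{symSlices} to $L=K^{\circ,p}$ yields
\[
K^{\circ,p}(-x_n)=-K^{\circ,p}(x_n),\qquad x_n\in\R,
\]
which is the claim of the corollary. There is no genuine obstacle here; the entire content of the corollary is the transfer of symmetry from $K$ to its $L^p$-polar $K^{\circ,p}$ (via the evenness of $h_{p,K}$ in $y$), combined with the elementary slice identity already proved in Claim \ref{symSlices}.
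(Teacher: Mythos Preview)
Your proof is correct and follows exactly the same route as the paper: invoke Theorem \ref{LpKwelldefined} to conclude that $K^{\circ,p}$ is symmetric, then apply Claim \ref{symSlices} to $K^{\circ,p}$. The paper's proof is just the two-sentence version of what you wrote.
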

\begin{proof}
    By Theorem \ref{LpKwelldefined}, $K^{\circ,p}$ is symmetric. Thus, by Claim \ref{symSlices}, $K^{\circ,p}(-x_n)= -K^{\circ,p}(x_n)$.
\end{proof}

In addition, $K^{\circ,p}$ inherits symmetries with respect to hyperplanes from $K$.
\begin{lemma}\label{KcircpSymPlane}
    Let $p\in (0,\infty], u\in\partial B_2^n$ and $K$ a convex body symmetric with respect to $u^\perp$. Then, $K^{\circ,p}$ is symmetric with respect to $u^\perp$. 
\end{lemma}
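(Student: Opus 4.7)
The plan is to show that the $L^p$-support function $h_{p,K}$ is invariant under the reflection $R_u(x) \defeq x - 2\langle x, u\rangle u$ across $u^\perp$, after which the symmetry of $K^{\circ,p}$ follows immediately from the defining integral \eqref{opNorm}.

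First I would observe that the hypothesis $K$ symmetric with respect to $u^\perp$ is precisely the statement $R_u K = K$, where $R_u \in O(n)$ satisfies $R_u^T = R_u = R_u^{-1}$. Applying Lemma~\ref{list}(iii) to the orthogonal matrix $A = R_u$, together with $R_u K = K$, yields
\[
h_{p,K}(y) = h_{p, R_u K}(y) = h_{p, K}(R_u^T y) = h_{p, K}(R_u y),
\]
so $h_{p,K}$ is $R_u$-invariant. (Alternatively, this can be seen by a direct change of variables $z = R_u x$ in \eqref{hpdef}, using $|\det R_u| = 1$ and $\langle x, R_u y\rangle = \langle R_u x, y\rangle$.)

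Next I would transfer this invariance to the ``norm'' $\|\cdot\|_{K^{\circ,p}}$. Since $R_u$ commutes with scalar multiplication, for every $y \in \R^n$ and $r > 0$ one has $h_{p,K}(r R_u y) = h_{p,K}(R_u(ry)) = h_{p,K}(ry)$, so by \eqref{opNorm}
\[
\|R_u y\|_{K^{\circ,p}} = \left(\frac{1}{(n-1)!}\int_0^\infty r^{n-1} e^{-h_{p,K}(rR_u y)}\,\dif r\right)^{-1/n} = \|y\|_{K^{\circ,p}}.
\]
Therefore $y \in K^{\circ,p}$ if and only if $R_u y \in K^{\circ,p}$, i.e., $R_u K^{\circ,p} = K^{\circ,p}$, which is exactly the required symmetry of $K^{\circ,p}$ with respect to $u^\perp$.

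There is no real obstacle here; the argument is a direct unwinding of the definitions, relying only on the $GL(n,\R)$-transformation rule of Lemma~\ref{list}(iii) and the fact that $L^p$-polarity is built from $h_{p,K}$ through a rotationally-compatible integral. The same approach, applied to $R_u = -\mathrm{Id}$, recovers the symmetry statement for centrally symmetric $K$ already recorded in Theorem~\ref{LpKwelldefined}.
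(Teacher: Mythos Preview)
Your proof is correct and follows the same underlying idea as the paper's: both establish that $h_{p,K}$ is invariant under the reflection $R_u$ across $u^\perp$ and then pass this invariance to $\|\cdot\|_{K^{\circ,p}}$. The only difference is presentational---the paper carries out the change of variables by hand in coordinates adapted to $u^\perp$, whereas you invoke the transformation rule of Lemma~\ref{list}(iii) (one could equally cite \eqref{LpK_affine_transform} to conclude $K^{\circ,p}=(R_uK)^{\circ,p}=R_uK^{\circ,p}$ in one line), which is a clean shortcut.
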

\begin{proof}
    By symmetry with respect to $u^\perp$, $\pi_{u^\perp}(K)= K\cap u^\perp$. There is concave $f:K\cap u^\perp\to [0,\infty)$ such that
    \begin{equation*}
        K= \{x+ tu: x\in K\cap u^\perp \text{ and } |t|\leq f(x)\}. 
    \end{equation*}
    For $y\in K\cap u^\perp, s\in \R$, 
    \begin{equation*}
        \begin{aligned}
            h_{p,K}(y+ s u)&= \frac1p \log\left(\int_K e^{p\langle z,y+ su\rangle}\frac{\dif z}{|K|} \right) \\
            &= \frac1p \log\left( \int_{x\in K\cap u^\perp}\int_{t=-f(x)}^{f(x)} e^{p\langle x+ tu, y+ su\rangle} \frac{\dif t\dif x}{|K|}\right)\\ 
            &= \frac1p \log\left( \int_{x\in K\cap u^\perp}e^{p\langle x, y\rangle}\int_{t=-f(x)}^{f(x)} e^{ p ts} \frac{\dif t\dif x}{|K|}\right) \\
            &= \frac1p \log\left( \int_{x\in K\cap u^\perp}e^{p\langle x, y\rangle}\int_{\tau=-f(x)}^{f(x)} e^{ -p \tau s} \frac{\dif \tau\dif x}{|K|}\right) \\
            &= h_{p,K}(y-su),
        \end{aligned}
    \end{equation*}
    by the change of variables $\tau=-t$.
    As a result, $\|y+su\|_{K^{\circ,p}}= \|y-su\|_{K^{\circ,p}}$, and hence $y+su\in K^{\circ,p}$ if and only if $y-su\in K^{\circ,p}$ as desired. 
\end{proof}

By Lemma \ref{SteinerProperties}, $\sigma_{e_n}K$ is symmetric with respect to $e_n^\perp$, thus, by Lemma \ref{KcircpSymPlane}, $(\sigma_{e_n}K)^{\circ,p}$ also is. Therefore, by Claim \ref{planeSymmetric} its antipodal slices are equal.

\begin{corollary}\label{SteinerKcircpCor}
    Let $p\in(0,\infty]$. For a convex body $K\subset \R^n$, $(\sigma_{e_n}K)^{\circ,p}(x_n)= (\sigma_{e_n}K)^{\circ,p}(-x_n)$ for all $x_n\in\R$. 
\end{corollary}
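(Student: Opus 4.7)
The proof plan is essentially a short chain of three results already established in the excerpt, so this is a routine bookkeeping argument rather than a substantive proof. The strategy is to observe that Steiner symmetrization in the $e_n$ direction produces a body which is symmetric with respect to the hyperplane $e_n^\perp$, then to transfer this hyperplane symmetry through the $L^p$-polarity operation, and finally to read off the slice equality directly from the definition of hyperplane symmetry.

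More precisely, I would proceed in three steps. First, invoke Lemma \ref{SteinerProperties} to conclude that $\sigma_{e_n}K$ is symmetric with respect to $e_n^\perp$, i.e.\ $\sigma_{e_n}K$ is invariant under the reflection $x\mapsto x-2\langle x,e_n\rangle e_n$. Second, apply Lemma \ref{KcircpSymPlane} with $u=e_n$ and the body $\sigma_{e_n}K$ to conclude that $(\sigma_{e_n}K)^{\circ,p}$ is likewise symmetric with respect to $e_n^\perp$. This is where all the real work happens, and it was carried out in the proof of Lemma \ref{KcircpSymPlane}, which uses the fact that the integrand defining $h_{p,K}$ is invariant under the substitution $t\mapsto -t$ along the fibers over $K\cap e_n^\perp$.

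Third, apply Claim \ref{planeSymmetric} to $L\defeq (\sigma_{e_n}K)^{\circ,p}$ with $u=e_n$. The claim yields $L(-x_n)=L(x_n)$ for every $x_n\in\R$, which is exactly the desired identity
\begin{equation*}
(\sigma_{e_n}K)^{\circ,p}(x_n)=(\sigma_{e_n}K)^{\circ,p}(-x_n).
\end{equation*}
There is no obstacle of substance here; the only subtlety is to verify that the hypotheses of the cited lemmas are met, namely that $\sigma_{e_n}K$ is indeed a convex body (guaranteed by Lemma \ref{SteinerProperties}) so that Lemma \ref{KcircpSymPlane} applies.
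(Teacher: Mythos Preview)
Your proof is correct and follows exactly the same chain of reasoning as the paper: Lemma \ref{SteinerProperties} gives that $\sigma_{e_n}K$ is symmetric with respect to $e_n^\perp$, Lemma \ref{KcircpSymPlane} transfers this symmetry to $(\sigma_{e_n}K)^{\circ,p}$, and Claim \ref{planeSymmetric} then yields the equality of antipodal slices.
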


\subsection{Proof of Lemma \ref{steiner_claim2}}
\label{SantaloTechnicalSection}

The only two ingredients required for the proof of Lemma \ref{steiner_claim2} are H\"older's inequality and the log-convexity of $\sinh(t)/t$ (Claim \ref{steiner_claim3} below).
\begin{proof}[Proof of Lemma \ref{steiner_claim2}.]
Let $f,g: \pi_{e_n^\perp}(K)\to \R, g\leq f$, so that 
\begin{equation*}
    K= \left\{(\xi, x_n)\in \pi_{e_n^\perp}(K)\times \R: g(\xi) \leq x_n\leq f(\xi)\right\}. 
\end{equation*}
Then, 
\begin{equation*}
    \sigma_{e_n}K= \left\{(\xi, x_n)\in \pi_{e_n^\perp}(K)\times \R: |x_n|\leq \frac{f(\xi)- g(\xi)}{2}\right\}. 
\end{equation*}
In the integrals below it will be convenient to use slice-coordinates
$$
(\eta,y_n)\in \sigma_{e_{n}K}, 
\q
\hbox{ with \ } 
\eta\in (\sigma_{e_{n}}K)\cap e_n^\perp,
\q
y_n\in\R.
$$
    Since $|\sigma_{e_{n}}K|=|K|$ and $(\sigma_{e_{n}}K)\cap e_{n}^\perp = \pi_{e_n}^\perp (K)$, 
    \begin{equation}\label{eq101}
    \begin{aligned}
        h_{p, \sigma_{e_{n}K}}\left(r\frac{\xi+\xi'}{2}, r x_n\right)&= \frac1p\log\left( \int_{\sigma_{e_{n}}K} e^{p\langle r\frac{\xi+\xi'}{2}, \eta\rangle} e^{pr x_n y_n}\frac{\dif\eta\dif y_n}{|\sigma_{e_{n}}K|}\right)\\
        &= \frac1p \log\left( \int_{\eta\in (\sigma_{e_{n}}K)\cap e_n^\perp} \int_{y_n=-\frac{f(\eta)-g(\eta)}{2}}^{\frac{f(\eta)- g(\eta)}{2}}e^{pr\langle \frac{\xi+\xi'}{2}, \eta\rangle} e^{pr x_n y_n}\frac{\dif y_n\dif \eta}{|K|}\right)\\
        &= \frac1p \log\left( \int_{\pi_{e_n^\perp}(K)} e^{pr\langle \frac{\xi+\xi'}{2}, \eta\rangle} \frac{e^{pr x_n\frac{f(\eta)-g(\eta)}{2}}- e^{-prx_n\frac{f(\eta)- g(\eta)}{2}}}{pr x_n}\frac{\dif \eta}{|K|}\right)\\
        &= \frac1p \log\left( \int_{\pi_{e_n^\perp}(K)} e^{pr\langle \frac{\xi+\xi'}{2}, \eta\rangle} \frac{2}{prx_n}\sinh\left(pr x_n\frac{f(\eta)- g(\eta)}{2}\right)\frac{\dif \eta}{|K|}\right).
    \end{aligned}
    \end{equation}
    Also,
    \begin{equation}\label{eq102}
        \begin{aligned}
            h_{p, K}(t\xi, t x_n)&= \frac1p \log\left( \int_K e^{p\langle t\xi, \eta\rangle}e^{pt x_n y_n}\frac{\dif \eta\dif y_n}{|K|}\right)\\
            &= \frac1p \log\left(\int_{\pi_{e_n^\perp}(K)}\int_{y_n= g(\eta)}^{f(\eta)} e^{pt\langle \xi,\eta\rangle}e^{p t x_n y_n}\frac{\dif y_n\dif \eta}{|K|} \right)\\
            &= \frac1p \log\left( \int_{\pi_{e_n^\perp}(K)}e^{pt\langle \xi,\eta\rangle}\frac{1}{px_n t}\Big(e^{p x_n t f (\eta)}- e^{p x_n t g(\eta)}\Big)\frac{\dif \eta}{|K|}\right)\\
            &=\frac1p\log\left( \int_{\pi_{e_n^\perp}(K)}e^{pt\langle \xi,\eta\rangle}\frac{2}{px_n t}e^{px_n t\frac{f(\eta)+g(\eta)}{2}}\sinh\left(p x_n t\frac{f(\eta)-g(\eta)}{2}\right)\frac{\dif \eta}{|K|}\right), 
        \end{aligned}
    \end{equation}
    because 
    \begin{equation*}
    \begin{aligned}
    e^{px_n tf(\eta)}- e^{px_n tg(\eta)}&= e^{px_n t\frac{f(\eta)+g(\eta)}{2}}\Big(e^{px_n t\frac{f(\eta)-g(\eta)}{2}}- e^{-px_n t\frac{f(\eta)-g(\eta)}{2}}\Big)\\
    &=2e^{px_n t\frac{f(\eta)+g(\eta)}{2}}\sinh\left(p x_n t\frac{f(\eta)-g(\eta)}{2}\right).
    \end{aligned}
    \end{equation*}
    Similarly, 
    \begin{equation}\label{eq103}
    \begin{aligned}
        &h_{p,K}(s\xi' , -sx_n)
        \\
        &= \frac1p\log\left( \int_{\pi_{e_n^\perp}(K)} e^{p\langle s\xi', \eta\rangle} \frac{2}{p(-sx_n )}e^{-px_n s\frac{f(\eta)+g(\eta)}{2}} \sinh\left(p(-s x_n)\frac{f(\eta)-g(\eta)}{2}\right)\frac{\dif \eta}{|K|}\right) \\
        &= \frac1p\log\left( \int_{\pi_{e_n^\perp}(K)} e^{ps\langle \xi',\eta\rangle} \frac{2}{px_n s}e^{-px_n s\frac{f(\eta)+g(\eta)}{2}} \sinh\left(px_n s\frac{f(\eta)-g(\eta)}{2}\right)\frac{\dif \eta}{|K|}\right). 
    \end{aligned}
    \end{equation}
By \eqref{eq102}--\eqref{eq103} and H\"older's inequality,
\begin{equation}\label{eq104}
\begin{aligned}
    &\frac{s}{t+s}h_{p,K}(t\xi, t x_n)+ \frac{t}{t+s} h_{p,K}(s\xi', -s x_n)\\
    &= \frac1p \log\left[\left(\int_{\pi_{e_n^\perp}(K)} e^{pt\langle \xi,\eta\rangle} \frac{2}{px_n t}e^{px_n t\frac{f(\eta)+g(\eta)}{2}}\sinh\left(px_n t\frac{f(\eta)-g(\eta)}{2}\right) \frac{\dif \eta}{|K|}\right)^{\frac{s}{t+s}}\right.\\
    &\hspace{1.6cm} \left.\left(\int_{\pi_{e_n^\perp}(K)}e^{ps\langle \xi', \eta\rangle}\frac{2}{px_n s} e^{-px_n s\frac{f(\eta)+g(\eta)}{2}}\sinh\left(px_n s\frac{f(\eta)-g(\eta)}{2}\right)\frac{\dif \eta}{|K|}\right)^{\frac{t}{t+s}}\right]\\
    &\geq \frac1p\log\left( \int_{\pi_{e_n^\perp}(K)} e^{p\frac{ts}{t+s}\langle \xi,\eta\rangle} \left(\frac{2}{px_n t}\right)^{\frac{s}{t+s}} e^{px_n \frac{ts}{t+s}\frac{f(\eta)+g(\eta)}{2}}\left(\sinh\left(px_n t\frac{f(\eta)-g(\eta)}{2}\right)^{\frac{s}{t+s}}\right)\right.\\
    &\hspace{2.9cm}\left. e^{p\frac{ts}{t+s}\langle \xi',\eta\rangle} \left(\frac{2}{px_n s}\right)^{\frac{t}{t+s}} e^{-px_n \frac{ts}{t+s}\frac{f(\eta)+g(\eta)}{2}} \left(\sinh\left(px_n s\frac{f(\eta)-g(\eta)}{2}\right)^{\frac{t}{t+s}}\right)\frac{\dif \eta}{|K|}\right) \\
    &=\frac1p\log\left(\int_{\pi_{e_n^\perp}(K)} e^{p\frac{ts}{t+s}\langle \xi+ \xi',\eta\rangle} J(\eta,t)^{\frac{s}{t+s}} J(\eta,s)^{\frac{t}{t+s}}\frac{\dif \eta}{|K|}\right)\\
    &= \frac1p\log\left(\int_{\pi_{e_n^\perp}(K)} e^{p r\langle \frac{\xi+\xi'}{2},\eta\rangle} J(\eta,t)^{\frac{s}{t+s}} J(\eta,s)^{\frac{t}{t+s}}\frac{\dif \eta}{|K|}\right), 
\end{aligned}
\end{equation}
where
\begin{equation*}
    J(\eta,t)\defeq \frac{2}{px_n t}\sinh\left(px_n t\frac{f(\eta)- g(\eta)}{2}\right).
\end{equation*}
By Claim \ref{steiner_claim3} below, $\log J$ is convex in $t$, and therefore,
\begin{equation}\label{eq105}
    J(\eta,t)^{\frac{s}{t+s}} J(\eta,s)^{\frac{t}{t+s}}\geq J\left(\eta, \frac{s}{t+s}t+ \frac{t}{t+s}s\right)= J\left(\eta, \frac{2ts}{t+s}\right)= J(\eta, r),
\end{equation}
because $\frac{2ts}{t+s}= r$. Therefore, by \eqref{eq101}, \eqref{eq104} and \eqref{eq105},
\begin{equation*}
    \begin{aligned}
        &\frac{s}{t+s}h_{p,K}(t\xi, t x_n)+ \frac{t}{t+s} h_{p,K}(s\xi', -s x_n)
        \\
        &\geq \frac1p\log\left(\int_{K\cap e_{n}^\perp} e^{p r\langle \frac{\xi+\xi'}{2},\eta\rangle}\frac{2}{px_n r}\sinh\left(px_n r\frac{f(\eta)- g(\eta)}{2}\right)\frac{\dif \eta}{|K|}\right)
        \\
        &= h_{p, \sigma_{e_{n}}K}\Big(r\frac{\xi+\xi'}{2}, rx_n\Big),
    \end{aligned}
\end{equation*}
as desired. 
\end{proof}

\begin{claim}\label{steiner_claim3}
    For any $x>0$, $t\mapsto \log\left(\frac1t \sinh(tx)\right), t>0,$ is convex.
\end{claim}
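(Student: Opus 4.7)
The plan is to exhibit $\sinh(tx)/t$ as an integral of exponentials that are log-convex in $t$, and then invoke the standard fact that integrals of log-convex functions are log-convex. Concretely, for fixed $x>0$, one observes the representation
\[
\frac{\sinh(tx)}{t}=\frac{x}{2}\int_{-1}^{1} e^{stx}\,ds,
\]
which is just a rewriting of $\int_{-1}^{1}e^{stx}ds=(e^{tx}-e^{-tx})/(tx)$. For each fixed $s\in[-1,1]$, the map $t\mapsto e^{stx}$ is log-linear in $t$ (its logarithm is $stx$), hence trivially log-convex.

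The key step is then log-convexity of the integral. If $f_s(t)$ is a family of positive functions that are log-convex in $t$, then for $t_0,t_1>0$ and $\lambda\in(0,1)$, H\"older's inequality gives
\[
\int f_s\bigl((1-\lambda)t_0+\lambda t_1\bigr)\,ds
\le \int f_s(t_0)^{1-\lambda} f_s(t_1)^{\lambda}\,ds
\le \Bigl(\int f_s(t_0)\,ds\Bigr)^{1-\lambda}\Bigl(\int f_s(t_1)\,ds\Bigr)^{\lambda},
\]
so $t\mapsto\int f_s(t)\,ds$ is log-convex. Applied with $f_s(t)=e^{stx}$, this yields log-convexity of $t\mapsto\int_{-1}^1 e^{stx}ds$ and hence of $\sinh(tx)/t$; multiplying by the positive constant $x/2$ only shifts $\log$ by a constant. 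Therefore $t\mapsto \log(\sinh(tx)/t)$ is convex, as claimed.

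I do not expect a serious obstacle: the identity with $\int_{-1}^1 e^{stx}ds$ is immediate, and the H\"older step is standard. As a back-up, a purely calculus-based verification is available: writing $f(t)=\log\sinh(tx)-\log t$ one computes
\[
f''(t)=\frac{1}{t^{2}}-\frac{x^{2}}{\sinh^{2}(tx)},
\]
which is nonnegative precisely because $\sinh(y)\ge y$ for $y\ge 0$ (applied to $y=tx$). Either route establishes the convexity needed in the proof of Lemma~\ref{steiner_claim2}.
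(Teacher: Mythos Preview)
Your proposal is correct. The integral representation $\sinh(tx)/t=\tfrac{x}{2}\int_{-1}^{1}e^{stx}\,ds$ combined with the H\"older argument is a genuinely different route from the paper, which instead computes the second derivative directly and checks $f''(t)=1/t^{2}-x^{2}/\sinh^{2}(tx)\ge 0$ via $\sinh(y)\ge y$. Your main argument is more conceptual and places the claim in the general framework ``Laplace-type transforms are log-convex,'' which makes the result transparent and readily generalizable (indeed, the same identity and H\"older reasoning already appear in the paper as Lemma~\ref{cube_hp} and Lemma~\ref{hpKconvex}, so your approach amounts to recognizing $\sinh(tx)/t$ as $e^{h_{1,[-1,1]}(tx)}$). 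The paper's calculus proof is more self-contained and avoids invoking H\"older, but yields the same inequality; your backup reproduces it verbatim.
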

\begin{proof}
    Write 
    $$f(t)\defeq \log\left(\frac1t \sinh(tx)\right)= \log(\sinh(tx))-\log t.
    $$ 
    Compute the derivatives, $f'(t)= x\frac{\cosh(tx)}{\sinh(tx)}-\frac1t$ and 
    \begin{equation*}
    \begin{aligned}
        f''(t)&= x^2\frac{\sinh(tx)}{\sinh(tx)}- x^2\frac{(\cosh(tx))^2}{(\sinh(tx))^2}+\frac{1}{t^2}= x^2\left(1- \frac{(\cosh(tx))^2}{(\sinh(tx))^2}+\frac{1}{(tx)^2}\right)\\
        &= x^2\left(1- \frac{1+(\sinh(tx))^2}{(\sinh(tx))^2}+ \frac{1}{(tx)^2}\right)= x^2\left(\frac{1}{(tx)^2}- \frac{1}{(\sinh(tx))^2}\right)\geq 0,
    \end{aligned}
    \end{equation*}
    because $\sinh(y)\geq y$, for all $y\geq 0$.
\end{proof}

\subsection{\texorpdfstring{
Slice analysis of 
$\LpK$ under Steiner symmetrization}{Slice analysis of Kᵒᵖ under Steiner symmetrization}}\label{KcircpSteiner}

\subsubsection{A monotonicity property for the average of antipodal slices}

For the proof of Lemma \ref{steiner_cor}, we first prove   \eqref{steiner_symm_lem1}. The aim is to apply the following theorem due to Ball \cite[Theorem 4.10]{ball} for $F,G,H$ 
appropriate exponentials of the $L^p$-support functions. 
\begin{theorem}\label{KBall_ineq}
Let $F, G, H: (0,\infty)\to [0,\infty)$ be measurable functions, not almost everywhere 0, with
\begin{equation}\label{fghCondition}
    H(r)\geq F(t)^{\frac{s}{t+s}} G(s)^{\frac{t}{t+s}}, \quad \text{ for all} \quad \frac2r=\frac1t+\frac1s.  
\end{equation}
Then, for $q\geq 1$, 
\begin{equation*}
    2\left(\int_0^\infty r^{q-1} H(r)\dif r \right)^{-1/q}\leq \left( \int_0^\infty t^{q-1} F(t)\dif t\right)^{-1/q}+ \left(\int_0^\infty s^{q-1}G(s)\dif s\right)^{-1/q}.
\end{equation*}
\end{theorem}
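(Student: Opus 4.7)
Proof proposal. The plan is to reduce the inequality to the classical Brunn--Minkowski inequality in $\R^q$, following an idea of K. Ball, by associating to each non-negative measurable function on $(0,\infty)$ a body of revolution whose $q$-dimensional Lebesgue volume reproduces the weighted integral $\int_0^\infty t^{q-1} F(t)\,dt$. Concretely, for each admissible $F$ I would introduce
\begin{equation*}
    A_F \defeq \{(t,y)\in (0,\infty)\times \R^{q-1}:\, |y|\le t\, F(t)^{1/(q-1)}\}\subset \R^q,
\end{equation*}
so that $|A_F| = \omega_{q-1}\int_0^\infty t^{q-1} F(t)\,dt$, where $\omega_{q-1}$ is the volume of the Euclidean unit ball in $\R^{q-1}$; the edge case $q=1$ is treated separately by an elementary scalar argument on the half-line. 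The conclusion to be proved then takes the form $|A_H|^{1/q}\ge \tfrac12\bigl(|A_F|^{1/q}+|A_G|^{1/q}\bigr)$, which by Brunn--Minkowski would follow from an inclusion $\tfrac12 A_F + \tfrac12 A_G \subset A_H$ with the appropriate matching of cross-sections.

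The key bookkeeping observation is that the harmonic-mean condition $2/r = 1/t + 1/s$ becomes the arithmetic-mean condition $w = \tfrac12(u+v)$ under the projective change of variables $(t,y)\mapsto(u,z)\defeq(1/t,y/t)$, which carries $A_F$ to the cylindrical body $\widetilde A_F = \{(u,z): |z|\le F(1/u)^{1/(q-1)}\}$. The Jacobian of this coordinate change equals $u^{-(q+1)}\,du\,dz$, and it is precisely this weight that mediates between the asymmetric weights $s/(t+s)$ and $t/(t+s)$ appearing in the hypothesis and the symmetric $\tfrac12,\tfrac12$ weights governing ordinary Minkowski sums of cylinders.

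The main obstacle is that the hypothesis furnishes a \emph{weighted geometric mean} lower bound on $H(r)$, whereas a naive Minkowski inclusion of $\widetilde A_F$ and $\widetilde A_G$ into $\widetilde A_H$ would require an \emph{arithmetic mean} lower bound on the cross-sectional radii. To bridge this gap I would either apply the Prékopa--Leindler inequality slicewise in the transverse variable $z\in\R^{q-1}$, using the $u^{-(q+1)}$ Jacobian weight to convert the weighted-geometric-mean bound into the correctly weighted arithmetic statement; or else work directly with the original bodies $A_F, A_G, A_H$, applying Hölder's inequality together with the substitution $(t,s)\leftrightarrow(r,u)$ with $r = 2ts/(t+s)$ and $u = s/(t+s)$ (Jacobian $dt\,ds = r/(4u^2(1-u)^2)\,dr\,du$) to integrate the pointwise hypothesis into the desired radial form. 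Once the slice-wise inclusion is in place, the classical Brunn--Minkowski inequality $|K+L|^{1/q}\ge|K|^{1/q}+|L|^{1/q}$ applied in $\R^q$ together with a short rearrangement yields the stated inequality, with the factor $2$ on the left and the $-1/q$ exponents arising naturally from this setup. A preliminary truncation and density argument reduces the problem to bounded, compactly supported $F,G,H$ bounded away from zero on their support, so that the body $A_F$ and the change of variables are well-defined.
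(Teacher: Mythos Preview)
Your reformulation of the target inequality is wrong, and this is a genuine gap rather than a typo. Writing $a=\bigl(\int t^{q-1}F\bigr)^{1/q}$, $b=\bigl(\int s^{q-1}G\bigr)^{1/q}$, $c=\bigl(\int r^{q-1}H\bigr)^{1/q}$, the conclusion $2c^{-1}\le a^{-1}+b^{-1}$ is $c\ge \dfrac{2ab}{a+b}$, the \emph{harmonic} mean. You instead set out to prove $c\ge \tfrac12(a+b)$, the arithmetic mean. The arithmetic-mean statement is strictly stronger and is false in general: take $q>1$, $F=\mathbf 1_{[1,1+\varepsilon]}$, $G=\mathbf 1_{[M,M+\varepsilon]}$; the hypothesis only forces $H\ge 1$ on an interval of length $O(\varepsilon)$ near $r=2$, so $c$ stays bounded while $\tfrac12(a+b)\sim \tfrac12 M^{(q-1)/q}\varepsilon^{1/q}\to\infty$. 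Since the inclusion $\tfrac12 A_F+\tfrac12 A_G\subset A_H$ followed by Brunn--Minkowski yields precisely the (false) arithmetic-mean bound, that route cannot succeed, and your proposed ``bridges'' via Pr\'ekopa--Leindler or H\"older do not address this: the weights $u/(u+v)$, $v/(u+v)$ in the hypothesis depend on the points themselves, which is outside the scope of standard PL, and no slicewise argument will manufacture an arithmetic-mean radial bound that is simply not there.

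The paper's proof does begin with the same inversion $u=1/t$ (turning the harmonic constraint $2/r=1/t+1/s$ into an arithmetic one), but the decisive extra idea is a free scaling parameter $\theta>0$: one sets $v=1/(\theta s)$, $w=1/r$ and works with the one-dimensional functions $A(u)=F(u^{-1})u^{-(q+1)}$, $B(v)=G(\theta^{-1}v^{-1})v^{-(q+1)}$, $C(w)=\bigl(\tfrac{\theta+1}{2}\bigr)^{q+1}H(w^{-1})w^{-(q+1)}$. The hypothesis then reads $C\bigl(\tfrac{u+\theta v}{2}\bigr)\ge A(u)^{u/(u+\theta v)}B(v)^{\theta v/(u+\theta v)}$, and one-dimensional Brunn--Minkowski applied to the superlevel sets $\{A\ge z\}$, $\{B\ge z\}$, $\{C\ge z\}$ gives $|C\ge z|\ge\tfrac12|A\ge z|+\tfrac\theta2|B\ge z|$. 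After reducing to bounded $F,G$, the parameter $\theta$ is chosen so that $\|A\|_\infty=\|B\|_\infty$; integrating the layer-cake and a short algebraic rearrangement then produces exactly the harmonic-mean bound $c\ge 2ab/(a+b)$. The $\theta$-optimization is the missing ingredient in your outline: it is what converts the variable-weight geometric-mean hypothesis into something Brunn--Minkowski can use while landing on the correct (harmonic) side of the AM--HM gap.
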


For the reader's convenience, we give a proof in Appendix \ref{appendixA}.  
Applying Theorem \ref{KBall_ineq} to prove Lemma \ref{steiner_symm_lem1} becomes possible by Lemma \ref{steiner_claim2}. 

\begin{proof}[Proof of Lemma \ref{steiner_cor}]
Set 
\begin{equation*}
\begin{gathered}
F(t)\defeq e^{-h_{p,K}(t\xi, t x_n)}, \\ 
G(s)\defeq e^{-h_{p, K}(s\xi', -sx_n)}, \\
H(r)\defeq e^{-h_{p,\sigma_{e_{n}}K}(r\frac{\xi+\xi'}{2}, rx_n)}. 
\end{gathered}
\end{equation*}
By Lemma \ref{steiner_claim2}, for any $t,s>0$ with $\frac{2}{r}=\frac1t+ \frac1s$, $H(r)\geq F(t)^{\frac{s}{t+s}}G(s)^{\frac{t}{t+s}}$, thus, by Theorem \ref{KBall_ineq} for $q= n$,
\begin{equation*}
    \begin{aligned}
        \left\|\left(\frac{\xi+\xi'}{2}, x_n\right)\right\|_{(\sigma_{e_{n}}K)^{\circ,p}}&= \left(\frac{1}{(n-1)!}\int_{0}^\infty r^{n-1} e^{-h_{p,\sigma_{e_{n}}K}(r\frac{\xi+\xi'}{2}, rx_n)}\dif r\right)^{-\frac{1}{n}}\\
        &\leq \frac12\left(\frac{1}{(n-1)!}\int_{0}^\infty t^{n-1} e^{-h_{p,K}(t\xi, t x_n)}\dif t\right)^{-\frac{1}{n}}\\
        &\hspace{.5cm}+\frac12\left(\frac{1}{(n-1)!}\int_0^\infty s^{n-1} e^{-h_{p,K}(s\xi', -s x_n)}\dif s\right)^{-\frac{1}{n}}\\
        &= \frac12 \|(\xi,x_n)\|_{\LpK}+ \frac12 \|(\xi',-x_n)\|_{\LpK}. 
    \end{aligned}
\end{equation*}
verifying \eqref{steiner_symm_lem1}. 

    For $\xi\in (\LpK)(x_n)$ and $\xi'\in (\LpK)(-x_n)$, by definition \eqref{sliceDef}, $(\xi,x_n)\in\LpK$ and $(\xi',-x_n)\in\LpK$, i.e., $\|(\xi,x_n)\|_{\LpK}\leq 1$ and $\|(\xi',-x_n)\|_{\LpK}\leq 1$. By \eqref{steiner_symm_lem1}, 
    $$
    \left\|\Big(\frac{\xi+\xi'}{2}, x_n\Big)\right\|_{\big(\sigma_{e_{n}}K)^{\circ,p}}\leq \frac{\|(\xi,x_n)\|_{\LpK}+ \|(\xi',-x_n)\|_{\LpK}}{2}\leq 1,
    $$
    i.e., $(\frac{\xi+\xi'}{2}, x_n)\in(\sigma_{e_{n}}K)^{\circ,p}$, or $\frac{\xi+\xi'}{2}\in (\sigma_{e_{n}}K)^{\circ,p}(x_n)$.
    Finally, by Corollary \ref{SteinerKcircpCor}, $(\sigma_{e_n^\perp}K)^{\circ,p}(x_n)= (\sigma_{e_n^\perp}K)^{\circ,p}(-x_n)$ hence the equality in the right-hand side of \eqref{averageEq}.
\end{proof}

\subsubsection{Monotonicity of the volume of slices under Steiner symmetrization}

\begin{proof}[Proof of Lemma \ref{sliceVolume}.]
By the Brunn--Minkowski inequality and Lemma \ref{steiner_cor}, 
\begin{equation*}
    \begin{aligned}
        |(\sigma_{e_n}K)^{\circ,p}(x_n)|^{\frac{1}{n-1}}&\geq \frac{|K^{\circ,p}(x_n)+ K^{\circ,p}(-x_n)|^{\frac{1}{n-1}}}{2} \\
        &\geq \frac{|K^{\circ,p}(x_n)|^{\frac{1}{n-1}}+ |K^{\circ,p}(-x_n)|^{\frac{1}{n-1}}}{2} \\
        &= |K^{\circ,p}(x_n)|^{\frac{1}{n-1}},
    \end{aligned}
\end{equation*}
because $K$ is symmetric thus, by Corollary \ref{KcircpSliceSym}, $K^{\circ,p}(-x_n)=-K^{\circ,p}(x_n)$, and hence their volumes are equal $|K^{\circ,p}(-x_n)|= |K^{\circ,p}(x_n)|$. 
\end{proof}

\subsection{Proof of 
Theorem \ref{santalo_symmetric}}
We now complete the proofs of Proposition \ref{steiner_ppolar} and Theorem \ref{santalo_symmetric}.

\begin{proof}[Proof of Proposition \ref{steiner_ppolar}.]\label{ppolarSantaloProof}
Take for a moment $u= e_{n}$. 
By \eqref{sliceVolumeIntegral} and Lemma \ref{sliceVolume},
\begin{equation}\label{steiner_eq2}
    |(\sigma_{e_{n}}K)^{\circ,p}|= \int_{-\infty}^\infty |(\sigma_{e_{n}}K)^{\circ,p}(x_n)|\dif x_n\geq \int_{-\infty}^\infty |(\LpK)(x_n)|\dif x_n= |\LpK|.
\end{equation}
In general, for $u\in \partial B_2^n$, there is $A\in O(n)$ such that $Au= e_{n}$. By Lemma \ref{steiner_orthogonal}, $\sigma_uK= A^{-1}(\sigma_{Au}(AK))= A^{-1}(\sigma_{e_{n}}(AK))$. By (\ref{LpK_affine_transform}), $(\sigma_uK)^{\circ,p}= (A^{-1}\sigma_{e_n}(AK))^{\circ,p}= A^T (\sigma_{e_n}(AK))^{\circ,p}$, thus by (\ref{steiner_eq2}),
\begin{equation*}
    |(\sigma_uK)^{\circ,p}|= |\det A^T| |(\sigma_{e_{n}}(AK))^{\circ,p}|\geq |\det A^T| |(AK)^{\circ,p}|= |A^T (AK)^{\circ,p}|= |\LpK|, 
\end{equation*}
because, again by (\ref{LpK_affine_transform}), $A^T (AK)^{\circ,p}= (A^{-1}A K)^{\circ,p}= \LpK$.
\end{proof}

Theorem \ref{santalo_symmetric} follows from Proposition \ref{steiner_ppolar} and the fact that repeated Steiner symmetrizations converge to a dilated $2$-ball (Lemma \ref{steiner_limit}).
\begin{proof}[Proof of Theorem \ref{santalo_symmetric}.]
There is $\rho>0$ and a sequence $\{u_j\}_{j\geq 1}\subset \partial B_2^n$ such that for 
\begin{equation*}
    K_0\defeq K, \quad K_j\defeq \sigma_{u_j}K_{j-1},
\end{equation*}
$K_j\to \rho B_2^n$ in the Hausdorff metric \cite[Theorem 1.1.16]{artstein-giannopoulos-milman}. By Proposition \ref{steiner_ppolar}, 
\begin{equation*}
    \M_p(K_j)= n! |K_j||\LpK_j|= n!|K| |\LpK_j|\leq n! |K||(\sigma_{u_{j+1}}K_j)^{\circ, p}|= n! |K| |\LpK_{j+1}|= \M_p(K_{j+1}). 
\end{equation*}
In particular, $\M_p(K)\leq \M_p(K_j)$ for all $j$. Sending $j\to \infty$, $K_j\to \rho B_2^n$ in the Hausdorff metric, and hence, by Lemma \ref{Mp_aff_inv} and Lemma \ref{Mp_Hausdorff}, $\M_p(K_j)\to \M_p(\rho B_2^n)= \M_p(B_2^n)$, thus $\M_p(K)\leq \M_p(B_2^n)$.
\end{proof}

\section{A connection to Bourgain's slicing problem}\label{S5}

In this section we explore the relationship between the $L^p$ support functions
$h_{p,K}$
\eqref{hpdef} and the slicing problem (Conjecture \ref{SlicingConjecture}). 
The aim is to prove Theorem \ref{bern_iso_prop} and then illustrate how it implies a
sub-optimal upper bound on the isotropic constant (Corollary \ref{suboptimal_bound})
originally due to Milman--Pajor. We also explain some interesting connections
to and motivations from complex geometry.

In \S \ref{S51} we recall the definitions of the covariance matrix and the isotropic constant, and relate these to $h_{p,K}$ 
(Lemma \ref{MA_isotropic}). In \S \ref{S52} we recall the definition of the \MA measure and its basic properties. Theorem \ref{bern_iso_prop} is proved in \S \ref{S53}. The proof consists of two parts: using Jensen's inequality to bound $\int\log\det\nabla^2 h_{1,K}$ (Lemma \ref{berniso_lem2}), and then bounding $\int_{\R^n} h_{1,K}\dif\nu_{p,K}$ (Lemma \ref{berniso_lem1}). In \S \ref{S54}, we show $\log\det\nabla^2 h_{p,K}+ p(n+1)h_{p,K}$ is convex, proving Theorem \ref{bern_iso_theorem}. From Theorems \ref{bern_iso_prop} and \ref{bern_iso_theorem} we then obtain an upper bound on the isotropic constant of order $O(\sqrt{n})$ (Corollary \ref{suboptimal_bound}). In \S \ref{LpSupportMeasureS}, we define the $L^p$ support functions of compactly supported probability measures and show that Theorem \ref{bern_iso_theorem} cannot be improved in that setting (Example \ref{simplexExample}). 
Finally, in \S\ref{complexgeom_subsec} we explain some novel connections of our work
to complex geometry, in particular to Ricci curvature, Fubini--Study metrics, 
Bergman metrics, Kobayashi's Theorem, and holomorphic line bundles.

\subsection{Preliminaries}
\subsubsection{Affine-invariance of \texorpdfstring{$\calC$}{Affine-invariance of C}}
\lb{AffineC}

The isotropic constant is an affine invariant
(e.g., \cite[p. 77]{brazitikos_etal}), hence so is $\calC$. 
As we could not find precisely the following lemma in the literature, we include its proof for completeness.

\begin{lemma}\label{cov_lem0.1}
  For $K\subset \R^n$ and $A\in GL(n,\R), b\in \R^n$,
    \begin{equation*}
    \mathrm{Cov}(AK+ b)= A\mathrm{Cov}(K)A^T,
    \end{equation*}
    where $\mathrm{Cov}(K)$ is defined in \eqref{CovEq}.
\end{lemma}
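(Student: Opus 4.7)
The plan is to decompose the affine map $x \mapsto Ax + b$ into a translation and a linear change of variables, and show separately that translation leaves the covariance matrix unchanged while the $GL(n,\R)$-action conjugates it by $A$. The key algebraic fact driving both parts is that $\mathrm{Cov}(K)$ is, by construction, the second moment of $x - b(K)$, i.e., the centered second moment, and hence any additive shift of $x$ is absorbed into $b(K)$.

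First I would verify translation invariance. Applying the change of variables $x = u + b$ in \eqref{CovEq} with $K$ replaced by $K + b$ (note $|K+b| = |K|$) gives
\begin{equation*}
\int_{K+b} x_i \frac{dx}{|K+b|} = b(K)_i + b_i,
\qquad
\int_{K+b} x_i x_j \frac{dx}{|K+b|} = \int_K (u_i+b_i)(u_j+b_j) \frac{du}{|K|}.
\end{equation*}
Expanding the second integral and subtracting the product of the first moments, the cross-terms involving $b_i, b_j$ and the term $b_i b_j$ cancel, leaving $\mathrm{Cov}(K+b) = \mathrm{Cov}(K)$.

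Next I would handle the linear part. Setting $x = Au$ with $dx = |\det A|\,du$, and using $|AK| = |\det A|\,|K|$, the Jacobians cancel, giving
\begin{equation*}
\int_{AK} x_i \frac{dx}{|AK|} = \sum_{k} A_{ik} \int_K u_k \frac{du}{|K|} = (A\, b(K))_i,
\end{equation*}
and similarly
\begin{equation*}
\int_{AK} x_i x_j \frac{dx}{|AK|} = \sum_{k,\ell} A_{ik} A_{j\ell} \int_K u_k u_\ell \frac{du}{|K|}.
\end{equation*}
Subtracting the product of the first moments and writing everything in matrix form yields $\mathrm{Cov}(AK)_{ij} = \sum_{k,\ell} A_{ik}\,\mathrm{Cov}(K)_{k\ell}\,(A^T)_{\ell j} = (A\,\mathrm{Cov}(K)\,A^T)_{ij}$.

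Finally, since $AK + b = (AK) + b$, combining the two steps gives $\mathrm{Cov}(AK + b) = \mathrm{Cov}(AK) = A\,\mathrm{Cov}(K)\,A^T$. There is no real obstacle here; the only mild point to keep track of is that the additive part must be handled first (or last) so that the computation for the $GL(n,\R)$-action reduces to integrals over $K$ rather than $K+b$, keeping the change of variables clean.
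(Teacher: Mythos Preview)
Your proof is correct and takes essentially the same approach as the paper: a change of variables together with expansion of the product $(Au+b)_i(Au+b)_j$ and cancellation of the cross terms. The only difference is organizational---you separate the translation and the linear action into two steps, while the paper handles the full affine map $x\mapsto Ax+b$ in a single computation---but the underlying argument is identical.
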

\begin{proof}
    Write $A= [A_i^j]_{i,j=1}^n$, $b= (b_1,\ldots, b_n)$ and $T(x)= Ax+b$. The Einstein summation convention of summing over repeated indices is used. Changing variables $y= T^{-1}x= A^{-1}x- A^{-1}b$, $\dif y= |\det A^{-1}|\dif x= |\det A|^{-1}\dif x$, 
{\allowdisplaybreaks
\begin{align*}
  \Cov_{ij}(AK+b)&= \int_{T(K)}x_ix_j\frac{\dif x}{|T(K)|}- \int_{T(K)} x_i\frac{\dif x}{|T(K)|}\int_{T(K)} x_j\frac{\dif x}{|T(K)|}\\
  &= \int_{K} (Ay+b)_i (Ay+b)_j\frac{|\det A|\dif y}{|AK+b|}\\
  &\hspace{.5cm}- \int_{K} (Ay+b)_i\frac{|\det A|\dif y}{|AK+b|}\int_{K} (Ay+b)_j\frac{|\det A|\dif y}{|AK+b|} \\
  &= \int_{K}(A_i^k y_k+ b_i)(A_j^l y_l+ b_j)\frac{\dif y}{|K|}- \int_{K} (A_i^k y_k+b_i)\frac{\dif y}{|K|}
  \int_{K}(A_j^l y_l+ b_j)\frac{\dif y}{|K|} \\
  &= A_{i}^k A_j^l\int_{K}y_k y_l \frac{\dif y}{|K|}+ b_j A_i^k\int_{K} y_k\frac{\dif y}{|K|}+ b_i A_j^l\int_{K}y_j\frac{\dif y}{|K|}+ b_ib_j \\
  &\hspace{.5cm}-A_{i}^k A_j^l\int_{K}y_ky_l\frac{\dif y}{|K|} - b_j A_i^k\int_{K}y_k \frac{\dif y}{|K|}- b_i A_j^l\int_{K}y_l\frac{\dif y}{|K|}- b_ib_j\\
  &= A_i^k A_j^l \left( \int_{K}y_ky_l\frac{\dif y}{|K|}-\int_{K}y_k \frac{\dif y}{|K|}\int_{K}y_l\frac{\dif y}{|K|}\right)\\
  &= A_i^k A_j^l \Cov_{kl}(K), 
\end{align*}
}
proving the claim.
\end{proof}

    Let $A\in GL(n,\R)$ and $b\in \R^n$. By Lemma \ref{cov_lem0.1},
    \begin{equation*}
        \calC(AK+b)= \frac{|AK+b|^2}{\det\mathrm{Cov}(AK+b)}= \frac{(\det A)^2 |K|^2}{\det(A\mathrm{Cov}(K)A^T)}= \frac{(\det A)^2 |K|^2}{(\det A)^2 \det\mathrm{Cov}(K)}= \calC(K),
    \end{equation*}
proving:

\begin{corollary}\label{calCaffineInv}
    $\calC$ is an affine invariant.
\end{corollary}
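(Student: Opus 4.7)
The plan is to reduce the corollary to a direct substitution using Lemma \ref{cov_lem0.1} together with the standard affine-transformation law for volume. There is no essential obstacle here; the whole argument is a two-line computation, so the main thing is to organize it clearly.

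First I would invoke Lemma \ref{cov_lem0.1} to rewrite the covariance of the transformed body as $\mathrm{Cov}(AK+b) = A\,\mathrm{Cov}(K)\,A^T$. Taking determinants and using multiplicativity gives $\det \mathrm{Cov}(AK+b) = (\det A)^2 \det \mathrm{Cov}(K)$. In parallel, the change-of-variables formula yields $|AK+b| = |\det A|\cdot |K|$, so $|AK+b|^2 = (\det A)^2 |K|^2$.

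Substituting both identities into the definition \eqref{CDefEq} of $\calC$, the factor $(\det A)^2$ cancels between numerator and denominator, leaving $\calC(AK+b) = |K|^2/\det \mathrm{Cov}(K) = \calC(K)$. Since $A \in GL(n,\R)$ and $b \in \R^n$ were arbitrary, this establishes affine invariance. The only thing to watch is that $\det A \neq 0$ so the cancellation is legitimate, which is guaranteed by $A \in GL(n,\R)$; in particular both $|AK+b|$ and $\det\mathrm{Cov}(AK+b)$ are nonzero whenever the same is true of $K$.
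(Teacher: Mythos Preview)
Your proposal is correct and follows essentially the same approach as the paper: apply Lemma \ref{cov_lem0.1} to get $\det\mathrm{Cov}(AK+b)=(\det A)^2\det\mathrm{Cov}(K)$, use $|AK+b|^2=(\det A)^2|K|^2$, and cancel. The paper's proof is the identical one-line computation.
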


\subsubsection{\texorpdfstring{$L^p$-support functions and the isotropic constant}{Lᵖ-support functions and the isotropic constant}}\label{S51}

Next, we relate
the functional $\calC$ \eqref{CDefEq}  to $h_{p,K}$ \eqref{hpdef}
(for $p=1$ see Klartag \cite[Lemma 3.1]{klartag}).

\begin{lemma}\label{MA_isotropic}
    Let $p>0$. For a convex body $K\subset \R^n$, $\nabla^2 h_{p,K}(0)= p\mathrm{Cov}(K)$ and 
    $$
        \calC(K)= \frac{p^n |K|^2}{\det\nabla^2 h_{p,K}(0)}. 
    $$
\end{lemma}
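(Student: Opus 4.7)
The plan is a direct computation: unpack the definition of $h_{p,K}$, differentiate twice, and evaluate at $y=0$.

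First, I would write $h_{p,K}(y) = \frac{1}{p}\log F(y)$ where $F(y) \defeq \int_K e^{p\langle x,y\rangle}\frac{\dif x}{|K|}$, and note that $F(0) = 1$. Since $K$ is bounded, differentiation under the integral sign is straightforward (dominated convergence applies in a neighborhood of any $y$), giving
\begin{equation*}
\partial_i F(y) = p\int_K x_i\, e^{p\langle x,y\rangle}\frac{\dif x}{|K|}, \qquad \partial_i\partial_j F(y) = p^2\int_K x_i x_j\, e^{p\langle x,y\rangle}\frac{\dif x}{|K|}.
\end{equation*}
In particular $\partial_i F(0) = p\,b(K)_i$ and $\partial_i\partial_j F(0) = p^2\int_K x_ix_j\,\frac{\dif x}{|K|}$.

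Next, I would apply the standard identity
\begin{equation*}
\partial_i\partial_j \log F = \frac{\partial_i\partial_j F}{F} - \frac{\partial_i F\,\partial_j F}{F^2}
\end{equation*}
and evaluate at $y=0$, where $F(0)=1$. Multiplying by $1/p$ and using \eqref{CovEq} yields
\begin{equation*}
\partial_i\partial_j h_{p,K}(0) = \frac{1}{p}\left(p^2\int_K x_ix_j\,\frac{\dif x}{|K|} - p^2\,b(K)_i\,b(K)_j\right) = p\,\mathrm{Cov}_{ij}(K),
\end{equation*}
which is the first claim $\nabla^2 h_{p,K}(0) = p\,\mathrm{Cov}(K)$.

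The second claim follows by taking determinants: $\det\nabla^2 h_{p,K}(0) = p^n\det\mathrm{Cov}(K)$, so the definition \eqref{CDefEq} of $\calC(K)$ rearranges to $\calC(K) = p^n|K|^2/\det\nabla^2 h_{p,K}(0)$. There is no real obstacle here; the only thing to be careful about is justifying the interchange of derivatives and integrals, which is immediate from the compactness of $K$ and the smoothness of $y\mapsto e^{p\langle x,y\rangle}$.
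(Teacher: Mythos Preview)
Your proof is correct and essentially identical to the paper's: both differentiate under the integral sign, compute the first and second partials of the log-Laplace transform, evaluate at the origin, and take determinants. The only cosmetic difference is that you factor through $F(y)$ and the identity $\partial_i\partial_j\log F = (\partial_i\partial_j F)/F - (\partial_i F)(\partial_j F)/F^2$, whereas the paper differentiates $h_{p,K}$ directly via the quotient rule; the computations are the same.
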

\begin{proof}
By direct calculation, 
\begin{equation*}
    \frac{\partial}{\partial y_i}h_{p,K}(y)= \frac{\int_K x_i e^{p\langle x,y\rangle}\dif x}{\int_K e^{p\langle x,y\rangle}\dif x}, 
\end{equation*}
and 
\begin{equation*}
    \begin{aligned}
        \frac{\partial^2}{\partial y_i\partial y_j}h_{p,K}(y)= \frac{p\int_K x_ix_j e^{p\langle x,y\rangle}\dif x \int_K e^{p\langle x,y\rangle}\dif x- p\int_K x_ie^{p\langle x,y\rangle}\dif x \int_K x_j e^{p\langle x,y\rangle}\dif x}{\left(\int_K e^{p\langle x,y\rangle}\dif x\right)^2}. 
    \end{aligned}
\end{equation*}
Since for $y=0$, $\int_K e^{p\langle x,0\rangle}\dif x= |K|$,
\begin{equation*}
\frac{\partial^2 h_{p,K}}{\partial y_i\partial y_j}(0)= p\int_{K} x_ix_j \frac{\dif x}{|K|}- p\int_K x_i\frac{\dif x}{|K|}\int_K x_j\frac{\dif x}{|K|}= p\mathrm{Cov}_{i,j}(K),
\end{equation*}
and
\begin{equation*}
    \det\nabla^2h_{p,K}(0)= \det(p \mathrm{Cov}(K))= p^n \det\mathrm{Cov}(K)= p^n\frac{|K|^2}{\calC(K)},
\end{equation*}
as claimed.
\end{proof}

\subsubsection{The \MA measure}\label{S52}
We review some basic details concerning
the Monge--Amp\`ere measure, following Rauch--Taylor \cite{rauch-taylor}. 
Legendre duality is defined by $f^*(y):=\sup_{x\in\R^n}[\langle y,x\rangle-f(x)]$.
\begin{definition} {\rm\cite[p. 215]{rockafellar}}
    For a convex function $\phi: \R^n\to \R\cup\{\infty\}$ and $x\in \R^n$, 
    the subdifferential of $\phi$ at $x$ is 
    \begin{equation*}
        \partial\phi(x)\defeq \{y\in \R^n: \phi(z)\geq \phi(x)+ \langle y, z-x\rangle, \text{ for all } z\in\R^n\}. 
    \end{equation*}
\end{definition}

\begin{lemma}\label{subgradient_lem1}
{\rm \cite[Theorem 23.5]{rockafellar}}
    For $\phi: \R^n\to \R$ convex, $\partial\phi(\R^n)\subset \{\phi^*<\infty\}$. 
\end{lemma}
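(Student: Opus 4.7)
The plan is essentially to unfold definitions: the statement is one rearrangement away from the Legendre transform. Suppose $y \in \partial\phi(x)$ for some $x \in \R^n$. By definition of the subdifferential, this means
\[
\phi(z) \geq \phi(x) + \langle y, z - x\rangle \qquad \text{for all } z \in \R^n.
\]
Rearranging so that $z$ appears only on the left side of the inequality and $x$ only on the right, I get $\langle y, z\rangle - \phi(z) \leq \langle y, x\rangle - \phi(x)$ for every $z \in \R^n$.

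Taking the supremum over $z \in \R^n$ on the left, the left-hand side becomes exactly $\phi^*(y)$. The right-hand side is independent of $z$ and is finite because $\phi$ is finite-valued at $x$ (the hypothesis is $\phi:\R^n\to\R$, not $\R\cup\{\infty\}$). Hence
\[
\phi^*(y) \leq \langle y, x\rangle - \phi(x) < \infty,
\]
which gives $y \in \{\phi^* < \infty\}$, as required.

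There is no real obstacle here; the whole content of the lemma is that the subgradient inequality, read the right way, is precisely a witness that $\phi^*(y)$ is bounded above. The only subtlety worth flagging in the write-up is that the argument uses finiteness of $\phi$ at the base point $x$, which is ensured by the blanket assumption $\phi:\R^n\to\R$; for a general convex $\phi:\R^n\to\R\cup\{\infty\}$ the same argument works provided $x\in\mathrm{dom}\,\phi$, which is automatic whenever $\partial\phi(x)\neq\emptyset$.
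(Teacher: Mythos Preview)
Your proof is correct and is essentially identical to the paper's: both rearrange the subgradient inequality into $\langle y,z\rangle-\phi(z)\leq \langle y,x\rangle-\phi(x)$, take the supremum over $z$, and note the right-hand side is finite. Your additional remark about the role of the finiteness hypothesis is a nice clarification.
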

\begin{proof}
    By definition of the subgradient, for $y\in\partial \phi(x)$, $\phi(z)\geq \phi(x)+ \langle y, z-x\rangle$ for all $z\in\R^n$, i.e., $\langle y,x\rangle-\phi(x)\geq \langle y,z\rangle-\phi(z)$. Taking supremum over all $z\in \R^n$, 
    \begin{equation*}
        \phi^*(y)\leq \langle y,x\rangle-\phi(x)<\infty, 
    \end{equation*}
    as claimed. 
\end{proof}

\begin{corollary}\label{subgradient_cor1}
    $\partial h_K(\R^n)\subset K$ and $\partial h_{p,K}(\R^n)\subset K$.
\end{corollary}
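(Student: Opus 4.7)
The plan is to invoke Lemma~\ref{subgradient_lem1} for both $h_K$ and $h_{p,K}$, so the task reduces to identifying (or at least controlling) the effective domain of the Legendre transform $\{\phi^* < \infty\}$ in each case.

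For the classical support function $h_K$, the identification is essentially tautological. By \eqref{hinftyKEq}, for $x \in K$ one has $\langle x,y\rangle \leq h_K(y)$ for every $y\in\R^n$, so $h_K^*(x) = \sup_y [\langle x,y\rangle - h_K(y)] \leq 0$ and in fact equals $0$ (attained at $y=0$). Conversely, if $x \notin K$, a standard hyperplane separation argument (since $K$ is closed and convex) produces $y_0 \in \R^n$ with $\langle x,y_0\rangle > h_K(y_0)$; then $\langle x, t y_0\rangle - h_K(ty_0) = t(\langle x, y_0\rangle - h_K(y_0)) \to \infty$ as $t \to \infty$. Hence $h_K^* = \bm{1}_K^\infty$, so $\{h_K^* < \infty\} = K$, and Lemma~\ref{subgradient_lem1} yields $\partial h_K(\R^n) \subset K$.

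For $h_{p,K}$, rather than computing its Legendre transform explicitly, the plan is to use monotonicity of Legendre duality together with Lemma~\ref{list}\,(v), which gives $h_{p,K} \leq h_K$. Since $\phi \leq \psi$ implies $\phi^* \geq \psi^*$, we obtain
\begin{equation*}
h_{p,K}^*(x) \;\geq\; h_K^*(x) \;=\; \bm{1}_K^\infty(x),
\end{equation*}
so $\{h_{p,K}^* < \infty\} \subset \{h_K^* < \infty\} = K$. Applying Lemma~\ref{subgradient_lem1} one more time gives $\partial h_{p,K}(\R^n) \subset K$.

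There is no real obstacle here: the statement is a direct consequence of Lemma~\ref{subgradient_lem1} once one observes that the easy direction of the Fenchel--Moreau identification $h_K^* = \bm{1}_K^\infty$ together with the pointwise bound $h_{p,K} \leq h_K$ confines the dual effective domain to $K$ in both cases.
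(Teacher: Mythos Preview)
Your proof is correct and follows essentially the same approach as the paper: invoke Lemma~\ref{subgradient_lem1}, use $h_K^* = \bm{1}_K^\infty$ for the first inclusion, and for the second use $h_{p,K} \leq h_K$ (Lemma~\ref{list}(v)) to get $h_{p,K}^* \geq \bm{1}_K^\infty$, hence $\{h_{p,K}^* < \infty\} \subset K$. The only difference is that you spell out the standard fact $h_K^* = \bm{1}_K^\infty$ in more detail than the paper does.
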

\begin{proof}
    Since $h_K^*= \mathbf{1}_K^\infty$, by Lemma \ref{subgradient_lem1}, $\partial h_K(\R^n)\subset \{\mathbf{1}_K^\infty<\infty\}=K$. Similarly, since $h_{p,K}\leq h_K$, the Legendre transform $\mathbf{1}_K^\infty= h_K^*\leq h_{p,K}^*$ thus, by Lemma \ref{subgradient_lem1}, 
    \begin{equation*}
        \partial h_{p,K}(\R^n)\subset \{h_{p,K}^*<\infty\}\subset \{\mathbf{1}_K^\infty<\infty\}= K. 
    \end{equation*}
\end{proof}

\begin{definition}\label{MAdef}
{\rm \cite[Definition 2.6]{rauch-taylor}}
    For a convex function $\phi$, let
    \begin{equation*}
        (\mathrm{MA}\phi)(U)\defeq |\partial\phi(U)|,
    \end{equation*}
    where the right hand side denotes the Lesbegue measure
    of $\partial\phi(U)$ in $\R^n$.
\end{definition}

\begin{lemma}\label{MAhpK}
$
        \mathrm{MA}h_{p,K}(\R^n)\leq |K|.
$
\end{lemma}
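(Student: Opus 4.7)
The statement follows almost immediately by combining Definition \ref{MAdef} with Corollary \ref{subgradient_cor1}. My plan is simply to unwind the definitions: by Definition \ref{MAdef},
\begin{equation*}
\mathrm{MA}\, h_{p,K}(\R^n) = |\partial h_{p,K}(\R^n)|,
\end{equation*}
where $|\cdot|$ denotes Lebesgue measure on $\R^n$. The key input is Corollary \ref{subgradient_cor1}, which shows that $\partial h_{p,K}(\R^n) \subset K$. Monotonicity of Lebesgue measure then yields $|\partial h_{p,K}(\R^n)| \leq |K|$, giving the claim.

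There is no real obstacle here — the entire content of the lemma is packaged in the previously established fact that the subgradient image of $h_{p,K}$ lies in $K$, which itself was a short consequence of Lemma \ref{subgradient_lem1} applied to the convex conjugate inequality $h_{p,K}^* \geq h_K^* = \mathbf{1}_K^\infty$. If one wanted to be pedantic, one should note that $\partial h_{p,K}(\R^n)$ is Lebesgue measurable; this follows from the fact that for a finite convex function on $\R^n$, the subgradient map is closed-graph and single-valued almost everywhere (by Rademacher's theorem), so $\partial h_{p,K}(\R^n)$ differs from the image of $\nabla h_{p,K}$ on the set of differentiability points by a Lebesgue-null set, and the latter is an analytic/Borel set as the continuous image of a Borel set, hence measurable.
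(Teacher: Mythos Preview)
Your proof is correct and follows essentially the same approach as the paper: apply Definition~\ref{MAdef} and invoke Corollary~\ref{subgradient_cor1} to conclude $\mathrm{MA}\,h_{p,K}(\R^n)=|\partial h_{p,K}(\R^n)|\leq |K|$. The measurability remark is a nice touch but the paper does not dwell on it.
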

\begin{proof}
By definition, $\mathrm{MA}h_{p,K}(\R^n)= |\partial h_{p,K}(\R^n)|\leq |K|$ because $\partial h_{p,K}(\R^n)\subset K$
by Corollary \ref{subgradient_cor1}.
\end{proof}

\begin{remark}
\lb{MADensRem}
In fact, equality holds in Lemma \ref{MAhpK}. In particular, $h_{p,K}$ is a smooth, strictly convex function with $\nabla h_{p,K}(\R^n)= \mathrm{int}\,K$ (see \cite[Lemma 3.1]{klartag} for the case $p=1$). By smoothness of $h_{p,K}$
we also know the density of $\mathrm{MA}h_{p,K}$ equals $\det\nabla^2h_{p,K}$.
\end{remark}

\subsection{Conditional lower bounds on the isotropic constant}
\label{S53}

The proof of Theorem \ref{bern_iso_prop} relies on the following observation. Assume that $K$ satisfies \eqref{B} for some $B>0$, i.e.,
    \begin{equation*}
        u_{B,K}(y)\defeq \log\det\nabla^2h_{1,K}(y)+ B h_{1,K}(y)
    \end{equation*}
    is convex.
    Note that $h_{1,K}(0)= 0$ thus $u_{B,K}(0)= \log\det\nabla^2 h_{1,K}(0)$. By Lemma \ref{MA_isotropic},
\begin{equation}\label{berniso_e1}
    \calC(K)= \frac{|K|^2}{\det\mathrm{Cov}(K)}= \frac{|K|^2}{\det\nabla^2h_{1,K}(0)}= |K|^2 e^{-u_{B,K}(0)}.
\end{equation}
Since $u_{B,K}$ is convex by assumption, for a probability measure $\mu$ with $b(\mu)=0$, by Jensen's inequality, 
\begin{equation}\label{berniso_e2}
\begin{aligned}
    u_{B,K}(0)&= u_{B,K}\left( \int_{\R^n} y\dif\mu(y)\right)\\
    &\leq \int_{\R^n} u(y)\dif\mu(y)\\
    &= \int_{\R^n} \log\det\nabla^2h_{1,K}(y)\dif\mu(y)+ B\int_{\R^n} h_{1,K}(y)\dif\mu(y).
\end{aligned}
\end{equation}
By (\ref{berniso_e1}) and (\ref{berniso_e2}), in order to get bounds on $\calC(K)$ it is enough to bound $\int \log\mathrm{MA}h_{1,K}\dif \mu$ and $\int_{\R^n} h_{1,K}\dif \mu$, for a suitable probability measure $\mu$.

Here, we consider the probability measures
\eqref{nuMeasure}
for which we obtain the desired bounds (Lemmas \ref{berniso_lem2} and \ref{berniso_lem1}). 
By Corollary \ref{santalo_point_corollary}, we may translate $K$ to a suitable position in order to obtain estimates on $\int_{\R^n}\log\mathrm{MA} h_{1,K}(y)\dif \nu_{p,K}(y)$ (Lemma \ref{berniso_lem2} (ii) and (iii)).

\subsubsection{A bound on \texorpdfstring{$\int\log\det\nabla^2h_{1,K}$}{A bound on the integral of the logarithm of the determinant of the Hessian of h₁,ₖ in terms of Lᵖ-Mahler volumes}
in terms of 
\texorpdfstring{$L^p$}{TEXT}-Mahler volumes}
\begin{lemma}\label{berniso_lem2}
    Let $p>0$. For a convex body $K\subset \R^n$, and $\nu_{p,K}$ as in \eqref{nuMeasure}, 

\noindent 
(i) \begin{equation*}
    \int_{\R^n}\log\det\nabla^2 h_{1,K}(y)\dif\nu_{p,K}(y)\leq\log\left( |K|^2 \frac{\M_{1/(2p)}(K)}{\M_{1/p}(K)^2} \frac{p^n}{2^n}\right). 
\end{equation*}

\noindent
(ii) If $b(\nu_{p,K})=0$, then
\begin{equation*}
    \int_{\R^n}\log\det\nabla^2 h_{1,K}(y)\dif\nu_{p,K}(y)\leq \log\left(\frac{|K| e^n}{ \int_{\R^n} e^{-p h_{1,K}(y)}\dif y} \right).
\end{equation*}

\noindent
(iii) If $b(K)=0$, then
\begin{equation*}
    \int_{\R^n}\log\det\nabla^2 h_{1,K}(y)\dif\nu_{p,K}(y)\leq \log\left(\frac{|K|}{ \int_{\R^n} e^{-p h_{1,K}(y)}\dif y} \right).
\end{equation*}
\end{lemma}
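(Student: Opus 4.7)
The plan is to handle all three parts by first applying scalar Jensen's inequality (concavity of $\log$) with respect to the probability measure $\nu_{p,K}$:
\[
\int\log\det\nabla^2 h_{1,K}(y)\dif\nu_{p,K}(y)\le \log\frac{\int\det\nabla^2 h_{1,K}(y)\, e^{-p h_{1,K}(y)}\dif y}{\int e^{-p h_{1,K}(y)}\dif y}.
\]
Writing $N\defeq\int\det\nabla^2 h_{1,K}(y)\, e^{-p h_{1,K}(y)}\dif y$ and $Z_p\defeq\int e^{-ph_{1,K}(y)}\dif y$, it then suffices to bound $N$ in each case, after which the substitutions $z=py$ and $z=2py$ (combined with the identity $h_{1/q,K}(qy)=qh_{1,K}(y)$ from Lemma~\ref{list}(i)) convert $Z_p$ and $\int e^{-2ph_{1,K}}$ into $\M_{1/p}(K)$ and $\M_{1/(2p)}(K)$, producing the $p^n/2^n$ factor in part~(i).

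For part~(iii), Jensen applied to the convex exponential inside the definition $h_{1,K}(y)=\log\int_K e^{\langle x,y\rangle}\dif x/|K|$ gives $h_{1,K}(y)\ge \langle b(K),y\rangle = 0$ under $b(K)=0$, hence $e^{-ph_{1,K}}\le 1$ pointwise and $N\le \int \det\nabla^2 h_{1,K}\dif y = |K|$ by the Monge--Amp\`ere total-mass identity (Lemma~\ref{MAhpK} and Remark~\ref{MADensRem}).

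For part~(ii), the idea is to upgrade the pointwise bound $e^{-ph_{1,K}}\le 1$ to $e^{-ph_{1,K}}\le e^n$ under the weaker hypothesis $b(\nu_{p,K})=0$, via a Legendre-duality identity. Integration by parts on $\nabla\cdot(y e^{-ph_{1,K}(y)}) = n e^{-ph_{1,K}(y)} - p\langle y,\nabla h_{1,K}(y)\rangle e^{-ph_{1,K}(y)}$, valid since $e^{-ph_{1,K}}$ decays at infinity ($h_{1,K}$ is coercive when $0\in\mathrm{int}\,K$), yields
\[\int \langle y,\nabla h_{1,K}(y)\rangle\dif\nu_{p,K}(y) = \frac{n}{p}.\]
Setting $T=\nabla h_{1,K}\colon\R^n\to\mathrm{int}\,K$ and $\mu_p=T_*\nu_{p,K}$, a probability measure on $K$ whose barycenter vanishes by the analogous IBP on $\nabla e^{-ph_{1,K}}$, and using Legendre duality $\langle y,T(y)\rangle = h_{1,K}(y)+h_{1,K}^*(T(y))$, this identity rewrites as $\int h_{1,K}\dif\nu_{p,K} + \int h_{1,K}^*\dif\mu_p = n/p$. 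Jensen for the convex $h_{1,K}$ with $b(\nu_{p,K})=0$ gives $\int h_{1,K}\dif\nu_{p,K}\ge h_{1,K}(0) = 0$, while Jensen for the convex $h_{1,K}^*$ with $b(\mu_p)=0$ gives $\int h_{1,K}^*\dif\mu_p\ge h_{1,K}^*(0) = -\min h_{1,K}$. Combining these yields $\min h_{1,K}\ge -n/p$, so $e^{-ph_{1,K}}\le e^n$ and $N\le e^n|K|$.

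For part~(i), the task reduces to showing $N\cdot Z_p\le |K|\int e^{-2ph_{1,K}(y)}\dif y$, which after the two substitutions above is equivalent to the claim. I would expand the left-hand side into the double integral $\iint \det\nabla^2 h_{1,K}(y)\, e^{-p(h_{1,K}(y)+h_{1,K}(z))}\dif y\dif z$, apply the midpoint convexity $h_{1,K}(y)+h_{1,K}(z)\ge 2h_{1,K}((y+z)/2)$, and change variables so that the $\det\nabla^2 h_{1,K}$ factor collapses to $|K|$ via the total-mass identity. The main obstacle is obtaining the sharp $2^{-n}$ factor: a direct application of midpoint convexity together with the Jacobian of the affine change of variables produces a spurious $2^n$, so the sharp inequality instead requires a more refined argument using Pr\'ekopa--Leindler (or an equivalent functional Brunn--Minkowski statement) that exploits the log-concavity of $y\mapsto e^{-ph_{1,K}(y)}$ to absorb the extra factor. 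This is the single delicate point; once settled, the conversion to $\M_{1/p}$ and $\M_{1/(2p)}$ is routine.
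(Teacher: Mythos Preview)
Your arguments for (ii) and (iii) are correct and in fact slightly more direct than the paper's, which first proves the intermediate bound
\[
\int\log\det\nabla^2 h_{1,K}\,\dif\nu_{p,K}\le \log\Big(|K|\,\frac{Z_{2p}}{Z_p^2}\Big),\qquad Z_q\defeq\int e^{-qh_{1,K}},
\]
and then simplifies using $h_{1,K}\ge 0$ (for (iii)) or Fradelizi's inequality $\inf(ph_{1,K})\ge ph_{1,K}(b(ph_{1,K}))-n=-n$ (for (ii)). Your Legendre-duality derivation of $\min h_{1,K}\ge -n/p$ is an independent proof of exactly that Fradelizi-type bound and is valid.

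Part (i), however, has a genuine gap. Starting from Jensen in the form $\int\log\det\nabla^2 h_{1,K}\,\dif\nu_{p,K}\le\log(N/Z_p)$ forces you to prove $N Z_p\le |K|\,Z_{2p}$, and this is \emph{not} accessible by your tools: midpoint convexity loses exactly $2^n$, and Pr\'ekopa--Leindler cannot recover it because the extra factor $\det\nabla^2 h_{1,K}$ is not known to be log-concave (indeed, for generic pairs $f\ge 0$ with $\int f=|K|$ and convex $g$, the inequality $(\int f e^{-g})(\int e^{-g})\le(\int f)(\int e^{-2g})$ is simply false). The paper sidesteps this by applying Jensen \emph{differently}: writing $\log\det=2\log\sqrt{\det}$ and using concavity of $\log$ gives
\[
\int\log\det\nabla^2 h_{1,K}\,\dif\nu_{p,K}\le 2\log\frac{\int(\det\nabla^2 h_{1,K})^{1/2}e^{-ph_{1,K}}}{Z_p},
\]
after which Cauchy--Schwarz on the numerator yields $\int(\det\nabla^2 h_{1,K})^{1/2}e^{-ph_{1,K}}\le(\int\det\nabla^2 h_{1,K})^{1/2}Z_{2p}^{1/2}\le|K|^{1/2}Z_{2p}^{1/2}$, giving the bound $\log(|K|Z_{2p}/Z_p^2)$ directly. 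The square-root before Jensen is the missing idea; it is what makes the $\det\nabla^2 h_{1,K}$ factor separate cleanly from the Gaussian-type weight without any loss.
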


For the proof of Lemma \ref{berniso_lem2} we need the following. 
\begin{claim}\label{bern_iso_claim2}
    Let $p>0$. For a convex body $K\subset \R^n$,
    \begin{equation*}
        \int_{\R^n}\log\det\nabla^2 h_{1,K}(y)\dif\nu_{p,K}(y)\leq \log\left(|K|\frac{\int_{\R^n} e^{-2ph_{1,K}(y)}\dif y}{\left( \int_{\R^n} e^{-p h_{1,K}(y)}\dif y\right)^2} \right). 
    \end{equation*}
\end{claim}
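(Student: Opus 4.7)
The plan is to apply Jensen's inequality twice. The first application is not immediate: applying Jensen directly to $\int \log\det\nabla^2 h_{1,K}\dif\nu_{p,K}$ produces $\int\det\nabla^2 h_{1,K}\dif\nu_{p,K}$, for which I see no good bound. The trick is to pair $\det\nabla^2 h_{1,K}$ with $e^{ph_{1,K}}$ beforehand, so that after Jensen the exponential weight in $\nu_{p,K}$ cancels, leaving the total Monge--Amp\`ere mass $\int\det\nabla^2 h_{1,K}\dif y=\mathrm{MA}\,h_{1,K}(\R^n)\leq |K|$ by Lemma~\ref{MAhpK}. Concretely, writing $B\defeq\int e^{-ph_{1,K}}\dif y$ and $C\defeq\int e^{-2ph_{1,K}}\dif y$, so that $\nu_{p,K}=e^{-ph_{1,K}}\dif y/B$, Jensen's inequality for the concave function $\log$ against the probability measure $\nu_{p,K}$ yields
\[
\int\big(ph_{1,K}+\log\det\nabla^2 h_{1,K}\big)\dif\nu_{p,K}\leq \log\bigg(\frac{1}{B}\int\det\nabla^2 h_{1,K}\dif y\bigg)\leq \log\frac{|K|}{B},
\]
i.e., $\int\log\det\nabla^2 h_{1,K}\dif\nu_{p,K}\leq \log(|K|/B)-p\int h_{1,K}\dif\nu_{p,K}$.

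For the remaining term I would apply Jensen's inequality to the convex function $e^{-p(\cdot)}$:
\[
\exp\bigg({-p\int h_{1,K}\dif\nu_{p,K}}\bigg)\leq \int e^{-ph_{1,K}}\dif\nu_{p,K}=\frac{1}{B}\int e^{-2ph_{1,K}}\dif y=\frac{C}{B},
\]
so $-p\int h_{1,K}\dif\nu_{p,K}\leq \log(C/B)$. Adding this to the previous bound gives
\[
\int\log\det\nabla^2 h_{1,K}\dif\nu_{p,K}\leq \log(|K|/B)+\log(C/B)=\log\big(|K|C/B^2\big),
\]
which is exactly the claim. The only non-routine step is the preliminary change of measure in the first Jensen application; once the pairing with $e^{ph_{1,K}}$ is in place and Lemma~\ref{MAhpK} is brought in as the normalization bound, the second Jensen inequality is forced by the shape of what remains.
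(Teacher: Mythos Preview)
Your proof is correct and reaches the same conclusion as the paper's, but via a different route. The paper writes $\log\det\nabla^2 h_{1,K}=2\log(\det\nabla^2 h_{1,K})^{1/2}$, applies Jensen to obtain $2\log\int(\det\nabla^2 h_{1,K})^{1/2}\dif\nu_{p,K}$, and then uses Cauchy--Schwarz on the product $(\det\nabla^2 h_{1,K})^{1/2}\cdot e^{-ph_{1,K}}$ to separate the Monge--Amp\`ere mass from the exponential weight. You achieve the same separation by instead pairing $\det\nabla^2 h_{1,K}$ with $e^{ph_{1,K}}$ \emph{before} Jensen, so that the weight in $\nu_{p,K}$ cancels directly, and then handle the leftover term $-p\int h_{1,K}\dif\nu_{p,K}$ with a second Jensen (applied to the convex function $t\mapsto e^{-pt}$). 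Both arguments rest on the same key input, Lemma~\ref{MAhpK}, and yield the identical bound; your version trades the square-root/Cauchy--Schwarz device for a change-of-weight trick followed by a second Jensen, which is arguably a little more transparent.
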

\begin{proof}
    By Jensen's inequality and Cauchy--Schwarz, 
    \begin{equation*}
        \begin{aligned}
            \int_{\R^n} \log\det\nabla^2 h_{1,K}(y)\dif \nu_{p,K}(y)&= 2\int_{\R^n} \log(\det\nabla^2 h_{1,K}(y))^{1/2}\dif \nu_{p, K}(y)\\
            &\leq 2\log\int_{\R^n} (\det\nabla^2 h_{1,K}(y))^{1/2} \dif\nu_{p,K}(y)\\
            &= 2\log\int_{\R^n} (\det\nabla^2 h_{1,K}(y))^{1/2} \frac{e^{-p h_{1,K}(y)}}{\int_{\R^n} e^{-ph_{1,K(y)}}\dif y}\dif y\\
            &\leq 2 \log\left( \int_{\R^n} \det\nabla^2 h_{1,K}(y)\dif y \frac{\int_{\R^n} e^{-2ph_{1,K}(y)}\dif y}{\left( \int_{\R^n} e^{-p h_{1,K}(y)}\dif y \right)^2}\right)^{1/2}\\
            &\leq \log\left( |K| \frac{\int_{\R^n} e^{-2ph_{1,K}(y)}\dif y}{\left( \int_{\R^n} e^{-p h_{1,K}(y)}\dif y \right)^2}\right), 
        \end{aligned}
    \end{equation*}
    because by Lemma \ref{MAhpK} and Remark \ref{MADensRem}, $\int_{\R^n}\det\nabla^2 h_{1,K}(y)\dif y= \mathrm{MA}h_{1,K}(\R^n)\leq |K|$. 
\end{proof}

\begin{proof}[Proof of Lemma \ref{berniso_lem2}]
(i) In view of Claim \ref{bern_iso_claim2}, it is enough to compute the following two integrals,
\begin{equation*}
\begin{aligned}
    \int_{\R^n} e^{-2p h_{1,K}(y)}\dif y&= \frac{1}{(2p)^n}\int_{\R^n} e^{-2p h_{1,K}(y/(2p))}\dif y\\
    &= \frac{1}{(2p)^n} \int_{\R^n} e^{-h_{1/(2p), K}(y)}\dif y\\
    &= \frac{1}{(2p)^n}\frac{\M_{1/(2p)}(K)}{|K|^{1+ 2p}}, 
\end{aligned}
\end{equation*}
because by Lemma \ref{list} (i), $2p h_{1,K}(y/(2p))= h_{1/(2p),K}(y)$.  Similarly, 
\begin{equation*}
    \int_{\R^n} e^{-p h_{1,K}(y)}\dif y= \frac{1}{p^n} \frac{\M_{1/p}(K)}{|K|^{1+p}}. 
\end{equation*}
Therefore, 
\begin{equation}\label{berniso_e3}
    |K|\frac{\int_{\R^n} e^{-2p h_{1,K}(y)}\dif y}{\left( \int_{\R^n} e^{-p h_{1,K}(y)}\dif y\right)^2}= |K| \frac{\M_{1/(2p)}(K)}{(2p)^{n}|K|^{1+2p}} \frac{p^{2n}|K|^{2+2p}}{\M_{1/p}(K)^2}= |K|^2\frac{p^n}{2^n}\frac{\M_{1/(2p)}(K)}{\M_{1/p}(K)^2}.
\end{equation}
The claim follows from (\ref{berniso_e3}) and Claim \ref{bern_iso_claim2}.

\medskip
\noindent 
(ii) Since $b(\nu_{p,K})= b(ph_{1,K})=0$, by Lemma \ref{fradelizi_lemma} below, $ph_{1,K}(y)\geq p h_{1,K}(0)-n= -n$. Therefore, 
\begin{equation}\label{berniso_e4}
    \int_{\R^n} e^{-2p h_{1,K}(y)}\dif y= \int_{\R^n} e^{-ph_{1,K}(y)} e^{-p h_{1,K}(y)}\dif y\leq e^n\int_{\R^n} e^{-p h_{1,K}(y)}\dif y.
\end{equation}
The claim follows directly from (\ref{berniso_e4}) and Claim \ref{bern_iso_claim2}.

\medskip
\noindent 
(iii) Since $b(K)=0$, $\int_K\langle x,y\rangle \dif x=0$ for all $y\in \R^n$. As a result, by Jensen's inequality,
\begin{equation*}
    h_{1,K}(y)= \log\int_{K} e^{\langle x,y\rangle}\frac{\dif x}{|K|}\geq \int_{K} \log e^{\langle x,y\rangle}\frac{\dif x}{|K|}= \int_K \langle x,y\rangle \frac{\dif x}{|K|}=0, 
\end{equation*}
i.e., $h_{1,K}(y)\geq 0$. Therefore, for $p>0$, $2p h_{1,K}(y)\geq p h_{1,K}(y)$ and hence
\begin{equation}\label{berniso_e5}
    \int_{\R^n} e^{-2p h_{1,K}(y)}\dif y\leq \int_{\R^n} e^{-ph_{1,K}(y)}\dif y. 
\end{equation}
The claim follows directly from (\ref{berniso_e5}) and Claim \ref{bern_iso_claim2}. 
\end{proof}

In the previous proof we made use of the following estimate of Fradelizi, stated without proof in \cite[Theorem 4]{fradelizi}. 
We include
a proof for the reader's convenience (see also \cite[Theorem 2.2.2]{brazitikos_etal}).
\begin{lemma}\label{fradelizi_lemma}
For a convex function $\phi: \R^n\to \R\cup\{\infty\}$, 
\begin{equation*}
    \inf_{x\in \R^n}\phi(x)\geq \phi(b(\phi))-n.
\end{equation*}
\end{lemma}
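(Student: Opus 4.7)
The plan is to normalize, prove the one-dimensional case by a direct barycenter computation, and then extend to $\R^n$ via Brunn--Minkowski applied to sub-level sets.

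\textbf{Normalization.} Translating $\phi \mapsto \phi(\,\cdot + b(\phi))$ preserves $\inf \phi$ and centres the barycenter at the origin; subtracting the constant $\phi(0)$ shifts both sides of the target inequality by the same amount. Hence WLOG $b(\phi) = 0$ and $\phi(0) = 0$, and the goal becomes $\inf_{\R^n} \phi \geq -n$. Fix any $x_0 \in \R^n$ and set $M := -\phi(x_0)$; it suffices to show $M \leq n$.

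\textbf{One-dimensional case.} For $n=1$, WLOG $x_0 > 0$. Convexity together with $\phi(0) = 0$ and $\phi(x_0) = -M$ gives the pointwise bounds
\begin{equation*}
    \phi(t) \leq -Mt/x_0 \text{ on } [0, x_0], \qquad \phi(t) \geq -Mt/x_0 \text{ on } (-\infty, 0].
\end{equation*}
Splitting the identity $\int_\R t\, e^{-\phi(t)}\,dt = 0$ into its positive and negative parts, then estimating the positive part from below by $\int_0^{x_0} t\, e^{Mt/x_0}\,dt$ and the negative part from above by $\int_{-\infty}^0 (-t)\, e^{Mt/x_0}\,dt$, a short integration by parts and a substitution $s=-t$ evaluate these Gamma-type integrals to $(x_0^2/M^2)\big[(M-1)e^M + 1\big]$ and $x_0^2/M^2$, respectively. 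Equating then yields $(M-1)e^M \leq 0$, i.e., $M \leq 1$.

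\textbf{General $n$.} I would lift the 1D argument to $\R^n$ via the Brunn--Minkowski inequality applied to the convex sub-level sets $K_t := \{\phi \leq t\}$: the function $t \mapsto V(t)^{1/n}$ with $V(t) := |K_t|$ is concave on $[-M, \infty)$. The layer-cake identities
\begin{equation*}
    \int_{\R^n} e^{-\phi}\,dx = \int_{-M}^\infty V(t)\,e^{-t}\,dt, \qquad 0 = \int_{-M}^\infty V(t)\,\bar{x}(t)\,e^{-t}\,dt,
\end{equation*}
with $\bar{x}(t)$ the centroid of $K_t$, reduce the barycenter condition to a one-dimensional integral inequality in $t$; pairing against the direction of $x_0$ and exploiting the concavity of $V^{1/n}$ (so that $V(t)^{1/n}$ is bounded above on $[-M,0]$ by its chord from the extremizer $x_0$ to $K_0$), the Gamma-type computation from the 1D case runs through with the extra factor $V(t) \asymp (t+M)^n$ inserting precisely the $n$th power needed to replace the constant $1$ by $n$.

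\textbf{Main obstacle.} The subtle point is controlling the drift of the centroid $\bar{x}(t)$ as $t$ varies, and ensuring that the $n$th-root concavity of $V$ is exploited sharply enough to yield the optimal constant $n$ rather than a suboptimal universal one. A cleaner alternative route is to induct on dimension: marginalize over the hyperplane $x_0^\perp$ to obtain a log-concave $F: \R \to \R_{\geq 0}$ with $\int t F(t)\,dt = 0$, apply the 1D case to $-\log F$ (contributing $1$), and apply the inductive hypothesis slicewise on $\phi(\cdot, t)$ (contributing $n-1$), the delicate bookkeeping being the alignment of slice barycenters with the basepoint $0$, which can be arranged by a judicious choice of the near-minimizer $x_0$.
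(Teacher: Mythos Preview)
Your one-dimensional argument is correct and clean: the chord bounds on $[0,x_0]$ and $(-\infty,0]$ are exactly right, and the Gamma-integral computation yields $(M-1)e^M\le 0$ as claimed (minor nit: you say ``equating'' where you mean ``comparing the two one-sided estimates,'' but the logic is fine).

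The general-$n$ part, however, is only a sketch, and both routes you propose face real obstacles that you yourself flag but do not resolve. For the Brunn--Minkowski route, the barycenter condition $\int y\,e^{-\phi}=0$ does \emph{not} translate into a usable statement about the centroids $\bar x(t)$ of the sublevel sets $K_t$, so the layer-cake identity with $V(t)\bar x(t)$ does not obviously close up. For the induction route, marginalizing over $x_0^\perp$ gives a log-concave $F$ with $\int tF=0$, and the 1D case then controls $\sup F/F(0)$; but this does not directly bound $\phi(x_0)$, since $\phi(x_0)$ is the value at a single point of the slice $\{t=|x_0|\}$, not the marginal there. The ``alignment of slice barycenters'' issue you mention is precisely the gap, and it is not a bookkeeping matter---one cannot in general force the slice barycenters to sit at $0$ by choosing $x_0$.

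The paper's proof bypasses all of this with a much shorter, dimension-free argument. After reducing to smooth strictly convex $\phi$ with quadratic growth, two steps suffice: (i) Jensen's inequality applied to the probability measure $d\nu=e^{-\phi}\,dx/V(\phi)$ gives $\phi(b(\phi))\le\int\phi\,d\nu$; (ii) integrating the pointwise convexity inequality $\phi(y)\ge\phi(x)+\langle\nabla\phi(x),y-x\rangle$ in $x$ against $d\nu$, and integrating by parts using $\partial_i(e^{-\phi})=-(\partial_i\phi)e^{-\phi}$, yields $\phi(y)\ge\int\phi\,d\nu-n$. Combining gives $\phi(b(\phi))\le\phi(y)+n$ for every $y$. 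No induction, no Brunn--Minkowski, and the constant $n$ drops out immediately from the $n$ coordinate-wise integrations by parts. Your 1D argument is a nice warm-up, but the direct approach scales to all dimensions without any extra work.
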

\begin{proof}
To begin with, it is enough to consider $\phi$ to be smooth, strictly convex, and bounded from below by $C|x|^2$ for large $|x|$. That is because
for a smooth, non-negative, compactly supported molifier $\chi: \R^n\to [0,\infty)$ we know that 
\begin{equation*}
    \phi_\e(x)\defeq \frac{1}{\e^n}\int_{\R^n} \phi(x-y)\chi(y/\e)\dif y
\end{equation*}
is smooth, convex and decreases to $\phi$ as $\e\to 0$. Let, 
\begin{equation*}
    \phi_{j,\e}(x)\defeq \phi_{\e}(x)+ \frac{1}{j}\frac{|x|^2}{2},
\end{equation*}
smooth, convex functions that decrease to $\phi$ as $\e\to 0$ and $j\to \infty$. In addition, $\phi_{j,\e}(x)\geq C|x|^2$ for large enough $|x|$, since $\phi_\e$ can be estimated by a linear term due to convexity, that is, $\phi_\e(x)\geq \phi_\e(0)+ \langle \nabla \phi_\e(0), x\rangle$. By monotone convergence, $b(\phi_{j,\e})\to b(\phi)$ as $\e\to 0$ and $j\to \infty$.
By convexity, $\phi(x)\geq \phi(b(\phi))+ \langle \partial\phi(b(\phi)), x- b(\phi)\rangle$ for all $x$, so if the claim holds for $\phi_{j,\e}$, then
\begin{equation*}
\begin{aligned}
    \phi_{j,\e}(y)&\geq \phi_{j,\e}(b(\phi_{j,\e}))-n\\
    &\geq \phi(b(\phi_{j,\e}))-n\\
    &\geq \phi(b(\phi))+ \langle \partial \phi(b(\phi)), b(\phi_{j,\e})-b(\phi)\rangle-n,
\end{aligned}
\end{equation*}
because $\phi_{j,\e}\geq \phi$.
Taking $j\to \infty$ and $\e\to 0$ yields $\phi(y)\geq \phi(b(\phi))-n$.

 For $\phi$ smooth, strictly convex with $\phi(x)\geq C|x|^2,$ for large $|x|$,
    by Jensen's inequality, 
    \begin{equation}\label{frad_eq1}
        \phi(b(\phi))= \phi\left( \int_{\R^n} x e^{-\phi(x)}\frac{\dif x}{V(\phi)}\right)\leq \int_{\R^n} \phi(x) e^{-\phi(x)}\frac{\dif x}{V(\phi)}. 
    \end{equation}
By convexity, for all $x,y\in \R^n$, 
$
    \phi(y)\geq \phi(x)+ \langle \nabla \phi(x), y-x\rangle, 
$
thus, integrating with respect $e^{-\phi(x)}\frac{\dif x}{V(\phi)}$,
\begin{equation}\label{frad_eq2}
\begin{aligned}
    \phi(y)&\geq \int_{\R^n}\phi(x) e^{-\phi(x)}\frac{\dif x}{V(\phi)}+ \int_{\R^n} \langle \nabla\phi(x), y-x\rangle e^{-\phi(x)}\frac{\dif x}{V(\phi)}\\
    &= \int_{\R^n}\phi(x) e^{-\phi(x)}\frac{\dif x}{V(\phi)}+ \sum_{i=1}^n \int_{\R^n} \frac{\partial \phi}{\partial x_i}(x) (y_i- x_i) e^{-\phi(x)}\frac{\dif x}{V(\phi)}\\
    &=\int_{\R^n}\phi(x) e^{-\phi(x)}\frac{\dif x}{V(\phi)}- \sum_{i=1}^n \int_{\R^n} \frac{\partial}{\partial x_i}(e^{-\phi(x)}) (y_i- x_i) \frac{\dif x}{V(\phi)}\\
    &= \int_{\R^n}\phi(x) e^{-\phi(x)}\frac{\dif x}{V(\phi)}- \sum_{i=1}^n \int_{\R^n}  e^{-\phi(x)}\frac{\dif x}{V(\phi)}\\
    &= \int_{\R^n}\phi(x) e^{-\phi(x)}\frac{\dif x}{V(\phi)}- n, 
\end{aligned}
\end{equation}
because, by integration by parts,
\begin{equation*}
    \int_{\R} \frac{\partial}{\partial x_i} (e^{-\phi(x)}) (y_i- x_i)\dif x= 0- \int_{\R} e^{-\phi(x)}\dif x,
\end{equation*}
since 
\begin{equation*}
   \lim_{|x_i|\to \infty} e^{-\phi(x)} |y_i-x_i|\leq \lim_{x_i\to \infty} e^{-C|x|^2} |y-x|=0.
\end{equation*}
By (\ref{frad_eq1}) and (\ref{frad_eq2}), $\phi(b(\phi))\leq \phi(y)+n$, for all $y\in \R^n$, from which the claim follows. 
\end{proof}

\subsubsection{A bound on \texorpdfstring{$\int_{\R^n} h_{1,K}\dif\nu_{p,K}$}{A bound on the integral of h₁,ₖ}}
\begin{lemma}\label{berniso_lem1}
Let $p>0$. For a convex body $K\subset \R^n$,
    \begin{equation*}
        \int_{\R^n} h_{1,K}(y)\dif \nu_{p,K}(y)\leq \frac{n}{p}. 
    \end{equation*}
\end{lemma}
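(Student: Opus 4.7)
The plan is to combine the divergence theorem on $\R^n$ with a convexity inequality for $h_{1,K}$. First observe that $h_{1,K}$ is convex (Lemma~\ref{hpKconvex}), smooth (as noted in Remark~\ref{MADensRem}), and $h_{1,K}(0)=0$ directly from \eqref{hpdef}. Convexity together with $h_{1,K}(0)=0$ yields the pointwise inequality
\begin{equation*}
h_{1,K}(y)\;\leq\;y\cdot\nabla h_{1,K}(y),\qquad y\in\R^n,
\end{equation*}
since $0=h_{1,K}(0)\geq h_{1,K}(y)+\nabla h_{1,K}(y)\cdot(0-y)=h_{1,K}(y)-y\cdot\nabla h_{1,K}(y)$. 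This is the key ingredient.

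Second, I would apply the divergence theorem to the vector field $V(y):=y\,e^{-p h_{1,K}(y)}$ on $\R^n$. A direct computation gives
\begin{equation*}
\mathrm{div}\,V(y)\;=\;\bigl(n-p\,y\cdot\nabla h_{1,K}(y)\bigr)\,e^{-p h_{1,K}(y)}.
\end{equation*}
Granting that $\int_{\R^n}\mathrm{div}\,V\,dy=0$ (see below), dividing through by $\int_{\R^n}e^{-p h_{1,K}(y)}\,dy$ and then using the convexity inequality above would give
\begin{equation*}
n\;=\;p\int_{\R^n}\bigl(y\cdot\nabla h_{1,K}(y)\bigr)\,d\nu_{p,K}(y)\;\geq\;p\int_{\R^n}h_{1,K}(y)\,d\nu_{p,K}(y),
\end{equation*}
and the lemma follows upon dividing by $p$.

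The only technical step requiring care is the vanishing of the boundary flux $\int_{\partial B_R}V\cdot\vec\nu\,dS$ as $R\to\infty$. Since $\nu_{p,K}$ is a probability measure, $\int e^{-p h_{1,K}}<\infty$; combined with convexity of $h_{1,K}$ and $h_{1,K}(0)=0$, this forces $h_{1,K}(y)\geq c|y|$ outside a large ball for some $c>0$ (otherwise convexity would give $h_{1,K}(ty)/t\to 0$ as $t\to\infty$, contradicting integrability). Consequently $|V(y)|$ decays exponentially at infinity, so the boundary flux tends to zero and the divergence identity is rigorous. This justification is really the only nontrivial point, and it should not present a serious obstacle.
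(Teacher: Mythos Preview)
Your proof is correct and takes a genuinely different route from the paper's. The paper observes that $F(p):=\log\int_{\R^n}e^{-h_{p,K}(y)}\,dy$ is nonincreasing in $p$ (since $h_{p,K}$ is nondecreasing in $p$ by Lemma~\ref{list}(v)), rewrites $F$ via $h_{p,K}(y)=\tfrac1p h_{1,K}(py)$ and a change of variable as $F(p)=\log\int e^{-h_{1,K}(y)/p}\,dy - n\log p$, and then reads off the inequality directly from $F'(p)\le 0$. You instead combine the pointwise bound $h_{1,K}(y)\le y\cdot\nabla h_{1,K}(y)$ with the integration-by-parts identity $\int (y\cdot\nabla h_{1,K})\,e^{-p h_{1,K}}\,dy=\tfrac n p\int e^{-p h_{1,K}}\,dy$. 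The two arguments are related under the hood---differentiating $h_{1,K}(py)/p$ in $p$ gives exactly $[z\cdot\nabla h_{1,K}(z)-h_{1,K}(z)]/p^2$ at $z=py$, so the monotonicity the paper invokes is equivalent to your convexity inequality---but the packaging differs. The paper's version sidesteps the boundary-flux verification entirely; your version has the merit of producing the exact \emph{identity} $\int y\cdot\nabla h_{1,K}\,d\nu_{p,K}=n/p$ along the way and of using only properties of $h_{1,K}$ rather than the whole family $h_{p,K}$. Your decay justification is fine once one notes that finiteness of $\int e^{-p h_{1,K}}$ forces the convex sublevel sets $\{h_{1,K}\le M\}$ to be bounded (a convex set of finite positive measure is bounded), after which monotonicity of $t\mapsto h_{1,K}(tu)/t$ gives the uniform linear lower bound.
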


\begin{proof}[Proof of Lemma \ref{berniso_lem1}]
    By Lemma \ref{list} (v),
    $h_{p,K}$ increases to $h_K$ with $p$. Therefore, by Lemma \ref{list} (i),
    \begin{equation*}
    \begin{aligned}
        F(p)&\defeq \log \int_{\R^n} e^{-h_{p,K}(y)}\dif y= \log\int_{\R^n} e^{-\frac1p h_{1,K}(py)}\dif y\\
        &= \log\int_{\R^n} e^{-\frac1p h_{1,K}(y)}\frac{\dif y}{p^n}= \log\int_{\R^n}e^{-\frac1p h_{1,K}(y)}\dif y- n\log p,
    \end{aligned}
    \end{equation*}
    is decreasing with $p$, and hence, its derivative must be non-positive, 
    \begin{equation*}
        \begin{aligned}
            0\geq \frac{\dif F}{\dif p}= \frac{\int_{\R^n} e^{-\frac1p h_{1,K}(y)} h_{1,K}(y) \dif y}{p^2 \int_{\R^n}e^{-\frac1p h_{1,K}(y)}\dif y}- \frac{n}{p}= \frac{1}{p^2}\int_{\R^n}  h_{1,K}(y)\dif \nu_{\frac1p}(y)- \frac{n}{p},  
        \end{aligned}
    \end{equation*}
    and the lemma follows. 
\end{proof}

\subsubsection{Proof of Theorem \ref{bern_iso_prop}}
\begin{claim}\label{bern_iso_claim1}
    Let $p>0$. For a convex body $K\subset \R^n$ with $0\in\mathrm{int}\,K$ and $|K|=1$, 
    \begin{equation*}
        \int_{\R^n} e^{-ph_{1,K}(y)}\dif y\geq \frac{\M(K)}{p^n}.
    \end{equation*}
\end{claim}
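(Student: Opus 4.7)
The plan is very short because all the needed ingredients have already been assembled in the paper. The strategy is to replace $h_{1,K}$ by the classical support function $h_K$ using Lemma \ref{list}(v), then exploit the $1$-homogeneity of $h_K$ together with formula \eqref{KcircVolume} to recognize the resulting integral as a rescaled Mahler volume.

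First, by Lemma \ref{list}(v), $h_{1,K}(y) \leq h_K(y)$ for every $y \in \R^n$, so $e^{-p h_{1,K}(y)} \geq e^{-p h_K(y)}$ pointwise. Integrating,
\begin{equation*}
    \int_{\R^n} e^{-p h_{1,K}(y)}\, \dif y \;\geq\; \int_{\R^n} e^{-p h_K(y)}\, \dif y.
\end{equation*}

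Next, since $h_K$ is $1$-homogeneous, the change of variables $z = py$ (with $dy = p^{-n} dz$) combined with \eqref{KcircVolume} yields
\begin{equation*}
    \int_{\R^n} e^{-p h_K(y)}\, \dif y \;=\; \int_{\R^n} e^{-h_K(py)}\, \dif y \;=\; \frac{1}{p^n}\int_{\R^n} e^{-h_K(z)}\, \dif z \;=\; \frac{n!\,|K^\circ|}{p^n}.
\end{equation*}
Using $|K| = 1$ and the definition \eqref{MahlerEq} of the Mahler volume, $n!\,|K^\circ| = n!\,|K|\,|K^\circ| = \M(K)$, and combining with the previous inequality gives the claim. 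There is no real obstacle here; the only subtlety is to notice that the hypothesis $|K| = 1$ is precisely what lets the $1/p^n$ factor arising from homogeneous rescaling of $h_K$ match the desired $\M(K)/p^n$ on the right-hand side.
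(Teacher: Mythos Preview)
Your proof is correct and essentially identical to the paper's own argument: both replace $h_{1,K}$ by $h_K$ via Lemma \ref{list}(v), rescale using the $1$-homogeneity of $h_K$, and then invoke \eqref{KcircVolume} together with $|K|=1$ to identify the resulting integral as $\M(K)/p^n$.
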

\begin{proof}
    Since $h_{p,K}\leq h_K$, by homogeneity of $h_K$, 
    \begin{equation*}
        \begin{aligned}
            \int_{\R^n} e^{-p h_{1,K}(y)}\dif y&\geq \int_{\R^n} e^{-p h_K(y)}\dif y= \int_{\R^n} e^{-h_{K}(py)}\dif y \\
            &= \int_{\R^n} e^{-h_K(v)}\frac{\dif v}{p^n}= \frac{n! |K^\circ|}{p^n}= \frac{\M(K)}{p^n}, 
        \end{aligned}
    \end{equation*}
    because $|K|=1$ thus $\M(K)\defeq n!|K||K^\circ|= n!|K^\circ|$. 
\end{proof}

\begin{proof}[Proof of Theorem \ref{bern_iso_prop}]
    By assumption, \eqref{B} holds.
    Thus (\ref{berniso_e2}) applies for probability measures with barycenter at the origin. 
    
    \medskip
    \noindent 
    (i) In order to apply the estimate \eqref{berniso_e2}, it is necessary to have a measure with barycenter at the origin. By Corollary \ref{santalo_point_corollary}, we may translate $K$ so that $b(\nu_{p,K})= b(h_{{1/p},K})=0$. By Corollary \ref{calCaffineInv}, this does not affect $\calC(K)$.
By (\ref{berniso_e2}) and Lemmas \ref{berniso_lem1} and \ref{berniso_lem2}(i), 
\begin{equation*}
    u_{B,K}(0)\leq \log\left( |K|^2 \frac{\M_{1/(2p)}(K)}{\M_{1/p}(K)^2} \frac{p^n}{2^n}\right)+ \frac{Bn}{p}
\end{equation*}
As a result, by (\ref{berniso_e1}),
    \begin{equation*}
        \begin{aligned}
            \calC(K)\geq \frac{\M_{1/p}(K)^2}{\M_{1/(2p)}(K)}\frac{2^n}{p^n} e^{-\frac{Bn}{p}}.
        \end{aligned}
    \end{equation*}
Choosing $p=B$, 
\begin{equation*}
\calC(K)\geq \frac{\M_{1/B}(K)^2}{\M_{1/(2B)}(K)}\frac{2^n}{B^n} e^{-n}.
\end{equation*}

\medskip    
\noindent 
(ii) Similarly, to apply \eqref{berniso_e2} we need a measure with barycenter at the origin.
By Corollary \ref{santalo_point_corollary}, we may translate $K$ so that $b(\nu_{p, K})= b(h_{1/p,K})=0$. Also, rescale so that $|K|=1$. By Corollary \ref{calCaffineInv} this does not affect $\calC(K)$. 
By (\ref{berniso_e2}), Lemma \ref{berniso_lem1} and Lemma \ref{berniso_lem2} (ii), 
\begin{equation*}
   u_{B,K}(0)\leq \log\left( \frac{e^n}{ \int_{\R^n} e^{-p h_{1,K}(y)}\dif y}\right)+ \frac{Bn}{p}\leq \log\left( \frac{e^n p^n}{\M(K)^n}\right)+ \frac{Bn}{p},
\end{equation*}
where we used Claim \ref{bern_iso_claim1} for the last inequality. 
As a result, by (\ref{berniso_e1}), since $|K|=1$,
\begin{equation}\label{iso_tool_eq2}
    \calC(K)= e^{-u_{B,K}(0)}\geq \frac{\M(K)}{e^np^n} e^{-\frac{Bn}{p}}.
\end{equation}
We can now optimize over all $p$ on the right hand side. Setting
$$
f(p):=
p^n e^{(1+\frac{B}{p})n}
$$
gives
$$
f'(p)=e^{n+nB/p}\cdot\Big[
np^{n-1}
-
p^n\cdot
nB/p^2
\Big]
=
ne^{n+nB/p}p^{n-2}(p
-
B
),
$$
and the second derivative
gives
$$
\begin{aligned}
f''(p)&=
ne^{n+nB/p}
\big[
-nBp^{-2}
p^{n-2}(p-B)
+
(n-2)p^{n-3}(p-B)
+
p^{n-2}
\big]\cr
&=
ne^{n+nB/p}
p^{n-4}
\big[
-nB(p-B)
+
(n-2)p(p-B)
+
p^{2}
\big]
,
\end{aligned}
$$
so $f''(B)=
ne^{2n}
B^{n-2}>0$ as long as $B>0$.
This 
confirms $p=B$
is a minimum. Thus, choosing $p= B$ in (\ref{iso_tool_eq2}), 
\begin{equation*}
    \calC(K)\geq \frac{\M(K)}{e^{2n}B^n}.
\end{equation*}

\bigskip
\noindent
(iii) Since $K$ is symmetric, $b(K)= b(\nu_{p,K})=0$. Rescale $K$ so that $|K|=1$. $\calC(K)$ remains invariant under rescalling by Corollary \ref{calCaffineInv}. By (\ref{berniso_e2}), Lemmas \ref{berniso_lem1} and \ref{berniso_lem2}(iii), and Claim \ref{bern_iso_claim1},
\begin{equation*}
   u_{B,K}(0)\leq \log\left( \frac{1}{ \int_{\R^n} e^{-p h_{1,K}(y)}\dif y}\right)+ \frac{Bn}{p}\leq \log\left( \frac{p^n}{\M(K)}\right)+ \frac{Bn}{p},
\end{equation*}
As a result, by (\ref{berniso_e1}), since $|K|=1$,
\begin{equation*}
    \calC(K)= e^{-u_{B,K}(0)}\geq \frac{\M(K)}{p^n} e^{-\frac{Bn}{p}}.
\end{equation*}
Thus, choosing $p=B$, 
\begin{equation*}
    \calC(K)\geq \frac{\M(K)}{e^n B^n},
\end{equation*}
concluding the proof of Theorem \ref{bern_iso_prop}
\end{proof}

\begin{remark}\label{h1Ksuffices}
    It is enough to formulate Theorem \ref{bern_iso_prop} in terms of $h_{1,K}$: By Lemma \ref{list} (i), $h_{p,K}(y)= \frac1p h_{1,K}(py)$, therefore $\nabla^2 h_{p,K}(y)= p\nabla^2 h_{1,K}(py)$. As a result, 
    \begin{equation*}
    \log\det\nabla^2  h_{p,K}(y)+ pB h_{p,K}(y)= \log\det\nabla^2  h_{1,K}(py)+ B h_{1,K}(py)+ n\log p. 
    \end{equation*}
    Thus, $\log\det\nabla^2 h_{p,K}(y)+ pB h_{p,K}(y)$ is convex if and only if $\log\det\nabla^2  h_{1,K}(y)+ Bh_{1,K}(y)$ is. 
\end{remark}

\subsection{A sub-optimal bound}
\label{S54}

We prove Theorem \ref{bern_iso_theorem}, i.e., we show that 
$\log\det\nabla^2 h_{p,K}+ p(n+1) h_{p,K}$ is convex. Corollary \ref{suboptimal_bound} then follows from Theorem \ref{bern_iso_prop} (ii). 

\begin{proof}[Proof of Corollary \ref{suboptimal_bound}.]
By Theorem \ref{bern_iso_prop} (ii) and Theorem \ref{bern_iso_theorem}, 
\begin{equation*}
    \calC(K)\geq \frac{\M(K)}{e^{2n}(n+1)^n}\geq \left( \frac{\pi}{2e^2}\right)^n \frac{1}{(n+1)^n}.
\end{equation*}
By tensorization, $\calC(K)\geq (\frac{\pi}{2e^2 n})^n$.
\end{proof}

\begin{proof}[Proof of Theorem \ref{bern_iso_theorem}.]
    Recall by the proof of Lemma \ref{MA_isotropic},
    \begin{equation*}
        \nabla^2 h_{p,K}(y)= p\left( \int_K x_i x_j \dif\nu^y(x)- \int_K x_i\dif\nu^y(x)\int_K x_j \dif\nu^y(x)\right)_{i,j}, 
    \end{equation*}
    where
    \begin{equation*}
        \dif\nu^y(x)\defeq \frac{e^{p\langle x,y\rangle}}{\int_K  e^{p\langle x,y\rangle}\frac{\dif x}{|K|}} \frac{ \mathbf{1}^\infty_K(x)\dif x}{|K|}, 
    \end{equation*}
    a probability measure that depends on $y$. Consider the $(n+1)\times (n+1)$ matrix, 
    \begin{equation*}
        M\defeq \begin{bmatrix}
        1 &
\begin{matrix}
    \int_K x_1\dif\nu^y(x) & \ldots & \int_K x_n \dif\nu^y(x)
\end{matrix}\\
\begin{matrix}
    \int_K x_1\dif\nu^y(x) \\ \vdots\\ \int_K x_n \dif\nu^y(x)
\end{matrix} 
& \left[ \int_{K}x_ix_j \dif\nu^y(x)\right]_{i,j=1}^n
        \end{bmatrix}.
    \end{equation*} 
    By row reduction, 
    \begin{equation*}
        \det M= p^{-n}\det\nabla^2 h_{p,K}. 
    \end{equation*}
Note that for $i,j\in\{0, 1,\ldots, n\}$, $M_{ij}= \langle x_i, x_j\rangle_{L^2(\dif\nu^y)}$, where $x_0=1$. 
For 
\begin{equation*}
    \Delta(x^{(0)},\ldots, x^{(n)})\defeq \det\begin{bmatrix}
    1 & x_1^{(0)} & \ldots & x_n^{(0)} \\
    \vdots & \vdots & \ddots & \vdots \\
    1 & x_1^{(n)} & \ldots & x_n^{(n)}
    \end{bmatrix},
\end{equation*}
by Andréief's formula \cite[(1.7)]{forrester}, 
\begin{equation*}
\begin{aligned}
    \det M&= \frac{1}{(n+1)!}\int_{K^{n+1}} \left( \det\begin{bmatrix}
    1 & x_1^{(0)} & \ldots & x_n^{(0)} \\
    \vdots & \vdots & \ddots & \vdots \\
    1 & x_1^{(n)} & \ldots & x_n^{(n)}
    \end{bmatrix}\right)^2 \dif\nu^y(x^{(0)})\ldots\dif\nu^y(x^{(n)}) \\
    &= \frac{1}{(n+1)!} \int_{K^{n+1}} |\Delta|^2 \frac{e^{p\langle x^{(0)}, y\rangle}}{\int_K e^{p\langle x^{(0)}, y\rangle}\dif x^{(0)}}\dif\lambda (x^{(0)})\ldots \frac{e^{p\langle x^{(n)}, y\rangle}}{\int_K e^{p\langle x^{(n)}, y\rangle}\dif x^{(n)}}\dif\lambda (x^{(n)}) \\
    &= \frac{1}{(n+1)!} \frac{1}{\left( \int_K e^{p\langle x,y\rangle} \dif x\right)^{n+1}} \int_{K^{n+1}}|\Delta|^2 e^{p\langle \sum_{j=0}^n x^{(j)},y\rangle}\dif x^{(0)}\ldots\dif x^{(n)}. 
\end{aligned}
\end{equation*}
Therefore, 
\begin{equation*}
\begin{aligned}
    \log\det\nabla^2 h_{p,K}(y)&= n\log p+ \log\det M\\
    &= n\log p- \log(n+1)!- (n+1) \log \int_K e^{p\langle x,y\rangle}\dif x+ \log\phi(y) \\
    &= n\log p- \log(n+1)!- p(n+1) h_{p,K}(y)+ \log\phi(y), 
\end{aligned}
\end{equation*}
where
\begin{equation*}
    \phi(y)\defeq \int_{K^{n+1}}|\Delta|^2 e^{p\langle \sum_{j=0}^n x^{(j)},y\rangle}\dif x^{(0)}\ldots\dif x^{(n)}. 
\end{equation*}
Since $\log\phi$ is convex (Lemma \ref{logconvLemma} below), and
\begin{equation*}
    \log\det\nabla^2 h_{p,K}(y)+ p(n+1)h_{p,K}(y)= n\log p- \log(n+1)!+ \log\phi(y), 
\end{equation*}
the claim follows. 
\end{proof}

\begin{lemma}\label{logconvLemma}
    Let $K\subset \R^n$ be a convex body, $m\in\mathbb{N}$, and $f: \R^{nm}\to [0,+\infty)$ measurable function. Then, 
    \begin{equation*}
        \phi(y)\defeq \log\int_{K^m} f(x_1, \ldots, x_m) e^{\langle x_1+ \ldots+ x_m, y\rangle}\dif x, \quad y\in \R^n, 
    \end{equation*}
    is convex. 
\end{lemma}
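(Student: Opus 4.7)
The plan is to verify log-convexity of the integral directly via H\"older's inequality, exploiting the fact that the $y$-dependence enters only through the exponential factor $e^{\langle x_1+\cdots+x_m, y\rangle}$, which splits multiplicatively under convex combinations of $y$.

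More precisely, fix $y, z \in \R^n$ and $\lambda \in (0,1)$. I want to show
$$\phi\big((1-\lambda)y+\lambda z\big) \le (1-\lambda)\phi(y)+\lambda\phi(z),$$
which, after exponentiating, amounts to
$$\int_{K^m} f(x) e^{\langle x_1+\cdots+x_m, (1-\lambda)y+\lambda z\rangle}\,\dif x \le \left(\int_{K^m} f(x)e^{\langle x_1+\cdots+x_m, y\rangle}\,\dif x\right)^{1-\lambda}\left(\int_{K^m} f(x)e^{\langle x_1+\cdots+x_m,z\rangle}\,\dif x\right)^{\lambda}.$$
The key factorization is
$$f(x) e^{\langle x_1+\cdots+x_m,(1-\lambda)y+\lambda z\rangle} = \left(f(x) e^{\langle x_1+\cdots+x_m, y\rangle}\right)^{1-\lambda}\left(f(x) e^{\langle x_1+\cdots+x_m, z\rangle}\right)^{\lambda},$$
which uses $f\ge 0$ (so $f = f^{1-\lambda}f^{\lambda}$) and the linearity of the inner product in $y$. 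Applying H\"older's inequality with conjugate exponents $\tfrac{1}{1-\lambda}$ and $\tfrac{1}{\lambda}$ to the product on the right-hand side yields exactly the desired bound.

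There is essentially no obstacle: the argument mirrors the proof of Lemma \ref{hpKconvex}, with $\R^{nm}$ and the measure $f(x)\,\dif x$ in place of $K$ and the uniform measure. The convexity of $K$ and the specific form of $f$ play no role; what matters is only that $f\ge 0$ and that the $y$-dependence is through a linear exponential. One should just note at the end that $\phi$ is finite (as $K$ is bounded, the integrand is bounded by $e^{M|y|}\int_{K^m} f$ for suitable $M$), so the inequality passes to the logarithm without issue. If one of the two integrals on the right vanishes, then by H\"older's inequality the left-hand side vanishes too, and convexity holds trivially in the extended sense $\log 0 = -\infty$.
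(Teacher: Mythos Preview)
Your proof is correct and follows essentially the same approach as the paper: factorize the integrand as $\big(f(x)e^{\langle x_1+\cdots+x_m,y\rangle}\big)^{1-\lambda}\big(f(x)e^{\langle x_1+\cdots+x_m,z\rangle}\big)^{\lambda}$ and apply H\"older's inequality with exponents $\tfrac{1}{1-\lambda}$ and $\tfrac{1}{\lambda}$. Your additional remarks on finiteness and the degenerate case are a nice touch, but the core argument is identical.
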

\begin{proof}
Write $x=(x_1, \ldots, x_m)\in \R^{nm}$ and let $\lambda\in(0,1), y_1, y_2\in \R^n$. 
Since 
$$f(x) e^{\langle x_1+\ldots+ x_m, (1-\lambda)y_1+\lambda y_2\rangle}= \left(f(x) e^{\langle x_1+\ldots+ x_m, y_1\rangle} \right)^{1-\lambda}\left(f(x) e^{\langle x_1+\ldots+x_m, y_2\rangle}\right)^{\lambda},
$$ by H\"older's inequality for $p= \frac{1}{1-\lambda}$ and $q= \frac{1}{\lambda}$,
\begin{equation*}
\begin{aligned}
    \int_{K^m}f(x) e^{\langle x_1+\ldots+ x_m, (1-\lambda)y_1+\lambda y_2\rangle}\dif x\leq &\left( \int_{K^m}f(x) e^{\langle x_1+\ldots+ x_m, y_1\rangle}\dif x\right)^{1-\lambda} 
    \\&\left( \int_{K^m}f(x) e^{\langle x_1+\ldots+ x_m, y_2\rangle}\dif x\right)^{\lambda}.
\end{aligned}
\end{equation*}
Taking logarithms yields $\phi((1-\lambda)y_1+\lambda y_2)\leq (1-\lambda)\phi(y_1)+ \lambda \phi(y_2)$. 
\end{proof}

\subsection{\texorpdfstring{More general probability measures and sharpness of 
$B=n+1$}{More general probability measures and sharpness of B=n+1}} \label{LpSupportMeasureS}

As just discussed, Theorem \ref{bern_iso_theorem} falls
short of proving the slicing conjecture because the best constant
$B$ we currently obtain is $n+1$. It is interesting to note that
while in the setting of the uniform measure on $K$ this constant
could potentially be improved, many of the results in this section extend to general probability measures and then the constant $n+1$ is in fact {\it optimal}. The purpose
of this subsection is to spell this out.

Throughout this section the only properties of the measure
$$
    \mathbf{1}_{K}^\infty \frac{\dif x}{|K|}
$$
used to obtain the estimates in Lemmas \ref{berniso_lem2} and \ref{berniso_lem1} were that it is a probability measure that is supported on $K$. 
As a result, it may be
replaced by any probability measure
\begin{equation*}
    \mu 
\end{equation*}
that is supported on $K$, i.e., for any measurable $A\subset \R^n\setminus K$, $\mu(A)=0$, so that, in addition, $\mathrm{co}\,\mathrm{supp}(\mu)= K$. For example, \eqref{berniso_e2} was already obtained for any probability measure with barycenter at the origin. 
For a convex body $K\subset \R^n$ and a probability measure $\mu$ whose convex hull of its support is $K$, let
\begin{equation*}
    h_{p,\mu}(y)\defeq \frac1p \log\int_{K} e^{p\langle x,y\rangle}\dif \mu(x). 
\end{equation*}
As in Lemma \ref{MA_isotropic}, 
\begin{equation*}
    \frac1p \nabla^2 h_{p,\mu}(0)= \mathrm{Cov}(\mu)\defeq \left[ \int_{K}x_ix_j\dif\mu(x)- \int_{K}x_i\dif\mu(x)\int_{K}x_j\dif\mu(x)\right]_{i,j=1}^n. 
\end{equation*}
For $p>0$, let
\begin{equation}\label{numu}
    \nu_{p,\mu}\defeq \frac{e^{-ph_{1,\mu}(y)}\dif y}{\int_{\R^n}e^{-ph_{1,\mu}(y)}\dif y}. 
\end{equation}
Then, Claim \ref{bern_iso_claim2}, Lemmas \ref{berniso_lem2} and \ref{berniso_lem1} generalize. 
\begin{lemma}
Let $p>0$. For a convex body $K\subset \R^n$, $\mu$ a probability measure with $\mathrm{co}\,\mathrm{supp}(\mu)= K$, and $\nu_{p,\mu}$ as in \eqref{numu}, 

\noindent 
(i) \begin{equation*}
    \int_{\R^n}\log\det\nabla^2 h_{1,\mu}(y)\dif\nu_{p,\mu}(y)\leq\log\left( |K| \frac{\int_{\R^n}e^{-2p h_{1,\mu}(y)}\dif y}{\left(\int_{\R^n}e^{-ph_{1,\mu}(y)}\dif y\right)^2}\right). 
\end{equation*}

\noindent
(ii) If $b(\nu_{p,\mu})=0$, then
\begin{equation*}
    \int_{\R^n}\log\det\nabla^2 h_{1,\mu}(y)\dif\nu_{p,\mu}(y)\leq \log\left(\frac{|K| e^n}{ \int_{\R^n} e^{-p h_{1,\mu}(y)}\dif y} \right).
\end{equation*}

\noindent
(iii) If $b(\mu)=0$, then
\begin{equation*}
    \int_{\R^n}\log\det\nabla^2 h_{1,\mu}(y)\dif\nu_{p,\mu}(y)\leq \log\left(\frac{|K|}{ \int_{\R^n} e^{-p h_{1,\mu}(y)}\dif y} \right).
\end{equation*}
\end{lemma}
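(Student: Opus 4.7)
The plan is to mirror, almost verbatim, the proofs of Claim \ref{bern_iso_claim2} and Lemma \ref{berniso_lem2}, replacing the uniform probability measure $\mathbf{1}_K^\infty\dif x/|K|$ by the arbitrary probability measure $\mu$ supported in $K$. The three pieces I need to carry over are: (a) a Monge--Amp\`ere total-mass bound $\int_{\R^n}\det\nabla^2 h_{1,\mu}(y)\dif y\leq |K|$; (b) Fradelizi's inequality applied to $ph_{1,\mu}$; and (c) a Jensen argument bounding $h_{1,\mu}$ from below.

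For part (i), I will follow the chain Jensen $\to$ Cauchy--Schwarz exactly as in Claim \ref{bern_iso_claim2}:
\begin{equation*}
\begin{aligned}
\int_{\R^n}\log\det\nabla^2 h_{1,\mu}(y)\dif\nu_{p,\mu}(y)
&=2\int_{\R^n}\log(\det\nabla^2 h_{1,\mu}(y))^{1/2}\dif\nu_{p,\mu}(y)\\
&\le 2\log\int_{\R^n}(\det\nabla^2 h_{1,\mu}(y))^{1/2}\dif\nu_{p,\mu}(y)\\
&\le \log\Big(\int_{\R^n}\det\nabla^2 h_{1,\mu}(y)\dif y\cdot\frac{\int e^{-2ph_{1,\mu}}}{(\int e^{-ph_{1,\mu}})^2}\Big).
\end{aligned}
\end{equation*}
The only non-routine input is the bound $\int_{\R^n}\det\nabla^2 h_{1,\mu}(y)\dif y=\mathrm{MA}h_{1,\mu}(\R^n)\le|K|$. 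This follows from the analog of Corollary \ref{subgradient_cor1}: since $\mathrm{supp}(\mu)\subset K$, one has $h_{1,\mu}\le h_K$ pointwise, hence $h_{1,\mu}^*\ge h_K^*=\bm{1}_K^\infty$, and so by Lemma \ref{subgradient_lem1} the subgradient image $\partial h_{1,\mu}(\R^n)\subset\{h_{1,\mu}^*<\infty\}\subset K$. Smoothness and strict convexity of $h_{1,\mu}$ (inherited from the same differentiation-under-the-integral argument as in Lemma \ref{strict_convexity}, using $\mathrm{co}\,\mathrm{supp}(\mu)=K$ to get positive definiteness of the Hessian) justifies rewriting $\mathrm{MA}h_{1,\mu}(\R^n)=\int\det\nabla^2 h_{1,\mu}$. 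This proves (i).

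For part (ii), observe first that $\nu_{p,\mu}$ is, up to normalization, the log-concave measure $e^{-ph_{1,\mu}}\dif y$, so $b(\nu_{p,\mu})=0$ is equivalent to $b(ph_{1,\mu})=0$. Since $\mu$ is a probability measure we have $h_{1,\mu}(0)=\log\int_K 1\,\dif\mu=0$, so Fradelizi's inequality (Lemma \ref{fradelizi_lemma}) applied to the convex function $ph_{1,\mu}$ yields $ph_{1,\mu}(y)\ge ph_{1,\mu}(0)-n=-n$ for all $y$. Consequently $e^{-2ph_{1,\mu}(y)}\le e^n e^{-ph_{1,\mu}(y)}$, which plugged into (i) gives (ii). For part (iii), the hypothesis $b(\mu)=0$ combined with Jensen's inequality gives
\begin{equation*}
h_{1,\mu}(y)=\log\int_K e^{\langle x,y\rangle}\dif\mu(x)\ge \int_K\langle x,y\rangle\dif\mu(x)=\langle b(\mu),y\rangle=0,
\end{equation*}
so $e^{-2ph_{1,\mu}(y)}\le e^{-ph_{1,\mu}(y)}$, and again (iii) follows from (i).

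The main ``obstacle'' is really just bookkeeping: one must confirm that the regularity properties of $h_{1,K}$ used implicitly in Lemma \ref{berniso_lem2} (smoothness, strict convexity, the a.e.\ equality of $\mathrm{MA}h_{1,\mu}$ with $\det\nabla^2 h_{1,\mu}\,\dif y$) continue to hold for $h_{1,\mu}$ under the mild assumption $\mathrm{co}\,\mathrm{supp}(\mu)=K$; this can be done by the same dominated-convergence/integration-by-parts arguments already used in Sections \ref{S51}--\ref{S52}, since what matters for both steps is only that $\mu$ is compactly supported with convex hull of support equal to $K$. No step of the original argument used the specific form $\dif x/|K|$ beyond these structural properties, so the extension is essentially automatic.
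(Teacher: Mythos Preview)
Your proposal is correct and follows exactly the approach the paper intends: the paper itself does not give a separate proof of this generalized lemma, merely stating that Claim~\ref{bern_iso_claim2} and Lemma~\ref{berniso_lem2} ``generalize,'' and you have faithfully carried over each step (Jensen, Cauchy--Schwarz, the subgradient-image bound $\partial h_{1,\mu}(\R^n)\subset K$ via $h_{1,\mu}\le h_K$, Fradelizi, and the Jensen lower bound for~(iii)). The only place requiring a moment's thought is the strict convexity of $h_{1,\mu}$ needed to identify $\mathrm{MA}h_{1,\mu}$ with $\det\nabla^2 h_{1,\mu}\,\dif y$, and you correctly note this follows from $\mathrm{co}\,\mathrm{supp}(\mu)=K$ being a convex body (so that the covariance of the tilted measure is nondegenerate).
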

\begin{lemma}
Let $p>0$.
For a convex body $K\subset \R^n$ and a probability measure $\mu$ with $\mathrm{co}\,\mathrm{supp}(\mu)=K$,
    \begin{equation*}
        \int_{\R^n} h_{1,\mu}(y)\dif \nu_{p,\mu}(y)\leq \frac{n}{p}. 
    \end{equation*}
\end{lemma}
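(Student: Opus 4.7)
The plan is to adapt the proof of Lemma \ref{berniso_lem1} essentially verbatim, observing that its argument uses only two properties of the uniform measure $\bm{1}_K^\infty \dif x/|K|$: the scaling identity of Lemma \ref{list}(i) and the monotonicity of Lemma \ref{list}(v). Both carry over to an arbitrary probability measure $\mu$ with $\mathrm{co}\,\mathrm{supp}(\mu)=K$. The identity $h_{p,\mu}(y) = \tfrac{1}{p}\, h_{1,\mu}(py)$ is immediate from the definition of $h_{p,\mu}$, just as in the proof of Lemma \ref{list}(i); the monotonicity $h_{p,\mu}(y) \le h_{q,\mu}(y)$ for $p \le q$ follows from H\"older's inequality applied to the probability measure $\mu$, exactly as in the proof of Lemma \ref{list}(v), using only that $\mu(K) = 1$.

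With these two facts in place, the plan is to set
\[
F(p) \defeq \log \int_{\R^n} e^{-h_{p,\mu}(y)} \dif y,
\]
and observe, via the scaling identity and the change of variables $y \mapsto y/p$, that
\[
F(p) = \log \int_{\R^n} e^{-h_{1,\mu}(y)/p} \dif y - n \log p.
\]
Since $p \mapsto h_{p,\mu}(y)$ is non-decreasing, $F$ is non-increasing, so $F'(p) \le 0$. Differentiating under the integral sign,
\[
0 \ge F'(p) = \frac{1}{p^2} \int_{\R^n} h_{1,\mu}(y)\, \dif \widetilde{\nu}_p(y) - \frac{n}{p},
\]
where $\widetilde{\nu}_p$ denotes the probability measure with density proportional to $e^{-h_{1,\mu}(y)/p}$. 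By the convention in \eqref{numu}, $\widetilde{\nu}_p = \nu_{1/p,\mu}$, so rearranging gives $\int h_{1,\mu}\, \dif \nu_{1/p,\mu} \le np$. Relabeling $q \defeq 1/p$ yields the claim $\int h_{1,\mu}\, \dif \nu_{q,\mu} \le n/q$.

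The main obstacle I anticipate is purely analytic rather than conceptual: one must verify that the integrals appearing in $F(p)$ are finite, so that differentiation under the integral sign is legitimate. The hypothesis $\mathrm{co}\,\mathrm{supp}(\mu) = K$ with $0 \in \mathrm{int}\, K$ (implicit, since $\nu_{p,\mu}$ must be a bona fide probability measure) ensures that along every direction $v \in \partial B_2^n$ there is a support point of $\mu$ in the open half-space $\{x : \langle x, v\rangle > 0\}$ with positive $\mu$-mass in a neighborhood thereof, from which one extracts a linear lower bound on $h_{p,\mu}$ along each ray. This is enough to guarantee integrability of $e^{-h_{p,\mu}}$ and to justify dominated convergence in the differentiation step, following the template of Lemma \ref{finiteness_bodies}. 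The verification is routine, and I do not foresee any conceptual obstruction beyond bookkeeping.
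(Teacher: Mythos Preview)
Your proposal is correct and takes essentially the same approach as the paper: the paper states this lemma as a direct generalization of Lemma~\ref{berniso_lem1} without writing out a separate proof, the understanding being that the argument of Lemma~\ref{berniso_lem1} goes through verbatim once one notes that the scaling identity $h_{p,\mu}(y)=\tfrac1p h_{1,\mu}(py)$ and the monotonicity $h_{p,\mu}\le h_{q,\mu}$ for $p\le q$ hold for any probability measure $\mu$. Your derivation of $F'(p)\le 0$ and the relabeling $q=1/p$ match the paper's computation exactly.
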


Theorem \ref{bern_iso_theorem} also generalizes. 
\begin{theorem}\label{MeasureProp}
    Let $p>0$. For a probability measure $\mu$ on $\R^n$ such that $\overline{supp(\mu)}$ is a convex body, the function
    \begin{equation*}
        \log\det\nabla^2 h_{p,\mu}(y)+ p(n+1) h_{p,\mu}(y)
    \end{equation*}
    is convex. 
\end{theorem}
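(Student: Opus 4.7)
The plan is to adapt the proof of Theorem \ref{bern_iso_theorem} essentially verbatim, using only the fact that $\mu$ is a probability measure and that tilting by an exponential still produces a probability measure. First, I would differentiate $h_{p,\mu}$ twice under the integral sign to obtain
\begin{equation*}
    \nabla^2 h_{p,\mu}(y) = p\,\mathrm{Cov}(\nu^y), \qquad \dif \nu^y(x) \defeq \frac{e^{p\langle x,y\rangle}}{\int e^{p\langle z,y\rangle}\dif\mu(z)}\dif\mu(x),
\end{equation*}
which is a probability measure supported on $\overline{\mathrm{supp}(\mu)}$. This generalizes the computation at the start of the proof of Theorem \ref{bern_iso_theorem}.

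Next, I would form the $(n+1)\times(n+1)$ Gram matrix
\begin{equation*}
M = \Big[\langle x_i, x_j\rangle_{L^2(\dif\nu^y)}\Big]_{i,j=0}^n, \qquad x_0 \equiv 1,
\end{equation*}
whose determinant equals $\det\mathrm{Cov}(\nu^y) = p^{-n}\det\nabla^2 h_{p,\mu}(y)$ by the standard Schur complement / row reduction. Andr\'eief's identity (which only requires $\nu^y$ to be a positive measure on $\R^n$) then gives
\begin{equation*}
    \det M = \frac{1}{(n+1)!}\int_{(\R^n)^{n+1}} \Delta(x^{(0)},\ldots,x^{(n)})^2 \,\dif\nu^y(x^{(0)})\cdots\dif\nu^y(x^{(n)}),
\end{equation*}
where $\Delta$ is the $(n+1)\times(n+1)$ Vandermonde-type determinant from the proof of Theorem \ref{bern_iso_theorem}. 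Substituting the definition of $\nu^y$ and pulling out the normalization factors,
\begin{equation*}
    \det M = \frac{1}{(n+1)!}\,\frac{\phi(y)}{\bigl(\int e^{p\langle x,y\rangle}\dif\mu(x)\bigr)^{n+1}},
\end{equation*}
where
\begin{equation*}
    \phi(y) \defeq \int_{(\R^n)^{n+1}} \Delta(x^{(0)},\ldots,x^{(n)})^2\, e^{p\langle \sum_j x^{(j)},\, y\rangle}\,\dif\mu(x^{(0)})\cdots\dif\mu(x^{(n)}).
\end{equation*}
Taking logarithms and using the definition of $h_{p,\mu}$,
\begin{equation*}
    \log\det\nabla^2 h_{p,\mu}(y) + p(n+1)h_{p,\mu}(y) = n\log p - \log(n+1)! + \log\phi(y).
\end{equation*}

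It remains to verify that $\log\phi$ is convex. This is the measure-theoretic analogue of Lemma \ref{logconvLemma} and follows by the same one-line H\"older argument: for $y_1,y_2\in\R^n$ and $\lambda\in(0,1)$, the integrand at $(1-\lambda)y_1+\lambda y_2$ factors as the product of the $(1-\lambda)$-power and $\lambda$-power of the integrands at $y_1$ and $y_2$, because the nonnegative weight $\Delta^2$ splits as $(\Delta^2)^{1-\lambda}(\Delta^2)^\lambda$. H\"older's inequality with exponents $1/(1-\lambda)$ and $1/\lambda$, applied with respect to the product probability measure $\dif\mu(x^{(0)})\cdots\dif\mu(x^{(n)})$, then yields $\phi((1-\lambda)y_1+\lambda y_2)\le \phi(y_1)^{1-\lambda}\phi(y_2)^\lambda$. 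I do not foresee a serious obstacle: every step in the original proof of Theorem \ref{bern_iso_theorem} used only that the underlying reference measure was a probability measure on $\R^n$, and both Andr\'eief's identity and Lemma \ref{logconvLemma} generalize to arbitrary product measures. The only point requiring mild care is justifying the interchange of differentiation and integration defining $\nabla^2 h_{p,\mu}$, which is routine since $\overline{\mathrm{supp}(\mu)}$ is bounded and thus $e^{p\langle x,y\rangle}$ is uniformly (in $x$) dominated on compact $y$-neighborhoods.
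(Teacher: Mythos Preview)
Your proposal is correct and follows exactly the approach the paper intends: it states Theorem \ref{MeasureProp} as a direct generalization of Theorem \ref{bern_iso_theorem}, and every step of the proof in \S\ref{S54} (the tilted measure $\nu^y$, the Gram matrix $M$, Andr\'eief's identity, and Lemma \ref{logconvLemma}) uses only that the reference measure is a probability measure, so the argument carries over verbatim. The paper also supplies a second, genuinely different proof in \S\ref{complexgeom_subsec} via Kobayashi's theorem on the Ricci curvature of Bergman metrics; that route interprets $h_{1,\mu}$ as the log of a diagonal Bergman kernel and reads off the convexity from the inequality $\mathrm{Ric}\,g\le (n+1)g$, which is conceptually illuminating but not needed for the bare statement.
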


In fact, Theorem \ref{MeasureProp} is sharp: the
next example shows $B=n+1$ cannot be improved.
\begin{example}
\label{simplexExample}
Consider 
\begin{equation*}
    \mu\defeq \frac{\delta_0+ \delta_{e_1}+ \ldots+ \delta_{e_n}}{n+1}
\end{equation*}
the probability measure on the standard simplex $\Delta_n$ that assigns mass $\frac{1}{n+1}$ to each vertex. Then, 
\begin{equation*}
    \log\det\nabla^2 h_{p,\mu}(y)+ p B h_{p,\mu}(y)
\end{equation*}
is convex if and only if $B\geq n+1$. To see this, compute 
\begin{equation*}
    h_{p,\mu}(y)= \frac1p \log\int_{\Delta_n} e^{p\langle x,y\rangle}\dif \mu(x)= \frac{1}{p} \log\frac{1+ e^{py_1}+ \ldots+ e^{py_n}}{n+1}. 
\end{equation*}
For the gradient, by the chain rule, 
\begin{equation}\label{GradCoordinate}
    \frac{\partial h_{p,\mu}}{\partial y_i}(y)= \frac1p \frac{n+1}{1+ e^{py_1}+ \ldots+ e^{py_n}} \frac{\partial}{\partial y_i}\left(\frac{1+ e^{py_1}+ \ldots+ e^{py_n}}{n+1}\right)= \frac{e^{py_i}}{1+ e^{py_1}+ \ldots+ e^{py_n}},
\end{equation}
thus
\begin{equation*}
    \nabla h_{p,\mu}(y)= \frac{(e^{py_1}, \ldots, e^{p y_n})}{1+ e^{py_1}+\ldots+ e^{py_n}} .
\end{equation*}
For the Hessian, by the quotient rule on \eqref{GradCoordinate},
\begin{equation*}
\begin{aligned}
    \frac{\partial^2 h_{p,\mu}}{\partial y_i\partial y_j}(y)&= \frac{p e^{p y_i}\delta_{ij}(1+e^{py_1}+ \ldots+ e^{py_n})- e^{py_i}p e^{py_j}}{(1+ e^{py_1}+\ldots + e^{py_n})^2}\\
    &= \frac{p}{1+ e^{py_1}+\ldots+ e^{p y_n}} \left( \delta_{ij} e^{py_i}- \frac{e^{p(y_i+y_j)}}{1+ e^{py_1}+\ldots+ e^{py_n}}\right), 
\end{aligned}
\end{equation*}
thus
\begin{equation*}
    \begin{aligned}
        \nabla^2 h_{p,\mu}(y)&= \frac{p}{1+ e^{py_1}+ \ldots+ e^{py_n}}\left[ \delta_{ij} e^{py_i}- \frac{e^{p(y_i+y_j)}}{1+ e^{py_1}+ \ldots + e^{py_n}}\right]_{i,j=1}^n \\
        &= \frac{p}{1+ e^{py_1}+ \ldots+ e^{py_n}} (D- a^Ta), 
    \end{aligned}
\end{equation*}
where $D= \mathrm{diag}(e^{py_1}, \ldots, e^{py_n})$ and 
$
    a= (1+ e^{py_1}+\ldots+ e^{py_n})^{-1/2} (e^{py_1}, \ldots, e^{py_n}).
$
Therefore,
\begin{equation*}
    \begin{aligned}
        \det\nabla^2 h_{p,\mu}(y)&= \frac{p^n}{(1+ e^{y_1}+ \ldots + e^{py_n})^n} \det(D- a^Ta)\\
        &= \frac{p^n}{(1+ e^{y_1}+ \ldots + e^{py_n})^n} \det D (1- \langle D^{-1}a, a\rangle)\\
        &=  \frac{p^n}{(1+ e^{y_1}+ \ldots + e^{py_n})^n} e^{py_1+\ldots+ py_n} \left( 1-\frac{e^{py_1}+ \ldots+ e^{py_n}}{1+e^{py_1}+\ldots+ e^{py_n}}\right)\\
        &=  \frac{p^n}{(1+ e^{y_1}+ \ldots + e^{py_n})^{n+1}} e^{py_1+ \ldots+ py_n}.
    \end{aligned}
\end{equation*}
As a result, 
\begin{equation*}
    \begin{aligned}
        \log\det\nabla^2 h_{p,\mu}(y)+ pB h_{p,\mu}(y)&=n\log(p)+ py_1+ \ldots+ py_n\\
        &\hspace{.5cm}- (n+1)\log(1+ e^{py_1}+\ldots+ e^{py_n}) \\
        &\hspace{.5cm} + B\log\frac{1+e^{py_1}+ \ldots+ e^{py_n}}{n+1} \\
        &= (B-n-1) \log (1+ e^{py_1}+ \ldots+ e^{py_n})\\ 
        &\hspace{.5cm}+ py_1+ \ldots+ py_n+ n\log p- B\log(n+1), 
\end{aligned}
\end{equation*}
which is convex if and only if $B\geq n+1$ (because $\log(1+ e^{py_1}+\ldots+ e^{py_n})$ is convex).

\noindent
When $B=n+1$ and $p=1$ we get
$$
\log\det\nabla^2  h_{1,\mu}+(n+1) h_{1,\mu}=y_1+...y_n -(n+1)\log(n+1).,
$$
so that $h_{1,\mu}$ solves the Monge--Amp\`ere equation
$$
\det\nabla^2 h_{1,\mu}(y)=\frac{1}{(n+1)^{n+1}} e^{-(n+1) h_{1,\mu}}e^{y_1+...y_n}.
$$
 From here we can read off that 
$$
\det\nabla^2 h_{1,\mu}(0)=\frac{1}{(n+1)^{n+1}}.
$$
We next look at a generalized isotropic constant, by defining
\beq
\mathcal{C}(\mu):=\frac{|K|^2}{\det\nabla^2 h_{1,\mu}(0)}.
\eeq
From the previous equation we then get, remembering that the volume of the unit simplex is $1/n!$, that
$$
\mathcal{C}(\mu)=\frac{(n+1)^{n+1}}{(n!)^2}.
$$
The right hand side here is of the order of magnitude $c^n n^{-n}$, so we see that the `suboptimal' bound of Corollary 1.12  {\it  is optimal in this generality.}
\end{example}

Interpreted benevolently, Example \ref{simplexExample} means that our method is optimal in the sense that  the best possible choice of $B$ gives the correct estimate for $\mathcal{C}(\mu)$. The natural question then arises, for which measures $\mu$ the constant $B$ can be taken smaller so that we as a consequence get a better estimate of $\mathcal{C}(\mu)$. One simple case when this is so is when $\mu$ is divisible, in the sense that we can write
$$
\mu=\nu\star\nu\star\cdots\star\nu= \nu^{k\star}
$$
as the $k$-fold convolution of another probability measure $\nu$ with itself. In that case,
$$
h_{1,\mu}=k h_{1,\nu}.
$$
Applying Theorem 6.20 to $h_{1,\nu}$ we then get that
$$
\log \det\nabla^2 h_{1,\nu}+(n+1)h_{1,\nu}
$$
is convex, which implies that
$$
\log \det\nabla^2 h_{1,\mu}+\frac{n+1}{k} h_{1,\mu}
$$
is convex. This leads to the improved estimate
$$
\mathcal{C}(\mu)\geq c^n\Big(\frac{k}{n}\Big)^n.
$$
This is however not so impressive since the same conclusion can be drawn directly from $\mathcal{C}(\nu)\geq c^n/n^n$ if we note that the convex hull of the support of $\nu$ is $K/k$. This way we also see that it is not really necessary that $\mu$ can be written $\nu^{k\star}$; it is enough that $\mu=f\nu^{k\star}$ where $f$ is bounded.

\subsection{A complex geometric approach to Theorems \ref{bern_iso_theorem} and \ref{MeasureProp}}
\lb{complexgeom_subsec}

In this section we outline a different proof of Theorem \ref{bern_iso_theorem} (and
of its generalization, Theorem \ref{MeasureProp}) which is a little more conceptual, but presupposes a bit of complex geometry. It is based on a theorem by S. Kobayashi 
\cite[Theorem 4.4]{Kobayashi}. Kobayashi's theorem deals with $L^2$ spaces of holomorphic $(n,0)$-forms on complex manifolds, but his proof goes through in a much more general setting and applies in particular to the setting we will now describe.

Let $\mu$ be a compactly supported probability measure on $\R^n$. Let
$$
H_\mu:=\Big\{ \tilde f(z):=\int_{\R^n} e^{\langle z, t\rangle/2} f(t) d\mu(t)\,:\, f\in L^2(\mu)\Big\}.
$$
$H_\mu$ is a space of entire functions on $\C^n$ and we give it an inner product 
\begin{equation}\label{HmuInnerProduct}
\langle \tilde{f}, \tilde{g}\rangle\defeq \int f(t)\overline{g(t)}\dif \mu(t),
\end{equation}
making $H_\mu$ a Hilbert space, isomorphic to $L^2(\mu)$. 

We require that $\mu$ is not supported in any proper linear subspace of  $\R^n$. This implies that for any $a\in \R^n$ there is a function $f$ such that
$$
\int fd\mu=0, \,\, \text{and}\,\, \int \langle a, t\rangle f(t) d\mu(t)\neq 0.
$$
Indeed, if this were not the case, any function orthogonal to 1 in $L^2(\mu)$ would also be orthogonal to $\langle a, t\rangle$, which would imply that $\langle a, t\rangle= c$ on the support on $\mu$, contrary to assumption. In terms of functions in $H_\mu$, this says that there is a function $\tilde f$ which vanishes at the origin, with $\sum a_j\partial_j f$ not vanishing there. Then, replacing $f$ by $e^{\langle z_0, t\rangle/2}f(t)$ we see that the same thing goes for any point $z_0$ in $\C^n$. This means that the conditions A.1 and A.2 in Kobayashi's paper \cite[pp. 271--2]{Kobayashi} are satisfied (we will see the relevance of this shortly). Kobayashi's 
condition~A.1 says that for any point in $\C^n$ there is a function in $H_\mu$ that does not vanish there -- this is trivial in our case. Indeed, for $z_0\in \C^n$, since $\mu$ is compactly supported, $e^{-\langle z_0,t\rangle}\in L^2(\mu)$, and 
\begin{equation*}
    \int e^{-\langle z_0,t\rangle} e^{\langle z_0,t\rangle}\dif\mu(t)= \int \dif\mu(t)=1, 
\end{equation*}
because $\mu$ is a probability measure.

The (diagonal) Bergman kernel for $H_\mu$ is defined as
$$
B_\mu(z):=\sup_{\{\tilde f\,:\, \|\tilde f\|=1\}} |\tilde f(z)|^2.
$$
By condition A.1, the Bergman kernel does not vanish anywhere. It follows directly from the definitions that for
$$
\mathcal{K}_\mu(z,w)\defeq\int e^{\langle \frac{z+\overline{w}}{2}, t\rangle} d\mu(t),  
$$
and $\tilde{f}(z)= \int e^{\langle z,t\rangle/2} f(t)\dif\mu(t)\in H_\mu$, by \eqref{HmuInnerProduct}, 
\begin{equation*}
    \langle \tilde{f}, \mathcal{K}_\mu(\cdot, w)\rangle= \int f(t) \overline{e^{\langle \overline{w},t\rangle/2}}\dif\mu(t)= \int f(t) e^{\langle w,t\rangle/2}\dif\mu(t)= \tilde{f}(w), 
\end{equation*}
i.e., $\mathcal{K}_\mu$ enjoys a reproducing property, in addition to being holomorphic in the first variable and anti-holomorphic in the second. These three properties characterize Bergman kernels \cite[\S 3.2]{MR}, thus $\mathcal{K}_\mu$ is the Bergman kernel of $H_\mu$.
Therefore, on the diagonal, if $z=x+iy$,
$$
B_\mu(z)=\mathcal{K}_\mu(z,z)=\int e^{\langle x, t\rangle} d\mu(t),  
$$
i.e., coming back full circle to the ideas in \S\ref{BergmanSubSec},
\beq
\lb{logBmuEq}
\log B_\mu= h_{1,\mu}.
\eeq

The Bergman metric associated to $H_\mu$ is the  K\"ahler metric on $\C^n$ defined by
$$
g_{j\bar k}:=\frac{\partial^2}{\partial z_j \partial \bar z_k}\log B_\mu.
$$
By \eqref{logBmuEq}, $\log B_\mu$ is convex in $x$, hence plurisubharmonic in $z$, and the matrix $g=[g_{j\bar k}]$ is positive semi-definite, and it is a standard fact (that we omit) that the condition A.2 is precisely what is needed to make sure it is strictly positive definite. (Alternatively, condition A.2 can verified by using \eqref{logBmuEq}, the computation of Lemma \ref{MA_isotropic},
and the Cauchy--Schwarz inequality to note that $h_{p,\mu}$ is strongly convex.)
Kobayashi's theorem says that the Ricci curvature $\mathrm{Ric}\, g$ of the Bergman metric is bounded from above by $(n+1)g$.

At this point we need to make use of a standard formula for the Ricci curvature, valid for any K\"ahler metric. Let 
$$
\Delta:= \det [g_{j\bar k}]
$$
be the density of the volume form of the metric $g$. Then the Ricci curvature 
form of $g$ is given by
$$
R_{j \bar k}= -\frac{\partial^2}{\partial z_j \partial \bar z_k}\log\Delta.
$$
Hence, Kobayashi's estimate
$$
[R_{j k}]\leq (n+1)[g_{j k}],
$$
translates to saying that
$$
\log\Delta +(n+1)\log B_\mu
$$
is plurisubharmonic. In our case, $B_\mu$ and $\log\Delta$ depend only on $x=\mathrm{Re}(z)$, so 
$$
\log\Delta +(n+1)\log B_\mu
$$ 
is actually a convex function of $x$. Moreover, $\log B_\mu=h_{1,\mu}$ and $\Delta=4^{-n}\det\nabla^2h_{1,\mu}$
(in the last equality we used the relation between the complex
Hessian and the real one on functions
depending only on the real part). Therefore
$$
\log \det\nabla^2h_{1,\mu}+(n+1) h_{1,\mu}
$$
is convex,  i.e., \eqref{B} holds
with $B=n+1$,
so we have proved Theorem \ref{MeasureProp}, and, in particular, also Theorem  \ref{bern_iso_theorem}.

\appendix
\section{A (near) norm associated to a convex function}\label{appendixA}
In this section we give proofs for Proposition \ref{norm_phi} \cite[Theorem 5]{ball2} and Theorem \ref{KBall_ineq} \cite[Theorem 4.10]{ball} (cf. \cite{busemann}, \cite[p. 90]{milman-pajor}). Let us start by using Theorem \ref{KBall_ineq} to prove Proposition \ref{norm_phi}.

\begin{proposition}\label{norm_phi}
    For a convex function $\phi: \R^n\to \R\cup\{\infty\}$, 
    \begin{equation}
    \lb{BallFnEq}
        x\mapsto \left( \int_{0}^\infty r^{n-1}e^{-\phi(rx)}\dif r\right)^{-\frac1n}
    \end{equation}
    is positively 1-homogeneous and sub-additive (it is also a norm if $\phi$ is, in addition, even), and 
    \begin{equation*}
        \frac1n \int_{\R^n} e^{-\phi(x)}\dif x= |\{x\in \R^n: \|x\|_{\phi}\leq 1\}|.
    \end{equation*}
\end{proposition}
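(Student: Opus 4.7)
The plan is to verify the three assertions separately, with the sub-additivity being the only non-trivial point. Positive 1-homogeneity of the functional in \eqref{BallFnEq} follows by a direct change of variables: for $\lambda>0$, substituting $s=\lambda r$ in
\[
\|\lambda x\|_\phi^{-n}=\int_0^\infty r^{n-1}e^{-\phi(r\lambda x)}\dif r
\]
yields $\|\lambda x\|_\phi^{-n}=\lambda^{-n}\|x\|_\phi^{-n}$, whence $\|\lambda x\|_\phi=\lambda\|x\|_\phi$. When $\phi$ is even, $\phi(-ry)=\phi(ry)$ gives $\|-x\|_\phi=\|x\|_\phi$ immediately, so together with sub-additivity we obtain a norm (on the locus where the functional is positive and finite).

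For sub-additivity, the plan is to feed convexity of $\phi$ into Ball's Brunn--Minkowski inequality for harmonic means (Theorem \ref{KBall_ineq}). Fix $x,y\in\R^n$ and set
\[
F(t)\defeq e^{-\phi(tx)},\q G(s)\defeq e^{-\phi(sy)},\q H(r)\defeq e^{-\phi(\frac{r}{2}(x+y))}.
\]
For any $t,s>0$ with $2/r=1/t+1/s$, i.e., $r=2ts/(t+s)$, the convex combination
\[
\frac{s}{t+s}(tx)+\frac{t}{t+s}(sy)=\frac{ts}{t+s}(x+y)=\frac{r}{2}(x+y),
\]
together with convexity of $\phi$, gives $\phi(\frac{r}{2}(x+y))\le \frac{s}{t+s}\phi(tx)+\frac{t}{t+s}\phi(sy)$, hence $H(r)\ge F(t)^{s/(t+s)}G(s)^{t/(t+s)}$. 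Theorem \ref{KBall_ineq} with $q=n$ then yields
\[
2\left(\int_0^\infty r^{n-1}e^{-\phi(\frac{r}{2}(x+y))}\dif r\right)^{-\frac1n}\le\left(\int_0^\infty t^{n-1}e^{-\phi(tx)}\dif t\right)^{-\frac1n}+\left(\int_0^\infty s^{n-1}e^{-\phi(sy)}\dif s\right)^{-\frac1n}.
\]
The substitution $u=r/2$ in the left integral scales it by $2^n$, so the left-hand side equals $2\cdot \tfrac12\|x+y\|_\phi=\|x+y\|_\phi$, and the right-hand side is $\|x\|_\phi+\|y\|_\phi$. This is the triangle inequality.

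Finally, the volume identity is a polar-coordinates calculation, exactly as in \eqref{normVolume}: since $\{\|\cdot\|_\phi\le 1\}=\{y\in\R^n:\|y\|_\phi\le 1\}$ is star-shaped about the origin with radial function $1/\|u\|_\phi$ on $u\in\partial B_2^n$,
\[
|\{x\in\R^n:\|x\|_\phi\le 1\}|=\int_{\partial B_2^n}\int_0^{1/\|u\|_\phi}r^{n-1}\dif r\dif u=\frac1n\int_{\partial B_2^n}\frac{\dif u}{\|u\|_\phi^n}.
\]
By the very definition of $\|\cdot\|_\phi$, $\|u\|_\phi^{-n}=\int_0^\infty r^{n-1}e^{-\phi(ru)}\dif r$, and reassembling the spherical and radial integrals in Cartesian coordinates gives
\[
|\{\|x\|_\phi\le 1\}|=\frac1n\int_{\partial B_2^n}\int_0^\infty r^{n-1}e^{-\phi(ru)}\dif r\dif u=\frac1n\int_{\R^n}e^{-\phi(x)}\dif x,
\]
as desired. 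The only genuine input is Theorem \ref{KBall_ineq}; the rest is bookkeeping. Accordingly, the main obstacle is really off-loaded to the proof of Ball's inequality itself, which the authors intend to include separately in the same appendix.
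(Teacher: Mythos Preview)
Your proof is correct and follows essentially the same route as the paper's: homogeneity by change of variables, sub-additivity by feeding convexity of $\phi$ into Theorem \ref{KBall_ineq} with $q=n$, and the volume identity via polar coordinates. The only cosmetic difference is bookkeeping for the sub-additivity step: the paper sets $F(t)=e^{-\phi(2tx)},\ G(s)=e^{-\phi(2sy)},\ H(r)=e^{-\phi(r(x+y))}$ and uses homogeneity afterwards as $\tfrac12\|2x\|_\phi+\tfrac12\|2y\|_\phi=\|x\|_\phi+\|y\|_\phi$, whereas you place the factor $\tfrac12$ inside $H$ and rescale the left-hand integral instead; the two are equivalent.
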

\begin{proof}[Proof of Proposition \ref{norm_phi}]
\noindent
\textit{1-homogeneity.} Let $x\in \R^n$ and $\lambda>0$. By changing variables $\rho= \lambda r$, 
\begin{equation*}
    \begin{aligned}
        \|\lambda x\|_{\phi}&\defeq \left( \int_0^\infty r^{n-1} e^{-\phi(r\lambda x)}\dif r\right)^{-\frac1n} \\
        &= \left( \int_0^\infty \frac{\rho^{n-1}}{\lambda^{n-1}} e^{-\phi(\rho x)}\frac{\dif \rho}{\lambda}\right)^{-\frac1n}\\
        &= \lambda \left( \int_0^\infty \rho^{n-1} e^{-\phi(\rho x)}\dif \rho\right)^{-\frac1n}= \lambda \|x\|_\phi. 
    \end{aligned}
\end{equation*}
Positivity of $\lambda$ is used in the last step.

\smallskip
\noindent 
\textit{Subadditivity.} Let $x,y\in\R^n$ and 
$r,t,s>0$ with 
$\frac1r= \frac12\Big(\frac1t+ \frac1s\Big)$,
or equivalently,
\begin{equation}
\lb{rstEq}
 \frac r{2t}+ \frac r{2s}=1.
\end{equation}
By \eqref{rstEq} and convexity of $\phi$, 
\begin{equation}
\lb{phirxyEq}
    \phi(r(x+y))= \phi\left(\frac{r}{2t}2tx+ \frac{r}{2s}2sy\right)\leq 
    \frac{r}{2t} \phi(2tx)+ \frac{r}{2s}\phi(2sy)=
     \frac{s}{t+s} \phi(2tx)+ \frac{t}{t+s}\phi(2sy). 
\end{equation}
Set,
$$
H(r)\defeq e^{-\phi(r(x+y))},\q F(t)\defeq e^{-\phi(2tx)},\q G(s)\defeq e^{-\phi(2sy)}.
$$
By \eqref{phirxyEq}, $H(r)\geq F(t)^{\frac{s}{t+s}}G(s)^{\frac{t}{t+s}}$,
so by Theorem \ref{KBall_ineq} (with $q=n$), 
\begin{equation*}
    \begin{aligned}
        \|x+y\|_\phi&= \left( \int_0^\infty r^{n-1}e^{-H(r)}\dif r\right)^{-\frac1n}\\
        &\leq \frac12\left(\int_0^\infty r^{n-1}e^{-F(t)}\dif t \right)^{-\frac1n}+ \frac12\left(\int_0^\infty r^{n-1} e^{-G(s)}\dif s \right)^{-\frac1n}\\
        &= \frac12 \|2x\|_{\phi}+ \frac12\|2y\|_{\phi}= \|x\|_\phi+ \|y\|_{\phi}, 
    \end{aligned}
\end{equation*}
using the already established homogeneity of $\|\cdot\|_\phi$. 

\smallskip
\noindent
\textit{Volume equality.} 
By \eqref{normVolume}, 
\begin{equation}\label{phiVol1}
    \begin{aligned}
        |\{x\in\R^n: \|x\|_\phi\leq 1\}|
         &
        = \frac1n \int_{\partial B_2^n}\frac{\dif u}{\|u\|_\phi^n}
        \cr
        &=\frac1n\int_{\partial B_2^n} \int_0^\infty r^{n-1}e^{-\phi(ru)}\dif r\dif u. \end{aligned}
\end{equation}
Using polar coordinates this is
$\frac1n \int_{\R^n}e^{-\phi(x)}\dif x$.

\smallskip
\noindent
\textit{Norm.} Assuming in addition that $\phi$ is even, for $x\in \R^n$, 
\begin{equation*}
    \|-x\|_{\phi}= \left( \int_0^\infty r^{n-1} e^{-\phi(-rx)}\dif r\right)^{-\frac1n}= \left( \int_0^\infty r^{n-1} e^{-\phi(rx)} \dif r\right)^{-\frac1n}= \|x\|_\phi. 
\end{equation*}
Therefore, for $\lambda\in\R$, $\|\lambda x\|_\phi= \||\lambda|x\|_{\phi}= |\lambda| \|x\|_{\phi}$, making $\|\cdot\|_{\phi}$ into a norm. This concludes
the proof of  Proposition \ref{norm_phi}.
\end{proof}

Next, we turn to proving Theorem \ref{KBall_ineq}.
The proof involves three auxiliary lemmas.
To begin with, invert the variables; for $t,s,r >0$, let
\begin{equation*}
    u\defeq \frac1t, \quad v\defeq \frac{1}{\theta s}, \quad \text{ and } \quad w\defeq \frac{1}{r}, 
\end{equation*}
for some $\theta>0$ to be chosen later. 
In the inverted coordinates, the condition $\frac{2}{r}= \frac1t+ \frac1s$ becomes $w= \frac{u+\theta v}{2}$
.
Now, let
\begin{equation}\label{FGdefEq}
    A(u)\defeq F(u^{-1}) u^{-(q+1)}, \qquad \quad B(v)\defeq G(\theta^{-1}v^{-1}) v^{-(q+1)},
\end{equation}
and 
\begin{equation}\label{HdefEq}
    C(w)\defeq \left(\frac{\theta+1}{2}\right)^{q+1} H(w^{-1}) w^{-(q+1)}.
\end{equation}
The reason behind the multiplication by $\left( \frac{\theta+1}{2}\right)^{q+1}$ will become apparent in the next lemma that translates the \eqref{fghCondition}
relation between $F,G$ and $H$ to one between $A,B$ and $C$.
\begin{lemma}\label{BallLemma1}
    Let $F,G,H$ as in Theorem \ref{KBall_ineq}, and $\theta>0$. 
    For $A,B$ and $C$ as in \eqref{FGdefEq}--\eqref{HdefEq}, 
    \begin{equation*}
        C\left( \frac{u+\theta v}{2}\right)\geq A(u)^{\frac{u}{u+\theta v}} B(v)^{\frac{\theta v}{u+\theta v}},
        \q     \hbox{for all $u,v>0$}.
    \end{equation*}
\end{lemma}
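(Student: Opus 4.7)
The plan is to reduce this to a direct computation using the defining hypothesis on $F,G,H$ together with the weighted AM--GM inequality. The parameter $\theta$ is inserted precisely to absorb a geometric-mean term into an arithmetic mean.

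First I would translate the exponents. From $u = 1/t$, $\theta v = 1/s$, and $w = (u+\theta v)/2 = 1/r$, one reads off
\[
\frac{u}{u+\theta v} = \frac{1/t}{2/r} = \frac{r}{2t} = \frac{s}{t+s},
\qquad
\frac{\theta v}{u+\theta v} = \frac{r}{2s} = \frac{t}{t+s},
\]
where the last equalities use $r = 2ts/(t+s)$. So the exponents $\frac{u}{u+\theta v}$ and $\frac{\theta v}{u+\theta v}$ coincide exactly with the exponents $\frac{s}{t+s}$ and $\frac{t}{t+s}$ appearing in the hypothesis \eqref{fghCondition}. Unpacking the definitions \eqref{FGdefEq}--\eqref{HdefEq} and applying \eqref{fghCondition}, the desired inequality reduces to
\[
t^{\frac{s}{t+s}}(\theta s)^{\frac{t}{t+s}} \;\leq\; \frac{\theta+1}{2}\,r \;=\; \frac{(\theta+1)\,ts}{t+s}.
\]

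The hard part, though really quite short, is this last estimate, and it is where the factor $(\frac{\theta+1}{2})^{q+1}$ in the definition of $C$ earns its keep. I would finish by applying weighted AM--GM with weights $\alpha = s/(t+s)$, $\beta = t/(t+s)$ (so $\alpha+\beta=1$) to the points $t$ and $\theta s$:
\[
t^{\alpha}(\theta s)^{\beta} \;\leq\; \alpha\, t + \beta\,\theta s \;=\; \frac{ts + \theta\, ts}{t+s} \;=\; \frac{(1+\theta)\,ts}{t+s},
\]
which is exactly what is needed. Combining this with the substitution of \eqref{fghCondition} yields the claim of Lemma~\ref{BallLemma1}.

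I do not expect any genuine obstacle here: the change of variables is designed so that the exponents match up, and the remaining inequality is a one-line AM--GM. The only mild subtlety is keeping track of why $\theta$ is free (it will be optimized later in the proof of Theorem~\ref{KBall_ineq}, presumably by choosing $\theta$ so that certain integrals match up), but for this lemma the argument works for every $\theta>0$.
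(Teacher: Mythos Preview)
Your proof is correct and follows essentially the same route as the paper: identify the exponents $\frac{u}{u+\theta v}=\frac{s}{t+s}$ and $\frac{\theta v}{u+\theta v}=\frac{t}{t+s}$ via the substitutions, unpack the definitions to reduce to $t^{s/(t+s)}(\theta s)^{t/(t+s)}\le \frac{\theta+1}{2}r$, and conclude by weighted AM--GM. The paper organizes the computation as a single chain of inequalities rather than isolating the reduced inequality first, but the content is identical.
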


A straightforward change of variables expresses the integrals of $F,G$, and $H$ in terms of integrals of $A,B$, and $C$:
\begin{lemma}\label{BallLemma2}
    Let $F,G,H$ as in Theorem \ref{KBall_ineq} and $\theta>0$. For $A, B$ and $C$ as in \eqref{FGdefEq} and \eqref{HdefEq}, 
    \begin{equation*}
        \begin{gathered}
            \int_0^\infty A(u)\dif u= \int_0^\infty t^{q-1} F(t)\dif t, \\
            \int_0^\infty B(v)\dif v= \theta^q \int_0^\infty s^{q-1} G(s)\dif s, \\
            \int_0^\infty C(w)\dif w= \left( \frac{\theta+1}{2}\right)^{q+1}\int_0^\infty r^{q-1} H(r)\dif r.
        \end{gathered}
    \end{equation*}
\end{lemma}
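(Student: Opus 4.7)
The plan is to establish each of the three identities by a single change of variables, inverting the substitution that defined $A$, $B$, and $C$ from $F$, $G$, and $H$. Since $A$, $B$, $C$ were built by evaluating the original functions at reciprocals (modulo a factor of $\theta$ in the case of $B$) and multiplying by a negative power, the proof is essentially bookkeeping: each $du$ in the new coordinate gives rise to $-dt/t^{2}$ (or a scaled version) in the old one, and the power $u^{-(q+1)}$ together with this Jacobian produces exactly $t^{q-1}\,dt$.

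For the first identity I would substitute $t=1/u$, noting that $du=-dt/t^{2}$ and $u^{-(q+1)}=t^{q+1}$, and that the limits flip so as to cancel the sign. This turns $\int_0^\infty F(u^{-1})u^{-(q+1)}\,du$ directly into $\int_0^\infty F(t)t^{q-1}\,dt$. For the second identity the substitution is $s=1/(\theta v)$, so $v=1/(\theta s)$, $dv=-ds/(\theta s^{2})$, and $v^{-(q+1)}=(\theta s)^{q+1}$; after cancelling signs one is left with the factor $\theta^{q+1}/\theta=\theta^{q}$ multiplying $\int_0^\infty G(s)s^{q-1}\,ds$. For the third identity the substitution $r=1/w$ is identical to the first, and the prefactor $\left(\frac{\theta+1}{2}\right)^{q+1}$ passes untouched through the integral.

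There is no real obstacle here: the only thing to be careful about is matching the Jacobian against the weight $u^{-(q+1)}$ (respectively $v^{-(q+1)}$, $w^{-(q+1)}$) so that the net power of the new variable ends up as $q-1$, and tracking the factor of $\theta$ correctly in the middle identity. The measurability of $F$, $G$, $H$, stipulated in Theorem \ref{KBall_ineq}, ensures all integrals (which are of non-negative integrands) are well defined in $[0,\infty]$, so Tonelli-type concerns do not enter.
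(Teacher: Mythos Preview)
Your proposal is correct and is essentially identical to the paper's proof: both argue by the reciprocal substitutions $t=1/u$, $s=1/(\theta v)$, $r=1/w$ and track the Jacobian against the weight $u^{-(q+1)}$ (resp.\ $v^{-(q+1)}$, $w^{-(q+1)}$) to produce the $t^{q-1}$ factor. There is nothing to add.
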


The following is a standard reduction:
\begin{lemma}\label{BallLemma3}
    It is enough to prove Theorem \ref{KBall_ineq} for $F$ and $G$ bounded.
\end{lemma}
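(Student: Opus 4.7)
My plan is to run a standard monotone truncation argument from above. Define, for each $N>0$,
\begin{equation*}
F_N(t) \defeq \min\{F(t), N\}, \qquad G_N(s) \defeq \min\{G(s), N\}.
\end{equation*}
These are bounded and measurable. Since $F_N \leq F$ and $G_N \leq G$ pointwise, and since $r \mapsto r^\alpha$ is monotone increasing for $\alpha \in (0,1)$, for every $t,s,r > 0$ with $\tfrac{2}{r} = \tfrac1t + \tfrac1s$ we have
\begin{equation*}
F_N(t)^{\frac{s}{t+s}} G_N(s)^{\frac{t}{t+s}} \leq F(t)^{\frac{s}{t+s}} G(s)^{\frac{t}{t+s}} \leq H(r).
\end{equation*}
Thus the triple $(F_N, G_N, H)$ satisfies the hypothesis \eqref{fghCondition} of Theorem \ref{KBall_ineq}, with the \emph{same} $H$.

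Since $F, G$ are not almost everywhere zero, the same holds for $F_N, G_N$ once $N$ is large enough, so the bounded case may be applied. Granting Theorem \ref{KBall_ineq} for bounded functions, it yields
\begin{equation*}
2\left(\int_0^\infty r^{q-1} H(r)\dif r\right)^{-1/q} \leq \left(\int_0^\infty t^{q-1} F_N(t)\dif t\right)^{-1/q} + \left(\int_0^\infty s^{q-1} G_N(s)\dif s\right)^{-1/q}
\end{equation*}
for all sufficiently large $N$.

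To conclude, I let $N \to \infty$. Since $0 \leq F_N \uparrow F$ and $0 \leq G_N \uparrow G$ pointwise, the monotone convergence theorem gives
\begin{equation*}
\int_0^\infty t^{q-1} F_N(t)\dif t \;\uparrow\; \int_0^\infty t^{q-1} F(t)\dif t, \qquad \int_0^\infty s^{q-1} G_N(s)\dif s \;\uparrow\; \int_0^\infty s^{q-1} G(s)\dif s,
\end{equation*}
where the limits lie in $(0, \infty]$. Because the map $x \mapsto x^{-1/q}$ is continuous and decreasing on $(0, \infty]$ with the convention $\infty^{-1/q} = 0$, the right-hand side of the above inequality converges to $\bigl(\int t^{q-1}F\bigr)^{-1/q} + \bigl(\int s^{q-1}G\bigr)^{-1/q}$, which gives the desired inequality for $(F, G, H)$. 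Note that the left-hand side is untouched throughout, since $H$ was never truncated; this is the whole point of truncating $F$ and $G$ downward rather than constructing a new $H$. There is essentially no obstacle here beyond bookkeeping — the only mild point to verify is that $F_N, G_N$ are eventually not almost everywhere zero so that the bounded case genuinely applies, which follows from the assumption that $F$ and $G$ are not almost everywhere zero.
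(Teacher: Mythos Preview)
Your proof is correct and follows essentially the same approach as the paper's: truncate $F$ and $G$ from above (you use $\min\{F,N\}$ where the paper uses $F\cdot\bm{1}_{\{F\leq m\}}$), verify the hypothesis \eqref{fghCondition} still holds with the same $H$, apply the bounded case, and pass to the limit by monotone convergence. The only difference is the choice of cutoff, which is immaterial; in fact your $\min\{F,N\}$ is nonzero wherever $F$ is, so the ``not a.e.\ zero'' check holds for every $N>0$, not just large $N$.
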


Before proving Lemmas \ref{BallLemma1}--\ref{BallLemma3}, let us show how they imply Theorem \ref{KBall_ineq}.  For a function $E:(0,\infty)\to [0,\infty)$, changing the order of integration,
\begin{equation}\label{Eeq}
    \int_0^\infty E(u)\dif u= \int_0^\infty \int_0^{E(u)}\dif z\dif u= \int_0^{\|E\|_\infty} \int_{\{u: E(u)\geq z\}} \dif u\dif z= \int_0^{\|E\|_\infty} |E\geq z|\dif z,
\end{equation}
where $\|E\|_\infty$ could potentially be infinite. 
Ball applies the 1-dimensional Brunn--Minkowski inequality to the sets $\{E\geq z\}$. 
\begin{proof}[Proof of Theorem \ref{KBall_ineq}.]
\noindent
\textit{Step 1: The setup.} Let 
\begin{equation*}
    a\defeq \left(\int_0^\infty t^{q-1}F(t)\dif t \right)^{\frac1q}, \quad b\defeq \left( \int_0^\infty s^{q-1} G(s)\dif s\right)^{\frac1q}, \quad c\defeq \left( \int_0^\infty r^{q-1} H(r)\dif r\right)^{\frac1q}. 
\end{equation*}
The aim is to show $\frac2c\leq \frac1a+ \frac1b$, or equivalently, 
\begin{equation}
    c\geq \frac{2ab}{a+b}. 
\end{equation}
By Lemma \ref{BallLemma2} and \eqref{Eeq},
\begin{gather}
\label{Fint}
        a^q= \int_0^\infty A(u)\dif u= \int_0^{\|A\|_\infty} |A\geq z|\dif z, \\
\label{Gint}
        (\theta b)^q= \int_0^\infty B(v)\dif v= \int_0^{\|B\|_\infty}|B\geq z|\dif z, \\
\label{Hint}
        \left( \frac{\theta+1}{2}\right)^{q+1} c^q= \int_0^\infty C(w)\dif w= \int_0^{\|C\|_\infty} |C\geq z|\dif z. 
\end{gather}

\noindent
\textit{Step 2: Comparing the superlevel sets.} Lemma \ref{BallLemma1} allows to compare the superlevel sets of $A, B$ and $C$, obtaining an inequality between $a, b$ and $c$. In particular,
\begin{equation}\label{SublevelEq}
    \{C\geq z\}\supset \frac12\{A\geq z\}+ \frac\theta2\{B\geq z\}, 
\end{equation}
because for $u\in \{A\geq z\}$ and $v\in \{B\geq z\}$, $C(\frac{u+\theta v}{2})\geq A(u)^{\frac{u}{u+\theta v}} B(v)^{\frac{\theta v}{u+\theta v}}\geq z^{\frac{u}{u+\theta v}} z^{\frac{\theta v}{u+\theta v}}=z$, i.e., $\frac{u+\theta v}{2}\in \{C\geq z\}$. By the 1-dimensional Brunn--Minkowski inequality, 
\begin{equation}\label{SubLevelEq2}
    |C\geq z|\geq \frac12 |A\geq z|+ \frac\theta2|B\geq z|. 
\end{equation}
By \eqref{Eeq} and \eqref{SubLevelEq2}, 
\begin{equation}\label{CqIneq}
    \begin{aligned}
        \left(\frac{\theta+1}{2}\right)^{q+1} c^q &= \int_0^{\|C\|_\infty} |C\geq z|\dif z\geq \frac12\int_0^{\|C\|_\infty} |A\geq z|\dif z+ \frac\theta2\int_0^{\|C\|_\infty} |A\geq z|\dif z. 
    \end{aligned}
\end{equation}

\noindent
\textit{Step 3: Choosing $\theta$.} By Lemma \ref{BallLemma1}, $\|C\|_\infty\geq \min\{\|A\|_\infty, \|B\|_{\infty}\}$. In view of \eqref{Fint}, \eqref{Gint} and \eqref{CqIneq}, we would like $\|C\|_\infty\geq \max\{\|A\|_\infty, \|B\|_\infty\}$. The only way to achieve this is to have $\|A\|_\infty= \|B\|_\infty$. It is here that one needs to take take $F$ and $G$ bounded so that $\|A\|_\infty$ and  $\|B\|_\infty$ are finite. By Lemma \ref{BallLemma3}, there is no loss in making such an assumption. Choosing
\begin{equation*}
    \theta \defeq \left(\frac{\sup_{r>0} F(r)r^{q+1}}{\sup_{r>0} G(r)r^{q+1}}\right)^{\frac{1}{q+1}}, 
\end{equation*}
gives
\begin{equation*}
    \|A\|_\infty= \sup_{r>0} F(r)r^{q+1}= \sup_{r>0} G(r)(\theta r)^{q+1}= \sup_{u>0} G(\theta^{-1}u^{-1}) u^{-(q+1)}=  \|B\|_\infty. 
\end{equation*}

\noindent
\textit{Step 4: Finishing the proof.} By Lemma \ref{BallLemma1} and the choice of $\theta$, $\|C\|_\infty\geq \|A\|_\infty= \|B\|_\infty$. By \eqref{Fint}, \eqref{Gint}, and \eqref{CqIneq}, 
\begin{equation*}
    \left( \frac{\theta+1}{2}\right)^{q+1} c^q\geq \frac12\int_0^{\|A\|_\infty} |A\geq z|\dif z+ \frac{\theta}{2}\int_0^{\|B\|_\infty} |B\geq z|\dif z= \frac{a^q+ \theta^{q+1} b^q}{2}. 
\end{equation*} 
That is, 
\begin{equation*}
    \begin{aligned}
        c^q&\geq \left( \frac{2}{\theta+1}\right)^q \left( \frac{1}{1+\theta} a^q+ \frac{\theta}{1+\theta} (\theta b)^q\right)\geq \left( \frac{2}{\theta+1}\right)^{q} \left( \frac{1}{\theta+1} a+ \frac{\theta}{\theta+1}\theta b\right)^q, 
    \end{aligned}
\end{equation*}
because for $q\geq 1$, $x\mapsto x^q$ is convex and hence $(1-\lambda)x^q+ \lambda y^q\geq \left((1-\lambda)x+ \lambda y\right)^q$ for all $x,y\geq 0$ and $\lambda\in[0,1]$. Finally, 
\begin{equation*}
\begin{aligned}
    c&\geq \frac{2(a+ \theta^2 b)}{(\theta+1)^2}
    = \frac{2(a+b)(a+ \theta^2 b)}{(a+b)(\theta+1)^2}
    = 2\frac{a^2+ \theta^2 ab+ ab+ \theta^2 b^2}{(a+b)(\theta+1)^2}\\
    &= 2\frac{(\theta^2+1)ab+ a^2+ \theta^2 b^2}{(a+b)(\theta+1)^2}
    = 2\frac{(\theta+1)^2 ab - 2\theta ab + a^2+ \theta^2 b^2}{(a+b)(\theta+1)^2}\\
    &= 2\frac{(\theta+1)^2 ab+ (a-\theta b)^2}{(a+b)(\theta+1)^2}
    = \frac{2ab}{a+b}+ \frac{2(a-\theta b)^2}{(a+b)(\theta+1)^2}
    \geq \frac{2ab}{a+b}, 
\end{aligned}
\end{equation*}
as desired.
This concludes the proof of Theorem \ref{KBall_ineq},
modulo the proofs of 
Lemmas \ref{BallLemma1}--\ref{BallLemma3}, which are given below.
\end{proof}

\begin{proof}[Proof of Lemma \ref{BallLemma1}.]
For $t,s,r>0$ with $\frac2r= \frac1t+ \frac1s$, by assumption, 
\begin{equation}\label{hFGineq}
\begin{aligned}
    H(r)\geq F(t)^{\frac{s}{t+ s}} G(s)^{\frac{t}{t+ s}}&= \left( A(t^{-1}) t^{-(q+1)}\right)^{\frac{ s}{t+ s}} \left( B(\theta^{-1}s^{-1}) (\theta s)^{-(q+1)}\right)^{\frac{t}{t+ s}} \\
    &= A(u)^{\frac{s}{t+s}} B(v)^{\frac{t}{t+s}} \left( t^{\frac{s}{t+s}}  (\theta s)^{\frac{t}{t+s}} \right)^{-(q+1)}.
\end{aligned}
\end{equation}
 Since, $\frac{s}{t+s}= \frac{u}{u+\theta v}$ and $\frac{t}{t+s}= \frac{\theta v}{u+\theta v}$, by \eqref{hFGineq} and the weighted AM--GM, 
\begin{equation}
    \begin{aligned}
        A(u)^{\frac{u}{u+\theta v}} B(v)^{\frac{\theta v}{u+\theta v}}= A(u)^{\frac{s}{t+s}} B(v)^{\frac{t}{t+s}}&\leq H(r) \left(t^{\frac{s}{t+s}} (\theta s)^{\frac{t}{t+s}}\right)^{q+1} \\
        &\leq H(r)\left(\frac{ts}{t+s}+ \frac{\theta st}{t+s}\right)^{q+1}\\
        &= \left( \frac{\theta+1}{2}\right)^{q+1} H(r) \left( \frac{2ts}{t+s}\right)^{q+1}\\
        &= \left(\frac{\theta+1}{2}\right)^{q+1} H(r) r^{p+1}
        = C\left(\frac{u+\theta v}{2}\right), 
    \end{aligned}
\end{equation}
because $\frac{1}{r}= \frac{1}{2t}+ \frac{1}{2s}= \frac12 u+ \frac12 (\theta v)= \frac{u+\theta v}{2}$.
\end{proof}

\begin{proof}[Proof of Lemma \ref{BallLemma2}.]
By changing variables, $u=\frac1t$,
\begin{equation*}
    \int_0^\infty A(t)\dif t= \int_0^\infty u^{-(q+1)} F(u^{-1})\dif u= \int_0^\infty t^{q+1} F(t) \frac{\dif t}{t^{2}}= \int_0^\infty t^{q-1} F(t)\dif t. 
\end{equation*}
For $v= \frac{1}{\theta s}$, 
\begin{equation*}
    \int_0^\infty B(v)\dif v= \int_0^\infty v^{-(q+1)} G(\theta^{-1}v^{-1}) \dif v= \int_0^\infty (\theta s)^{q+1} G(s) \frac{\dif s}{\theta s^2}= \theta^q \int_0^\infty s^{q-1} G(s)\dif s. 
\end{equation*}
Finally, for $w= \frac1r$, 
\begin{equation*}
\begin{aligned}
    \int_0^\infty C(w)\dif w= \left( \frac{\theta+1}{2}\right)^{q+1} \int_0^\infty w^{-(q+1)} H(w^{-1})\dif w&= \left( \frac{\theta+1}{2}\right)^{q+1}\int_0^\infty r^{q+1} H(r)\frac{\dif r}{r^2} \\
    &= \left( \frac{\theta+1}{2}\right)^{q+1}\int_0^\infty r^{q-1} H(r)\dif r. 
\end{aligned}
\end{equation*}
\end{proof}

\begin{proof}[Proof of Lemma \ref{BallLemma3}.]
For $m\in\mathbb{N}$, let
\begin{equation*}
    F_m(t)\defeq F(t)\bm{1}_{\{F\leq m\}}(t), \quad \text{ and } \quad G_m(s)\defeq G(s)\bm{1}_{\{G\leq m\}}(s). 
\end{equation*}
Then, $F_m, G_m$ are both bounded by $m$. In addition, $F\geq F_m$ and $G\geq G_m$, thus for $t,s,r>0$ with $\frac2r=\frac1t+ \frac1s$, 
\begin{equation*}
    H(r)\geq F(t)^{\frac{s}{t+s}} G(s)^{\frac{t}{t+s}}\geq F_m(t)^{\frac{s}{t+s}} G_{m}(s)^{\frac{t}{t+s}}. 
\end{equation*}
Under the assumption that Theorem \ref{KBall_ineq} holds for bounded bounded functions,  
\begin{equation*}
    2\left(\int_0^\infty r^{q-1} H(r)\dif r\right)^{-\frac1q}\leq \left(\int_0^\infty t^{q-1} F_m(t)\dif t \right)^{-\frac1q}+ \left(\int_0^\infty s^{q-1}G_m(s)\dif s \right)^{-\frac1q}. 
\end{equation*}
The claim follows from the monotone convergence theorem by taking $m\to\infty$. 
\end{proof}

\bigskip

{\sc Chalmers University  of Technology}

{\tt bob@chalmers.se} 

\medskip
 
{\sc University of Maryland}

{\tt vmastr@umd.edu, yanir@alum.mit.edu}

\end{document}